\numberwithin{equation}{section}
\theoremstyle{definition}
\newtheorem* {theorem*}{Theorem}
\newtheorem* {conjecture*}{Conjecture}
\newtheorem{theorem}{Theorem}[section]
\theoremstyle{definition}
\theoremstyle{definition}
\newtheorem* {example*}{Example}
\newtheorem{lemma}[theorem]{Lemma}
\theoremstyle{definition}
\newtheorem{definition}[theorem]{Definition}
\theoremstyle{definition}
\newtheorem{conjecture}[theorem]{Conjecture}
\newtheorem{proposition}[theorem]{Proposition}
\newtheorem{corollary}[theorem]{Corollary}
\newtheorem* {remark*}{Remark}
\newtheorem {remark}[theorem]{Remark}
\theoremstyle{definition}
\newtheorem {example}[theorem]{Example}
\theoremstyle{definition}
\theoremstyle{definition}
\theoremstyle{definition}
\theoremstyle{definition}
\def\modu{\ (\mathrm{mod}\ }
\def\({\left(}
\def\){\right)}
\newcommand{\sP}{\mathscr{P}}
\newcommand{\FF}{\mathbb{F}}
\newcommand{\CC}{\mathbb{C}}
\newcommand{\QQ}{\mathbb{Q}}
\newcommand{\cP}{\mathcal{P}}
\newcommand{\cK}{\mathcal{K}}
\def\cY{\mathcal{Y}}
\def\NN{\mathbb{N}}
\def\CC{\mathbb{C}}
\def\ZZ{\mathbb{Z}}
\def\Aut{\mathrm{Aut}}
\def\Ind{\mathrm{Ind}}
\def\GL{\mathrm{GL}}
\def\Res{\mathrm{Res}}
\def\spanning{\textnormal{-span}} 
\def\Irr{\mathrm{Irr}}
\newcommand{\cM}{\mathcal{M}}
\newcommand{\cN}{\mathcal{N}}
\newcommand{\cL}{\mathcal{L}}
\newcommand{\one}{{1\hspace{-.11cm} 1}}
\newcommand{\sgn}{\mathrm{sgn}}
\def\barr{\begin{array}}
\def\earr{\end{array}}
\def\ba{\begin{aligned}}
\def\ea{\end{aligned}}
\def\be{\begin{equation}}
\def\ee{\end{equation}}
\def\qquand{\qquad\text{and}\qquad}
\def\quand{\quad\text{and}\quad}
\def\quord{\quad\text{or}\quad}
\def\cH{\mathcal H}
\def\cM{\mathcal M}
\def\hs{\hspace{0.5mm}}
\def\ds{\displaystyle}
\def\dash{\hs\text{---}\hs}
\def\ben{\begin{enumerate}}
\def\een{\end{enumerate}}
\def\cT{\mathscr{T}}
\def\cF{\mathcal F}
\def\hs{\hspace{0.5mm}}
\def\D{\hat D}
\def\Des{\mathrm{Des}}
\newcommand{\xRightarrow}[2][]{\ext@arrow 0359\Rightarrowfill@{#1}{#2}}
\renewcommand{\O}{\operatorname{O}}
\newcommand{\Sp}{\operatorname{Sp}}
\newcommand{\cA}{\mathcal{A}}
\newcommand{\cB}{\mathcal{B}}
\def\sI{\mathscr{I}}
\def\cG{\mathcal{G}}
\def\arcstart{\ \xy<0cm,-.06cm>\xymatrix@R=.1cm@C=.2cm }
\newcommand{\arcstartc}[1]{\ \xy<0cm,-.15cm>\xymatrix@R=.1cm@C=#1cm}
\def\cF{\mathcal F}
\def\h {\mathrm{ht}}
\def\m{\mathfrak{m}}
\def\n{\mathfrak{n}}
\def\invsim_i{\overset{\mathrm{i}}{\underset{\mathrm{inv}}{\sim}}}
\def\LL{\ZZ[x,x^{-1}]}
\def\H{\mathcal{H}}
\def\A{\mathsf{A}}
\def\BC{\mathsf{BC}}
\def\D{\mathsf{D}}
\def\cG{\mathcal{G}}
\def\Asc{\mathrm{Asc}}
\def\cP{\mathscr{P}}
\def\WB{W^{\mathsf{BC}}}
\def\WD{W^{\mathsf{D}}}
\def\m{\mathbf{m}}
\def\n{\mathbf{n}}
\def\Ad{\mathrm{Ad}}
\def\W{W^{\BC}}
\def\WD{W^{\D}}
\def\TT{{\mathbb T}}
\def\KK{\mathbb{K}}
\def\dualityBC{\iota^\BC}
\def\dualityD{\iota^\D}
\begin{document}
\title{Gelfand $W$-graphs for classical Weyl groups}

\author{
Eric Marberg\thanks{
Email: \tt emarberg@ust.hk
}
\and
Yifeng Zhang\thanks{
Email: \tt yzhangci@connect.ust.hk
}
}
\date{Department of Mathematics \\ Hong Kong University of Science and Technology}

\maketitle

\abstract{A Gelfand model for an algebra is a module given by a direct sum of irreducible submodules, 
with every isomorphism class  of irreducible modules represented exactly once.
We introduce the notion of a perfect model for a finite Coxeter group,
which is a certain set of discrete data (involving Rains and Vazirani's concept of a perfect involution)
that parametrizes a Gelfand model for the associated Iwahori-Hecke algebra. We
describe perfect models for all classical Weyl groups, excluding type D in even rank. 
The representations attached to these models simultaneously
generalize constructions of Adin, Postnikov, and Roichman (from type A to other classical types)
and of Araujo and Bratten (from group algebras to Iwahori-Hecke algebras).
We show that each Gelfand model derived from a perfect model has a canonical basis that gives rise to
a pair of related $W$-graphs, which we call Gelfand $W$-graphs.
For types BC and D, we prove that 
these $W$-graphs are dual to each other, a phenomenon which does not occur in type A.}

\setcounter{tocdepth}{2}
\tableofcontents

 \section{Introduction}

In this article we study certain uniform ways 
of constructing $W$-graphs for finite Weyl groups whose associated Iwahori-Hecke algebra
representations are multiplicity-free.
We start by defining a special kind of \emph{model} (in the sense of \cite[\S3]{BumpGinzburg}) that leads to these constructions.

\subsection{Perfect models}

Let $(W,S)$ be a Coxeter system with length function $\ell : W \to \NN := \{0,1,2,\dots\}$.
Define $\Aut(W,S)$ to be the group of \emph{Coxeter automorphisms} of $W$, that is, automorphisms $\varphi \in \Aut(W)$ with $\varphi(S) = S$.
Let $W^+ = W \rtimes \Aut(W,S)$ be the semidirect product group whose elements are the pairs $(w, \varphi)$ for $w \in W$ and $\varphi \in \Aut(W,S)$, with multiplication 
$ (v,\alpha)(w,\beta) := (v \cdot \alpha(w), \alpha\beta).$
Extend the length function of $W$ to $W^+$ by setting $\ell(w,\varphi) = \ell(w)$.
We view $W$ as a subgroup of $W^+$ by identifying $w \in W$ with $(w,1) \in W^+$.

\begin{example}
For $i > 0$, define permutations 
$
s_i := (i,i+1)(-i,-i-1),
$
$
s_0 := (-1,1),
$
and
$ s_{-i} = (i,-i-1)(-i,i+1)
$.
We realize the classical Weyl groups of types $\A_{n-1}$, $\BC_n$, and $\D_n$
as 
\[
S_n := \langle s_1,s_2,\dots, s_{n-1}\rangle
\subset \W_n := \langle s_0,s_1,s_2,\dots,s_{n-1}\rangle
\supset \WD_n := \langle s_{-1},s_1,s_2,\dots,s_{n-1}\rangle.\]
Here and throughout, $n$ is a fixed positive integer.
Each of these groups is a Coxeter group relative to the listed set
of simple generators.
The map $s_i \mapsto s_{n-i}$ extends to the only nontrivial Coxeter automorphism of $S_n$.
For $n>4$, the map interchanging $s_{-1}$ and $s_{1}$ while fixing the other generators extends to the only nontrivial Coxeter automorphism of $\WD_n$. 
When $n>2$ there are no nontrivial Coxeter automorphisms of $\W_n$.
\end{example}

Let $z = (w,\varphi) \in W^+$.
Following \cite{RV}, we say that $z$ is a \emph{perfect involution} if 
\[z^2= (z t )^4 = 1 \quad\text{for all reflections
$t \in  \{ wsw^{-1} : (w,s) \in W\times S\}.$}\]
Rains and Vazirani identify all such elements when $W$ is a finite Coxeter group
in \cite[\S9]{RV}; we review this classification in Section~\ref{perfect-subsect}.

Let $\sI = \sI(W,S)$ be the set of perfect involutions in $W^+$. 
The group $W$ acts on $\sI$ by conjugation
$v : (w,\varphi) \mapsto v (w,\varphi)v^{-1} =  (v\cdot w \cdot\varphi(v)^{-1},\varphi) .$
%
As in \cite{RV}, we say that an element $z \in \sI$ is \emph{$W$-minimal}
if $\ell(sz s) \geq \ell(z)$ for all $s \in S$, and \emph{$W$-maximal}
if $\ell(sz s) \leq \ell(z)$ for all $s \in S$.
Each $W$-conjugacy class in $\sI$ contains a unique $W$-minimal element
(and a unique $W$-maximal element when $W$ is finite) \cite[Cor. 2.10]{RV}.


If $C_W(z) := \{ w \in W: wz = z w\}$ is the $W$-stabilizer of a $W$-minimal element
$z\in \sI$,
 then $C_W(z)$ is a \emph{quasiparabolic subgroup} in the sense of Rains and Vazirani \cite[Thm. 4.6]{RV}. Among other consequences from \cite{RV}, this implies that the 
 permutation representation of $W$ acting on  $W/C_W(z)$ deforms to a representation of the \emph{Iwahori-Hecke algebra} of $(W,S)$.

Given  $J\subset S$, let $W_J := \langle  J\rangle$. 
Then $(W_J, J)$ is a Coxeter system
whose length function is the restriction of $\ell$.
Write $\sI_J := \sI(W_J,J)$ for the set of perfect involutions in $(W_J)^+$.
%

\begin{definition}\label{model-triple-def}
A \emph{model triple} $(J, z_{\min}, \sigma)$ for a finite Coxeter group $W$ consists of
a set $J\subset S$, a $W_J$-minimal element $z_{\min} \in  \sI_J$,
and a linear character (that is, a group homomorphism) $\sigma : W_J \to \{\pm 1\}$.
A set $\sP$ of model triples  is a \emph{perfect model} for $W$  if
\[\sum_{(J,z_{\min},\sigma) \in \sP} \Ind_{C_{W_J}(z_{\min})}^W \Res^{W_J}_{C_{W_J}(z_{\min})}(\sigma) = \sum_{\chi \in \Irr(W)} \chi,\]
where $\Irr(W)$ is the set of complex irreducible characters of $W$
and $\Ind$ and $\Res$ are the usual operations of induction and restriction.
\end{definition}

Our first main result identifies
perfect models for most classical Weyl groups.
The model  for type $\A$ given below is well-known; see \cite{APR2007,IRS,KV2004}.
What we describe for type $\BC$ (respectively, $\D$)
is a coarser version of the models in
\cite{APR2010,Baddeley,M2012}
(respectively, \cite{Caselli2009,CF,CM,M2011}).

\begin{definition}\label{main-thm1-def}
 Let $W \in \{ S_n, \W_n, \WD_n\}$ be a classical Weyl group.
 Write $\one=\one_W : w \mapsto 1$ and $\sgn=\sgn_W : w \mapsto (-1)^{\ell(w)}$ for the trivial and sign 
 representations.
\ben

\item[($\A$)] Assume $W=S_n$ with $n\geq 1$.  For each integer $0\leq k\leq \lfloor\frac{n}{2}\rfloor$, form a triple $(J,z_{\min},\sigma)$ 
by taking $J = \{s_1,s_2,\dots,s_{n-1}\} \setminus\{s_{2k}\}$
so that  $ \langle J\rangle  = S_{2k} \times S_{n-2k} $,
and then setting
\[z_{\min} = s_1s_3s_5\cdots s_{2k-1}
\quand \sigma = \one_{S_{2k}} \times \sgn_{S_{n-2k}}.\]

\item[($\BC$)] Assume $W=\W_n$ with $n\geq 2$.  For each integer $0\leq k\leq \lfloor\frac{n}{2}\rfloor$, form a triple
  $(J,z_{\min},\sigma)$ by taking
$J = \{s_0,s_1,s_2,\dots,s_{n-1}\} \setminus\{s_{2k}\}$
 so that  $ \langle J\rangle  = \W_{2k} \times S_{n-2k}  $,
 and then setting
\[z_{\min} = s_1s_3s_5\cdots s_{2k-1}
\quand \sigma = \one_{\W_{2k}} \times \sgn_{S_{n-2k}}.\]

\item[($\D$)] Assume $W=\WD_n$ with $n\geq 3$. For each integer $0<k\leq \lfloor\frac{n}{2}\rfloor$, form a triple
  $(J,z_{\min},\sigma)$ by taking
$J = \{s_{-1},s_1,s_2,\dots,s_{n-1}\} \setminus\{s_{2k}\}$
 so that $ \langle J\rangle  = \WD_{2k} \times S_{n-2k}  $,
 and then setting
\[z_{\min} = s_1s_3s_5\cdots s_{2k-1}
\quand\sigma = \one_{\WD_{2k}} \times \sgn_{S_{n-2k}}.\]
Also include one additional triple $(J,z_{\min},\sigma)=(\{s_1,s_2,\dots,s_{n-1}\}, 1, \sgn_{S_n})$.

\een
Let $\sP=\sP(W)$ be the set of
 $1+\lfloor \frac{n}{2}\rfloor$ triples $(J,z_{\min},\sigma)$ listed in each case.
 \end{definition}
 
 As we will explain in the next section, what distinguishes these models from prior work  is that our constructions lead to
  Iwahori-Hecke algebra representations. 

\begin{theorem}
\label{main-thm1}
If $W$ is one of the classical Weyl groups $S_{n-1}$, $\W_n$, or $\WD_{2n+1}$ for an integer $n\geq 2$,
then $\sP(W)$ is a perfect model of $W$.
\end{theorem}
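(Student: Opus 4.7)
The plan is to verify the character identity in Definition~\ref{model-triple-def} separately in each of the three cases $W=S_{n-1}$, $W=\W_n$, and $W=\WD_{2n+1}$, reducing in each case to known Gelfand-model decompositions of the classical Weyl groups.

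The first step is to confirm that each candidate $z_{\min}=s_1s_3\cdots s_{2k-1}$ is a $W_J$-minimal perfect involution in $(W_J)^+$. Since the factors commute and none equals $s_0$ or $s_{-1}$, their product is an involution of length $k$, and $W_J$-minimality follows from a quick inspection of $\ell(sz_{\min}s)$ for each $s\in J$: either $s$ commutes with $z_{\min}$, or $s\in\{s_0,s_{-1},s_{2k+1},\ldots\}$, in which case conjugation strictly lengthens $z_{\min}$ by appending a reflection. The condition $(z_{\min}t)^4=1$ for all reflections $t$ then follows from the Rains-Vazirani classification reviewed in Section~\ref{perfect-subsect}, or by a direct verification using signed permutations; the extra $\D$-triple at $z_{\min}=1$ is trivially perfect.

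The second step is to identify the centralizer $C_{W_J}(z_{\min})$, which factors as $C\times S_{n-2k}$ with $C$ the centralizer of $z_{\min}$ in the first Coxeter factor. In type $\A$ this is the hyperoctahedral subgroup $S_2\wr S_k\subset S_{2k}$; in type $\BC$ it is the order-$4^kk!$ subgroup of $\W_{2k}$ stabilizing the pair partition $\bigl\{\{\pm(2i-1),\pm 2i\}\bigr\}_{i=1}^k$; in type $\D$ it coincides with the type-$\BC$ centralizer (every sign-preserving element of the latter automatically lies in $\WD_{2k}$). After substituting $\sigma=\one\boxtimes\sgn$ and using functoriality of induction, the identity to prove becomes
\[\sum_k \Ind_{C\times S_{n-2k}}^W(\one\boxtimes\sgn)=\sum_{\chi\in\Irr(W)}\chi.\]
For type $\A$ this is the classical Inglis-Richardson-Saxl theorem~\cite{IRS}. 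For types $\BC$ and $\D$, our sum is a coarser aggregation of the finer Gelfand-model decompositions in \cite{APR2010,Baddeley,M2012} and \cite{Caselli2009,CF,CM,M2011} respectively; to conclude one matches triples and verifies that summing the finer pieces indexed by a given value of $k$ recovers our single coarse summand.

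The main obstacle is the $\D$-case, where the oddness of the rank $2n+1$ is essential. In odd rank the outer Coxeter automorphism of $\WD_{2n+1}$ does not fuse distinct irreducibles, and no self-associate bipartitions arise, so each induction from $\WD_{2k}\times S_{2n+1-2k}$ (with $k\geq 1$) supplies a clean multiplicity-one bundle of irreducible characters, while the extra triple $(\{s_1,\ldots,s_{2n}\},1,\sgn_{S_{2n+1}})$ accounts precisely for the irreducibles not reached this way. Confirming the balance requires careful bookkeeping with the restriction $\Res^{\W_{2n+1}}_{\WD_{2n+1}}$ and Littlewood-Richardson-type rules, and the same balance fails in even rank, explaining the restriction in the theorem.
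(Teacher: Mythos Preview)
Your type $\A$ argument matches the paper's exactly: both invoke \cite{IRS}.

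For types $\BC$ and $\D$, however, your proposal diverges from the paper and, more importantly, stops short of an actual verification. The paper does \emph{not} argue by aggregating the finer models of \cite{APR2010,Baddeley,M2012} or \cite{Caselli2009,CF,CM,M2011}; it computes each induced character directly. In type $\BC$, Baddeley's \cite[Prop.~1]{Baddeley} gives $\Ind_{C_{\W_{2k}}(z_{\min})}^{\W_{2k}}(\one)$ as the sum of $\chi^{(\lambda,\mu)}$ over bipartitions of $2k$ with both components having all even parts; a type-$\BC$ Pieri rule (Lemma~\ref{bc-irr-lem}) then shows that inducing further to $\W_n$ against $\sgn_{S_{n-2k}}$ yields exactly the $\chi^{(\lambda,\mu)}$ with $n-2k$ odd parts total. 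Summing over $k$ gives all bipartitions of $n$. In type $\D$ with $n$ odd, the paper observes (as you do) that $C_{\W_{2k}}(z_{\min})\subset\WD_{2k}$, so the $k$th type-$\D$ summand has the same induced character as the $k$th type-$\BC$ summand after applying $\Ind_{\WD_n}^{\W_n}$; the type-$\BC$ result and \eqref{d-irr-eq} then give the answer, since for odd $n$ the map $\chi^{\{\lambda,\mu\}}\mapsto\chi^{(\lambda,\mu)}+\chi^{(\mu,\lambda)}$ is injective.

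Your ``coarser aggregation'' strategy is plausible in principle, but you never actually match triples: the finer models in those references are indexed by conjugacy classes of involutions (or twisted involutions) rather than by a single integer $k$, and checking that their summands group correctly into yours is a genuine computation that you have not done. Likewise, your type-$\D$ paragraph is a description of why the result ought to hold (``multiplicity-one bundle'', ``careful bookkeeping'') rather than a proof. The paper's reduction of type $\D$ to type $\BC$ via $\Ind_{\WD_n}^{\W_n}$ is both cleaner and complete; if you want a self-contained argument, that is the step to supply.
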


This result does not construct a perfect model for $\WD_{n}$ when $n$ is even.
The sequel to this article \cite{MZ} will take up the problem of classifying 
the finite Coxter groups with perfect models; it will be shown there
that no perfect models exist in type $\D_{2n}$.
This property exactly mirrors the non-existence of \emph{involution models}
in type $\D_{2n}$, which is shown in \cite{Baddeley} (see \cite{Vinroot} for the definition of an involution model).
In fact, we will prove in \cite{MZ} that a finite Coxeter group $W$
has a perfect model if and only if it has an involution model.

As noted in \cite[\S3]{BumpGinzburg}, there is a connection between the perfect model
$\sP$ in type $\A_{n-1}$ 
and certain \emph{Klyachko models} for the finite general linear group $\GL_n(\FF_q)$.
It would be interesting to know if  $\sP$ in types $\BC_n$ and $\D_n$ is similarly related to
models for $\O_n(\FF_q)$ or $\Sp_n(\FF_q)$.


\subsection{Iwahori-Hecke algebra modules}\label{intro-sect2}


Let $x$ be an indeterminate.
The \emph{(single-parameter) Iwahori-Hecke algebra} of a Coxeter system $(W,S)$ is
the free $\LL$-module $\H = \H(W)$ with basis $\{H_w : w \in W\}$, equipped with the unique algebra
structure in which 
\[ H_s H_w  =\begin{cases} H_{sw} &\text{if }\ell(sw) > \ell(w) \\
H_{sw} + (x-x^{-1}) H_w &\text{if }\ell(sw) <\ell(w)
\end{cases}
\quad\text{for all $s \in S$ and $w \in W$.} \]
When $\KK$ is a field, we write $\H_\KK = \H_\KK(W) := \KK(x) \otimes_{\ZZ[x,x^{-1}]} \H(W)$ for the algebra given in exactly the same way
but defined over the scalar field of rational functions $\KK(x)$.

If $W$ is finite with splitting field $\KK\subset \CC$, then the algebra $\H_\KK$ is split \cite[Thm. 9.3.5]{GeckPfeiffer}.
(Each finite Weyl group has splitting field $\KK=\QQ$ \cite[Thm. 6.3.8]{GeckPfeiffer}.)
In this case, a \emph{Gelfand model} for $\H_\KK$ is defined to be a left $\H_\KK$-module
 isomorphic to the direct sum of all irreducible $\H_\KK$-modules.
 Our second main theorem  describes a pair of Gelfand models for $\H_\QQ$ when $W$ is a classical Weyl group 
 outside type $\D_{2n}$.
As we explain in Section~\ref{gel-sect}, any perfect model for $W$ gives rise to  
 a Gelfand model for $\H_\KK$, so this result
is a formal consequence of
Theorem~\ref{main-thm1}.

\begin{definition}\label{fg-def}
Our Gelfand models will be spanned by the following sets of involutions:
\begin{itemize}
\item Define $\cF_{n-1}^\A := \left\{ z \in S_{2n} : z=z^{-1}\text{ and }z(i) \neq i \text{ for all }i \in [2n]\right\}$.
\item Define $\cF_{n}^\BC := \left\{ z \in \W_{2n} : z=z^{-1}\text{ and }|z(i)| \neq i \text{ for all }i \in [2n]\right\}$.
\item Define $\cF^\D_n$ to  be the subset of 
 $z \in \cF_n^\BC$ for which $|\{ i\in [n] : z(i) < -i\}|$
is even.
\end{itemize}
We say that an integer $i>0$ is a \emph{visible descent} of a permutation $z$
if \[
z(i+1) < \min\{i,z(i)\}\quord z(i) < -i.\] 
Define $\cG^\A_{n-1}$, $\cG^\BC_n$, and $\cG^\D_n$ to be the respective subsets of
$\cF^\A_{n-1}$, $\cF^\BC_n$, and $\cF^\D_n$ consisting of the elements with  no visible descents greater than $n$.
\end{definition}

Let $\cF = \cF(W)$ be $\cF^\A_{n-1}$, $\cF^\BC_n$, or $\cF^\D_n$
according to whether $W$ is $S_n$, $\W_n$, or $\WD_n$. 
Define $\cG = \cG(W)$ analogously, and note that 
if $W \in \{S_n, \W_n,\WD_n\}$ then $\cG  =  \cF \cap \cG^\BC_n$.
We will describe the sets $\cG^\A_{n-1}$, $\cG^\BC_n$, and $\cG^\D_n$  more explicitly in Section~\ref{descent-sect}.

\begin{example}
The 10 elements of $\cG^\A_3$ are 
\[
\ba
&
(1,5)(2,6)(3,7)(4,8),
\\[-12pt]\\&
(1,2)(3,5)(4,6)(7,8),\quad
(1,3)(2,5)(4,6)(7,8),\quad
(1,4)(2,5)(3,6)(7,8),
\\&
(2,3)(1,5)(4,6)(7,8),\quad
(2,4)(1,5)(3,6)(7,8),\quad
(3,4)(1,5)(2,6)(7,8), 
\\[-12pt]\\&
(1,2)(3,4)(5,6)(7,8), \quad
(1,3)(2,4)(5,6)(7,8), \quad
(1,4)(2,4)(5,6)(7,8).
\ea
\]
The 6 elements of $\cG^\BC_2$ are
\[ 
\ba&
(-4,-2)(-3,-1)(1,3)(2,4),\ \
(-4,1)(-3,2)(-2,3)(-1,4), \ \
(-4,-2)(-3,1)(-1,3)(2,4),\\&
(-4,-1)(-3,2)(-2,3)(1,4), \ \
(-4, -3)(-2,-1)(1,2)(3,4),\ \
(-4, -3)(2,-1)(1,- 2)(3,4).
\ea
\]
The 3 elements of $\cG^\D_2$ are
\[
(-4,-2)(-3,-1)(1,3)(2,4),\ \
(-4,1)(-3,2)(-2,3)(-1,4),\ \
(-4, -3)(-2,-1)(1,2)(3,4).\]
\end{example}

When evaluating the length $\ell(z)$ of an element $z \in \cG$, we use the length
function afforded by viewing
$
\cG^\A_{n-1} \subset S_{2n}
$,
$
\cG^\BC_{n} \subset \W_{2n} 
$,
and
$
\cG^\D_{n} \subset \WD_{2n}.
$
Now fix $z \in \cG$
and define 
\be\label{intro-des-1} \Des^=(z) := \{ s \in S : sz=zs\}
\quand 
\Asc^=(z) := \{ s \in S : zsz \in \{ s_i : i > n\} \}.
\ee
Finally let 
\be\label{intro-des-2}
\ba
\Des^< (z) &:= \{ s \in S : \ell(sz) < \ell(z)\} - \Des^=(z)\sqcup \Asc^=(z) ,
\\
 \Asc^< (z) &:= \{ s \in S : \ell(sz) > \ell(z)\} - \Des^=(z)\sqcup \Asc^=(z) ,
 \ea\ee
where $\ell$ is the length function of $S_{2n}$ (if $W=S_n$), $\W_{2n}$ (if $W=\W_n$), or $\WD_{2n}$ (if $W=\WD_n$).
We refer to elements of $ \Des^=(z)$ and $ \Asc^=(z)$ as \emph{weak descents} and \emph{weak ascents} of $z$,
and to elements of $ \Des^<(z)$ and $ \Asc^<(z)$ as \emph{strict descents} and \emph{strict ascents}.
%

\begin{theorem}\label{main-thm2}
Assume $(W,S)$ is the Coxeter system of type $\A_{n-1}$, $\BC_n$, or $\D_n$ with $n\geq 2$.
Define 
$\cM = \cM(W)$ and $\cN = \cN(W)$ to be the
 free $\LL$-modules with respective bases
  $\{M_z : z \in \cG \}$ and $ \{N_z : z \in \cG \}$.
There is a unique left $\H$-module structure on $\cM$ in which
\[
H_s  M_z = \begin{cases} 
M_{s  zs}  &\text{if }s \in \Asc^<(z) \\
M_{s  zs} + (x - x^{-1}) M_z &\text{if }s\in \Des^<(z) \\
-x^{-1} M_z & \text{if }s \in \Asc^=(z) \\
xM_z & \text{if }s \in \Des^=(z) 
\end{cases}
\quad\text{for all $s \in S$ and $z \in \cG$},
\]
and there is a unique left $\H$-module structure on $\cN$ in which
\[
H_s  N_z = \begin{cases} 
N_{s  zs}  &\text{if }s \in \Asc^<(z) \\
N_{s  zs} + (x - x^{-1}) N_z &\text{if }s\in \Des^<(z) \\
x N_z & \text{if }s \in \Asc^=(z) \\ 
-x^{-1}N_z & \text{if }s \in \Des^=(z) 
\end{cases}
\quad\text{for all $s \in S$ and $z \in \cG$}.
\]
Moreover, if
$(W,S)$ is not of type
 $ \D_{n}$
 with $n$ even, then the left $\H_\QQ$-modules
 \[
  \cM_\QQ=   \cM_\QQ(W) := \QQ(x) \otimes_{\ZZ[x,x^{-1}]} \cM
  \quand \cN_\QQ=  \cN_\QQ(W) := \QQ(x) \otimes_{\ZZ[x,x^{-1}]} \cN
\]
are both Gelfand models.
\end{theorem}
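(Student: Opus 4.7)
The proof splits into two parts, mirroring the two assertions. For the first part (existence and hence uniqueness of the $\H$-module structures on $\cM$ and $\cN$), I would verify that the proposed formulas satisfy the defining relations of $\H$ on the generators $\{H_s : s \in S\}$. The quadratic relation $(H_s-x)(H_s+x^{-1})=0$ reduces to a short case check over the four subsets partitioning $S$ attached to $z$: in the two \emph{strict} cases it becomes the standard Iwahori-Hecke computation relating $H_s^2 M_z$ to $H_{szs}$ up to the usual $x$-corrections; in the two \emph{weak} cases one simply observes that $M_z$ (respectively $N_z$) already lies in one of the eigenlines $x$ and $-x^{-1}$. The braid relations $\underbrace{H_s H_t H_s \cdots}_{m_{st}} = \underbrace{H_t H_s H_t \cdots}_{m_{st}}$ are substantially more delicate: for each pair $s,t \in S$ with $m_{st} \in \{2,3,4\}$, I would subdivide according to how the parabolic $\langle s,t\rangle$-orbit of $z$ lies relative to the descent/ascent partition, track the behavior of the conjugate generators $szs$, $tzt$, $stzts$, etc.\ with respect to $\{s_i : i > n\}$, and verify each case by direct computation. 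This braid verification should parallel the analogous check for the Hecke module on involutions in earlier literature, enhanced to handle the weak-ascent case that arises only for generators $s$ with $zsz = s_i$ for $i > n$.

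For the second part, I would appeal to the machinery developed in Section~\ref{gel-sect}, which shows that any perfect model $\sP$ for $W$ canonically lifts to a Gelfand model for $\H_\KK$ realized as a direct sum of induced $\H$-modules
\[
\bigoplus_{(J,z_{\min},\sigma)\in \sP}\Ind_{\H(W_J)}^{\H}\bigl(\text{Hecke deformation of }\Res^{W_J}_{C_{W_J}(z_{\min})}(\sigma)\bigr).
\]
Since $\sP = \sP(W)$ is perfect by Theorem~\ref{main-thm1} whenever $W$ is not of type $\D_{2n}$, this produces an abstract Gelfand module, which I would then identify with $\cM_\QQ$ (respectively $\cN_\QQ$) by constructing an explicit bijection between $\cG(W)$ and the disjoint union $\bigsqcup_{(J,z_{\min},\sigma)\in \sP(W)} W^J$ of minimum-length coset representatives for $W/C_{W_J}(z_{\min})$. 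The bijection is the natural one in which $z \in \cG(W)$, viewed as a fixed-point-free involution on the ``doubled'' ground set of $2n$ points, restricts on a distinguished subset of $2k$ points to a conjugate of the standard involution $s_1 s_3 \cdots s_{2k-1}$ and, on the complement, encodes a coset of $S_{n-2k}$; the condition that $z$ has no visible descents greater than $n$ is precisely what singles out minimum-length representatives. Matching the $\H$-action case by case under this bijection, one sees that the weak descents and ascents of $z$ correspond to the generators of $C_{W_J}(z_{\min})$ acting by $+1$ or $-1$ via $\sigma$, while the strict descents and ascents correspond to shifts between distinct cosets.

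The principal obstacles are therefore (i) the braid-relation verification in part one and (ii) the explicit combinatorial identification of $\cG(W)$ with the induced-module indexing set in part two. Both are case-intensive but essentially mechanical once the doubled-generator combinatorics is set up. The swap $x \leftrightarrow -x^{-1}$ between the formulas defining $\cM$ and $\cN$ then corresponds to tensoring $\sigma$ with the sign character of $W_J$, giving a formal explanation of the $\cM_\QQ\leftrightarrow\cN_\QQ$ duality alluded to in the abstract.
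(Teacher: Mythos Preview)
Your overall plan is viable, but the paper takes a different and more economical route for the first part. Rather than verifying the braid relations directly on $\cM$ and $\cN$, the paper first proves the general Theorem~\ref{m-thm}, which constructs the $\H$-module $\cM^\sigma$ on $\cK^J = W^J \times \sI_J$ for an arbitrary pair $(J,\sigma)$; the braid check there is done once at the quasiparabolic level by reducing to rank two and adapting Rains--Vazirani \cite[Thm.~7.1]{RV}. The paper then builds the explicit bijection $\varphi_W : \cG|_\TT \to \cK^\TT$ (Lemmas~\ref{vartheta-lem} and \ref{varphi-lem}) and shows that it identifies the four descent/ascent sets $\Des^=$, $\Asc^=$, $\Des^<$, $\Asc^<$ on both sides. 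The $\H$-module structure on $\cM$ is then simply the one \emph{transferred} from $\cM^\sP$ via $\varphi_W$, with no separate braid verification needed. Since this same bijection is exactly what you need for Part~2, the paper's approach kills both birds with one stone: once $\varphi_W$ is known to preserve ascents and descents, both the module structure and the Gelfand property follow immediately from the general machinery. Your direct braid check would of course work, but it duplicates effort already absorbed into Theorem~\ref{m-thm}, and the rank-two case analysis would be messier when carried out for the concrete involutions in $\cG$ rather than abstractly on $\cK^J$.

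A small imprecision in your Part~2: the indexing set for the induced module attached to $\TT=(J,z_{\min},\sigma)$ is not $W^J$ alone but $\cK^\TT = W^J \times \{v z_{\min} v^{-1} : v \in W_J\}$, which is what parametrizes $W/C_{W_J}(z_{\min})$ via $(u,v z_{\min} v^{-1}) \mapsto uv\,C_{W_J}(z_{\min})$. Your description of the bijection is morally correct but should track both coordinates.
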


The module $\cM_\QQ(S_n)$ is essentially the Gelfand model for $\H_\QQ(S_n)$
that Adin, Postnikov, and Roichman study in \cite{APR2007}, 
while 
$\cN(W)$ for each $W \in\{S_n,\W_n,\WD_n\}$ is a deformation\footnote{
In the notation of \cite[\S1.2]{APR2007}, one has $q=x^2$ and $T_i = -x H_{s_i}$.
On fixing $z \in \cG^\BC_{n} \supset \cG^\A_{n-1}$, define $w \in \W_n$ to have
 $w(i) = z(i)$ if $|z(i)| \leq n$,  $w(i) = i$ if $z(i) > n$, and $w(i) = -i$ if $z(i) < n$.
Then an isomorphism from $\cM_\QQ(S_n)$ to the module $V_n$ in \cite[\S1.2]{APR2007}
is  given by the linear map with $M_z \mapsto C_w$.
Similarly, if one sets $x=1$, then
 the linear map sending $N_z \mapsto e_w$ 
  is an isomorphism from $\cN(W)$ to the representation in \cite[\S2]{AraujoBratten}
  for each $W \in\{S_n,\W_n,\WD_n\}$.
} of the $W$-representation 
described by Araujo and Bratten in \cite{AraujoBratten}.
Our constructions for types $\BC_n$ and $\D_{2n+1}$ 
partly resolve an open problem mentioned in \cite[\S7]{APR2010}.


%
Theorem~\ref{main-thm2} implies that $\cM(S_n)$ and $\cN(S_n)$ are abstractly isomorphic 
(after extending scalars)
to the type $\A_{n-1}$ version of the Hecke algebra module studied by Lusztig and Vogan in \cite{LV2,LV};
see \cite[\S5.3]{LV}. It is not clear how to write down these isomorphisms in any concrete way, however.
When $W$ is of type $\BC_n$ and $\D_{n}$, the Iwahori-Hecke algebra modules in \cite{LV2,LV} 
have the same dimensions as $\cM$ and $\cN$ but 
are no longer multiplicity-free.


\subsection{Gelfand $W$-graphs}\label{intro-sect3}

Let $p \mapsto \overline p$ denote the ring automorphism of $\LL$ sending $x \mapsto x^{-1}$.
A map $\phi : \cA \to \cB$ between $\LL$-modules is \emph{antilinear}
if $\phi(pa) = \overline p \cdot \phi(a)$ for all $a \in \cA$. 
For the algebra $\H$, 
there is a unique antilinear map $\H \to \H$, written $H \mapsto \overline H$ and called the \emph{bar operator},
such that $\overline{H_w} = (H_{w^{-1}})^{-1}$ for all $w \in W$.
This map is a ring involution.

One way to define the well-known \emph{Kazhdan-Lusztig basis} of $\H$ \cite{KL} is as the set of unique elements $\underline H_w$ for $w \in W$ satisfying
$\overline{\underline H_w} = \underline H_w \in H_w + \sum_{ \ell(y) < \ell(w)} x^{-1} \ZZ[x^{-1}] H_y.$
Our third main theorem constructs analogous ``canonical bases'' for the $\H$-modules $\cM$ and $\cN$
from Theorem~\ref{main-thm2}.  
This result is formally reminiscent of \cite[Thms. 0.2 and 0.4]{LV2}.

Define an \emph{$\H$-compatible bar operator} on a left $\H$-module $\cA$
to be an antilinear map $A \mapsto\overline A$ with $\overline{HA} = \overline H \cdot \overline A$ 
for all $ H \in \H$ and $A \in \cA$.
For the rest of this section,
assume $W$ is a classical Weyl group and define $\cG$, $\cM$, and $\cN$ as in Theorem~\ref{main-thm2}.

\begin{theorem}\label{main-thm3}
There are unique $\H$-compatible bar operators on the modules $\cM$ and $\cN$
from Theorem~\ref{main-thm2}
satisfying
$\overline{M_z} = M_z$ and $\overline{N_z} = N_z$ for all $z \in \cG$ with 
$\Des^{<}(z) = \varnothing$.
Each of these bar operators is an involution.
Additionally, 
  $\cM$ has a unique basis $\{ \underline M_z : z \in \cG\}$ with
\[\overline{\underline M_z} = \underline M_z \in M_z + \sum_{\substack{y \in \cG \\ \ell(y) < \ell(z)}} x^{-1} \ZZ[x^{-1}] M_y.\]
The module $\cN$ likewise has a unique basis 
$\{ \underline N_z : z \in \cG\}$ with
\[\overline{\underline N_z} = \underline N_z \in N_z + \sum_{\substack{y \in \cG \\ \ell(y) < \ell(z)}} x^{-1} \ZZ[x^{-1}] N_y.\]
\end{theorem}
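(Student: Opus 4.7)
The proof will follow the standard strategy of Kazhdan and Lusztig \cite{KL} (cf.\ \cite{LV2,LV}), proceeding in three stages: constructing the bar operator, verifying it is an involution, and then building the canonical basis by an inductive correction argument.

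For existence and uniqueness of the bar operator on $\cM$, I would induct on $\ell(z)$. In the base case $\Des^<(z)=\varnothing$ one is forced to set $\overline{M_z} := M_z$. For the inductive step, pick $s \in \Des^<(z)$; then $szs$ lies in $\cG$ with strictly smaller length, and the formula from Theorem~\ref{main-thm2} gives $H_s M_{szs} = M_z + (x-x^{-1})M_{szs}$. Since the bar operator on $\H$ satisfies $\overline{H_s} = H_s^{-1} = H_s - (x-x^{-1})$ and $\overline{(x-x^{-1})}=-(x-x^{-1})$, applying bar to the rearranged identity $M_z = H_s M_{szs} - (x-x^{-1})M_{szs}$ forces the recursive definition
\[
\overline{M_z} \;:=\; H_s\, \overline{M_{szs}},
\]
where the two correction terms exactly cancel. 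The involution property $\overline{\overline{M_z}} = M_z$ then follows by induction, since $\overline{H_s}\cdot M_{szs} = (H_s - (x-x^{-1}))M_{szs} = M_z$. The construction for $\cN$ is verbatim, because only the strict descent/ascent formulas of Theorem~\ref{main-thm2} enter the recursion, and those coincide in $\cM$ and $\cN$.

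The main obstacle is proving that this recursion is independent of the choice of $s \in \Des^<(z)$, which is equivalent to the full $\H$-compatibility $\overline{HA} = \overline H \cdot \overline A$. I would reduce this to a finite family of local checks for pairs $s,t \in \Des^<(z)$, in the pattern of \cite{LV2}: such a pair generates a dihedral subgroup $\langle s,t\rangle$, and the two recursions for $\overline{M_z}$ (via $s$ versus via $t$) must agree modulo the braid relations of $\H$. Carrying this out depends on the explicit action in Theorem~\ref{main-thm2} together with the combinatorial structure of $\cG$ governed by visible descents (Definition~\ref{fg-def}), which controls when intermediate elements such as $szs$, $tzt$, and $stzst$ remain in $\cG$ and how their weak/strict descent sets interact. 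These dihedral compatibilities are the core technical content, and parallel the analogous verifications for quasiparabolic cosets in the Rains-Vazirani framework underlying Theorem~\ref{main-thm1}.

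Once the bar operator is established, the canonical basis $\{\underline M_z\}$ is produced by the familiar Kazhdan-Lusztig correction procedure: by induction on $\ell(z)$, expand $\overline{M_z} = \sum_{y\in\cG} a_{y,z} M_y$, and then define
\[
\underline M_z \;:=\; M_z \;+\; \sum_{\substack{y \in \cG \\ \ell(y) < \ell(z)}} c_{y,z}\, \underline M_y
\]
with $c_{y,z} \in x^{-1}\ZZ[x^{-1}]$ chosen recursively so as to cancel the nonnegative-degree portion of $\overline{M_z} - M_z$. Bar-invariance of each previously-constructed $\underline M_y$ (inductive hypothesis) together with the triangular correction scheme yields both existence and uniqueness of $\underline M_z$ by the standard argument. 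The construction of $\{\underline N_z\}$ is identical.
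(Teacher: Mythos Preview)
Your overall strategy is sound and is genuinely different from the paper's proof, but there is a computational slip and the comparison is worth spelling out.

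First, the slip: when $s\in\Des^<(z)$ you have $s\in\Asc^<(szs)$, so Theorem~\ref{main-thm2} gives $H_s M_{szs}=M_z$ directly, not $M_z+(x-x^{-1})M_{szs}$. The forced recursion is therefore $\overline{M_z}=\overline{H_s}\,\overline{M_{szs}}$, not $H_s\,\overline{M_{szs}}$. The involution check then reads $\overline{\overline{M_z}}=\overline{\overline{H_s}}\,\overline{\overline{M_{szs}}}=H_s M_{szs}=M_z$. This is harmless for the architecture of your argument, but worth correcting.

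Now the comparison. The paper does not carry out any direct dihedral verification on $\cG$. Instead it first proves a general result (Theorem~\ref{m-thm}) constructing bar operators and canonical bases on the abstract modules $\cM^\sigma$ indexed by pairs $(J,\sigma)$, using the quasiparabolic machinery of Rains--Vazirani and the bar operator from \cite{Marberg}. It then builds an explicit descent-preserving bijection $\varphi_W:\cG\to\bigsqcup_{\TT\in\sP}\cK^\TT$ (Lemmas~\ref{vartheta-lem} and \ref{varphi-lem}) and simply transports everything through the resulting $\H$-module isomorphisms $f_W,g_W$. The length compatibility in \eqref{vzw-eq} ensures that the triangularity condition in Theorem~\ref{m-thm}(c) (with respect to $\prec$) implies triangularity with respect to $\ell$ on $\cG$, and uniqueness follows from Lemma~\ref{webster-lem}.

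Your direct approach would in effect reprove the $J=S$ case of Theorem~\ref{m-thm} inside the concrete model $\cG$. That is perfectly viable, but the well-definedness step you flag as ``the main obstacle'' is exactly where the quasiparabolic structure is doing real work: one needs a Matsumoto-type statement for sequences of strict descents acting on $z$, not merely pairwise dihedral checks, and establishing that on $\cG$ from scratch amounts to re-deriving the Rains--Vazirani axioms in this setting. The paper's transport argument sidesteps all of this by invoking the general theory once and then doing only combinatorics on $\cG$.
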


The structure constants for the multiplication map $\H \times \cM \to \cM$ and $\H \times \cN \to \cN$
in the bases $\{ \underline M_z\}$ and $\{\underline N_z\}$
may be encoded using Kazhdan and Lusztig's notion of a \emph{$W$-graph} from \cite{KL}.
We review the definition below, following the conventions of \cite{Stembridge}.

\begin{definition}
An \emph{$S$-labeled graph}
  $\Gamma= (V,\omega,I)$ is a set $V$ with maps
 \[\omega : V\times V \to \LL
\quand I : V \to \{\text{ subsets of $S$ }\}.\]
%
We often think of this structure as a weighted directed graph on $V$
with edges $u\xrightarrow{\omega(u,v)} v$ for each $u,v \in V$ with $\omega(u,v) \neq 0$.
  An $S$-labeled graph $\Gamma=(V,\omega,I)$ 
  is a \emph{$W$-graph}
if
   the free $\LL$-module $\cY(\Gamma)$ with basis $\{ Y_v : v \in V \}$ has a left $\H$-module structure 
   in which
  \be\label{hy-eq}
    H_s Y_u = \begin{cases} x Y_u &\text{if }s \notin I(u) \\ 
 -x^{-1} Y_u +  \ds\sum_{\substack{v \in V\\ s\notin I(v)}} \omega(u,v) Y_v &\text{if }s \in I(u)
  \end{cases}
\quad\text{for all $s \in S$ and $ u \in V$.}\ee
\end{definition}
%

Define ${\underline M_z}$ and ${\underline N_z}$ as in Theorem~\ref{main-thm3} and 
let $\m_{yz}, \n_{yz} \in \ZZ[x^{-1}]$ for $y,z \in \cG$
be such that
\[\underline M_z = \sum_{y \in \cG} \m_{yz} M_y
\quand
\underline N_z = \sum_{y \in \cG} \n_{yz} N_y.\]
Write
$ \mu^\m_{yz}$
and
$\mu^\n_{yz} $
for the coefficients of $x^{-1}$ in $\m_{yz}$ and $\n_{yz}$.
For $z \in \cG$, define
\[\Asc^\m(z) := \Asc^<(z) \sqcup \Asc^=(z)\quand \Asc^\n(z) := \Asc^<(z) \sqcup \Des^=(z).\] 
Let $\omega^\m : \cG \times \cG \to \ZZ$  and $\omega^\n : \cG \times \cG \to \ZZ$ be the  maps with 
\[
\ba
\omega^\m(y,z) &=\begin{cases} \mu^\m_{yz} + \mu^\m_{zy}
&\text{if } \Asc^\m(y)\not\subset \Asc^\m(z), \\
0&\text{otherwise},
\end{cases}
\\
\omega^\n(y,z)& =\begin{cases} \mu^\n_{yz} + \mu^\n_{zy}
&\text{if } \Asc^\n(y) \not\subset \Asc^\n(z), \\
0&\text{otherwise}.
\end{cases}
\ea
\]
Finally define 
$\Gamma^\m=\Gamma^\m(W) := (\cG,  \omega^\m, \Asc^\m)
$
and
$
\Gamma^\n=\Gamma^\n(W) := (\cG,  \omega^\n, \Asc^\n).
$

\begin{theorem}\label{main-thm4}
Let $W \in \{S_n, \W_n,\WD_n\}$. Then
 $\Gamma^\m$ and $\Gamma^\n$ are
(quasi-admissible) $W$-graphs. The linear maps $Y_z \mapsto \underline M_z$ and $Y_z \mapsto \underline N_z$ 
are isomorphisms $\cY(\Gamma^\m) \cong \cM$ and $\cY(\Gamma^\n)\cong \cN$.
\end{theorem}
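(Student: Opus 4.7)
The plan is to read the $W$-graph data directly off the canonical basis, adapting the original Kazhdan-Lusztig construction \cite{KL} for the regular representation of $\H$. By Theorem~\ref{main-thm3} we have a bar-invariant basis $\{\underline M_z\}$ of $\cM$, so the task reduces to verifying that each generator $H_s$ acts on this basis according to \eqref{hy-eq} with vertex-labels $I=\Asc^\m$. Once this is done, the linear bijection $Y_z\mapsto\underline M_z$ is tautologically an $\H$-isomorphism $\cY(\Gamma^\m)\cong\cM$, and the proof for $\cN$ and $\Gamma^\n$ is entirely parallel after exchanging the roles of $\Asc^=$ and $\Des^=$.

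The key input is that $\underline H_s:=H_s+x^{-1}$ and $H_s-x$ are both bar-invariant in $\H$ (using $\overline{H_s}=H_s^{-1}=H_s+x^{-1}-x$); by $\H$-compatibility of the bar operator from Theorem~\ref{main-thm3}, the elements $\underline H_s\,\underline M_z$ and $(H_s-x)\underline M_z$ are bar-invariant in $\cM$ for every $z\in\cG$. Expanding $\underline M_z=\sum_{y\leq z}\m_{yz}M_y$ with $\m_{zz}=1$ and $\m_{yz}\in x^{-1}\ZZ[x^{-1}]$ for $y<z$, and applying the four-case rule of Theorem~\ref{main-thm2} to each $H_sM_y$, yields an explicit $M$-basis expansion. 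The central claim to establish is: if $s\in\Des^<(z)\sqcup\Des^=(z)$, then $H_s\underline M_z=x\underline M_z$; while if $s\in\Asc^<(z)\sqcup\Asc^=(z)$, then
\[
H_s\underline M_z = -x^{-1}\underline M_z + \sum_{y:\;s\in\Des^<(y)\sqcup\Des^=(y)} c_{zy}\,\underline M_y
\]
for some integers $c_{zy}\in\ZZ$ to be identified.

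Both parts are proved by induction on $\ell(z)$. In the descent case, the $M_z$-component of $(H_s-x)\underline M_z$ vanishes by direct inspection of the four-case rule, so this bar-invariant element lies in $\bigoplus_{y<z}\LL\cdot\underline M_y$; the inductive hypothesis together with the leading-term constraint $\m_{yz}\in x^{-1}\ZZ[x^{-1}]$ characterizing $\underline M_y$ then forces it to vanish. In the ascent case, $\underline H_s\,\underline M_z$ is bar-invariant and, after subtracting its $\underline M_z$-component, is supported on those $\underline M_y$ with $s\in\Des^<(y)\sqcup\Des^=(y)$ by a parallel leading-term analysis. Comparing coefficients of $x^{-1}M_y$ on both sides identifies $c_{zy}=\mu^\m_{yz}+\mu^\m_{zy}$; and the hypothesis $s\in\Asc^\m(z)$ combined with $s\notin\Asc^\m(y)$ automatically yields $\Asc^\m(z)\not\subset\Asc^\m(y)$, so $c_{zy}=\omega^\m(z,y)$ in the notation of Theorem~\ref{main-thm4}. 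Thus \eqref{hy-eq} holds verbatim, and $\cY(\Gamma^\m)\cong\cM$ follows.

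The principal technical obstacle is the \emph{weak} case $s\in\Asc^=(z)\cup\Des^=(z)$, which has no analogue in the classical KL setting for $\H$; one must verify that the leading-term cancellation above genuinely restricts the support of the ascent sum to those $y$ with $s\in\Des^<(y)\sqcup\Des^=(y)$, rather than some larger set that would destroy the $W$-graph form. This is handled by a direct case analysis of how $H_s$ interacts with each of the four vertex types, relying on the explicit description of $\cG^\A$, $\cG^\BC$, $\cG^\D$ and the visible-descent formalism from Section~\ref{descent-sect}. Quasi-admissibility of $\Gamma^\m$ and $\Gamma^\n$ then reduces to integrality of $\omega^\m,\omega^\n$ and the standard symmetry axioms, both of which follow from the integrality of the $\mu^\m,\mu^\n$-coefficients guaranteed by Theorem~\ref{main-thm3} and the manifestly symmetric form of $\omega^\m(y,z)$ and $\omega^\n(y,z)$.
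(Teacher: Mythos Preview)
Your approach is in the right spirit---a direct Kazhdan--Lusztig style argument on $\cM$---but it diverges from the paper and has a genuine gap in the descent case. The paper does not redo the KL computation here: it has already established in the abstract setting of $\cK^J$ (Lemma~\ref{cb-lem}, Theorem~\ref{mw-thm}) that $\Upsilon^{\m|\sigma}$ is a quasi-admissible $W$-graph with $\cY(\Upsilon^{\m|\sigma})\cong\cM^\sigma$. The proof of Theorem~\ref{main-thm4} then consists only of observing that the bijection $\varphi_W:\cG\to\bigsqcup_{\TT\in\sP}\cK^\TT$ from Lemma~\ref{vartheta-lem} preserves all ascent/descent data, so $\Gamma^\m$ and $\Gamma^\n$ are simply the pullbacks of $\Upsilon^{\m}(\sP)$ and $\Upsilon^{\n}(\sP)$ along $\varphi_W$. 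All the analytic work is done once, in the general framework, rather than re-derived in each classical type.

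The specific gap in your sketch: your claim that ``the $M_z$-component of $(H_s-x)\underline M_z$ vanishes by direct inspection of the four-case rule'' is only correct when $s\in\Des^=(z)$. If $s\in\Des^<(z)$, then $(H_s-x)M_z=M_{szs}-x^{-1}M_z$, and the lower term $\m_{szs,z}M_{szs}$ in $\underline M_z$ contributes $\m_{szs,z}$ back to the $M_z$-coefficient via $H_sM_{szs}=M_z$; the resulting coefficient $-x^{-1}+\m_{szs,z}$ is not obviously zero. The paper's Lemma~\ref{cb-lem} handles $s\in\Des^<(z)$ by a different route: it uses the ascent formula (part (b)) applied to $s\tau$ to write $\underline M^\sigma_\tau=\underline H_s\underline M^\sigma_{s\tau}-\sum\mu^{\m|\sigma}_{\upsilon,s\tau}\underline M^\sigma_\upsilon$, and then applies $\underline H_s^2=(x+x^{-1})\underline H_s$ together with the inductive descent case. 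So the two cases must be proved in tandem, not independently. You also omit the bipartiteness check required for quasi-admissibility; in the paper this comes from Corollary~\ref{e-cor}, which shows $\mu^{\m|\sigma}_{\upsilon\tau}=0$ unless $\h(\tau)-\h(\upsilon)$ is odd.
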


For an explanation of the term \emph{quasi-admissible}, see Definition~\ref{quasi-admissible-def}.
The integer edge weights of these $W$-graphs are not always positive: the first negative edge weights in each type occur when $W\in\{S_8,W^\BC_4,W^\D_7\}$.
To actually compute 
$\underline M_z$ and $\underline N_z$, one can use \eqref{hy-eq}, which becomes an inductive formula after substituting 
$Y_z \mapsto \underline M_z$ or $Y_z \mapsto \underline N_z$.
For examples of these $W$-graphs, see Figures~\ref{abcd3-fig},
\ref{a4-fig}, \ref{bc4-fig}, \ref{d4-fig}, and \ref{table-fig}.

\begin{figure}
\begin{center}
$\Gamma^\m(S_4)=\boxed{\raisebox{-0.5\height}{\includegraphics[scale=0.34]{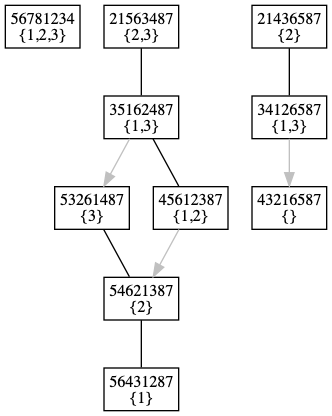}}}
\qquad
\boxed{\raisebox{-0.5\height}{\includegraphics[scale=0.34]{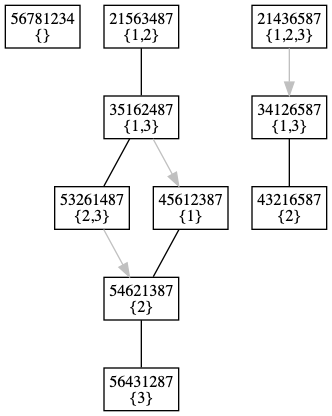}}}=\Gamma^\n(S_4)$
\\ \ \\ \ \\
$\Gamma^\m(\W_3)=\boxed{\raisebox{-0.5\height}{\includegraphics[scale=0.34]{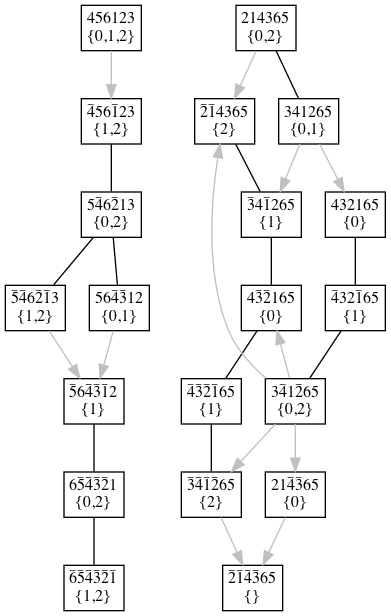}}}
\qquad
\boxed{\raisebox{-0.5\height}{\includegraphics[scale=0.34]{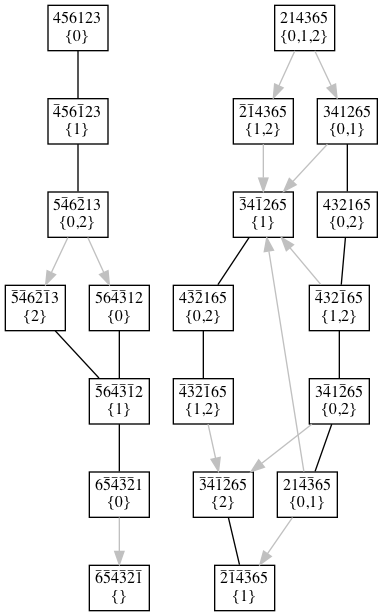}}}=\Gamma^\n(\W_3)$
\\ \ \\ \ \\
$\Gamma^\m(\WD_3)=\boxed{\raisebox{-0.5\height}{\includegraphics[scale=0.34]{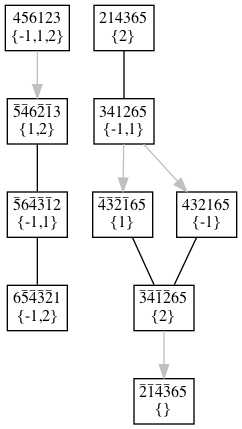}}}
\qquad
\boxed{\raisebox{-0.5\height}{\includegraphics[scale=0.34]{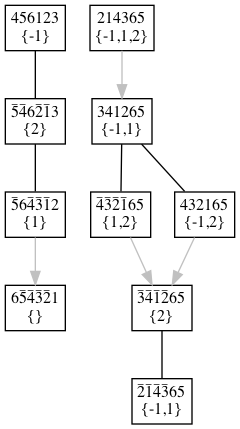}}}=\Gamma^\n(\WD_3)$
\end{center}
\caption{
The left boxes show $\Gamma^\m(W)$ and the right boxes show $\Gamma^\n(W)$ for $W \in \{S_4, \W_3, \WD_3\}$.
The pairs in types $\BC_3$ and $\D_3$ consist of dual $W$-graphs.
Here and in later figures, unlabeled edges $u \to v$ have weight $\omega(u,v)=1$,
 the sets below each permutation $z$ are $\Asc^\m(z)$ or $\Asc^\n(z)$ as appropriate,
 and undirected edges $u\ \dash\ v$ stand for pairs of directed edges $u \to v$ and $v \to u$.
}
\label{abcd3-fig}
\end{figure}

\begin{figure}
\begin{center}
$\ba \Gamma^\m(S_5)&=\boxed{\raisebox{-0.5\height}{\includegraphics[scale=0.43]{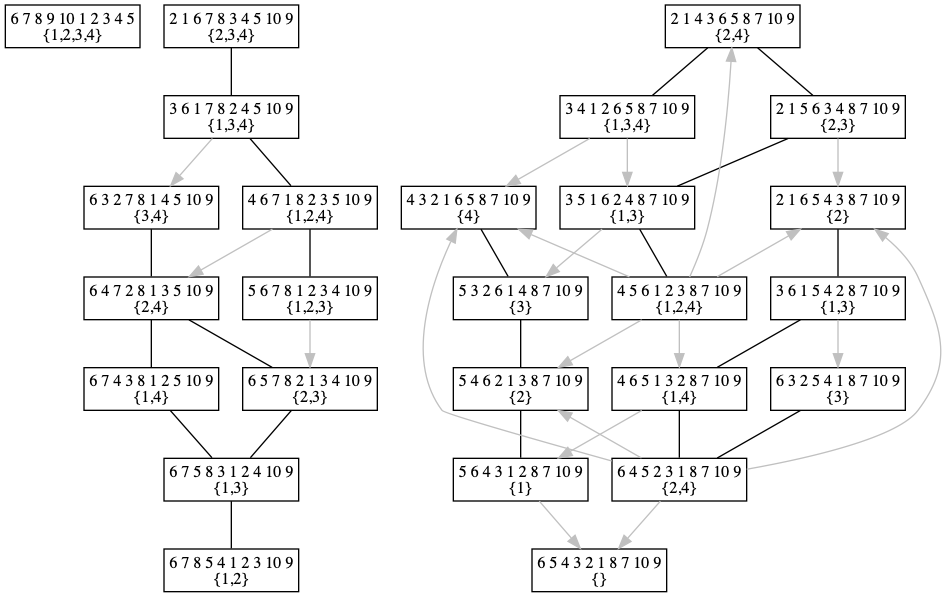}}}
\\ \ \\
\Gamma^\n(S_5)&=\boxed{\raisebox{-0.5\height}{\includegraphics[scale=0.43]{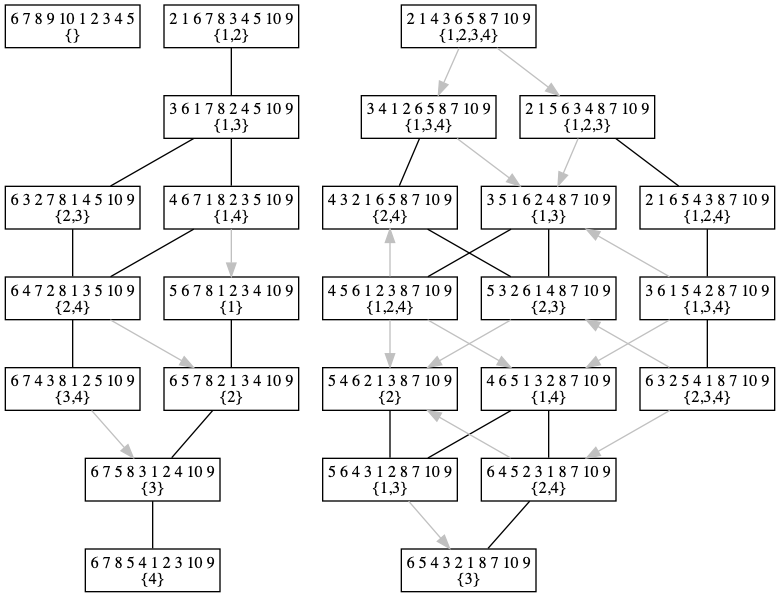}}} \ea$
\end{center}
\caption{The top box shows $\Gamma^\m(S_5)$ and the bottom box shows $\Gamma^\n(S_5)$.
These $W$-graphs are not dual or isomorphic.
}
\label{a4-fig}
\end{figure}

\begin{figure}
\begin{center}
$\boxed{\raisebox{-0.5\height}{\includegraphics[scale=0.33]{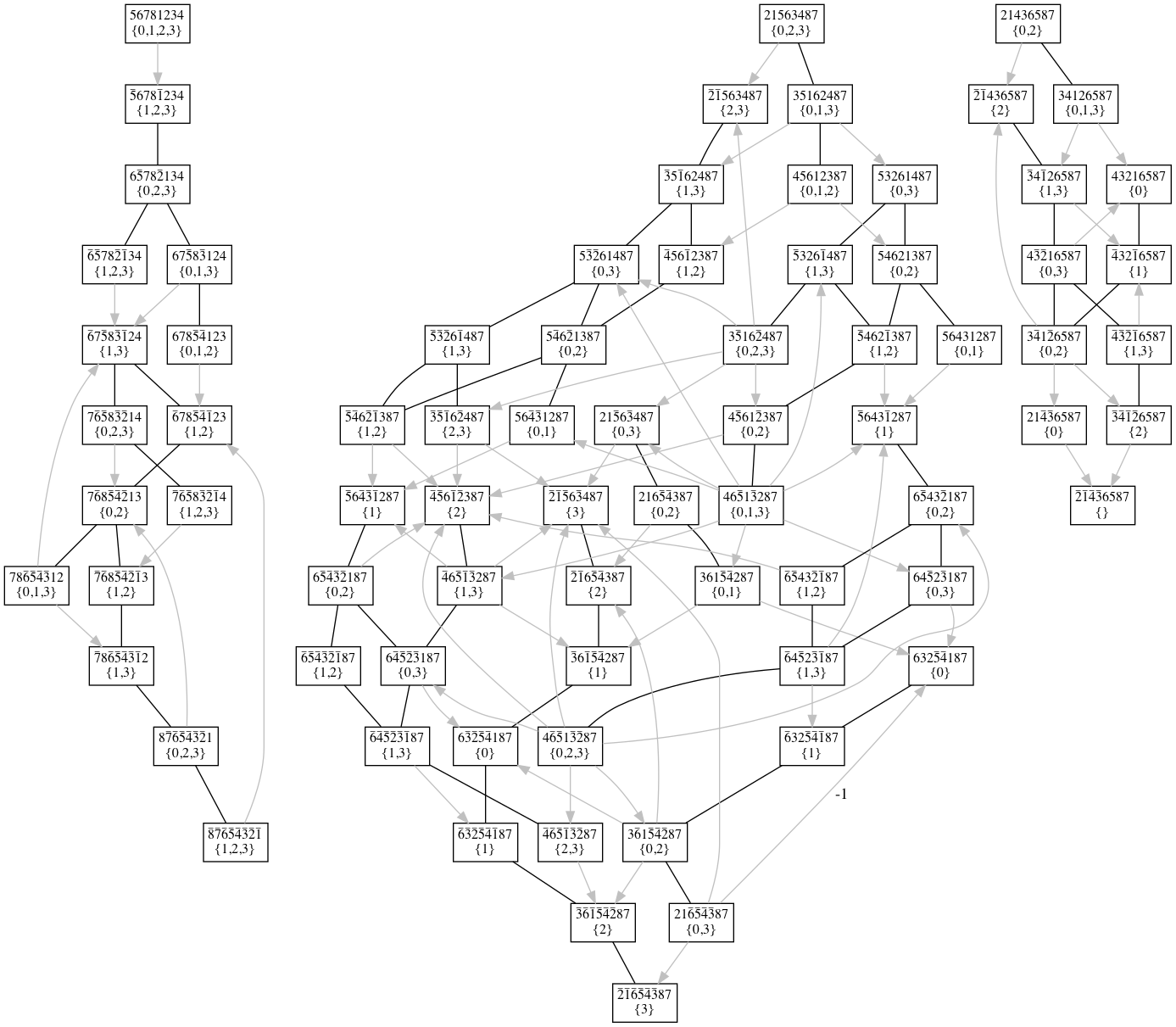}}}$
\end{center}
\caption{The box shows $\Gamma^\m(\W_4)$, which has a single edge $u \to v$ with weight
$\omega^\m(u,v) = -1$. All other edges $u\to v$ have weight $\omega^\m(u,v)=1$.
One can compute $\Gamma^\n(\W_4)$ from this picture using Theorem~\ref{bc-dual-thm}. 
}
\label{bc4-fig}
\end{figure}

\begin{figure}
\begin{center}
$\boxed{\raisebox{-0.5\height}{\includegraphics[scale=0.55]{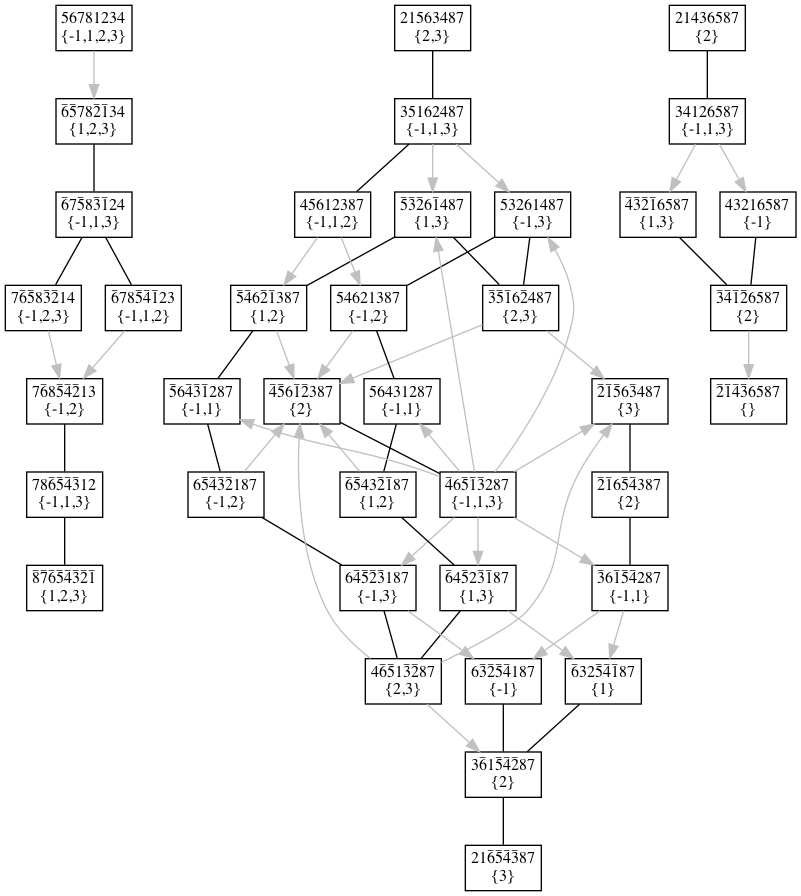}}}$
\end{center}
\caption{The box shows $\Gamma^\m(\WD_4)$, in which all edge weights are $\omega^\m(u,v)=1$.
One can compute $\Gamma^\n(\WD_4)$ from this picture using Theorem~\ref{d-dual-thm}.
}
\label{d4-fig}
\end{figure}

\begin{figure}
\begin{center}
\begin{tabular}{|l|llllll|}
\hline
$W$-graph & \#Vertices & \#Edges & Edge Weights & \#WCC & \#Cells & \#Molecules  \\
\hline
$\Gamma^\m(S_2)$ &  2 & 0 & $\varnothing$& 2 & 2 & 2  \\
$\Gamma^\m(S_3)$ &  4 & 3 & $\{1\}$& 2 & 3 & 3   \\
$\Gamma^\m(S_4)$ &  10 & 13  & $\{1\}$ & 3 & 5 & 5   \\
$\Gamma^\m(S_5)$ &  26 & 59 & $\{1\}$ & 3 & 7 & 7  \\
$\Gamma^\m(S_6)$ &  76 & 238 & $\{1\}$ & 4 & 11 & 11  \\
$\Gamma^\m(S_7)$ &  232 & 998 & $\{1\}$ & 4 & 15 & 15   \\
$\Gamma^\m(S_8)$ &  764 & 4230 & $\{1\}$ & 5 & 22 & 22   \\
$\Gamma^\m(S_9)$ &  2620 & 18467 & $\{-1, 1\}$ & 5 & 30 & 30  \\
$\Gamma^\m(S_{10})$ & 9496  & 83869   & $\{-1,1, 2, 3\}$ & 6  &  42 & 42  \\
&&&&&& \\
$\Gamma^\n(S_2)$ &  2 & 0 & $\varnothing$ & 2 & 2 & 2  \\
$\Gamma^\n(S_3)$ &  4 & 3  & $\{1\}$& 2 & 3 & 3  \\
$\Gamma^\n(S_4)$ &  10 & 13   & $\{1\}$ & 3 & 5 & 5    \\
$\Gamma^\n(S_5)$ &  26 & 57  & $\{1\}$& 3 & 7 & 7  \\
$\Gamma^\n(S_6)$ &  76 & 227  & $\{1\}$& 4 & 11 & 11 \\
$\Gamma^\n(S_7)$ &  232 & 931  & $\{1\}$& 4 & 15 & 15   \\
$\Gamma^\n(S_8)$ &  764 & 3863 & $\{1\}$ & 5 & 22 & 22   \\
$\Gamma^\n(S_9)$ &  2620 & 16437  & $\{1\}$& 5 & 30 & 30  \\
$\Gamma^\n(S_{10})$ & 9496  & 72182   & $\{1\}$ & 6  &  42 & 42 \\
&&&&&& \\
$\Gamma^\m(\W_2)$ &  6 & 6 & $\{1\}$ & 2 & 4 & 4  \\
$\Gamma^\m(\W_3)$ &  20 & 36  & $\{1\}$& 2 & 8 & 8  \\
$\Gamma^\m(\W_4)$ &  76 & 206   & $\{-1,1\}$ & 3 & 15 & 15    \\
$\Gamma^\m(\W_5)$ &  312 & 1217  & $\{-1,1\}$& 3 & 26 & 26  \\
$\Gamma^\m(\W_6)$ &  1384 & 7335  & $\{-1,1,2\}$& 4 & 44 & 46 \\
$\Gamma^\m(\W_7)$ &  6512 & 46066  & $\{-1,1,2,3\}$& 4 & 72 & 76   \\
&&&&&& \\
$\Gamma^\m(\WD_2)$ &  3 & 1 & $\{1\}$ & 2 & 3 & 3  \\
$\Gamma^\m(\WD_3)$ &  10 & 14  & $\{1\}$& 2 & 5 & 5  \\
$\Gamma^\m(\WD_4)$ &  38 & 87   & $\{1\}$ & 3 & 10 & 11    \\
$\Gamma^\m(\WD_5)$ &  156 & 534  & $\{1\}$& 3 & 16 & 18  \\
$\Gamma^\m(\WD_6)$ &  692 & 3262  & $\{1\}$& 4 & 29 & 36 \\
$\Gamma^\m(\WD_7)$ &  3256 & 20640  & $\{-1,1,2,3\}$& 4 & 45 & 59   \\
$\Gamma^\n(\WD_8)$ &  16200 & 137576  & $\{-1,1,2,3,4\}$& 5 & 75 & 109   \\
\hline
\end{tabular}
\end{center}
\caption{This table lists some data related to the $W$-graphs $\Gamma^\m(W)$ and $\Gamma^\n(W)$.
The sets in the fourth column are $\{ \omega(u,v) : u,v \in V\} \setminus\{0\}$ for each  graph
$\Gamma = (V,\omega, I)$.
The fifth column counts the weakly connected components in each directed graph.
The last column lists the number of molecules in each $W$-graph,
where a \emph{molecule} is a connected component for the subgraph in which we retain only
the edges $u \to v$ with a complementary edge $v \to u$.
We have omitted $\Gamma^\n(W)$ in types $\BC$ and $\D$
since all of the listed data is the same as for $\Gamma^\m(W)$ by Theorems~\ref{bcbc-dual-thm} and \ref{dd-dual-thm}. 
The vertex counts in column two are \cite[A000085, A000898, and A000902]{OEIS}.
The cell and molecule counts for $W=S_n$ appear to be the number of partitions of $n$ \cite[A000041]{OEIS}.
More exotically (and perhaps coincidentally), the molecule counts for $W=\WD_n$ match \cite[A162891]{OEIS} for $n\leq 8$.
}
\label{table-fig}
\end{figure}

When $W$ is finite with splitting field $\KK\subset \CC$,
we say that a $W$-graph $\Gamma$
is a \emph{Gelfand $W$-graph} 
if
$\cY(\Gamma)_\KK := \KK(x) \otimes_{\ZZ[x,x^{-1}]} \cY(\Gamma)$ is a Gelfand model for $\H_\KK(W)$.
\begin{corollary} If $W \in \{S_n ,\W_n, \WD_{2n+1}\}$ then $\Gamma^\m$ and $\Gamma^\n$ are Gelfand $W$-graphs.
\end{corollary}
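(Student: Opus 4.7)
The plan is to deduce this directly from Theorems~\ref{main-thm2} and \ref{main-thm4}, since the corollary is stated as a formal consequence of the machinery built up earlier. First, I would recall that by Theorem~\ref{main-thm4}, the assignments $Y_z \mapsto \underline M_z$ and $Y_z \mapsto \underline N_z$ are $\H$-module isomorphisms $\cY(\Gamma^\m) \cong \cM$ and $\cY(\Gamma^\n) \cong \cN$ respectively, for every classical Weyl group $W \in \{S_n,\W_n,\WD_n\}$.

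Next, I would extend scalars. Tensoring with $\QQ(x)$ over $\ZZ[x,x^{-1}]$ is exact and commutes with the left $\H$-action, so the isomorphisms above induce $\H_\QQ$-module isomorphisms
\[
\cY(\Gamma^\m)_\QQ \cong \cM_\QQ \quand \cY(\Gamma^\n)_\QQ \cong \cN_\QQ.
\]
By the second half of Theorem~\ref{main-thm2}, when $W \in \{S_n, \W_n, \WD_{2n+1}\}$ (the classical Weyl groups outside of type $\D_{2n}$), both $\cM_\QQ$ and $\cN_\QQ$ are Gelfand models for $\H_\QQ$.

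To finish, I would invoke the fact cited in Section~\ref{intro-sect2} that each finite Weyl group has splitting field $\KK=\QQ$ (see \cite[Thm. 6.3.8]{GeckPfeiffer}), so the definition of a Gelfand $W$-graph applies with $\KK=\QQ$. Combining this with the previous step, $\cY(\Gamma^\m)_\QQ$ and $\cY(\Gamma^\n)_\QQ$ are Gelfand models for $\H_\QQ(W)$, which by definition means $\Gamma^\m$ and $\Gamma^\n$ are Gelfand $W$-graphs.

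There is no real obstacle here: the corollary is essentially a repackaging of Theorems~\ref{main-thm2} and \ref{main-thm4} once one observes that the isomorphism $\cY(\Gamma^\m) \cong \cM$ (and similarly for $\cN$) survives base change to $\QQ(x)$. The substantive content — that $\cM_\QQ$ and $\cN_\QQ$ are Gelfand models, and that $\Gamma^\m,\Gamma^\n$ are genuine $W$-graphs whose attached modules realize $\cM$ and $\cN$ — has already been established in the preceding theorems.
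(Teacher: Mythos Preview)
Your proposal is correct and follows essentially the same reasoning the paper intends: the corollary is stated without proof precisely because it is an immediate consequence of combining the $\H$-module isomorphisms $\cY(\Gamma^\m)\cong\cM$ and $\cY(\Gamma^\n)\cong\cN$ from Theorem~\ref{main-thm4} with the Gelfand model assertion for $\cM_\QQ$ and $\cN_\QQ$ in Theorem~\ref{main-thm2}, after noting (as you do) that $\QQ$ is a splitting field for finite Weyl groups.
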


It is notable that we can define these $W$-graphs in a somewhat uniform fashion.
Ignoring this prerogative,
Gelfand $W$-graphs exist for 
every  crystallographic
finite Coxeter group $W$, simply because
 all irreducible representations of $\H_\KK$ can be afforded by $W$-graphs \cite{Gyoja}.


It is an interesting open problem to classify the cells in  $\Gamma^\m$ and $\Gamma^\n$,
where a \emph{cell} in a $W$-graph is defined to be a strongly connected component for the underlying directed graph.
  This is because if $C \subset V$ is a cell in $\Gamma = (V,\omega,I)$ 
  then
  $\Gamma|_C := (C, \omega|_{C\times C}, I|_C)$
   is itself a $W$-graph, which defines a \emph{cell representation}
   $\cY(\Gamma|_C)$ of the algebra $\H$.
 
Our final results show that
in types $\BC$ and $\D$, the \emph{a priori} distinct cell classification problems for $\Gamma^\m$ and $\Gamma^\n$
are equivalent.
Two $W$-graphs $\Gamma = (V,\omega,I)$ and
$ \Gamma' = (V',  \omega',  I')$ are \emph{dual} via some
 bijection $V \to V'$, written $v\mapsto v'$,
 if $\omega'(u',v') = \overline{\omega(v,u)}$ and $ I'(v') = S \setminus I(v)$
for all $u,v \in V$.
In this case
 $v\mapsto v'$  sends the cells in $\Gamma$ to
the cells in $\Gamma'$.

\begin{theorem}\label{bcbc-dual-thm}
Assume $W=\W_n$. Then $\Gamma^\m$ and $\Gamma^\n$
are dual.
\end{theorem}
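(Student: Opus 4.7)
The plan is to exhibit an explicit bijection $\iota : \cG \to \cG$ on $\cG = \cG^\BC_n$ realizing the duality, and then verify the two required properties $\Asc^\n(\iota(z)) = S \setminus \Asc^\m(z)$ and $\omega^\n(\iota(y), \iota(z)) = \overline{\omega^\m(z,y)}$.  The guiding observation is that the formulas for the $\H$-actions on $\cM$ and $\cN$ in Theorem~\ref{main-thm2} agree for $s \in \Asc^<(z) \sqcup \Des^<(z)$ but interchange the eigenvalues $x$ and $-x^{-1}$ on the weak-ascent and weak-descent eigenspaces.  This swap is precisely the effect of the algebra automorphism $\sigma : \H \to \H$ defined by $\sigma(H_s) = -H_s^{-1}$, which swaps the two eigenvalues of each simple generator.

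Unpacking the label-set condition in terms of the partition $S = \Asc^<(z) \sqcup \Des^<(z) \sqcup \Asc^=(z) \sqcup \Des^=(z)$, the natural ansatz forces $\iota$ to swap $\Asc^<$ with $\Des^<$ while preserving both $\Asc^=$ and $\Des^=$ individually.  Such a bijection is archetypically realized by left multiplication by a central length-reversing element of $\W_{2n}$: centrality preserves the conjugation action $s \mapsto zsz^{-1}$ (and hence the weak sets), while length-reversal flips the strict sets.  The first candidate is the longest element $w_0 \in \W_{2n}$, but small examples show that $w_0 z$ generally fails the visible-descent condition cutting $\cG^\BC_n$ out of $\cF^\BC_n$.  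So $\iota$ must be obtained from $z \mapsto w_0 z$ by a further combinatorial correction (for instance, conjugation by a fixed element of $\W_{2n}$ supported on indices $>n$, or a reshuffling of cycles crossing the window $[1, n]$) chosen to restore the visible-descent condition without disturbing the weak/strict behavior with respect to $S$.  Identifying this correction and verifying that it produces a genuine involution of $\cG^\BC_n$ is the main technical obstacle I anticipate.

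Once $\iota$ is in place, the label-set condition is immediate by construction.  For the edge-weight condition, I would construct a compatible $\LL$-linear map $\phi : \cM \to \cN$ of the form $\phi(M_z) = \varepsilon(z) N_{\iota(z)}$ for signs $\varepsilon(z) \in \{\pm 1\}$, and verify case by case across the four descent/ascent types that $\phi(H_s M_z) = \sigma(H_s) \phi(M_z)$, using $H_s^{-1} = H_s - (x - x^{-1})$ to expand the right-hand side.  The signs $\varepsilon(z)$ are determined by the strict cases in order to absorb the minus sign in $\sigma(H_s) = -H_s^{-1}$; consistency of this sign assignment on the conjugation action $z \mapsto szs$ reduces to an independent combinatorial check on $\cG^\BC_n$.

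Finally, I would show that $\phi$ intertwines the bar operators of Theorem~\ref{main-thm3} up to these same signs, so that by the uniqueness clause of that theorem $\phi(\underline M_z) = \varepsilon(z)\underline N_{\iota(z)}$.  Comparing expansions in the $\{M_z\}$ and $\{N_z\}$ bases yields $\n_{\iota(y)\iota(z)} = \varepsilon(y)\varepsilon(z)\m_{yz}$, from which extracting the $x^{-1}$-coefficient and substituting into the definitions of $\omega^\m$ and $\omega^\n$ produces $\omega^\n(\iota(y), \iota(z)) = \overline{\omega^\m(z,y)}$, with a last verification that the sign factor $\varepsilon(y)\varepsilon(z)$ equals $+1$ on edges that survive the $\Asc$-set containment conditions in the definitions of $\omega^\m$ and $\omega^\n$.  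This completes the duality.
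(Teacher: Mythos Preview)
Your proposal contains a genuine gap at the step where you claim $\phi(\underline M_z) = \varepsilon(z)\,\underline N_{\iota(z)}$.  The bijection $\iota$ you seek must swap strict ascents with strict descents, so it \emph{reverses} the partial order $\prec$ governing the triangularity of the canonical bases: the minimal elements of $\cG$ for $\cM$ (those with $\Des^<(z)=\varnothing$) are sent to \emph{maximal} elements for $\cN$, not to minimal ones.  Consequently $\phi$ cannot intertwine the bar operators of Theorem~\ref{main-thm3}: for minimal $z$ one has $\phi(\overline{M_z})=\varepsilon(z)N_{\iota(z)}$ while $\overline{\phi(M_z)}=\varepsilon(z)\overline{N_{\iota(z)}}\neq\varepsilon(z)N_{\iota(z)}$.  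And even granting bar-compatibility, $\phi(\underline M_z)=\sum_y\m_{yz}\varepsilon(y)N_{\iota(y)}$ is \emph{upper}-triangular in the $N$-basis (supported on $\iota(y)\succeq\iota(z)$), so it cannot coincide with $\varepsilon(z)\underline N_{\iota(z)}$, which is lower-triangular.  Your derived relation $\n_{\iota(y)\iota(z)}=\varepsilon(y)\varepsilon(z)\m_{yz}$ is therefore false; indeed on any actual edge one has $\varepsilon(y)\varepsilon(z)=-1$ since $\h(y)\not\equiv\h(z)\pmod 2$ whenever $\mu_{yz}\neq 0$, so your final sign check would also fail.

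What is actually true is the \emph{inverted} relation $\mu^\m_{yz}=\mu^\n_{\iota(z)\iota(y)}$.  This comes not from a direct module map but from an inversion identity
\[\sum_{\kappa}(-1)^{\h(\upsilon)+\h(\kappa)}\m_{\upsilon\kappa}\,\n_{\iota(\tau)\iota(\kappa)}=\delta_{\upsilon\tau},\]
which the paper establishes (Theorem~\ref{inversion-thm}) by pairing $\{\underline M_z\}$ against the \emph{dual} canonical basis of $\cN$ --- the one with $x\ZZ[x]$ rather than $x^{-1}\ZZ[x^{-1}]$ triangularity.  Your map $\phi$, intertwining with $H_s\mapsto -H_s^{-1}$, is a correct ingredient and is essentially the paper's $\Phi^\sigma$; but $\Phi^\sigma$ sends $\underline M_z$ to a dual canonical element $\underline{\hat N}_\tau$, and passing from there to $\underline N_{\iota(z)}$ is precisely the inversion step you are missing.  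The paper packages all this as a general duality $\Upsilon^{\m|\sigma}\leftrightarrow\Upsilon^{\n|\sigma^\vee}$ via $\tau\mapsto\tau^\vee$ (Theorem~\ref{dual-thm}), then uses centrality of $w_0\in\W_n$ to identify each $\TT^\vee$ with $\TT$, producing the explicit duality map $\iota:\underline w\mapsto\underline{-w}$ on $\cG^\BC_n$.
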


The  version of this duality for type $\D$ requires a brief technical digression.
Assume $W=\WD_n$
and write $\diamond$ for the Coxeter automorphism with $w \mapsto w^\diamond := s_0ws_0$.
This map interchanges $s_{-1}$ and $s_1$ while fixing the other simple generators of $\WD_n$.

 For $z \in\cG$
define $ \Asc^{\m|\diamond}(z)=\Asc^\m(z)$ and $ \Asc^{\n|\diamond}(z)=\Asc^\n(z)$
if $n + |\{ i \in [n] : |z(i)| > n\}| $ is divisible by 4 
and otherwise let $ \Asc^{\m|\diamond}(z)= \Asc^\m(z^\diamond) $ and $ \Asc^{\n|\diamond}(z)= \Asc^\n(z^\diamond) $. 
We will see in Section~\ref{revisit-sect} that the modified triples 
$\widetilde\Gamma^\m := (\cG,\omega^\m,  \Asc^{\m|\diamond})
$ and $ \widetilde\Gamma^\n := (\cG,\omega^\n,  \Asc^{\n|\diamond})$
are both  $\WD_n$-graphs.
They have
the same underlying directed graph structure as $\Gamma^\m$ and $\Gamma^\n$.

\begin{theorem}\label{dd-dual-thm}
Assume $W=\WD_n$. Then
 $\Gamma^\m$ is dual to  $\widetilde\Gamma^\n$ 
and
 $ \Gamma^\n$ is dual to $\widetilde\Gamma^\m$.
\end{theorem}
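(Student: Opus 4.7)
The plan is to exhibit an explicit vertex bijection $\Phi : \cG \to \cG$ realizing both dualities, analogous to the construction behind Theorem~\ref{bcbc-dual-thm} but with a correction by the Coxeter automorphism $\diamond$. In type $\BC$, the expected bijection is left multiplication by the longest element $w_0 = -I$ of the ambient group $\W_{2n}$ (which is central), and one verifies directly that $\Asc^\n(w_0 z) = \Des^<(z) \sqcup \Des^=(z) = S \setminus \Asc^\m(z)$, since $w_0$-multiplication swaps $\Des^<$ with $\Asc^<$ while preserving $\Des^=$ and $\Asc^=$. For type $\D$, the longest element $w_0 = -I$ of $\WD_{2n}$ is still central (as $2n$ is even), but $z \mapsto w_0 z$ need not preserve the parity condition defining $\cF^\D_n$: a short calculation shows $|\{i \in [n] : (w_0 z)(i) < -i\}| = |\{i \in [n] : z(i) > i\}|$, which may have different parity from $|\{i \in [n] : z(i) < -i\}|$. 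The proposed fix is to define $\Phi(z)$ to be $w_0 z$ when the latter lies in $\cG$, and $w_0 z^\diamond$ otherwise, with the mod-$4$ parity count $n + |\{i \in [n] : |z(i)| > n\}|$ distinguishing the two cases.

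The first step is to prove that this $\Phi$ is a well-defined bijection $\cG \to \cG$ and to analyze its action on the four sets $\Des^<$, $\Asc^<$, $\Des^=$, $\Asc^=$. Since $w_0$ is central, its effect is purely length-reversing, swapping $\Des^<$ with $\Asc^<$ and preserving the weak descent/ascent structure. The conjugation-by-$s_0$ twist in $\diamond$ interchanges the simple generators $s_{-1}$ and $s_1$ of $\WD_n$ but fixes the rest, which is exactly the discrepancy accounted for in the definitions of $\Asc^{\m|\diamond}$ and $\Asc^{\n|\diamond}$. Combining the identities $\Asc^\m = \Asc^< \sqcup \Asc^=$ and $\Asc^\n = \Asc^< \sqcup \Des^=$ with the swap behavior should then yield
\[ \Asc^{\n|\diamond}(\Phi(z)) = \Des^<(z) \sqcup \Des^=(z) = S \setminus \Asc^\m(z),\]
establishing the label condition for one duality; the second duality follows by an entirely parallel calculation or by taking inverses of the first.

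Next, to verify the edge-weight identity $\omega^\n(\Phi(u), \Phi(v)) = \omega^\m(v, u)$ (noting integer weights are bar-fixed), I would match canonical bases via the uniqueness statements in Theorem~\ref{main-thm3}. The key point is that the defining formulas for the $\H$-actions on $\cM$ and $\cN$ in Theorem~\ref{main-thm2} differ only on weak descents/ascents, precisely where the action of $\Phi$ swaps $\Asc^=$ with $\Des^=$; the formulas on strict descents/ascents are symmetric, so $\Phi$ induces a candidate antilinear isomorphism $\cM \to \cN$ sending $M_z \mapsto N_{\Phi(z)}$ (possibly up to a global sign $(-1)^{\ell(w_0)}$) which intertwines the bar operators. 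Because $\Phi$ takes the fixed-point condition $\Des^<(z) = \varnothing$ characterizing $M_z = \underline M_z$ to $\Asc^<(\Phi(z)) = \varnothing$, and the latter characterizes $N_{\Phi(z)} = \underline N_{\Phi(z)}$ by an analogous minimal-length argument, the uniqueness in Theorem~\ref{main-thm3} forces $\underline M_z \leftrightarrow \underline N_{\Phi(z)}$, whence the $\m$- and $\n$-polynomials satisfy $\m_{yz} = \n_{\Phi(y), \Phi(z)}$ and the edge weights $\omega^\m$ and $\omega^\n$ match pairwise across $\Phi$.

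The main obstacle will be the bookkeeping tied to the outer automorphism $\diamond$. Unlike the $\BC$ case, where centrality of $w_0$ and the triviality of $\Aut(\W_n,S)$ for $n > 2$ make the correspondence completely rigid, in type $\D$ one must carefully match $\diamond$-orbits on $\cG$ with the parity classes cut out by $n + |\{i \in [n] : |z(i)| > n\}| \pmod 4$, and verify that the canonical-basis uniqueness argument survives the twist. This last verification — in particular the interplay between the asymmetric embedding $\WD_n \hookrightarrow \WD_{2n}$, the $\diamond$-action, and the bar-involution fixed-point criterion — is where the bulk of the technical work will lie, and explains why the theorem must use $\widetilde\Gamma^\n$ (rather than $\Gamma^\n$ itself) on the dual side.
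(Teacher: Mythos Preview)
There is a genuine gap in the construction of your bijection $\Phi$. You take $w_0$ to be the longest element of the \emph{ambient} group $\W_{2n}$ (or $\WD_{2n}$) and set $\Phi(z) = w_0 z$ or $w_0 z^\diamond$, but neither map sends $\cG$ to $\cG$. For a concrete obstruction, note that every $z \in \cG^\BC_n$ with $|\{i \in [n] : |z(i)| > n\}| < n$ has $z(2n-1) = 2n$ by Proposition~\ref{new-des-prop1}, so $(w_0 z)(2n-1) = -2n < -(2n-1)$ is a visible descent and $w_0 z \notin \cG^\BC_n$. Conjugating by $s_0$ only affects positions $\pm 1$, so $w_0 z^\diamond$ has the same defect. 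The point is that the ``tail'' structure $z(n+i+2j) = n+i+2j-1$ in Proposition~\ref{new-des-prop1} is rigid and is destroyed by global negation in $\W_{2n}$. Consequently your descent analysis, while correct as a computation in $\W_{2n}$, does not apply to elements of $\cG$, and the subsequent canonical-basis matching argument never gets off the ground.

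The paper's bijection $\iota^\D_n$ does not negate $z$ inside $\W_{2n}$; it negates the underlying involution $w \in \W_n$ parametrizing $z = \underline w$ via Definition~\ref{suggests-def}, i.e., $\underline w \mapsto \underline{-w}$ (followed by the $\diamond$-correction in half the cases). This is equivalent, after passing through the identification $\varphi_W : \cG|_\TT \to \cK^\TT$, to the map $(w,z) \mapsto (w\, w_J\, w_0,\ z\cdot u_J)$ on $\cK^J$, where $w_0$ is now the longest element of $\WD_n$, not $\WD_{2n}$. The edge-weight identity is then obtained not by a direct bar-operator transfer on $\cM$ and $\cN$, but from the general duality Theorem~\ref{dual-thm} (proved via the inversion formula of Theorem~\ref{inversion-thm}), together with Propositions~\ref{equiv-prop} and~\ref{dual-prop} to recognize $\TT^\vee$ as equivalent to $\TT$ or $\TT^\diamond$ depending on the parities of $n$ and $k$. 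Your instinct that the $\diamond$-twist is governed by a parity and that uniqueness of canonical bases drives the weight-matching is correct, but the argument must be carried out at the level of $\cK^J$, not directly on $\cG \subset \W_{2n}$.
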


One can observe these dualities in 
Figure~\ref{abcd3-fig}, which shows $\Gamma^\m$ and $\Gamma^\n$ for $W \in \{\W_3,\D_3\}$.

\begin{corollary}
If $W=\WD_{n}$ when $n$ is odd, then  
$ \widetilde\Gamma^\m$ and $\widetilde\Gamma^\n$
are Gelfand $W$-graphs.
\end{corollary}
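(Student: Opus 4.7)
The plan is to deduce this corollary from Theorems~\ref{main-thm2}, \ref{main-thm4}, and \ref{dd-dual-thm}, together with the general principle that dualizing a $W$-graph preserves the Gelfand property after extension of scalars to the splitting field.

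First I would fix $W=\WD_n$ with $n$ odd. Theorem~\ref{main-thm2} asserts that $\cM_\QQ$ and $\cN_\QQ$ are Gelfand models for $\H_\QQ$, and Theorem~\ref{main-thm4} supplies $\H$-module isomorphisms $\cY(\Gamma^\m)\cong\cM$ and $\cY(\Gamma^\n)\cong\cN$. Extending scalars to $\QQ(x)$ then shows that $\cY(\Gamma^\m)_\QQ$ and $\cY(\Gamma^\n)_\QQ$ are themselves Gelfand models. By Theorem~\ref{dd-dual-thm}, $\widetilde\Gamma^\n$ is dual to $\Gamma^\m$ and $\widetilde\Gamma^\m$ is dual to $\Gamma^\n$, and both are genuine $W$-graphs by the construction discussed in Section~\ref{revisit-sect}.

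Second, I would prove the following key lemma: if $\Gamma$ and $\Gamma'$ are dual $W$-graphs for a finite Weyl group $W$, then the classes of $\cY(\Gamma)_\QQ$ and $\cY(\Gamma')_\QQ$ in the Grothendieck group of $\H_\QQ$-modules are interchanged by an involution of $\Irr(\H_\QQ)$ induced by a natural algebra automorphism of $\H_\QQ$ (the Hecke analogue of tensoring with the sign character of $W$, essentially $H_s\mapsto -H_s+(x-x^{-1})$). The proof is a direct computation with \eqref{hy-eq}: replacing $I$ by its complement $S\setminus I$ and the weights $\omega(u,v)$ by $\overline{\omega(v,u)}$ with reversed arrows corresponds, after passing to the contragredient and absorbing residual sign factors into a basis change $Y_v\mapsto\pm Y_v$, to twisting the $\H_\QQ$-action by this sign automorphism. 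Because the resulting involution on $\Irr(\H_\QQ)$ is a bijection, multiplicity-free modules go to multiplicity-free modules, and the Gelfand property is preserved.

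Combining these ingredients, $\cY(\widetilde\Gamma^\n)_\QQ$ is a twist of the Gelfand model $\cY(\Gamma^\m)_\QQ$ and $\cY(\widetilde\Gamma^\m)_\QQ$ is a twist of the Gelfand model $\cY(\Gamma^\n)_\QQ$; both are therefore themselves Gelfand models, as claimed. The main obstacle is the duality-to-twist lemma in the second step: while conceptually transparent, reconciling the complementation of labels, the reversal of arrows with bar-conjugated weights, and the necessary contragredient together with the sign change of basis requires careful bookkeeping. No deep new ingredient is needed beyond a direct manipulation of \eqref{hy-eq} and the standard fact that $\H_\QQ$ is split semisimple with a character lattice naturally isomorphic to that of $\QQ[W]$.
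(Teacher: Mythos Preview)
Your approach is correct and follows the line of reasoning the paper leaves implicit: the corollary is placed immediately after Theorem~\ref{dd-dual-thm}, and the intended argument is exactly to combine that duality result with the fact that $\Gamma^\m$ and $\Gamma^\n$ are already Gelfand $W$-graphs for odd $n$. Your key lemma---that dual $W$-graphs give modules related by contragredient plus the $\Theta$-twist, up to a sign change of basis---is correct for the quasi-admissible $W$-graphs at hand; the bipartiteness needed to absorb the residual signs is part of quasi-admissibility (Definition~\ref{quasi-admissible-def} and Theorem~\ref{main-thm4}). Since both contragredient and twisting by an algebra automorphism permute the irreducibles of the split semisimple algebra $\H_\QQ$, the Gelfand property transfers.

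One remark: the paper's framework also supports a shorter route that bypasses your general duality lemma. As noted in Section~\ref{revisit-sect}, $\widetilde\Gamma^\m$ is obtained from $\Gamma^\m=\bigsqcup_{\TT\in\sP}\Gamma^\m|_\TT$ by replacing certain summands with their $\diamond$-twists, and $\cY(\Gamma^\diamond)$ is just $\cY(\Gamma)$ twisted by the $\H$-automorphism $H_w\mapsto H_{w^\diamond}$. When $n$ is odd, $\diamond=\Ad(w_0)$ is inner, and one checks directly that this $\H$-automorphism is conjugation by $H_{w_0}$; hence $\cY((\Gamma^\m|_\TT)^\diamond)\cong\cY(\Gamma^\m|_\TT)$ for every $\TT$, giving $\cY(\widetilde\Gamma^\m)_\QQ\cong\cY(\Gamma^\m)_\QQ$ immediately. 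Your duality argument is what the corollary's placement suggests, but this alternative sidesteps the bookkeeping you identify as the main obstacle.
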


When proving Theorems~\ref{bcbc-dual-thm} and \ref{dd-dual-thm} in Section~\ref{revisit-sect}, we will identify the relevant duality maps $\dualityBC_n : \cG^\BC_{n}  \to \cG^\BC_{n} $
and 
$\dualityD_n : \cG^\D_{n}  \to \cG^\D_{n} $
explicitly. Referring to \eqref{iotaBC-eq} and \eqref{iotaD-eq} for the definitions of these bijections,
we can state the following corollary:

\begin{corollary}
A subset $C \subset \Gamma^\m(\W_n)$ is a cell if and only if
$\dualityBC_n(C)$ is a cell in $ \Gamma^\n(\W_n)$.
Likewise, a subset $C \subset \Gamma^\m(\WD_n)$ is a cell if and only if
$\dualityD_n(C)$ is a cell in $ \Gamma^\n(\WD_n)$.
\end{corollary}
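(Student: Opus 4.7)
The plan is to derive this corollary directly from Theorems~\ref{bcbc-dual-thm} and \ref{dd-dual-thm}, using two elementary observations. First, by definition a cell is a strongly connected component of the underlying weighted digraph, so the cell decomposition of a $W$-graph $(V,\omega,I)$ depends only on the pair $(V,\omega)$ and not on the labeling $I$. Second, as noted in the remark preceding Theorem~\ref{bcbc-dual-thm}, the duality condition $\omega'(u',v') = \overline{\omega(v,u)}$ immediately implies that the bijection $v \mapsto v'$ carries strongly connected components of one digraph to strongly connected components of the (edge-reversed) other, and hence sends cells bijectively to cells.

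For type $\BC$, Theorem~\ref{bcbc-dual-thm} asserts that $\Gamma^\m(\W_n)$ and $\Gamma^\n(\W_n)$ are dual. Its proof in Section~\ref{revisit-sect} will construct an explicit duality bijection $\cG^\BC_n \to \cG^\BC_n$, and the key point is that this bijection is precisely the map $\dualityBC_n$ defined in \eqref{iotaBC-eq}. Combining this with the cell-preservation observation above yields the $\BC$ half of the corollary.

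For type $\D$, the argument is analogous but requires one additional step. Theorem~\ref{dd-dual-thm} will be proved by constructing a duality between $\Gamma^\m(\WD_n)$ and the modified graph $\widetilde\Gamma^\n(\WD_n)$ via the map $\dualityD_n$ of \eqref{iotaD-eq}. Since $\widetilde\Gamma^\n$ and $\Gamma^\n$ have the same underlying weighted digraph (they differ only in their vertex labelings $\Asc^{\n|\diamond}$ versus $\Asc^\n$), they have the same cells. Hence $\dualityD_n$ sends cells of $\Gamma^\m(\WD_n)$ bijectively to cells of $\Gamma^\n(\WD_n)$; the same reasoning with $\m$ and $\n$ swapped handles the other implication.

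The only substantive work beyond Theorems~\ref{bcbc-dual-thm} and \ref{dd-dual-thm} themselves is pinning down that the duality bijections produced in their proofs are literally the maps $\dualityBC_n$ and $\dualityD_n$ of \eqref{iotaBC-eq} and \eqref{iotaD-eq}, rather than merely some unnamed bijections. This identification is the main (and essentially only) obstacle, and I expect it to emerge naturally from the explicit construction in Section~\ref{revisit-sect}, after which the corollary follows purely formally.
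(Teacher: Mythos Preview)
Your proposal is correct and follows exactly the paper's implicit argument: the corollary is immediate once Theorems~\ref{bc-dual-thm} and \ref{d-dual-thm} identify the duality bijections as $\dualityBC_n$ and $\dualityD_n$, using the observation (stated just before Theorem~\ref{bcbc-dual-thm}) that a duality bijection sends cells to cells, together with the remark (stated just before Theorem~\ref{dd-dual-thm}) that $\widetilde\Gamma^\n$ and $\Gamma^\n$ share the same underlying directed graph. Your worry about ``pinning down'' the explicit bijections is already handled by the effective Theorems~\ref{bc-dual-thm} and \ref{d-dual-thm}, which state precisely that the dualities are realized by $\dualityBC_n$ and $\dualityD_n$.
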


By contrast,
the $S_n$-graphs $\Gamma^\m(S_n)$ and $\Gamma^\n(S_n)$ 
seem to be dual if and only if $n\leq 2$,
and there does not seem to be a simple relationship between their cells
in general.
We will study the cells in these Gelfand $W$-graphs more carefully in our followup work \cite{MZ}.
Here, we mention just one related conjecture, which holds at least for $n\leq 10$ (but not in types $\BC$ or $\D$).

%

%


\begin{conjecture}
The cell representations associated to $\Gamma^\m(S_n)$ and $\Gamma^\n(S_n)$
are all irreducible.
\end{conjecture}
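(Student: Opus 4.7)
The plan is to combine an RSK-type bijection for $\cG^\A_{n-1}$ with the multiplicity-free character data already provided by Theorem~\ref{main-thm2}. Using the explicit collapsing map from $\cG^\A_{n-1}$ to involutions in $S_n$ indicated in the footnote of Section~\ref{intro-sect2}, composed with the RSK correspondence on involutions (which records $P=Q$ as a single standard Young tableau), I would first build a bijection
\[ \Phi \colon \cG^\A_{n-1} \longleftrightarrow \{(\lambda,T) : \lambda \vdash n,\ T \in \mathrm{SYT}(\lambda)\}. \]
The vertex count $|\cG^\A_{n-1}| = \sum_{\lambda \vdash n} f^\lambda$ recorded in Figure~\ref{table-fig} (matching OEIS A000085) is consistent with $\Phi$ being bijective. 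Writing $\mathrm{sh}(z)$ for the shape of $\Phi(z)$, the goal is to identify the cells of $\Gamma^\m(S_n)$ and of $\Gamma^\n(S_n)$ with the fibres of $\mathrm{sh}$.

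The main combinatorial work is to prove this cell--shape identification. For one direction, I would verify that each directed edge $y \to z$ in $\Gamma^\m$ (and in $\Gamma^\n$) satisfies $\mathrm{sh}(y) = \mathrm{sh}(z)$, by examining the local behaviour of the bar-invariant basis under the $H_s$-action rule \eqref{hy-eq} and reducing shape-invariance of the $\mu$-coefficients to the classical Kazhdan--Lusztig statement in $S_{2n}$, exploited through the embedding $\cG^\A_{n-1}\subset\cF^\A_{n-1}\subset S_{2n}$. For the reverse direction, I would prove a connectivity result: any two vertices of the same shape are joined by a directed path in both $\Gamma^\m$ and $\Gamma^\n$. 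This is the involution analogue of the classical statement that Knuth-equivalent permutations lie in a common Kazhdan--Lusztig cell, and I would expect to derive it from elementary involution-Knuth moves compatible with both the fixed-point-free structure and the visible-descent constraint defining $\cG^\A_{n-1}$.

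Once cells coincide with shape fibres, irreducibility follows cleanly. The cell filtration of $\cY(\Gamma^\m)$ yields
\[ \sum_{\lambda \vdash n} \ch\!\bigl(\cY(\Gamma^\m|_{C_\lambda})\bigr) \;=\; \ch(\cM_\QQ) \;=\; \sum_{\lambda \vdash n} \chi^\lambda \]
by Theorem~\ref{main-thm2}, with the analogous identity for $\cN$. Since the number of cells equals $p(n) = |\Irr(S_n)|$ (Figure~\ref{table-fig}) and each $\ch(\cY(\Gamma|_{C_\lambda}))$ is a nonnegative integer combination of irreducible characters, every cell representation must equal a single $\chi^\lambda$. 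The particular $\chi^\lambda$ attached to $C_\lambda$ can then be pinned down by computing the character on a distinguished vertex of $C_\lambda$, for instance the minimum-length involution whose RSK tableau has shape $\lambda$.

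The principal obstacle is the connectivity step: proving that vertices sharing a common shape lie in a single strongly connected component of $\Gamma^\m$ (and of $\Gamma^\n$). Adapting Knuth-equivalence tools to the involution setting must respect both the visible-descent restriction defining $\cG^\A_{n-1}$ and the twisted asymmetry between $\Asc^\m$ and $\Asc^\n$. Moreover, the appearance of negative edge weights starting at $n=9$ (Figure~\ref{table-fig}) signals that the $\mu$-combinatorics is genuinely richer than in the classical case, so a purely combinatorial analysis will likely need to be supplemented by the positivity-free techniques of Lusztig--Vogan \cite{LV2,LV}, whose type $\A$ Hecke modules are noted in Section~\ref{intro-sect2} to be abstractly isomorphic to $\cM(S_n)$ and $\cN(S_n)$ after extending scalars.
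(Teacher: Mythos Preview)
The statement you are attempting to prove is explicitly labelled a \emph{conjecture} in the paper; there is no proof given, and the surrounding text says only that it ``holds at least for $n\leq 10$''. Even the equality of the cell count with $p(n)$ is phrased as empirical (``appear to be the number of partitions of $n$'' in the caption to Figure~\ref{table-fig}). So there is no paper proof to compare against, and your proposal is an attack on an open problem.

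Your strategy contains a concrete error. You claim as the ``one direction'' that every directed edge $y\to z$ in $\Gamma^\m$ (and in $\Gamma^\n$) satisfies $\mathrm{sh}(y)=\mathrm{sh}(z)$. This is false already for $n=4$: if all edges preserved shape, the underlying directed graph would decompose as a disjoint union over the $p(4)=5$ shape fibres, forcing at least $5$ weakly connected components, whereas Figure~\ref{table-fig} records only $3$. In any $W$-graph with more than one cell there are typically one-way edges between distinct cells---this is what makes the cell decomposition a genuine filtration of $\cY(\Gamma)$ rather than a direct sum at the level of the canonical basis. What your argument actually requires is the weaker statement that the \emph{strongly} connected components coincide with the shape fibres, together with your connectivity step. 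Neither direction is known, and the reduction you sketch to classical Kazhdan--Lusztig theory in $S_{2n}$ does not obviously apply: the polynomials $\m_{yz}$ and $\n_{yz}$ are defined relative to a different bar operator and a different standard basis than the ordinary Kazhdan--Lusztig polynomials, and the paper notes explicitly (just after Theorem~\ref{main-thm2}) that no concrete isomorphism to the Lusztig--Vogan modules is known.
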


The rest of this paper is organized as follows.
Section~\ref{prelim-sect} contains some preliminary results and background material.
 In Section~\ref{models-sect} we verify that the sets $\sP$ in Definition~\ref{main-thm1-def} are perfect models.
Section~\ref{foll-sect} constructs certain general $W$-graphs parametrized by subsets $J \subset S$ 
paired with linear characters $\sigma:W_J\to\{\pm1\}$. 
 In Section~\ref{duality-sect} we prove a duality theorem for these $W$-graphs.
 Sections~\ref{gel-sect} and \ref{revisit-sect} derive most of the results sketched in this introduction.
Section~\ref{equiv-sect} discusses a notion of equivalence for perfect models,
which will be studied further in \cite{MZ}.

\subsection*{Acknowledgements}

This work was partially supported by Hong Kong RGC Grant ECS 26305218.
We thank Brendan Pawlowski and Joel Brewster Lewis for many useful discussions.

\section{Preliminaries}\label{prelim-sect}

Our main references for this section are the article \cite{RV} and the books \cite{CCG,GeckPfeiffer}.

\subsection{Perfect involutions}\label{perfect-subsect}

The sets of perfect involutions $\sI(W,S)$ for all classical Weyl groups are described in
\cite[\S9]{RV}. We review this classification below. Recall that we identify each $w \in W$ with $(w,1) \in W^+$.
\ben
\item[($\A$)] The symmetric group $S_n$ of permutations of $[n] := \{1,2,\dots,n\}$
is a Coxeter group relative to the generators $\{s_1,s_2,\dots,s_{n-1}\}$ where $s_i:=(i,i+1)$.
The only non-identity element of $\Aut(S_n,\{s_1,s_2,\dots,s_{n-1}\})$ is the inner automorphism $\Ad(w_0)$
induced by  $w_0 = n\cdots 321$. 
When $n$ is odd every perfect involution in $S_n^+$ is central.
When $n$ is even there are three $S_n$-minimal perfect involutions: $1\in S_n$,
$s_1s_3s_5\cdots s_{n-1} \in S_n$, and $(1, \Ad(w_0))\in S_n^+$.

\item[($\BC$)] The group $\W_n$ of permutations of $\{-n,\dots,-1,0,1,\dots,n\}$ that commute with $x\mapsto-x$
is a Coxeter group relative to the generators $\{s_0,s_1,\dots,s_{n-1}\}$ 
with $s_0 := (-1,1)$ and $s_i := (i,i+1)(-i,-i-1)$ for $i>0$. 
The one-line representation of $w \in \W_n$ is the word $w(1)w(2)\cdots w(n)$ where we write $\bar i$ for $-i$.
When $n>2$ there are only trivial Coxeter automorphisms 
and  $w=w^{-1}\in \W_n$  if perfect if and only if the map
 $|w| : i \mapsto |w(i)|$ fixes all or no elements of $[n]$.
The $\W_n$-minimal perfect involutions consist of the $n+1$ elements
$\theta_i := \bar 1 \bar 2 \cdots \bar i (i+1)\cdots n$ for $i \in \{0,1,\dots,n\}$  plus $s_1s_3\cdots s_{n-1}$ if $n$ is even.

\item[($\D$)] The subgroup $\WD_n$ of $w \in \W_n$ such that $|\{ i \in [n] : w(i) < 0\}|$ is even 
is a Coxeter group relative to the generators $\{s_{-1},s_1,s_2,\dots,s_{n-1}\}$ where $s_{-1} := (1,-2)(-1,2)$
and $s_i $ for $i\geq 0$ is as in ($\BC$).
Assume $n>4$. Then the unique nontrivial Coxeter automorphism is $w\mapsto w^\diamond := s_0ws_0$.
The perfect involutions in $(\WD_n)^+$ are the pairs $(w, \varphi)$ with $\varphi \in \{1,\diamond\}$ and $w^{-1} = \varphi(w)$ such that $|w| \in S_n$ is $1$ or fixed-point-free.
The $\WD_n$-minimal perfect involutions consist of 
$\theta_i$ for each even $0\leq i\leq n$, $\hat \theta_i := (1 \bar2 \bar3 \cdots \bar i (i+1)\cdots n, \diamond)$ for each odd $1\leq i\leq n$,   plus both $s_1s_3s_5\cdots s_{n-1}$ and $s_{-1}s_3s_5\cdots s_{n-1}$ if $n$ is even.
\een
From now on, when we write $s_i$ for $i \in \ZZ$ we mean the permutations defined in
($\BC$) and ($\D$); everything in ($\A$) remains true on changing ``$s_i:=(i,i+1)$'' to ``$s_i:=(i,i+1)(-i,-i-1)$.''

Let $(W,S)$ be any Coxeter system and write $\sI=\sI(W,S)$.
The group $W$ acts on $\sI$ by
conjugation.
Rains and Vazirani prove that $\sI$ is a \emph{quasiparabolic $W$-set} relative to this action and the height function $\frac{1}{2}\ell$ \cite[Thm. 4.6]{RV}. Concretely, this means that the following holds:
\begin{lemma}[See \cite{RV}] \label{QP-lem} Suppose $s \in S$, $t \in \{wsw^{-1} : (w,s) \in W\times S\}$, and $z \in \sI$. Then:
\ben
\item[(a)] If $\ell(tzt) = \ell(z)$ then $tzt=z$.
\item[(b)] If $\ell(tzt) > \ell(z)$ and $\ell(stzts) < \ell(szs)$, then $szs= tzt$.
\een
\end{lemma}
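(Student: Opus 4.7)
The statement is a direct unpacking of the definition of a quasiparabolic $W$-set of Rains and Vazirani \cite{RV}, applied to $X = \sI$ with the conjugation action and height function $h := \tfrac12 \ell$. My plan is to cite their Theorem~4.6 (already invoked in the sentence immediately above) and verify that (a) and (b) are simply the axioms (QP1) and (QP2) transcribed into the present setup. Recall that a $W$-set $X$ with height $h : X \to \tfrac12\ZZ$ is quasiparabolic when $h(sx) - h(x) \in \{-1, 0, 1\}$ for every $s \in S$ and $x \in X$, together with (QP1): if $h(sx) = h(x)$ for some $s \in S$, then $sx = x$; and (QP2): if $t$ is any reflection, $s \in S$, and $h(tx) > h(x)$ while $h(stx) < h(sx)$, then $sx = tx$. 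A standard formal consequence (also established in \cite{RV}) is the reflection version of (QP1): if $t$ is any reflection with $h(tx) = h(x)$, then $tx = x$.

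Next I would specialize to our situation. Under conjugation, the $s$-action on $z$ is $szs$, so $\ell(szs) - \ell(z) \in \{-2, 0, 2\}$ and hence $h(szs) - h(z) \in \{-1, 0, 1\}$, justifying the choice of height. The reflection form of (QP1) then reads ``$\ell(tzt) = \ell(z) \Rightarrow tzt = z$'', which is (a), and (QP2) reads ``$\ell(tzt) > \ell(z)$ and $\ell(stzts) < \ell(szs) \Rightarrow szs = tzt$'', which is (b).

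The substantive content, proved by Rains and Vazirani, is that $\sI$ really does satisfy these axioms. If one wanted a self-contained argument, the main obstacle would be leveraging the defining relation $(zr)^4 = 1$ for every reflection $r$ (i.e., perfectness of $z$) to rule out configurations with $tzt \neq z$ but $\ell(tzt) = \ell(z)$, and to establish the interchange required by (QP2). The key observation for (a) is that if $tzt \neq z$ then $tz$ has order exactly $4$, and combining the braid identity $tztz = ztzt$ with the strong exchange property forces $\ell(tzt) \neq \ell(z)$; for (b), one compares reduced expressions of $tzt$ and $szs$ by a similar length-braid argument. Since these verifications are carried out in full in \cite[\S4]{RV}, I would simply invoke that theorem and declare the translation complete.
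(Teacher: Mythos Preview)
Your proposal is correct and matches the paper's own treatment: the paper does not prove this lemma but simply presents it as the concrete meaning of \cite[Thm.~4.6]{RV}, exactly as you do. Your additional remark that part~(a) for a general reflection $t$ is the reflection-strengthened form of (QP1) rather than the literal axiom is a helpful clarification the paper leaves implicit.
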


\subsection{Descent sets}\label{descent-sect}

Assume $(W,S)$ is a finite classical Coxeter system,
so that $W$ is $S_n$, $\W_n$, or $\WD_n$. 
It is useful to recall an explicit description of the right descent sets for elements of  these groups:
 
\begin{proposition}[{See \cite[Props. 8.1.2 and 8.2.2]{CCG}}]
\label{old-des-prop}
Let $w \in W$ where $W\in \{S_n,\W_n,\WD_n\}$.
\begin{itemize} 
\item[(a)] If $i \in [n-1]$ 
then $\ell(ws_i) < \ell(w) $ if and only if $w(i) > w(i+1)$.

\item[(b)] If $W=\W_n$ then $\ell(ws_0) <\ell(w) $ if and only if $0 > w(1)$.

\item[(c)] If $W= \WD_n$ then $\ell(ws_{-1}) < \ell(w) $ if and only if 
 $-w(2) > w(1)$.
\end{itemize}
\end{proposition}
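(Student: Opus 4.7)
The plan is to deduce all three parts from the standard combinatorial length formulas
\[
\ell(w)=\operatorname{inv}(w),\qquad \ell(w)=\operatorname{inv}(w)+\operatorname{nsp}(w)+\operatorname{neg}(w),\qquad \ell(w)=\operatorname{inv}(w)+\operatorname{nsp}(w)
\]
for $w\in S_n$, $\W_n$, and $\WD_n$ respectively, where $\operatorname{inv}(w):=|\{1\leq i<j\leq n:w(i)>w(j)\}|$, $\operatorname{nsp}(w):=|\{1\leq i<j\leq n:w(i)+w(j)<0\}|$, and $\operatorname{neg}(w):=|\{1\leq i\leq n:w(i)<0\}|$. These formulas are recorded in \cite[Props.~8.1.1 and 8.2.1]{CCG}; alternatively one can reprove them by induction on a reduced expression, taking the three claims of the proposition itself as the inductive step. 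In each part I would compute $\ell(ws)-\ell(w)$ directly by comparing the one-line notation of $ws$ with that of $w$.

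For (a), right multiplication by $s_i$ swaps positions $i$ and $i+1$. The $(i,i+1)$-contribution to $\operatorname{inv}$ flips between $0$ and $1$; for each $k\notin\{i,i+1\}$ the pairs $\{i,k\}$ and $\{i+1,k\}$ merely swap their roles in both $\operatorname{inv}$ and $\operatorname{nsp}$, so their contributions cancel; and $\operatorname{neg}$ depends only on the multiset of values of $w$, hence is unchanged. This yields $\ell(ws_i)-\ell(w)=\pm1$, with the minus sign exactly when $w(i)>w(i+1)$.

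For (b), right multiplication by $s_0$ negates $w(1)$ and fixes the rest. Setting $a:=w(1)$ and $f(x):=|\{k\geq 2:w(k)<x\}|$, the total contribution of position $1$ to $\operatorname{inv}+\operatorname{nsp}+\operatorname{neg}$ is $f(a)+f(-a)+\mathbf{1}[a<0]$ before and $f(-a)+f(a)+\mathbf{1}[a>0]$ after; all other contributions are unchanged. Hence $\ell(ws_0)-\ell(w)=\sgn(w(1))$, which is negative iff $w(1)<0$.

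For (c), right multiplication by $s_{-1}$ sends $(w(1),w(2))=(a,b)$ to $(-b,-a)$ and fixes $w(k)$ for $k\geq 3$. The $(1,2)$-contribution to $\operatorname{inv}$ is unchanged since $[a>b]=[-b>-a]$; for each $k\geq 3$, the four indicators contributed by the pairs $(1,k)$ and $(2,k)$ to $\operatorname{inv}+\operatorname{nsp}$, namely $\{[w(k)<a],[w(k)<b],[w(k)<-a],[w(k)<-b]\}$, are merely permuted by the swap-and-negate action of $s_{-1}$, so their net contribution is preserved; the only real change is the $(1,2)$-contribution to $\operatorname{nsp}$, which flips from $[a+b<0]$ to $[a+b>0]$. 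Therefore $\ell(ws_{-1})-\ell(w)=\sgn(w(1)+w(2))$, which is negative iff $-w(2)>w(1)$. The main bookkeeping obstacle lies in verifying these cancellations in (c), but once one observes that the four indicators $[w(k)<\pm a]$ and $[w(k)<\pm b]$ are simply permuted, the rest is routine.
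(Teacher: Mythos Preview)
Your argument is correct. In all three parts you correctly identify which terms of the length formulas change and which merely get permuted; the key cancellation in (c), namely that for each $k\geq 3$ the four indicators $[w(k)<a],\,[w(k)<b],\,[w(k)<-a],\,[w(k)<-b]$ are simply rearranged under $(a,b)\mapsto(-b,-a)$, is stated and justified. The only implicit point you rely on is that the edge cases $w(1)=0$ in (b) and $w(1)+w(2)=0$ in (c) do not occur, but this is automatic since $|w|$ is a permutation of $[n]$.

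As for comparison with the paper: the paper does not actually give a proof of this proposition. It is stated with the attribution ``See \cite[Props.~8.1.2 and 8.2.2]{CCG}'' and no proof environment follows; the result is treated as a standard fact from Bj\"orner--Brenti. Your approach---deducing the descent criteria from the combinatorial length formulas in the same reference---is exactly the standard route and is essentially how \cite{CCG} proves it as well. So you have supplied a genuine proof where the paper simply cites one.
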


Recall the definitions of $\cF = \cF(W)$ and $\cG =\cG(W)$ from Definition~\ref{fg-def}.
Note that $\cG^\A_{n-1}  =\cG(S_n) \subset \cG^\BC_n = \cG(\W_n)\supset \cG^\D_n =  \cG(\WD_n)$.
We can characterize these sets
more explicitly. 

\begin{proposition}\label{new-des-prop1}
Suppose $z \in \cF^\BC_n=\left\{ z \in \W_{2n} : z=z^{-1}\text{ and }|z(i)| \neq i \text{ for all }i \in [2n]\right\}$.
Then $z  \in \cG^\BC_n$
if and only if 
there is an integer $0 \leq i \leq n$ with 
$ i \equiv n \modu 2)$ such that
\[
-n \leq z(n+1) < z(n+2) < \dots < z(n+i) \leq n
\quad\text{while}\quad
z(n+i+2j) =n+i+2j-1
\]
for 
each 
 integer $j>0$ with $n+i+2j \leq 2n$.
 \end{proposition}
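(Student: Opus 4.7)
The plan is to prove the two implications separately.

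\textbf{Reverse direction.} I would verify directly that the stated structure (with some $i_0$) implies the two conditions characterizing ``no visible descents greater than $n$'': namely $z(i+1) \geq \min\{i, z(i)\}$ for $n < i < 2n$, and $z(i) > -i$ for $n < i \leq 2n$. This reduces to a short case check that splits on whether $i$ lies in the prefix $\{n+1, \ldots, n+i_0\}$ or in the adjacent-swap tail $\{n+i_0+1, \ldots, 2n\}$, and uses the explicit formula $z(n+i_0+2j) = n+i_0+2j-1$ in the tail together with the strict monotonicity $z(n+1) < \cdots < z(n+i_0)$ in the prefix.

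\textbf{Forward direction.} Assume $z \in \cG^\BC_n$. I would classify each position $i \in (n, 2n]$ according to where $z(i)$ lies: call $i$ of \emph{small value} if $z(i) \in [-n, n]$, of \emph{large value} if $z(i) \in (n, 2n]$, and of \emph{very negative value} if $z(i) < -n$. The target is to show that very-negative positions do not occur, the small-value positions form a prefix of $(n, 2n]$, the large-value positions form a suffix and are paired by $z$ as adjacent swaps, and the values at small-value positions strictly increase. The very-negative case is ruled out by a short minimality argument: if $z(i) = -j$ with $j > n$ and $i$ is minimal such, then the involution and negation-symmetry of $z$ give $z(j) = -i < -n$, while the non-descent condition $z(i) > -i$ forces $j < i$, contradicting minimality.

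The crucial step is to show that every large-value pair $(i, j)$ with $z(i) = j$ is adjacent. I would choose, among all non-adjacent large-value pairs with $j \geq i+2$, one that maximizes the smaller end $i$. The non-descent inequality at $i$ combined with the involution property forces $z(i+1) \geq i+2$, so position $i+1$ is also large-value and its pair has smaller end strictly greater than $i$; maximality of $i$ then forces this pair to be adjacent, i.e., $z(i+1) = i+2$. Iterating builds adjacent pairs $(i+1, i+2), (i+3, i+4), \ldots$ filling the interval toward $j$. A parity-based endgame then yields a contradiction: for odd $j-i \geq 3$ the pairs fill up to $(j-2, j-1)$, and then the non-descent inequality at $j-1$ forces $i \geq j-2$, impossible; for even $j-i \geq 4$ position $j-1$ is left over, and the non-descent inequality together with the already-used values forces $z(j-1) > j$, producing a new non-adjacent pair with smaller end $j-1 > i$ and contradicting maximality; the base $j-i=2$ fails directly via the non-descent inequality at $i+1$. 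This step is the main obstacle of the proof.

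The remaining claims then follow cleanly. For the suffix property: if $i-1$ is a large-value position and $i$ is a small-value position with $i \geq n+3$, then adjacency forces $z(i-1) = i-2$ (the alternative $z(i-1) = i$ would give $z(i) = i-1 > n$), and the non-descent inequality at $i-1$ then gives $z(i) \geq i-2 > n$, contradicting small-value; the boundary cases $i \in \{n+1, n+2\}$ are handled by direct inspection. The parity $n - i_0 \equiv 0 \pmod 2$ is automatic since large-value positions pair up. Finally, strict monotonicity in the prefix is immediate from the non-descent inequality at $n+k$ combined with distinctness of values, since $z(n+k) \leq n < n+k$ makes the minimum on the right-hand side equal to $z(n+k)$.
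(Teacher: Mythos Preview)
Your proof is correct and follows the same overall decomposition as the paper: the reverse implication is immediate, and the forward direction proceeds by first ruling out values below $-n$ on $(n,2n]$, then showing that every pair $\{a,z(a)\}\subset(n,2n]$ is adjacent, and finally reading off the prefix/suffix structure.

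The one place where you diverge from the paper is the adjacency step. The paper argues it in a single stroke: take the \emph{largest} position $a$ with $z(a)+1<a$; then a short backward chain (if no descent occurs in $[z(a),a)$, the values $z(a-1),z(a-2),\dots$ are forced to strictly decrease below $z(a)$, contradicting $z(z(a))=a$) produces a visible descent somewhere in $[z(a),a)$, which lies above $n$ once $z(a)\ge n$. Your version instead picks the \emph{largest smaller endpoint} $i$ of a non-adjacent pair, uses maximality to force $(i+1,i+2),(i+3,i+4),\dots$ to be adjacent, and finishes with a parity-based endgame. Both are valid; the paper's descending-chain argument is shorter and avoids the case split on the parity of $j-i$, while yours is more explicit and also makes the suffix property (which the paper leaves to the reader) completely transparent.
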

 
 \begin{proof}
By definition $z=z^{-1}$ and $|z(i)| \neq i$ for all $i \in[n]$.
 If $z$ has the given properties then $z$ has no visible descents greater than $n$ so $z \in \cG^\BC_n$.
 Instead assume  $z \in \cF^\BC_n$ has no visible descents greater than $n$.
 If $n < a \leq 2n$ then $-n \leq z(a)$, since otherwise $a$ or $-z(a)$ would be a visible descent exceeding $n$.
 If there exists a maximal $a \in \ZZ$ with $z(a)+1 < a$,
 then  $z(i+1) < \min\{i,z(i)\}$ for some $i$ with $z(a) \leq i < a$. Therefore if $n \leq z(a)<a$ then $a = z(a)+1$.
 Let $i>0$ be maximal with $z(n+i) \leq n$ or set $i=0$ if no such integer exists.
 Then $-n\leq z(n+1)<z(n+2)<\dots<z(n+i) \leq n$ as otherwise some number in $\{n+1,n+2,\dots,2n-1\}$
 would be a visible descent, and we also have $
z(n+i+2j) =n+i+2j-1
$
for 
each 
 integer $j>0$ with $n+i+2j \leq 2n$, so $z$ has the desired form.
 \end{proof}

The preceding result suggests the following notation.

\begin{definition}\label{suggests-def}
Suppose $w =w^{-1} \in W^\BC_n$. Let 
\[a_1 >a_2  > \dots >a_p
\qquad\text{(respectively, $ b_1<b_2<\dots<b_q$)}
\]
be the numbers $a \in [n]$ with $w(a) = -a$ (respectively, $b \in [n]$ with $w(b) = b$).
Define $\underline w$ to be the unique element of $\cG^\BC_n$ mapping 
$-a_i \mapsto n + i$ for $i \in [p]$ and $b_i \mapsto n + p + i$ for $i \in [q]$ along with $c \mapsto w(c)$ for $c \in [n] \setminus \{ a_1,a_2,\dots,a_p, b_1,b_2,\dots,b_q\}$. We can write this down using the formula
\[ \underline w := \prod_{i=1}^p (a_i,n+i) (-a_i,-n-i) \cdot \prod_{i=1}^q (b_i,n+p+i)(-b_i,-n-p-i) \cdot 
\prod_{\substack{i \in [n-p-q] \\ i\text{ odd}}} s_{n+p+q+i} \cdot w .\] 
\end{definition}

For example, for $w=2134 = (1,2)(3)(4) \in S_4$ we have
\[
\underline{w} = (1,2)(3,5)(4,6)(7,8) = 21563487 \in \cG^\A_3.
\]
For $w =\bar 3 2 \bar 1 \bar 4 \bar5  = (1,-3)(3,-1)\cdot (2)(-2)\cdot (-4,4)\cdot (-5,5) \in \W_5$ we likewise have
\be\label{bc-under-eq}
\ba \underline{w} &= 
 (1,-3)(3,-1)\cdot (2,8)(-2,-8)\cdot (-4,7)(4,-7)\cdot (-5,6)(5,-6) \cdot (9,10)
 \\&=
\bar 3, 8, \bar 1, \bar 7, \bar 6, \bar 5, \bar 4, 2,10,9 \in \cG^\BC_5.
\ea
\ee
When $W \in \{S_n, \W_n\}$, the map $w \mapsto \underline w$ is a bijection 
\be\{ w=w^{-1} \in S_n\} \xrightarrow{\sim} \cG^\A_{n-1}
\quand
\{ w=w^{-1} \in \W_n\} \xrightarrow{\sim} \cG^\BC_{n}
.\ee
If we set  $e(w) := | \{ i \in [n]  : w(i) < -i\}|$ for $w \in \W_n$,
then the same map is a bijection
\be\label{dun-eq}
\left\{ w=w^{-1} \in \WD_n : e(w)\text{ is even}\right\} \sqcup
\left\{ w=w^{-1} \in \W_n -  \WD_n : e(w)\text{ is odd}\right\}
\xrightarrow{\sim} \cG^\D_n.
\ee

Finally, to do calculations with the modules $\cM$ and $\cN$ from Theorem~\ref{main-thm2},
it is helpful to have a direct characterization of the  sets 
$\Des^=(v)$, $\Asc^=(v)$, $\Des^<(v)$, and $\Asc^<(v)$.

\begin{proposition}\label{new-des-prop2}
Suppose  $v \in \cG$ where $W\in \{S_n, \W_n, \WD_n\}$.
\ben
\item[(a)] Suppose $s=s_i$ for $i \in [n-1]$. Then:
\begin{itemize}
\item $s \in \Des^=(v)$ if and only if $v(i)=i+1$ or $ v(i) = -i-1$.
\item $s \in \Asc^=(v)$ if and only if $n<v(i) < v(i+1)$ or $v(i) < v(i+1) < -n$.
\item $s \in \Des^<(v)$ if and only if $i+1\neq v(i)>v(i+1)\neq i$.
\item $s \in \Asc^<(v)$ in all other cases.

\end{itemize}

\item[(b)] Suppose $W=\WB_n$. Then 
$s_0 \in \Des^<(v)$ if $z(1) < 0$ and $s_0\in \Asc^<(v)$ if $v(1) >0$.

\item[(c)] Suppose $W=\WD_n$. Then:
\begin{itemize}
\item $s_{-1} \in \Des^=(v)$ if and only if $v(1) =2$ or $v(1)= - 2$.

\item $s_{-1}  \in \Asc^=(v)$ if and only if $v(1) < -n < n < v(2)$ or $ v(2) < -n < n < v(1)$.

\item $s_{-1}  \in \Des^<(v)$ if $s_{-1} \notin \Des^=(v) \sqcup \Asc^=(v)$ and $-v(2) > v(1)$.
%
%
\item $s_{-1}  \in \Asc^<(v)$ if  $s_{-1} \notin \Des^=(v) \sqcup \Asc^=(v)$ and $-v(2) < v(1)$.
\end{itemize}

\een
\end{proposition}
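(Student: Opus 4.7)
The plan is to verify each claim by direct computation, combining (i) the explicit structure of $\cG$ from Proposition~\ref{new-des-prop1}, which pins down the values of $v$ on positions greater than $n$; (ii) the standard descent formula in Proposition~\ref{old-des-prop} applied to the ambient group $S_{2n}$, $\W_{2n}$, or $\WD_{2n}$; and (iii) the identity $\ell(sv) = \ell(vs)$, valid since $v = v^{-1}$, which converts left descents into right descents.

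For part (a) with $s = s_i$ and $i \in [n-1]$, I would proceed in three steps. First, since $v$ is a fixed-point-free involution in $\W_{2n}$ (with $|v(k)| \neq k$), the equation $s_i v s_i = v$ holds precisely when $v$ contains either the 2-cycle $(i,i+1)$ or the 2-cycle $(i,-i-1)$, yielding $v(i) \in \{i+1,-i-1\}$ for $\Des^=(v)$. Second, the conjugate $v s_i v^{-1}$ equals the signed reflection $(v(i),v(i+1))(-v(i),-v(i+1))$, which coincides with some $s_j = (j,j+1)(-j,-j-1)$ for $j>n$ iff $\{v(i),v(i+1)\}$ equals $\{j, j+1\}$ or $\{-j-1,-j\}$; i.e., iff $v(i)$ and $v(i+1)$ are consecutive integers both $>n$ or both $<-n$. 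The key sub-lemma needed is that, for $v \in \cG$, the weaker condition $n < v(i) < v(i+1)$ (or $v(i) < v(i+1) < -n$) already forces consecutive values: letting $a = v(i)$ and $b = v(i+1)$, the involution property gives $v(a)=i$ and $v(b)=i+1$, with $n < a < b$ and $v(a), v(b) \leq n$; by Proposition~\ref{new-des-prop1}, $a$ and $b$ must lie in the strictly increasing block $\{n+1,\dots,n+i'\}$, and since no integer lies strictly between $i$ and $i+1$, no position lies strictly between $a$ and $b$ in that block, forcing $b = a+1$. Third, by Proposition~\ref{old-des-prop}(a), $\ell(s_i v) < \ell(v)$ iff $v(i) > v(i+1)$; removing from this the cases in $\Des^=(v) \sqcup \Asc^=(v)$ (noting that $v(i)>v(i+1)$ automatically excludes $\Asc^=$, while its overlap with $\Des^=$ is precisely $v(i) = i+1$, equivalently $v(i+1) = i$) yields the characterization of $\Des^<(v)$, and the characterization of $\Asc^<(v)$ is what remains.

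Parts (b) and (c) follow the same template. For (b), since $|v(1)| \neq 1$ we see immediately that $s_0 = (-1,1)$ does not commute with $v$; and since $v s_0 v^{-1} = (v(1),-v(1))$ has the form $(a,-a)$, it is never of the form $s_j = (j,j+1)(-j,-j-1)$ for $j>n$. Thus $s_0 \notin \Des^=(v) \sqcup \Asc^=(v)$, and the sign of $v(1)$ (via Proposition~\ref{old-des-prop}(b)) alone determines whether $s_0$ lies in $\Des^<(v)$ or $\Asc^<(v)$. For (c), conjugation by $s_{-1} = (1,-2)(-1,2)$ fixes $v$ iff the cycles of $v$ through $\pm 1,\pm 2$ assemble into either $(1,2)(-1,-2)$ or $(1,-2)(-1,2)$, giving $v(1) \in \{2,-2\}$; and the conjugate $v s_{-1} v^{-1} = (v(1),-v(2))(-v(1),v(2))$ matches some $s_j$ with $j>n$ iff $\{v(1), -v(2)\}$ equals $\{j, j+1\}$ or $\{-j-1, -j\}$, which unwinds into the two asymmetric sign configurations $v(1) < -n < n < v(2)$ or $v(2) < -n < n < v(1)$. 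The $\Des^<$ and $\Asc^<$ conditions for $s_{-1}$ then read off from Proposition~\ref{old-des-prop}(c) after removing $\Des^= \sqcup \Asc^=$.

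The main obstacle, modest but genuinely necessary, is the sub-lemma in part (a): that ``$n < v(i) < v(i+1)$'' for $v \in \cG$ automatically forces $v(i+1) = v(i) + 1$. This is where the explicit structure of $\cG$ from Proposition~\ref{new-des-prop1} (strict monotonicity of $v$ on positions $n+1, \dots, n+i'$) plays an essential role; the remainder of the proof is a direct translation of the definitions of $\Des^=$, $\Asc^=$, $\Des^<$, and $\Asc^<$ into one-line notation.
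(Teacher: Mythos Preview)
Your proposal is correct and follows essentially the same approach as the paper, which merely says the result is ``straightforward from \eqref{intro-des-1} and \eqref{intro-des-2} using Propositions~\ref{old-des-prop} and \ref{new-des-prop1}.'' You have fleshed out precisely this computation, including the one nontrivial step (your sub-lemma that $n<v(i)<v(i+1)$ forces consecutiveness via the monotone block in Proposition~\ref{new-des-prop1}), which the paper leaves implicit.
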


\begin{proof}
This  is straightforward from  \eqref{intro-des-1} and \eqref{intro-des-2}
using Propositions~\ref{old-des-prop} and \ref{new-des-prop1}. 
\end{proof}

\subsection{Parabolic induction}

Suppose $\cM$ is a right $\cA$-module and $\cN$ is a left $\cA$-module for some ring $\cA$.
Then one can form the tensor product $\cM \otimes_\cA \cN$
as the quotient of the additive group $\cM \times \cN$ by the subgroup generated by all elements of the form
$(M+M', N) - (M,N) - (M',N)$ or
$ (M,N+N') - (M,N)-(M,N')$ or
$ (Ma,N)-(M,aN)$
for $M,M'\in \cM$, $N,N'\in \cN$, and $a \in \cA$.
We write $M\otimes_\cA N$ for the element of this quotient represented by $(M,N)$.
The tensor product $\cM\otimes_\cA\cN$ is naturally an additive abelian group. 
When $\cB$ is another ring and $\cM$ is a $(\cB,\cA)$-bimodule,
 $\cM \otimes_\cA \cN$ is also a left $\cB$-module.

Let $(W,S)$ be a Coxeter system with Iwahori-Hecke algebra $\H$. Fix a subset $J \subseteq S$ and let $\H_J$ be the Iwahori-Hecke algebra of $(W_J,J)$.
%
Suppose $\cM$ is a left $\H_J$-module. The algebra $\H$ is itself  an $(\H,\H_J)$-bimodule 
so, as in \cite[\S9.1]{GeckPfeiffer}, we can form the induced left $\H$-module
\[\Ind_{J}^{S}(\cM) := \H \otimes_{\H_J} \cM.\]
Assume further that $\cM$  is free as a $\LL$-module with basis $\{ M_z : z \in Z\}$.
Then $\Ind_J^S(\cM)$ is free as an $\LL$-module with basis 
$\{ H_w \otimes_{\H_J} M_z : (w,z) \in W^J\times Z\}$
where \be
\label{W^J-eq}
W^J := \{ w \in W : \ell(ws) > \ell(w)\text{ for all }s \in J\}.
\ee
If $s \in S$ and $w \in W^J$, then either $sw \in W^J$ in which case 
\[ H_s  H_w \otimes_{\H_J} M_z = \begin{cases} H_{sw}\otimes_{\H_J} M_z  &\text{if }\ell(sw) > \ell(w), \\
H_{sw}\otimes_{\H_J} M_z  + (x-x^{-1}) H_{w}\otimes_{\H_J} M_z &\text{if }\ell(sw) < \ell(w),\end{cases}
\]
or $sw \notin W^J$ in which case $t := w^{-1} ws \in J$  (see \cite[Lem. 3.2]{HowlettYin}) and 
\[
H_s  H_w \otimes_{\H_J} M_z 
=
H_w \otimes_{\H_J} H_tM_z.
\]

\subsection{Canonical bases}

We have adapted the following definitions from \cite{Webster}.

\begin{definition}
Suppose $\cM$ is a free $\LL$-module.
A \emph{(balanced) pre-canonical structure} on $\cM$ consists of 
a \emph{bar involution} $\psi : \cM \to \cM$ that is an antilinear map with $\psi^2=1$,
along with  a \emph{standard basis} $\{ M_z\}_{z\in Z}$ with partially ordered index set $(Z,\leq)$ satisfying
\[ \psi(M_z) \in M_z + \sum_{y<z} \ZZ[x,x^{-1}] M_{y}\qquad\text{for all }z \in Z.\]
Assume further that $\cM$ is  an $\cA$-module for some $\LL$-algebra $\cA$ with its own pre-canonical structure in which the bar involution is also denoted $\psi$.
A pre-canonical structure
$(\psi, \{ M_z\}_{z\in Z})$ on $\cM$ is \emph{$\cA$-compatible}
if $\psi(AM) = \psi(A)\psi(M)$ for all $A \in \cA$ and $M \in \cM$.
\end{definition}

Pre-canonical structures are closely related to \emph{$P$-kernels} as defined in \cite{StanleyPKernel};
see \cite[\S2.2]{M2014}.

\begin{definition}
A \emph{canonical basis} for 
a pre-canonical structure
$(\psi, \{ M_z\}_{z\in Z})$ 
on 
a free $\LL$-module $\cM$
is a basis $\{\underline M_z\}_{z \in Z}$ with
$\underline M_z = \psi(\underline M_z) \in M_z + \sum_{y<z} x^{-1} \ZZ[x^{-1}] M_{y}$
for all $z \in Z$.
\end{definition}


Let $(W,S)$ be a Coxeter system with Iwahori-Hecke algebra $\H$.
The \emph{Bruhat order} on  $W$ is the transitive closure of the relation with $w<wt$ for all $w \in W$ and $t \in \{ wsw^{-1} : (w,s) \in W\times S\}$ with 
$\ell(w) < \ell(wt)$.
The basis $\{ H_w : w \in W\}$, in which $W$ is ordered by $<$, together with 
the bar operator $H\mapsto \overline H$ is a pre-canonical structure on 
$\H$; see \cite[\S6.1]{CCG}.
 The \emph{Kazhdan-Lusztig basis} from \cite{KL} is the unique canonical basis for this structure.
 This uniqueness is typical:
 
%

\begin{lemma}[{See \cite[Prop. 1.10]{Webster}}]\label{webster-lem}
A pre-canonical structure on a free $\LL$-module admits at most one canonical basis.
\end{lemma}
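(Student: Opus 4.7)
The plan is to assume $\{\underline M_z\}_{z \in Z}$ and $\{\underline M'_z\}_{z \in Z}$ are two canonical bases for the same pre-canonical structure $(\psi, \{M_z\}_{z \in Z})$ and show their difference vanishes term-by-term. Fix $z \in Z$ and consider $a := \underline M_z - \underline M'_z$. Since both bases have the form $\underline M_z \in M_z + \sum_{y<z} x^{-1}\ZZ[x^{-1}] M_y$, the leading $M_z$ terms cancel and $a \in \sum_{y < z} x^{-1}\ZZ[x^{-1}] M_y$; since both are bar-invariant, $\psi(a) = a$. It therefore suffices to establish the general claim that any $\psi$-invariant element $a \in \sum_{y\in Z} x^{-1}\ZZ[x^{-1}] M_y$ must be zero.

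To prove this claim, I will write $a = \sum_y c_y M_y$ with each $c_y \in x^{-1}\ZZ[x^{-1}]$ (and only finitely many $c_y \neq 0$), and argue by contradiction. Assuming $a \neq 0$, choose $y_0$ maximal with respect to the partial order on $Z$ among those indices with $c_{y_0} \neq 0$. The triangularity condition $\psi(M_y) \in M_y + \sum_{w < y} \ZZ[x,x^{-1}] M_w$ implies that $\psi(M_y)$ contributes to the $M_{y_0}$-coefficient of $\psi(a) = \sum_y \overline{c_y}\psi(M_y)$ only when $y = y_0$ or $y > y_0$. The latter option is excluded by the maximality of $y_0$ in the support of $a$, so the coefficient of $M_{y_0}$ in $\psi(a)$ equals $\overline{c_{y_0}}$. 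Comparing with the coefficient of $M_{y_0}$ in $a$ via $\psi(a)=a$ yields $c_{y_0} = \overline{c_{y_0}}$.

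Since $c_{y_0} \in x^{-1}\ZZ[x^{-1}]$ while $\overline{c_{y_0}} \in x\ZZ[x]$, and these two subsets intersect only at $0$, we conclude $c_{y_0} = 0$, contradicting our choice. There is no serious obstacle here: the entire argument is a single triangularity step of the type familiar from the standard construction of the Kazhdan--Lusztig basis reviewed just above the lemma. The only mild subtlety worth flagging is that one must confirm the support of $a$ is finite (which holds because $a$ is expressed in a basis of a free $\LL$-module) so that a maximal index $y_0$ in the support actually exists.
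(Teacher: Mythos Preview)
Your argument is correct and is precisely the standard one: the difference of two canonical basis elements is $\psi$-fixed with all coefficients in $x^{-1}\ZZ[x^{-1}]$, and picking a maximal index in the (finite) support forces that coefficient to be bar-invariant, hence zero. The paper does not supply its own proof of this lemma---it simply cites \cite[Prop.~1.10]{Webster}---so there is nothing to compare against beyond noting that your proof is exactly the expected one.
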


Although saying anything concrete about this basis is difficult in general, in many situations 
a unique canonical basis is guaranteed to exist.
A \emph{lower interval} in a poset $(Z,\leq)$ is a set of the form $\{ y \in Z: y \leq z\}$ for a fixed $z \in Z$.
The following lemma is equivalent to a result of Du.

\begin{lemma}[{See \cite[Thm. 1.2 and Rmk. 1.2.1(1)]{Du}}] \label{du-lem}
A pre-canonical structure $(\psi, \{ M_z\}_{z\in Z})$
admits a unique canonical basis if all lower intervals in the indexing poset $(Z,\leq)$ are finite.
\end{lemma}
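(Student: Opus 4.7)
Uniqueness follows immediately from Lemma~\ref{webster-lem}, so the real content is existence. The plan is to construct $\underline M_z$ for each $z \in Z$ by induction along a linear extension of the partial order restricted to the (finite) lower interval $\{y \in Z : y \leq z\}$, recovering the standard Kazhdan--Lusztig recursion in an abstract setting. The base case is trivial: at a minimal element $y$ of such an interval, triangularity forces $\psi(M_y) = M_y$, so $\underline M_y := M_y$ works.

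For the inductive step, fix $z$ and suppose $\underline M_y$ has already been built for every $y < z$. Triangularity makes $\{M_z\} \cup \{\underline M_y : y < z\}$ an $\LL$-basis for the $\LL$-span of $\{M_y : y \leq z\}$, so one can uniquely expand
\[\psi(M_z) = M_z + \sum_{y < z} a_{yz}\, \underline M_y\]
with coefficients $a_{yz} \in \LL$, almost all of them zero. Applying $\psi$ again and using $\psi^2 = 1$ together with $\psi(\underline M_y) = \underline M_y$ gives $a_{yz} + \overline{a_{yz}} = 0$ for each $y < z$.

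The key auxiliary step is Lusztig's splitting lemma: any $a \in \LL$ with $a + \overline a = 0$ admits a unique decomposition $a = \overline b - b$ with $b \in x^{-1} \ZZ[x^{-1}]$. Indeed, writing $a = \sum_k c_k x^k$, the symmetry relation forces $c_0 = 0$ and $c_{-k} = -c_k$, so one must take $b := \sum_{k > 0} c_k x^{-k}$. Applying this to each $a_{yz}$ produces $b_{yz} \in x^{-1}\ZZ[x^{-1}]$, and one defines
\[\underline M_z := M_z - \sum_{y < z} b_{yz}\, \underline M_y.\]
A direct calculation using $a_{yz} - \overline{b_{yz}} = -b_{yz}$ confirms $\psi(\underline M_z) = \underline M_z$, and reexpanding each $\underline M_y$ in the standard basis $\{M_w\}$ yields $\underline M_z \in M_z + \sum_{y < z} x^{-1} \ZZ[x^{-1}] M_y$, since $x^{-1}\ZZ[x^{-1}]$ is closed under addition and multiplication. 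There is no serious obstacle here: the finiteness of lower intervals is precisely what makes the induction well-founded and keeps all the relevant sums finite, while the splitting lemma handles the one nontrivial arithmetic step that drives the original Kazhdan--Lusztig construction as well.
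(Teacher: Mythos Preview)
Your proof is correct and is exactly the standard Kazhdan--Lusztig style recursion that underlies Du's result; the paper itself does not give a proof but simply cites \cite[Thm.~1.2 and Rmk.~1.2.1(1)]{Du}, so there is nothing further to compare.
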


This lemma applies, for example, to the pre-canonical structure on $\H$.
The following proposition summarizes the main applications of the material in this section.

 \begin{proposition}\label{induced-prop}
Fix a subset $J\subseteq S$, write $\H_J$ for the Iwahori-Hecke algebra of $(W_J,J)$, and assume $\cM$ is an $\H_J$-module
with a pre-canonical structure
 $(\psi, \{M_z\}_{z \in Z})$ that is $\H_J$-compatible.
 The basis $\{ H_w \otimes_{\H_J} M_z \}_{(w,z) \in W^J\times Z}$,
partially ordered such that $(w,z) \leq (w',z')$ if $w\leq w'$ in Bruhat order and or if 
$w=w'$ and $z\leq z'$, 
along with the antilinear map sending 
\[H  \otimes_{\H_J} M\mapsto \overline{H}\otimes_{\H_J} \psi(M)\quad\text{for }(H,M) \in \H\times \cM,\]
is an $\H$-compatible pre-canonical structure on $\Ind_J^S(\cM)$.

 \end{proposition}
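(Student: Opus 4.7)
The plan is to establish four properties of the proposed antilinear operator $\tpsi : H \otimes_{\H_J} M \mapsto \overline{H} \otimes_{\H_J} \psi(M)$ together with the basis $\{H_w \otimes_{\H_J} M_z\}_{(w,z) \in W^J \times Z}$ of $\Ind_J^S(\cM)$: namely, (i) that $\tpsi$ descends to the tensor product, (ii) that $\tpsi$ is an antilinear involution, (iii) that $\tpsi$ is $\H$-compatible, and (iv) the triangularity condition required for a pre-canonical structure. Steps (i)--(iii) are formal, while step (iv) contains the real work.

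For step (i), the key calculation is that for any $A \in \H_J$,
\[\overline{HA} \otimes_{\H_J} \psi(M) = \overline{H}\cdot\overline{A} \otimes_{\H_J} \psi(M) = \overline{H} \otimes_{\H_J} \overline{A}\,\psi(M) = \overline{H} \otimes_{\H_J} \psi(AM),\]
where the first equality uses that $\overline{\cdot}$ is a ring involution on $\H$ whose restriction to $\H_J$ coincides with the bar operator of $\H_J$, the second passes $\overline{A} \in \H_J$ across the tensor, and the third invokes the $\H_J$-compatibility hypothesis $\psi(AM) = \psi(A)\psi(M) = \overline{A}\,\psi(M)$. Steps (ii) and (iii) follow immediately: antilinearity and involutivity transfer factorwise from $\overline{\cdot}$ and $\psi$, while $\tpsi\bigl(H \cdot (H' \otimes_{\H_J} M)\bigr) = \overline{HH'} \otimes_{\H_J} \psi(M) = \overline{H}\cdot\tpsi(H' \otimes_{\H_J} M)$ gives $\H$-compatibility.

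The triangularity in step (iv) is where the argument has substance. For $(w, z) \in W^J \times Z$, I would expand
\[\tpsi(H_w \otimes_{\H_J} M_z) = \Bigl(H_w + \sum_{y < w} c_{yw} H_y\Bigr) \otimes_{\H_J} \Bigl(M_z + \sum_{u < z} d_{uz} M_u\Bigr)\]
using the pre-canonical structures on $\H$ and $\cM$. Terms of the form $H_w \otimes_{\H_J} M_u$ with $u < z$ are already basis vectors indexed by $(w, u) < (w, z)$. Each remaining error term $H_y \otimes_{\H_J} N$ with $y \in W$, $y < w$, must be re-expressed in the basis $\{H_{w'} \otimes_{\H_J} M_{z'}\}$: I would apply the parabolic factorization $y = y^J y_J$ with $y^J \in W^J$, $y_J \in W_J$, $\ell(y) = \ell(y^J) + \ell(y_J)$, write $H_y \otimes_{\H_J} N = H_{y^J} \otimes_{\H_J} H_{y_J} N$, and expand $H_{y_J} N \in \cM$ in the standard basis of $\cM$. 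The main (though mild) obstacle is the sublemma that $y < w$ and $w \in W^J$ force $y^J < w$ in Bruhat order: if instead $y^J = w$, then $y = w y_J$ with $\ell(y) = \ell(w) + \ell(y_J) \geq \ell(w)$, which combined with $y \leq w$ and the monotonicity of $\ell$ under Bruhat order forces $y_J = 1$ and hence $y = w$, contradicting $y < w$. Consequently every term in the expansion has the form $H_{w'} \otimes_{\H_J} M_{z'}$ with $w' < w$, so certainly $(w', z') < (w, z)$ in the stated partial order. This is exactly the required triangularity, and the proposition follows.
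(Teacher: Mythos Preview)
Your proof is correct and follows the same route as the paper: well-definedness via the $\H_J$-balance relation, formal $\H$-compatibility, and triangularity by parabolically factoring each lower-order $H_y$. The one wrinkle is your sublemma: you only rule out $y^J = w$, not the possibilities $y^J > w$ or $y^J$ incomparable to $w$; the standard fact $y^J \leq y$ in Bruhat order (which the paper invokes as ``$v' < v$'') gives $y^J \leq y < w$ directly and renders your case analysis unnecessary.
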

 
 \begin{proof}
It is easy to check that the antilinear map  sending 
$(H,M)\mapsto (\overline{H}, \psi(M))$ preserves the kernel of $\cH \times \cM \to \Ind_J^S(\cM)$,
so descends to an involution of $ \Ind_J^S(\cM)$. If we denote this map again by $\psi$,
then it is clear from the definition that $\psi(H_1 \cdot H_2\otimes_{\H_J} M) = \overline{H_1H_2}\otimes_{\H_J} \psi(M) = \overline{H_1} \cdot \psi(H_2\otimes_{\H_J} M)$ for all $H_1,H_2 \in \H$ and $M\in\cM$.
Now observe that 
\[ \psi(H_w \otimes_{\H_J} M_z) = \overline{H_w} \otimes_{\H_J} \psi(M) \in H_w \otimes_{\H_J} M_z + \sum_{
\substack{
(v,y) \in W \times Z  \\
v \leq w,\ y\leq z \\ 
(v,y) \neq (w,z)}
} \LL H_v \otimes_{\H_J} M_y.
\]
This is almost the desired triangularity condition for a bar involution on $\Ind_J^S(\cM)$, except the sum on the right varies over
$ W\times Z$ rather than $ W^J\times Z$. This is no problem, however,
since whenever $v \in W$ but $v\notin W^J$ we can write $v=v'v''$ where $v' \in W^J$ has $v' < v$ and $v'' \in W_J$,
in which case $H_v \otimes_{\H_J} M_y = H_{v'} \otimes_{\H_J} H_{v''}M_y \in \sum_{y' \in Z} \LL H_{v'} \otimes M_{y'}$.
 \end{proof}

%
 
 \section{Results}

This section contains  proofs of the theorems in the introduction and a few more general results.

 \subsection{Models for classical groups}\label{models-sect}
 
We split the proof of  Theorem~\ref{main-thm1} into three parts
 for types $\A$, $\BC$, and $\D$ below. In view of the description of the perfect involutions for
 classical Weyl groups in Section~\ref{perfect-subsect},
it remains only to show that $\sum_{(J,z_{\min},\sigma)} \Ind_{C_{W_J}(z_{\min})}^W\Res_{C_{W_J}(z_{\min})}^{W_J}(\sigma) = \sum_{\chi  \in \Irr(W)} \chi$ when the first sum ranges over the model triples listed in 
Definition~\ref{main-thm1-def}. We check this directly after recalling a few standard facts about the irreducible characters of $S_n$, $\W_n$, and $\WD_n$.

 Fix an integer $n\in \NN$. 
 A \emph{partition} of $n$ is a weakly decreasing sequence of nonnegative integers 
 $\lambda = (\lambda_1 \geq \lambda_2 \geq \dots\geq 0)$ with $\sum_i \lambda_i = n$.
 To indicate that $\lambda$ is a partition of $n$ we write $\lambda \vdash n$.
 We also let $\ell(\lambda)$ denote the number of parts $\lambda_i>0$.
 The \emph{(Young) diagram} of $\lambda \vdash n$ is the set of positions 
 $
 \D_\lambda :=\{ (i,j) \in [\ell(\lambda)] \times \ZZ: 1 \leq j \leq \lambda_i\},$
 oriented as in a matrix.
We let $\chi^\lambda$ denote the usual irreducible character of $S_n$ indexed by $\lambda \vdash n$ 
(as given, for example, by \cite[Def. 5.4.4]{GeckPfeiffer}), so that $\lambda \mapsto \chi^\lambda$ is a bijection from the set of partitions of $n$ to $\Irr(S_n)$.

Given $p,q\in \NN$ with $p+q=n$, we identify 
$S_p \times S_q = \langle s_1,\dots,s_{p-1},s_{p+1},\dots,s_{n-1}\rangle \subset S_n$.
If $G$ and $H$ are finite groups, then for any characters $\chi : G\to \CC$ and $\psi  : H \to \CC$
 we write $\chi \times \psi  :G\times H \to \CC$
for the character  that maps $(g,h) \mapsto \chi(g) \psi(h) $.
 The \emph{Littlewood-Richardson coefficients} $c_{\lambda\mu}^\nu\geq 0$
 are the integers with 
 $\Ind_{S_p\times S_q}^{S_n}(\chi^\lambda \times \chi^\mu) = \sum_{\nu \vdash n} c_{\lambda\mu}^\nu \chi^\nu.$
The well-known \emph{Pieri formula} states that
 if $\mu = (1,1,\dots,1)\vdash q$, so that $\chi^\mu = \sgn$,
 then $c_{\lambda\mu}^\nu$ is either one or zero, with $c_{\lambda\mu}^\nu = 1$ if and only if $\D_\nu$ is formed by adding $q$ positions to $\D_\lambda$ in distinct rows.
 
When $W=S_n$, Theorem~\ref{main-thm1} is equivalent to the main result in
\cite{IRS}. The proof below, included for completeness, is not significantly different from the short argument in \cite{IRS}.

 \begin{proof}[Proof of Theorem~\ref{main-thm1} in type $\A$; see \cite{IRS}]
 Let $z_{\min} = s_1s_3s_5\cdots s_{2k-1}$ and write $\one $ for the trivial character.
Then $\Ind_{C_{S_{2k}}(z_{\min})}^{S_{2k}}(\one) = \sum_\lambda \chi^\lambda$ where the sum is over the partitions $\lambda \vdash 2k$ with all even parts \cite[Lem. 1]{IRS}. As   $H_k := C_{S_{2k} \times S_{n-2k}}(z_{\min}) \cong C_{S_{2k}}(z_{\min}) \times S_{n-2k}$,
it follows that
 $\Ind_{H_k}^{S_n} \Res_{H_k}^{S_{2k}\times S_{n-2k}}(\one \times \sgn)=\Ind_{S_{2k}\times S_{n-2k}}^{S_n} (\Ind_{C_{S_{2k}}(z_{\min})}^{S_{2k}}(\one) \times \sgn)=  \sum_\lambda \chi^\lambda$
 where the sum is over $\lambda \vdash n$ with $n-2k$ odd parts \cite[Lem. 2]{IRS}.
Summing over $k$ gives $\sum_{\lambda \vdash n}\chi^\lambda$ as needed.
 \end{proof}
 
 A \emph{bipartition} of $n$ is a pair $(\lambda,\mu)$ of partitions with $\lambda \vdash p$ and $\mu \vdash q$ for some $p,q \in \NN$ satisfying $p+q=n$.
 To indicate this situation, we write $(\lambda,\mu) \vdash n$. 
 The irreducible characters $\chi^{(\lambda,\mu)}$ of $\W_n$ are uniquely indexed by the bipartitions $(\lambda,\mu)\vdash n$.
We define $\chi^{(\lambda,\mu)}$ as in \cite[\S5.5]{GeckPfeiffer},
 so that $\one = \chi^{((n),\emptyset)}$ and $\sgn = \chi^{(\emptyset,(1,1,\dots,1))}$.
Fix $p,q\in \NN$ with $p+q=n$ and identify 
\[
 \ba
 \W_p \times S_q &= \langle s_0, s_1,\dots,s_{p-1},s_{p+1},\dots,s_{n-1}\rangle \subset \W_n,
 \\
 \W_p \times \W_q &= \langle s_0, s_1,\dots,s_{p-1},t_p,s_{p+1},\dots,s_{n-1}\rangle \subset \W_n,
\ea 
 \]
  where we set $t_0 := s_0$, $t_n:=1$, and $t_p := s_p \cdots s_2s_1s_0s_1s_2\cdots s_p  \in \W_n$ for $0<p<n$.
 
 \begin{lemma}\label{bc-irr-lem}
 Suppose $(\lambda,\mu)\vdash p$. Then $ \Ind_{\W_p \times S_q}^{\W_n}( \chi^{(\lambda,\mu)}\times \sgn)=\sum_{(\nu,\gamma)} \chi^{(\nu,\gamma)}$  where the sum is over all bipartitions $(\nu,\rho)\vdash n$ such that $\D_\nu$ is formed by 
 adding $ k$ positions to $\D_\lambda$ in distinct rows and $\D_\gamma$ is formed by adding $l$ positions to $\D_\mu$ in distinct rows, where $k+l = q$.
 \end{lemma}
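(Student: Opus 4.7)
The plan is to use transitivity of induction through the intermediate subgroup $\W_p \times \W_q \subset \W_n$, which reduces the lemma to two well-known ingredients: a branching/Pieri rule for $\W_q \supset S_q$, and the standard wreath-product Pieri rule for $\W_n \supset \W_p \times \W_q$.

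First, I would write
\[
\Ind_{\W_p \times S_q}^{\W_n}(\chi^{(\lambda,\mu)} \times \sgn)
= \Ind_{\W_p \times \W_q}^{\W_n}\bigl(\chi^{(\lambda,\mu)} \times \Ind_{S_q}^{\W_q}(\sgn)\bigr).
\]
The next step is to identify $\Ind_{S_q}^{\W_q}(\sgn)$. Using Frobenius reciprocity together with the classical restriction rule $\chi^{(\alpha,\beta)}|_{S_q} = \Ind_{S_{|\alpha|}\times S_{|\beta|}}^{S_q}(\chi^\alpha\times\chi^\beta)$, one computes
\[
\langle \chi^{(\alpha,\beta)}, \Ind_{S_q}^{\W_q}(\sgn)\rangle
= \langle \chi^\alpha\times\chi^\beta, \sgn\times\sgn\rangle,
\]
which is $1$ precisely when $\alpha = (1^{|\alpha|})$ and $\beta = (1^{|\beta|})$. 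Hence
\[
\Ind_{S_q}^{\W_q}(\sgn) = \sum_{k+l=q}\chi^{((1^k),(1^l))}.
\]

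Next I would invoke the standard Pieri-type rule for type $\BC$ induction from $\W_p\times\W_q$ to $\W_n$: the relevant Littlewood-Richardson coefficients factor as
\[
c^{(\nu,\gamma)}_{(\lambda,\mu),(\alpha,\beta)} = c^\nu_{\lambda,\alpha}\, c^\gamma_{\mu,\beta},
\]
a fact essentially coming from the wreath-product construction and the explicit description of $\chi^{(\nu,\gamma)}$ in \cite[\S5.5]{GeckPfeiffer}. Applying this with $(\alpha,\beta)=((1^k),(1^l))$ and then invoking the classical Pieri formula (already recalled in the excerpt) to each factor identifies $c^\nu_{\lambda,(1^k)}$ as the indicator that $\D_\nu$ is obtained from $\D_\lambda$ by adding $k$ boxes in distinct rows, and similarly for $c^\gamma_{\mu,(1^l)}$. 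Summing over all $(k,l)$ with $k+l=q$ yields the claimed formula.

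The main obstacle, if any, is locating or deriving the factorization $c^{(\nu,\gamma)}_{(\lambda,\mu),(\alpha,\beta)} = c^\nu_{\lambda,\alpha}\,c^\gamma_{\mu,\beta}$ in the form needed here; this is standard and can be cited from \cite{GeckPfeiffer}, but one has to be careful to match conventions (which $\chi^{(\lambda,\mu)}$ corresponds to the trivial and sign representations). Once this is fixed, the rest is formal manipulation of induced characters.
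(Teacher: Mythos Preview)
Your argument is correct and follows essentially the same route as the paper: transitivity of induction through $\W_p\times\W_q$, the identification $\Ind_{S_q}^{\W_q}(\sgn)=\sum_{k+l=q}\chi^{((1^k),(1^l))}$, and the factorization of the type $\BC$ Littlewood--Richardson coefficients combined with the classical Pieri rule. The only cosmetic difference is that the paper computes $\Ind_{S_q}^{\W_q}(\sgn)$ by specializing the general formula $\Ind_{S_n}^{\W_n}(\chi^\nu)=\sum_{(\lambda,\mu)}c_{\lambda\mu}^\nu\chi^{(\lambda,\mu)}$ rather than via Frobenius reciprocity, but this is the same fact viewed from the dual side.
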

 
 This lemma is a straightforward exercise and probably well-known. We include a short proof.

 \begin{proof}
 Using the construction of $\Irr(\W_n)$ in \cite[\S5.5.4]{GeckPfeiffer},
one can show that 
 $\Ind_{S_n}^{\W_n}(\chi^\nu) = \sum_{(\lambda,\mu)\vdash n} c_{\lambda\mu}^\nu \chi^{(\lambda,\mu)}
$
and
$
 \Ind_{\W_p\times \W_q}^{\W_n} ( \chi^{(\lambda,\alpha)} \times \chi^{(\mu,\beta)}) = \sum_{(\nu,\gamma) \vdash n} c_{\lambda\mu}^\nu c_{\alpha\beta}^{\gamma} \chi^{(\nu,\gamma)}$ for all $\nu \vdash n$, $(\lambda,\alpha)\vdash p$, and $(\mu,\beta)\vdash q$. 
 Thus
 $\Ind_{S_n}^{\W_n}(\sgn) = \sum_{p+q=n} \chi^{(1^p,1^q)}$
 where $1^n := (1,1,\dots,1)\vdash n$.
Now apply these facts to
$  \Ind_{\W_p \times S_q}^{\W_n}( \chi^{(\lambda,\mu)}\times \sgn)=\Ind_{\W_p \times \W_q}^{\W_n}( \chi^{(\lambda,\mu)} \times \Ind_{S_q}^{\W_q}(\sgn))  $.
 \end{proof}

  \begin{proof}[Proof of Theorem~\ref{main-thm1} in type $\BC$]
  Again let $z_{\min} = s_1s_3s_5\cdots s_{2k-1}$.
  It follows as a special case of \cite[Prop. 1]{Baddeley}
that $\Ind_{C_{\W_{2k}}(z_{\min})}^{\W_{2k}}(\one) $ is the sum of $ \chi^{(\lambda,\mu)}$ over all $(\lambda,\mu) \vdash 2k$ in which  $\lambda$ and $\mu$ both have all even parts.
As
 $H_k := C_{\W_{2k} \times S_{n-2k}}(z_{\min}) \cong C_{\W_{2k}}(z_{\min}) \times S_{n-2k}$, 
 Lemma~\ref{bc-irr-lem} implies that
 $\Ind_{H_k}^{\W_n} \Res_{H_k}^{\W_{2k}\times S_{n-2k}}(\one \times \sgn)=\Ind_{\W_{2k}\times S_{n-2k}}^{\W_n} \(\Ind_{C_{\W_{2k}}(z_{\min})}^{\W_{2k}}(\one) \times \sgn\)=  \sum_{(\lambda,\mu)} \chi^{(\lambda,\mu)}$
 where the sum is over all $(\lambda,\mu) \vdash n$ in which 
$\lambda$ and $\mu$ together have $n-2k$ odd parts in total.
Summing this over all $k\in \NN$ with $0\leq 2k \leq n$  gives $\sum_{(\lambda,\mu) \vdash n}\chi^{(\lambda,\mu)}$ as needed.
  \end{proof}
  
  An \emph{unordered bipartition} of $n$ is a set $\{\lambda,\mu\}= \{\mu,\lambda\}$ in which $\lambda \vdash p$ and $\mu \vdash q$ for some $p,q \in \NN$ with $p+q=n$. To indicate this situation we write
  $\{\lambda,\mu\} \vdash n$. 
  When $n$ is even and $\nu\vdash n/2$, we consider the singleton set $\{\nu\} = \{\nu,\nu\} $ to be an unordered bipartition of $n$.
  
  For each $\{\lambda,\mu\}\vdash n$, define $\chi^{\{\lambda,\mu\}} = \Res_{\WD_n}^{\W_n}(\chi^{(\lambda,\mu)})$.
  If $n$ is odd then $\{\lambda,\mu\}\mapsto \chi^{\{\lambda,\mu\}}$ is a bijection 
  from the set of unordered bipartitions of $n$ to $\Irr(\WD_n)$.
  If $n$ is even and $\nu \vdash n/2$ then $\chi^{\{\nu\}}$ decomposes as the sum of two distinct irreducible characters $\chi^{\{\nu\}}_+$ and $\chi^{\{\nu\}}_-$. In this case, 
  every irreducible character of $\WD_n$ is uniquely given either by $\chi^{\{\lambda,\mu\}}$
  for some $\{\lambda,\mu\}\vdash n$ with $\lambda \neq \mu$ or by 
  $\chi^{\{\nu\}}_+$ or  $\chi^{\{\nu\}}_-$ for some $\nu \vdash n/2$.
For either parity of $n$ we have 
  \be\label{d-irr-eq} \Ind_{\WD_n}^{\W_n}(\chi^{\{\lambda,\mu\}}) = \chi^{(\lambda,\mu)} + \chi^{(\mu,\lambda)}
  \quand 
   \Ind_{\WD_n}^{\W_n}(\chi^{\{\nu\}}_\pm)  =
\chi^{(\nu,\nu)}\ee
for all $\{\lambda,\mu)\vdash n$ with $\lambda \neq \mu$ 
and $\nu \vdash n/2$  \cite[\S5.6]{GeckPfeiffer}.
  
    \begin{proof}[Proof of Theorem~\ref{main-thm1} in type $\D$]
    Assume $n$ is odd and
     let $z_{\min} = s_1s_3s_5\cdots s_{2k-1}$.
    The centralizer
    $C_{\WD_{2k} \times S_{n-2k}}(z_{\min})$ is equal to $H_k:= C_{\W_{2k} \times S_{n-2k}}(z_{\min})$
while
$\Res_{H_k}^{\WD_{2k}\times S_{n-2k}}(\one \times \sgn) = 
\Res_{H_k}^{\W_{2k}\times S_{n-2k}}(\one \times \sgn)$,
so
$\sum_{k=0}^{\lfloor \frac{n}{2}\rfloor} \Ind_{\WD_n}^{\W_n} \Ind_{H_k}^{\WD_n} \Res_{H_k}^{\WD_{2k}\times S_{n-2k}}(\one \times \sgn) = \sum_{(\lambda,\mu) \vdash n} \chi^{(\lambda,\mu)}$. Since $n$ is odd,  \eqref{d-irr-eq} implies that
$\sum_{k=0}^{\lfloor \frac{n}{2}\rfloor} \Ind_{H_k}^{\WD_n} \Res_{H_k}^{\WD_{2k}\times S_{n-2k}}(\one \times \sgn) = \sum_{\{\lambda,\mu\} \vdash n} \chi^{\{\lambda,\mu\}}$
as needed.
    \end{proof}
    
\subsection{$\H$-modules and $W$-graphs}\label{foll-sect}

Throughout this section we fix a finite Coxeter system $(W,S)$, a subset $J \subset S$,
and a linear character $\sigma : W_J \to \{\pm 1\}$.
Define $\cK^J := W^J \times \sI_J$
where $W^J$ is the set of minimal length coset representatives from \eqref{W^J-eq}
and $\sI_J$ is the set of perfect involutions in $(W_J)^+$.
The main results in this section construct 
a $W$-graph with vertex set $\cK^J$.

If $\tau = (w,z) \in \cK^J$, 
then $z$ is $W_J$-conjugate to a unique $W_J$-minimal $z_{\min} \in \sI_J$ and we
define
\be\label{h-def-eq}
\h(\tau) := \ell(w) + \tfrac{1}{2}(\ell(z) - \ell(z_{\min})).\ee
If $s \in S$ and $w \in W^J$ then either $sw \in W^J$ or $w^{-1} sw \in J$,
so for $\tau=(w,s) \in \cK^J$ we may define
\[ s\tau := \begin{cases} 
(sw,z) & \text{if }sw \in W^J \\ 
(w,tzt) &\text{if }sw \notin W^J\text{ and }t=w^{-1}sw .
 \end{cases}
\]
This formula extends to an
action of $W$ on $\cK^J$ with the property that $\h(s\tau) - \h(\tau) \in \{-1,0,1\}$
whenever $s \in S$ and $\tau \in \cK^J$.
Indeed, restricted to the orbit of $(1,z_{\min})$ for some $W_J$-minimal $z_{\min} \in \sI_J$,
our $W$-action is isomorphic to the usual action of $W$ on the set of left cosets $W / H$ for $H=C_{W_J}(z_{\min})$ via the map 
$(u,v\cdot z_{\min} \cdot v^{-1}) \mapsto uvH$.

\begin{proposition}\label{h-eq-prop}
If $s \in S$ and $\tau \in \cK^J$ then $\h(s\tau) = \h(\tau)$ if and only if $s\tau = \tau$.
\end{proposition}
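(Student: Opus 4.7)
The proof is a short case analysis controlled by Lemma~\ref{QP-lem}(a) (the quasiparabolic property of $\sI_J$). The reverse implication $s\tau = \tau \Rightarrow \h(s\tau) = \h(\tau)$ is immediate, so the content is in the forward direction.

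Fix $\tau = (w,z) \in \cK^J$ and $s \in S$, and split into the two cases used to define the $W$-action on $\cK^J$.

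\textbf{Case 1:} $sw \in W^J$. Then $s\tau = (sw,z)$, so
\[
\h(s\tau) - \h(\tau) = \ell(sw) - \ell(w) \in \{-1,+1\},
\]
since $w$ and $sw$ differ by one simple reflection. In particular $\h(s\tau) \neq \h(\tau)$, so this case cannot produce an equality of heights.

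\textbf{Case 2:} $sw \notin W^J$. Then $t := w^{-1}sw \in J$ (as noted in the paragraph preceding the proposition), and $s\tau = (w,\, tzt)$. Hence
\[
\h(s\tau) - \h(\tau) = \tfrac12\bigl(\ell(tzt) - \ell(z)\bigr).
\]
If $\h(s\tau) = \h(\tau)$, then $\ell(tzt) = \ell(z)$. Since $t \in J$ is a simple reflection of $(W_J,J)$ and $z \in \sI_J$ is a perfect involution in $(W_J)^+$, Lemma~\ref{QP-lem}(a) applied inside $(W_J,J)$ forces $tzt = z$, whence $s\tau = (w,z) = \tau$.

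Combining the two cases yields the forward implication, and together with the trivial reverse direction this proves the proposition. I do not anticipate any obstacle: once the case split is in place, everything reduces to the observation that $\h$ changes by $\pm 1$ in Case~1 (ruling it out entirely) and to a direct invocation of the quasiparabolic axiom in Case~2. The only small thing to verify is that $t = w^{-1}sw$, being an element of $J$, is in particular a reflection of $W_J$, which is immediate.
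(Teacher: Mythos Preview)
Your proof is correct and follows essentially the same approach as the paper: rule out the case $sw \in W^J$ because $\h$ changes by $\pm 1$, and in the remaining case use Lemma~\ref{QP-lem}(a) to conclude $tzt = z$ from $\ell(tzt) = \ell(z)$. The paper's argument compresses both cases into a single sentence, but the content is identical.
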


\begin{proof}
Suppose $\tau = (w,z).$ If $\h(s\tau) = \h(\tau)$
then we must have $sw \notin W^J$ and $\ell(tzt) = \ell(z)$ for $t=w^{-1}sw \in J$, but this implies 
that $tzt = z$ by Lemma~\ref{QP-lem}, so $s\tau = \tau$.
\end{proof}

\begin{proposition}\label{same-orbit-prop}
Two elements $\tau = (w,z)$ and $\tau'=(w',z')$ in $\cK^J$
belong to the same $W$-orbit if and only if $z$ and $z'$ belong to the same $W_J$-conjugacy class in $\sI_J$.
\end{proposition}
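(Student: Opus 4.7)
The forward implication is immediate from the definition of the $W$-action: for $s \in S$ and $\tau = (w, z) \in \cK^J$, the second coordinate of $s\tau$ is either $z$ itself (when $sw \in W^J$) or $tzt$ with $t = w^{-1}sw \in J \subseteq W_J$ (when $sw \notin W^J$). Hence the $W_J$-conjugacy class of the second coordinate is preserved by each generator, and therefore by all of $W$.

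For the reverse direction, suppose $z$ and $z'$ lie in a common $W_J$-conjugacy class, and let $z_{\min}$ denote the unique $W_J$-minimal representative of this class. The plan is to show that every element of $\cK^J$ whose second coordinate is $W_J$-conjugate to $z_{\min}$ lies in the $W$-orbit of $(1, z_{\min})$. This is precisely the content of the parenthetical remark immediately preceding the proposition, which identifies this orbit $W$-equivariantly with $W/H$ for $H = C_{W_J}(z_{\min})$ via $(u, v z_{\min} v^{-1}) \mapsto uvH$ where $u \in W^J$ and $v \in W_J$.

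The most transparent way to justify that identification is to model $\cK^J$ as the quotient $(W \times \sI_J)/\sim$, where $\sim$ is generated by $(wv, z) \sim (w, vzv^{-1})$ for $v \in W_J$. The unique factorization $W = W^J \cdot W_J$ with length-additive products ensures that every $\sim$-class has a unique representative in $W^J \times \sI_J = \cK^J$. Under this model the $W$-action becomes left multiplication on the first factor (modulo $\sim$), and one would confirm that it agrees with the action in the excerpt by a two-case check: when $sw \in W^J$, left multiplication sends $(w,z)$ directly to $(sw,z) \in \cK^J$; when $sw \notin W^J$, the Howlett-Yin identity gives $sw = wt$ with $w \in W^J$ and $t \in J$, which is the unique factorization $(sw)^J(sw)_J = w \cdot t$ of length $\ell(w)+1$, so the $\sim$-representative of $(sw, z)$ in $W^J \times \sI_J$ is exactly $(w, tzt)$. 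Once this equivariance is in place, the orbit of $[(1, z_{\min})]$ visibly contains $[(w, v z_{\min} v^{-1})]$ for every $w \in W^J$ and $v \in W_J$, which exhausts all pairs in $\cK^J$ whose second coordinate is $W_J$-conjugate to $z_{\min}$; this gives the reverse implication.

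The main obstacle is the equivariance check just sketched; everything else is formal bookkeeping. No new ideas are required beyond the factorization and Howlett-Yin cancellation that the excerpt already invokes when defining the action on $\cK^J$.
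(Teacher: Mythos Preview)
Your proof is correct, but it takes a more structural route than the paper's. The paper argues the reverse implication directly in two short steps: first, if $w \in W^J$ and $\ell(sw) < \ell(w)$ then $sw \in W^J$, so by iterating one sees that any $(w,z)$ lies in the $W$-orbit of $(1,z)$; second, for $t \in J$ one has $t \cdot (1,z) = (1,tzt)$, so $(1,z)$ and $(1,z')$ are in the same orbit whenever $z$ and $z'$ are $W_J$-conjugate. Your approach instead realizes $\cK^J$ as the quotient $(W \times \sI_J)/\!\sim$ with $W$ acting by left multiplication on the first factor, checks that this agrees with the given action (via the same Howlett--Yin cancellation), and then reads off the orbit description all at once. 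What you gain is a self-contained justification of the parenthetical remark preceding the proposition (the identification of the orbit with $W/C_{W_J}(z_{\min})$); what the paper gains is brevity, since it never needs to set up the quotient model explicitly.
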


\begin{proof}
If $w \in W^J$ and $s \in S$ are such that $\ell(sw)  < \ell(w)$, then $sw \in W^J$ (see \cite[Lem. 3.2]{HowlettYin}),
so $\tau$ belongs to the $W$-orbit of $(1,z)$ and $\tau'$ belongs to the $W$-orbit $(1,z')$. 
Thus if $z$ and $z'$ are in the same $W_J$-conjugacy class then $\tau$ and $\tau'$ are in the same $W$-orbit.
The converse holds since applying the map  $(v,y)\mapsto y$
to $\tau$ and $s\tau$ for $s \in S$ yields $W_J$-conjugate elements of $\sI_J$.
\end{proof}


\begin{definition}\label{prec-def}
The \emph{quasiparabolic Bruhat order} on $\sI_J$ (see \cite[\S5]{RV}) is the transitive closure of the relation
with $z < tzt$ for all $z \in \sI_J$ and all reflections $t \in  \{ wsw^{-1} : (w,s) \in W_J \times J\}$ with $\ell(z) < \ell(tzt)$.
We write $\prec$ for the partial order on $\cK^J$ that has $\tau = (w,z) \prec \tau' = (w',z')$
if and only if $\tau$ and $\tau'$ belong to the same $W$-orbit such that $\h(\tau) < \h(\tau')$ and either
\begin{itemize}
\item  $w<w'$ in the Bruhat order on $W$, or 
\item $w=w'$ and $z<z'$ in the quasiparabolic Bruhat order on $\sI_J$.
\end{itemize}
The dependence of this partial order on $J$ is suppressed in our notation.
\end{definition}

It is convenient to adopt the following terminology: for $\tau = (w,z) \in \cK^J$, define
\be\label{asc-des-def}
\ba
\Asc^<(\tau;\sigma) &:= \{s \in S: \h(s\tau) > \h(\tau)\},\\
\Des^<(\tau;\sigma) &:= \{s \in S: \h(s\tau) < \h(\tau)\},\\
\Asc^=(\tau;\sigma) &:= \{s \in S: \h(s\tau) = \h(\tau)\text{ and }\sigma(w^{-1} sw) = -1\},  \\
\Des^=(\tau;\sigma) &:= \{s \in S: \h(s\tau) = \h(\tau)\text{ and }\sigma(w^{-1} sw) = 1\}.
\ea
\ee
We refer to the elements of $\Asc^<(\tau;\sigma) $ and $\Des^<(\tau;\sigma) $ as strict ascents and strict descents of $\tau$,
and to elements of $\Asc^=(\tau;\sigma) $ and $\Des^=(\tau;\sigma) $ as weak ascents and weak descents.

\begin{theorem}\label{m-thm}
Assume $(W,S)$ is a finite Coxeter system. Choose a subset $J \subset S$ and a linear character $\sigma : W_J \to \{\pm1\}$.
Let $\cM^{\sigma}$
be the free $\LL$-module with 
basis $\{ M^\sigma_\tau : \tau \in \cK^J\}$.
\ben
\item[(a)] There is a unique $\H$-module structure on $\cM^{\sigma}$ in which
\[
H_s M^\sigma_\tau = \begin{cases}
M^\sigma_{s\tau}  &\text{if }s \in \Asc^<(\tau;\sigma)\\
M^\sigma_{s\tau} + (x-x^{-1}) M^\sigma_\tau &\text{if }s \in \Des^<(\tau;\sigma) \\
-x^{-1} M^\sigma_\tau &  \text{if }s \in \Asc^=(\tau;\sigma) \\
x M^\sigma_\tau& \text{if }s \in \Des^=(\tau;\sigma)
\end{cases}
\quad\text{for all }s \in S\text{ and } \tau \in \cK^J.\]

\item[(b)] There is a unique $\H$-compatible bar operator $M \mapsto \overline {M}$ on $\cM^{\sigma}$
that
fixes the basis element $ M^\sigma_{\tau} \in \cM^{\sigma}$
for each $\tau \in \cK^J$ with $\Des^<(\tau;\sigma)=\varnothing$.
This bar operator is an involution with 
\be\label{ut-eq} \overline{M^\sigma_\tau} \in M^\sigma_\tau + \sum_{\upsilon \prec \tau} \ZZ[x,x^{-1}] M^\sigma_\upsilon.\ee

\item[(c)] Finally, the $\H$-module $\cM^{\sigma}$ has a unique basis $\{ \underline M^\sigma_\tau : \tau \in \cK^J\}$ satisfying
\[\overline{\underline M^\sigma_\tau} = \underline M^\sigma_\tau \in M^\sigma_\tau + \sum_{ \upsilon \prec \tau} x^{-1} \ZZ[x^{-1}] M^\sigma_\upsilon\qquad\text{for all }\tau \in \cK^J.\]
\een
\end{theorem}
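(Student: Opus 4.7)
The strategy is to realize $\cM^\sigma$ as a direct sum of modules that are parabolically induced from $\H_J$-modules on the individual $W_J$-orbits in $\sI_J$. Parts (b) and (c) will then follow from the general machinery of induced pre-canonical structures in Proposition~\ref{induced-prop} and the uniqueness statement in Lemma~\ref{du-lem}. By Proposition~\ref{same-orbit-prop} the $W$-set $\cK^J$ splits as a disjoint union of orbits indexed by the $W_J$-conjugacy classes $\cO \subseteq \sI_J$, each with a unique $W_J$-minimal element $z_\cO$, so it is enough to fix one such $\cO$ and construct the submodule $\cM^\sigma_\cO$ spanned by $\{M^\sigma_\tau : \tau \in W^J \times \cO\}$.

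I would first construct a free $\LL$-module $\cN^\sigma_\cO$ on $\{N_z : z \in \cO\}$ and postulate an $\H_J$-action by the same four-case formula in (a), specialized to $s \in J$ and $\tau = (1, z)$. When $\sigma = \one$, this is the Hecke deformation of the permutation action on the quasiparabolic $W_J$-set $\cO$, whose well-definedness is a consequence of Rains--Vazirani \cite{RV}. For general $\sigma$, the formula differs from the trivial case only by replacing $x$ with $-x^{-1}$ on certain weak contributions, and a case-by-case check using Lemma~\ref{QP-lem} shows the braid relations of $\H_J$ still hold; this is the same sort of argument used in \cite{M2012} to twist the permutation representation on a conjugacy class of involutions. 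I would then identify $\cM^\sigma_\cO$ with $\Ind_J^S(\cN^\sigma_\cO) = \H \otimes_{\H_J} \cN^\sigma_\cO$ via $M^\sigma_{(w, z)} \leftrightarrow H_w \otimes_{\H_J} N_z$. The four-case formula in (a) follows by breaking the product $H_s H_w$ into the subcases $sw \in W^J$ and $sw \notin W^J$ and invoking in each the standard product in $\H$ or the $\H_J$-action on $\cN^\sigma_\cO$, which exactly match the prescription of $s\tau$ from the paragraph preceding the theorem. Uniqueness in (a) is immediate.

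For (b), I would inductively define an $\H_J$-compatible bar involution on $\cN^\sigma_\cO$ fixing $N_{z_\cO}$, using the quasiparabolic Bruhat order: given $y \in \cO$ with $y > z_\cO$, pick any strict descent $s \in J$ of $(1, y)$ and let $\overline{N_y}$ be the unique solution of $H_s \cdot \overline{N_y} = \overline{H_s \cdot N_y}$ consistent with the relations already verified in Step~1. Once independence of $s$, involutivity, and the triangularity $\overline{N_y} \in N_y + \sum_{z < y} \LL \, N_z$ are established, Proposition~\ref{induced-prop} lifts this to an $\H$-compatible bar involution on each $\cM^\sigma_\cO$ satisfying \eqref{ut-eq} with respect to the partial order $\prec$ from Definition~\ref{prec-def}. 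Summing over $\cO$ and noting that $\Des^<(\tau; \sigma) = \varnothing$ exactly characterizes the pairs $\tau = (1, z_\cO)$ gives (b). Finally, (c) is a direct application of Lemma~\ref{du-lem}, since $W$ is finite so all lower intervals in $(\cK^J, \prec)$ are finite.

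The principal obstacle will be the well-definedness and involutivity of the bar operator on $\cN^\sigma_\cO$ in the third paragraph. Unlike the classical Kazhdan--Lusztig case, the quasiparabolic Bruhat order admits moves that preserve $\h$, and the bar operator must be compatible with these weak rank-one relations in addition to the braid identities of $\H_J$. I expect this verification to reduce via Lemma~\ref{QP-lem} to a manageable case analysis over rank-two subsystems of $(W_J, J)$, following the template of the analogous constructions for involution Hecke modules in \cite{LV, LV2}.
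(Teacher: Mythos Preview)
Your plan follows essentially the same route as the paper: reduce part (a) to a single $W_J$-orbit in $\sI_J$ (equivalently, to the case $J=S$) where it follows from the Rains--Vazirani rank-two argument, then identify $\cM^\sigma$ with a parabolically induced module and invoke Proposition~\ref{induced-prop} and Lemma~\ref{du-lem} for (b) and (c).

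The one substantive difference is in part (b). Rather than build the bar operator on each orbit inductively and verify well-definedness---what you correctly flag as the principal obstacle---the paper writes down a closed formula
\[
\overline{M^\sigma_{(z,\theta)}} := \|z\|_\sigma \cdot \overline{H_z} \cdot M^\sigma_{(z^{-1},\theta)},
\]
where $\|z\|_\sigma$ is a monomial in $x$ determined by a reduced expression for the $W$-minimal element in the conjugacy class of $(z,\theta)$, and cites \cite{Marberg} for the check that this is $\H$-compatible. This sidesteps your obstacle entirely: with the formula in hand, involutivity and triangularity follow directly from the corresponding properties of $H\mapsto\overline H$ on $\H$, and uniqueness follows because every $M^\sigma_\tau$ can be written as $H_v M^\sigma_{\tau_{\min}}$ for some $v\in W$. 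Your inductive construction would also work but is harder to pin down.

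One small gap in your outline: Proposition~\ref{induced-prop} only gives triangularity for a coarser order than $\prec$ (it neither restricts to a single $W$-orbit nor imposes $\h(\upsilon)<\h(\tau)$). The paper closes this by again using the expression $\overline{M^\sigma_\tau}=\overline{H_v}\, M^\sigma_{\tau_{\min}}$ together with the triangularity of $\overline{H_v}$ in the standard basis of $\H$; you should note that an analogous refinement is needed.
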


\begin{proof}
When $J=S$, we abbreviate by writing $M^\sigma_{z}$ in place of $M^\sigma_{(1,z)}$ for 
$z \in \sI(W,S)$.
It suffices to prove part (a) in the case when $J=S$, since when $J\subset S$ 
the module $\cM^\sigma(W)$ that we describe can be identified with $\Ind_J^S(\cM^\sigma(W_J))$ 
via by the linear map sending $M^\sigma_{(w,z)} \mapsto H_w \otimes_{\cH_J} M^\sigma_{z}$ for $(w,z) \in \cK^J$.
Thus, assume $J=S$ so that $\sigma$ is a linear character of $W$.
In this special case, one can deduce part (a) by essentially repeating the proof of \cite[Thm. 7.1]{RV}
with minor adjustments. In particular, 
when $\sigma $ is $\one$ or $\sgn$, part (a) is literally just \cite[Thm. 7.1]{RV} combined with \cite[Thm. 4.6]{RV}.
We sketch the argument for $\sigma \notin\{\one,\sgn\}$ below,   glossing over some of the details   in \cite[\S7.1]{RV}.

We need to check that when $J=S$ our formula for the action on $\H$ on $\cM^\sigma$ obeys the quadratic relations
$(H_s-x)(H_s+x^{-1}) = 0$ and the braid relations $H_sH_tH_s \cdots = H_t H_sH_t \cdots $ (where the number of factors 
on both sides is the order of $st$ in $W$)
for all $s, t \in S$. The quadratic relations are easy to verify directly.

For the braid relations, we may assume without loss of generality that $S= \{s,t\}$ and 
then replace $\cM^\sigma$
by the submodule $\cT^\sigma$ spanned by the $W$-conjugacy class of a single $W$-minimal element $z_{\min}\in \sI(W,S)$.
Let $m$ be the order of $st$ in $W$.
As explained in the proof of  \cite[Thm. 7.1]{RV}, the centralizer of $z_{\min}$ in $W$
then has the form $\langle (st)^{m'}\rangle$ or $\langle (st)^{m'}, s\rangle$ or $\langle (st)^{m'},t\rangle$ for some $m'$ dividing $m$.
If $m/m'$ is odd then we can assume that  $\sigma(s) = \sigma(t)$,
since  $\sigma(s) \neq \sigma(t)$ only if $m$ is even but if $m'$ is even 
then $\sigma$ has the same restriction to $C_{W}(z_{\min})$ as $\one$ or $\sgn$.
Therefore, we can pass from $W$ to the quotient $W/\langle (st)^{m'}\rangle$
and
 further reduce to the case when $C_{W}(z_{\min})$ is  $\{1\}$ or $\langle s\rangle$ or $\langle t\rangle$.
 Following the proof of  \cite[Thm. 7.1]{RV},
 the desired braid relations then hold since $\cT^\sigma$ can be identified with either the regular representation of $\cH=\cH(\langle s,t\rangle)$
or with a module induced from a one-dimensional representation of $\cH(\langle s\rangle)$ or $\cH(\langle t\rangle)$.

This completes the proof of part (a). To prove part (b) we continue to assume $J=S$.
Fix an element $(z,\theta) \in \sI=\sI(W,S)$ and let $ (z_{\min},\theta)$
be the unique $W$-minimal element  in the same $W$-conjugacy class.
Let $z_{\min} = s_1s_2\cdots s_l$ be any reduced expression
and define $\|z\|_\sigma :=  x^{a_+} / (-x)^{a_-}$ where $a_\pm = |\{ i \in [l] : \sigma(s_i) = \pm1\}|$. Because $\sigma(s)  = \sigma(t)$ whenever $s,t \in S$ and $st$ has odd order, $\|z\|_\sigma$ does not depend on
the reduced expression.
We claim that setting 
\be\label{bar-op-concrete} \overline{M^\sigma_{(z,\theta)}} := \|z\|_\sigma \cdot \overline {H_z} \cdot M_{(z^{-1},\theta)}\ee
extends to an  antilinear map $\cM^\sigma \to \cM^\sigma$ 
satisfying $\overline{M^\sigma_{(z_{\min},\theta)}} = M^\sigma_{(z_{\min},\theta)}$
and $\overline{HM} = \overline{H}\cdot \overline{M}$ for all $H \in \H$ and $M \in \cM^\sigma$.
When $\sigma$ is $\one$ or $\sgn$ this claim is precisely \cite[Thm. 4.19]{Marberg},
and the proof of that result carries over to our slightly more general setting with minimal changes.
Now, 
since $ \overline{M^\sigma_{(z,\theta)}} = \overline{H_w} \cdot M^\sigma_{(z_{\min},\theta)}$
for any $w\in W$ of minimal length with $w\cdot z_{\min} \cdot \theta(w)^{-1} = z$,
and since $\overline{H_w} \in H_w + \sum_{v<w} \LL H_v$,
it follows that $\overline{M^\sigma_{\tau}} \in M^\sigma_{\tau} + \sum_{\upsilon < \tau} \LL M^\sigma_\upsilon$
for any $\tau \in \sI$, where $<$ is the quasiparabolic Bruhat order,
and that the map \eqref{bar-op-concrete} is an involution.
Thus, if $J=S$ then $\cM^\sigma$ has an $\H$-compatible pre-canonical structure with standard basis 
$ \{ M^\sigma_\tau\} $ ordered by the quasiparabolic Bruhat order.

Now let $J\subset S$ and define $< $ to be the partial order on $\cK^J$ that has $(v,y) \leq (w,z)$ if $v\leq w$ in  Bruhat order
 or if $v=w$ and $y\leq z$ in  quasiparabolic Bruhat order. 
This is slightly different from
$\prec$ in Definition~\ref{prec-def}.
Observe that $\tau=(w,z) \in \cK^J$ has $\Des^<(\tau;\sigma) = \varnothing$ if and only if $w=1$ and $z \in \sI_J$ is $W_J$-minimal.
Identifying $M^\sigma_{(w,z)} \in \cM^\sigma(W)$ with  $H_w \otimes_{\cH_J} M^\sigma_{z} \in \H\otimes_{\H_J} \cM^\sigma(W_J)$, we deduce from the previous paragraph and Proposition~\ref{induced-prop}
that $\cM^\sigma(W)$ has an $\H$-compatible bar operator $M \mapsto \overline{M}$
that fixes $M^\sigma_\tau$ if $\Des^<(\tau;\sigma)=\varnothing$
and  has
 \be
 \label{ut-eq2}
 \overline{M^\sigma_\tau} \in M^\sigma_\tau + \sum_{\upsilon \leq \tau} \ZZ[x,x^{-1}] M^\sigma_\upsilon
 \quad\text{for all }\tau \in \cK^J
 .\ee
 For each $\tau=(w,z) \in \cK^J$, there is a unique $\tau_{\min}=(1,z_{\min}) \in \cK^J$ in the $W$-orbit of $\tau$ with $\Des^<(\tau_{\min};\sigma) = \varnothing$
 and we have $M^\sigma_\tau = H_v M^\sigma_{\tau_{\min}}$ for some $v \in W$.
 Since then $\overline{M^\sigma_\tau} = \overline{H_v} M^\sigma_{\tau_{\min}}$, it follows that our $\H$-compatible
 bar operator on $\cM^\sigma$ is unique and an involution.
 It remains only to check that \eqref{ut-eq2} implies the stronger triangular property \eqref{ut-eq},
 but this is also easy to derive from the identities 
  $\overline{M^\sigma_\tau} = \overline{H_v} M^\sigma_{\tau_{\min}}$ and
   $ \overline{H_v}  \in H_v + \sum_{u\in W, u<v} \LL H_u$.
 
This completes the proof of part (b).
We have shown that $(M\mapsto \overline M, \{ M^\sigma_\tau\}_{\tau \in \cK^J})$ with $\cK^J$ ordered by $\prec$
is a pre-canonical structure on $\cM^\sigma$.
Since the poset $(\cK^J,\prec)$ clearly has finite lower intervals,
part (c) is immediate from Lemma~\ref{du-lem}.
\end{proof}

\begin{example}
If $J = \varnothing$ then $\cM^\sigma$ is isomorphic to the left regular representation of $\H$
and the canonical basis $\{ \underline M^\sigma_\tau : \tau \in \cK^J\}$ may be identified with the usual Kazhdan-Lusztig basis from \cite{KL}.
More generally, if $\sigma$ is $\one$ or $\sgn$, then 
  $\{ \underline M^\sigma_{(w,1)}: w \in W^J\}$ may be identified 
with the parabolic Kazhdan-Lusztig basis studied in \cite{Deodhar}.
\end{example}

Our next goal is to explain how the structure constants of $\H \times \cM^\sigma \to \cM^\sigma$ in the basis 
$\{ \underline M^\sigma_\tau\}$ define a $W$-graph on the set $\cK^J$. 
This is already done for the special case $J=S$ in \cite{Marberg}. What we describe 
for general subsets $J \subset S$, apart from differences in notation, is what one gets by applying Howlett and Yin's
method of $W$-graph induction from \cite{HowlettYin,HowlettYin2} to the constructions in \cite{Marberg}.
However, since presenting things in terms of $W$-graph induction makes it somewhat difficult to do explicit computations, we will develop our new $W$-graphs from scratch
without assuming any background from \cite{HowlettYin,HowlettYin2,Marberg}.

Define $\m^\sigma$ to be the  $\cK^J\times \cK^J$ matrix with entries $\m^\sigma_{\upsilon\tau} \in \ZZ[x^{-1}]$
such that
\[\underline M^\sigma_\tau = \sum_{\upsilon \in \cK^J} \m^\sigma_{\upsilon\tau}  M^\sigma_\upsilon
.
\]
Write
$ \mu^{\m|\sigma}_{\upsilon\tau} := [x^{-1}]  \m^\sigma_{\upsilon\tau} \in \ZZ$
for the coefficient of $x^{-1}$ in $\m^\sigma_{\upsilon\tau}$.
For the next lemma, we define
\[
\Des^{\m|\sigma}(\tau) := \Des^<(\tau;\sigma) \sqcup \Des^=(\tau;\sigma)
\quand
\Asc^{\m|\sigma}(\tau) := \Asc^<(\tau;\sigma) \sqcup \Asc^=(\tau;\sigma).
\]
Let $\delta_{ij}$ denote the usual Kronecker delta function.

\begin{lemma}\label{cb-lem}
Suppose $s \in S $ and $\tau \in \cK^J$. Let $\underline H_s = H_s + x^{-1}$.
\ben
\item[(a)] If $s \in \Des^{\m|\sigma}(\tau) $ then $\underline H_s \underline M^\sigma_\tau =  
(x+x^{-1}) \underline M^\sigma_\tau$.

\item[(b)] If $s \in \Asc^{\m|\sigma}(\tau)$ then 
$\ds\underline H_s \underline M^\sigma_\tau =
(1-\delta_{\tau,s\tau}) \underline M^\sigma_{s \tau} + 
\sum_{\substack{\upsilon \prec \tau  \\ s \in \Des^{\m|\sigma}(\upsilon)}}
\mu^{\m|\sigma}_{\upsilon\tau} \underline M^\sigma_\upsilon.$

\een
\end{lemma}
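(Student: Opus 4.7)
The plan is to follow the standard Kazhdan--Lusztig argument, adapted to the module $\cM^\sigma$. The two key ingredients are bar-invariance of $\underline H_s = H_s + x^{-1}$ and the identity $\underline H_s^2 = (x+x^{-1})\underline H_s$, which follows immediately from the quadratic relation $(H_s-x)(H_s+x^{-1})=0$.

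First I would translate Theorem~\ref{m-thm}(a) into explicit formulas for $\underline H_s M^\sigma_\upsilon$ across the four descent types: one finds $M^\sigma_{s\upsilon}+x^{-1}M^\sigma_\upsilon$, $M^\sigma_{s\upsilon}+x M^\sigma_\upsilon$, $0$, and $(x+x^{-1})M^\sigma_\upsilon$ when $s$ is, respectively, a strict ascent, strict descent, weak ascent, or weak descent of $\upsilon$. The crucial combinatorial consequence is that whenever $s \in \Des^{\m|\sigma}(\upsilon)$ the coefficient of $M^\sigma_\upsilon$ in $\underline H_s M^\sigma_\upsilon$ has positive $x$-degree, whereas for $s \in \Asc^{\m|\sigma}(\upsilon)$ it lies in $x^{-1}\ZZ[x^{-1}]$.

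For part (a), I would set $Y := \underline H_s \underline M^\sigma_\tau - (x+x^{-1})\underline M^\sigma_\tau$, which is bar-invariant and killed by $\underline H_s$ by the quadratic identity. Proceeding by induction on $\prec$, I would expand $Y = \sum_\upsilon c_\upsilon \underline M^\sigma_\upsilon$ with each $c_\upsilon \in \ZZ[x+x^{-1}]$, and use Step~1 to check that the $\underline M^\sigma_\tau$-component of $Y$ vanishes, so $c_\tau = 0$. Any surviving $\upsilon \prec \tau$ with $s \in \Des^{\m|\sigma}(\upsilon)$ contributes $(x+x^{-1})c_\upsilon \underline M^\sigma_\upsilon$ to $\underline H_s Y$ by inductive (a), while those with $s \in \Asc^{\m|\sigma}(\upsilon)$ contribute fresh terms $c_\upsilon \underline M^\sigma_{s\upsilon}$ by inductive (b); linear independence then forces all $c_\upsilon = 0$ and hence $Y = 0$.

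For part (b), let $Y' := \underline H_s \underline M^\sigma_\tau -(1-\delta_{\tau,s\tau})\underline M^\sigma_{s\tau} - \sum_{\upsilon\prec\tau,\, s\in\Des^{\m|\sigma}(\upsilon)}\mu^{\m|\sigma}_{\upsilon\tau}\underline M^\sigma_\upsilon$, which is bar-invariant. I would expand $\underline M^\sigma_\tau = M^\sigma_\tau + \sum_{\upsilon\prec\tau}\m^\sigma_{\upsilon\tau} M^\sigma_\upsilon$, apply the formulas from Step~1, and collect coefficients in the standard basis. Using that $\m^\sigma_{\upsilon\tau} \in x^{-1}\ZZ[x^{-1}]$ for $\upsilon\prec\tau$ together with the definition $\mu^{\m|\sigma}_{\upsilon\tau} = [x^{-1}]\m^\sigma_{\upsilon\tau}$, I would verify that $Y'$ lies in $\sum_{\upsilon\prec\tau} x^{-1}\ZZ[x^{-1}]M^\sigma_\upsilon$; by the uniqueness clause of Lemma~\ref{du-lem} applied to the zero canonical basis element, this forces $Y' = 0$.

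The hard part will be the bookkeeping in part (b): precisely tracking how, after $\underline H_s$ rearranges the expansion into the standard basis, each leading coefficient $\mu^{\m|\sigma}_{\upsilon\tau}$ re-absorbs into the canonical basis element $\underline M^\sigma_\upsilon$ exactly when $s \in \Des^{\m|\sigma}(\upsilon)$, while the remaining standard-basis residue stays in $x^{-1}\ZZ[x^{-1}]$. The mixed strict/weak descent structure, and in particular the nontrivial scalars $(x+x^{-1})$ produced by weak descents, complicates the calculation compared with the classical Kazhdan--Lusztig case.
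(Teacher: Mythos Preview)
Your treatment of part (b) is correct and matches the paper's one-line argument: the difference $Y'$ is bar-invariant and lies in $\sum_\upsilon x^{-1}\ZZ[x^{-1}]M^\sigma_\upsilon$, hence vanishes. The paper in fact proves (b) first for all $\tau$, with no induction, and only then attacks (a).

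Your argument for part (a), however, has a genuine gap. From $\underline H_s Y = 0$ you cannot conclude $Y=0$ by ``linear independence'': the operator $\underline H_s$ has a nontrivial kernel on $\cM^\sigma$. Concretely, if $\upsilon$ has $s \in \Asc^=(\upsilon;\sigma)$ then $s\upsilon=\upsilon$, so the factor $(1-\delta_{\upsilon,s\upsilon})$ in part (b) kills the ``fresh term'' $\underline M^\sigma_{s\upsilon}$ that your argument relies on. If in addition there is no $\kappa\prec\upsilon$ with $s\in\Des^{\m|\sigma}(\kappa)$ and $\mu^{\m|\sigma}_{\kappa\upsilon}\neq 0$, then $\underline H_s \underline M^\sigma_\upsilon = 0$ and the equation $\underline H_s Y = 0$ places no constraint whatsoever on the coefficient $c_\upsilon$. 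Your sketch explicitly treats only the strict-ascent contribution and overlooks this weak-ascent case.

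The paper handles (a) by splitting into the two descent types. For $s\in\Des^<(\tau;\sigma)$ one applies the already-proved part (b) to $s\tau$ (where $s$ is a strict ascent) to write $\underline M^\sigma_\tau = \underline H_s \underline M^\sigma_{s\tau} - \sum_{\upsilon\prec s\tau,\ s\in\Des^{\m|\sigma}(\upsilon)}\mu^{\m|\sigma}_{\upsilon,s\tau}\underline M^\sigma_\upsilon$, and then multiplies by $\underline H_s$ using $\underline H_s^2=(x+x^{-1})\underline H_s$ together with the inductive (a) for the lower terms. For $s\in\Des^=(\tau;\sigma)$ the paper uses a different device: it writes $\underline M^\sigma_\tau = M^\sigma_\tau + \sum_{\upsilon\prec\tau} f_{\upsilon\tau}\,\underline M^\sigma_\upsilon$ with $f_{\upsilon\tau}\in x^{-1}\ZZ[x^{-1}]$, observes $(H_s-x)M^\sigma_\tau=0$, and then applies inductive (a) and (b) to rewrite $(H_s-x)\underline M^\sigma_\tau$ in the canonical basis. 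The only terms indexed by $\upsilon$ with $s\in\Asc^{\m|\sigma}(\upsilon)$ carry coefficient $-(x+x^{-1})f_{\upsilon\tau}$; bar-invariance of the whole expression then forces each such $f_{\upsilon\tau}$ to be bar-invariant, hence zero since $f_{\upsilon\tau}\in x^{-1}\ZZ[x^{-1}]$. This is exactly where the weak-ascent case is dealt with, and it requires the degree information on $f_{\upsilon\tau}$ rather than the annihilation $\underline H_s Y=0$.
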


\begin{proof}
For part (b), we observe that the difference of the two sides is a bar invariant element of $ x^{-1}\ZZ[x^{-1}]\spanning \{ M^\sigma_\tau : \tau \in \cK^J\}$, but the only such element is zero. 
For part (a), our argument is similar to the proof of \cite[Theorem 5.3]{HowlettYin}.
First let $s \in \Des^=(\tau;\sigma)$. Then $\underline M^\sigma_\tau = M^\sigma_\tau + \sum_{\upsilon \prec\tau} f_{\upsilon\tau} \underline M^\sigma_\upsilon$
for certain $f_{\upsilon\tau} \in x^{-1}\ZZ[x^{-1}]$. Since $H_s M^\sigma_\tau = x M^\sigma_\tau$, we have
\[ (H_s - x) \underline M^\sigma_\tau = \sum_{ \upsilon \prec \tau} f_{\upsilon\tau}\cdot (H_s-x)\cdot \underline M^\sigma_\upsilon= \sum_{ \upsilon \prec \tau} f_{\upsilon\tau} \cdot (\underline H_s-x-x^{-1})\cdot \underline M^\sigma_\upsilon\]
which by induction using both parts can be written as 
\[  (H_s - x) \underline M^\sigma_\tau =-(x+x^{-1}) \sum_{\substack{ \upsilon \prec \tau \\ s \in \Asc^{\m|\sigma}(\upsilon)}} f_{\upsilon\tau} \underline M^\sigma_\upsilon + \sum_{\substack{\upsilon \in \cK^J \\ s \in \Des^{\m|\sigma}(\upsilon)}} a_{\upsilon\tau} f_{\upsilon\tau} \underline M^\sigma_\upsilon\]
for certain integers $a_{\upsilon\tau}\in \ZZ$. The left hand side is bar invariant by construction, but since $\{ \underline M^\sigma_\tau\}_{\tau \in \cK^J}$ is a basis, the right hand side is bar invariant only if \[\overline{f_{\upsilon\tau}} = f_{\upsilon\tau} \in x^{-1}\ZZ[x^{-1}] \cap x\ZZ[x]= 0\quad\text{whenever $s \in \Asc^{\m|\sigma}(\upsilon)$.}\]
Thus $(H_s-x)\underline M^\sigma_\tau = 0$ so $\underline H_s \underline M^\sigma_\tau = (x+x^{-1})\underline M^\sigma_\tau$.
Now let $s \in \Des^<(\tau;\sigma)$. Then $s \in \Asc^<(s\tau)$ so by induction using (b)
we have $\underline M^\sigma_\tau = \underline H_{s} \underline M^\sigma_{s\tau} - \sum_{\upsilon\prec s\tau, s \in \Des^{\m|\sigma}(\upsilon)} \mu^{\m|\sigma}_{\upsilon,s\tau}\underline M^\sigma_\upsilon.$
The desired identity follows by induction using part (a) and $ \underline H_s \underline H_s  = (x+x^{-1}) \underline H_s$.
\end{proof}

Given 
$\upsilon,\tau \in \cK^J$
and   $s \in S$,
define $P^{\m|\sigma}_{\upsilon\tau} := x^{\h(\tau)-\h(\upsilon)} \m^\sigma_{\upsilon\tau} \in \LL$
and let 
\[
\Sigma_{\upsilon\tau}^{\m|\sigma}(s) := \sum_{\substack{
\kappa \in  \cK^J 
\\
\upsilon \preceq \kappa \prec s\tau \\
s \in \Des^{\m|\sigma}(\kappa)
}}  x^{\h(\tau)-\h(\kappa)} \cdot \mu^{\m|\sigma}_{\kappa,s\tau} \cdot P^{\m|\sigma}_{\upsilon\kappa}.
\]
By
comparing coefficients of $M^\sigma_\upsilon$ on both sides of the identities in Lemma~\ref{cb-lem},
one obtains the following formulas for $P^{\m|\sigma}_{\upsilon\tau} $. We omit the proof, which is straightforward.
\begin{corollary}
\label{sigma-cor}
 Let $\upsilon,\tau \in  \cK^J$ and $s \in S$.
\ben
\item[(a)] If $s \in \Des^{\m|\sigma}(\tau)$ then $P^{\m|\sigma}_{\upsilon\tau} = P^{\m|\sigma}_{s\upsilon,\tau}$.

\item[(b)] If $s \in \Des^<(\tau;\sigma) \cap \Des^{\m|\sigma}(\upsilon)$ then 
$ P^{\m|\sigma}_{\upsilon\tau} = x^2 P^{\m|\sigma}_{\upsilon,s\tau} + P^{\m|\sigma}_{s\upsilon,s\tau} -\Sigma_{\upsilon\tau}^{\m|\sigma}(s) .$

\item[(c)] If $s \in \Des^<(\tau;\sigma) \cap \Asc^<(\upsilon;\sigma)$ then 
$ P^{\m|\sigma}_{\upsilon\tau}  =  P^{\m|\sigma}_{\upsilon,s\tau} + x^2P^{\m|\sigma}_{s\upsilon,s\tau} - \Sigma_{\upsilon\tau}^{\m|\sigma}(s) .$

\item[(d)] If $s \in \Des^{\m|\sigma}(\tau) \cap \Asc^=(\upsilon;\sigma)$ then 
$ P^{\m|\sigma}_{\upsilon\tau} =  0 .$

\een
\end{corollary}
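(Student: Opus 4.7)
The plan is to extract the claimed recursions for $P^{\m|\sigma}_{\upsilon\tau}$ directly from Lemma~\ref{cb-lem} by expanding both sides in the standard basis $\{M^\sigma_\kappa\}$, matching the coefficient of a fixed $M^\sigma_\upsilon$, and then rescaling by $x^{\h(\tau)-\h(\upsilon)}$. The only ingredient needed besides Lemma~\ref{cb-lem} is the explicit action of $\underline H_s = H_s + x^{-1}$ on the standard basis, which one reads off Theorem~\ref{m-thm}(a): namely, $\underline H_s M^\sigma_\kappa$ equals $M^\sigma_{s\kappa} + x^{-1} M^\sigma_\kappa$, $M^\sigma_{s\kappa} + x M^\sigma_\kappa$, $0$, or $(x+x^{-1}) M^\sigma_\kappa$ according as $s$ lies in $\Asc^<(\kappa;\sigma)$, $\Des^<(\kappa;\sigma)$, $\Asc^=(\kappa;\sigma)$, or $\Des^=(\kappa;\sigma)$. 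Consequently, for any fixed $\upsilon$ the only $\kappa$ that contribute to $[M^\sigma_\upsilon](\underline H_s \underline M^\sigma_{\,\cdot\,})$ are $\kappa=\upsilon$ and (when $s\upsilon\neq\upsilon$) $\kappa = s\upsilon$.

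For part (a), I would apply Lemma~\ref{cb-lem}(a) in the form $\underline H_s \underline M^\sigma_\tau = (x+x^{-1})\underline M^\sigma_\tau$ and expand both sides. After comparing the coefficient of $M^\sigma_\upsilon$, each of the four cases $s \in \Asc^<(\upsilon),\ \Des^<(\upsilon),\ \Des^=(\upsilon),\ \Asc^=(\upsilon)$ yields one of the identities
\[
x\,\m^\sigma_{\upsilon\tau} = \m^\sigma_{s\upsilon,\tau},\qquad
x^{-1}\m^\sigma_{\upsilon\tau} = \m^\sigma_{s\upsilon,\tau},\qquad
0=0,\qquad
(x+x^{-1})\m^\sigma_{\upsilon\tau}=0.
\]
Multiplying through by $x^{\h(\tau)-\h(\upsilon)}$ and using that $\h(s\upsilon)-\h(\upsilon) = \pm 1$ in the first two cases (while $s\upsilon=\upsilon$ in the last two, by Proposition~\ref{h-eq-prop}) turns the first three into the statement $P^{\m|\sigma}_{\upsilon\tau} = P^{\m|\sigma}_{s\upsilon,\tau}$, which is part (a); the fourth gives part (d).

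For parts (b) and (c), since $s\in\Des^<(\tau;\sigma)$ we have $s\in\Asc^<(s\tau;\sigma)\subset\Asc^{\m|\sigma}(s\tau)$, so Lemma~\ref{cb-lem}(b) applied to $s\tau$ gives
\[
\underline M^\sigma_\tau = \underline H_s \underline M^\sigma_{s\tau} \;-\; \sum_{\substack{\kappa\prec s\tau\\ s\in \Des^{\m|\sigma}(\kappa)}} \mu^{\m|\sigma}_{\kappa,s\tau}\,\underline M^\sigma_\kappa.
\]
I would then expand in the standard basis and take the coefficient of $M^\sigma_\upsilon$. The $\underline H_s \underline M^\sigma_{s\tau}$ term again contributes only through $\kappa=\upsilon$ (diagonal coefficient $x^{\pm 1}$ or $x+x^{-1}$ depending on which stratum contains $\upsilon$) and $\kappa=s\upsilon$ (coefficient $1$). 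In case (b1), $s\in\Des^<(\upsilon)$ gives $\m^\sigma_{\upsilon\tau} = x\,\m^\sigma_{\upsilon,s\tau}+\m^\sigma_{s\upsilon,s\tau}-\sum_\kappa\mu^{\m|\sigma}_{\kappa,s\tau}\m^\sigma_{\upsilon\kappa}$; in case (b2), $s\in\Des^=(\upsilon)$ gives the same formula with $x$ replaced by $x+x^{-1}$ and with $s\upsilon=\upsilon$. In case (c), $s\in\Asc^<(\upsilon)$ gives $\m^\sigma_{\upsilon\tau} = x^{-1}\m^\sigma_{\upsilon,s\tau}+\m^\sigma_{s\upsilon,s\tau}-\sum_\kappa\mu^{\m|\sigma}_{\kappa,s\tau}\m^\sigma_{\upsilon\kappa}$. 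Multiplying by $x^{\h(\tau)-\h(\upsilon)}$ and using $\h(s\tau)=\h(\tau)-1$ together with the appropriate sign for $\h(s\upsilon)-\h(\upsilon)$ converts these into the claimed $P$-identities; the inner sum matches $\Sigma^{\m|\sigma}_{\upsilon\tau}(s)$ because $\m^\sigma_{\upsilon\kappa}$ vanishes unless $\upsilon\preceq\kappa$, which supplies exactly the lower bound built into the definition of $\Sigma$.

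This is pure bookkeeping and the author's ``straightforward'' label is accurate; the only place one might stumble is keeping the height-shift exponents consistent in case (b2), where $s\upsilon=\upsilon$ forces $x^2 P^{\m|\sigma}_{\upsilon,s\tau}+P^{\m|\sigma}_{s\upsilon,s\tau}$ to collapse to $(x^2+1)P^{\m|\sigma}_{\upsilon,s\tau}$, matching the factor $(x+x^{-1})$ that appeared before rescaling.
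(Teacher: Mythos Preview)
Your proposal is correct and follows exactly the approach the paper indicates: comparing coefficients of $M^\sigma_\upsilon$ on both sides of the identities in Lemma~\ref{cb-lem}, using the explicit action of $\underline H_s$ on the standard basis from Theorem~\ref{m-thm}(a), and then rescaling by the appropriate power of $x$. The paper omits all details, but your case analysis and height-shift bookkeeping (including the collapse in case (b2) where $s\upsilon=\upsilon$) are accurate.
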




The previous two results make it possible to compute $\underline M^\sigma_\tau $ and $\m^\sigma_{\upsilon\tau}$
by induction in a relatively efficient manner.
The following properties are immediate by induction using Corollary~\ref{sigma-cor},
together with the fact that $\underline M^\sigma_\tau = \sum_{\upsilon\in\cK^J} \m^\sigma_{\upsilon\tau}M^\sigma_\upsilon \in M^\sigma_\tau +\sum_{\upsilon \prec \tau} x^{-1}\ZZ[x^{-1}] M^\sigma_\upsilon$.

\begin{corollary}\label{e-cor}
Suppose $\upsilon,\tau \in \cK^J$.
Then 
$P^{\m|\sigma}_{\tau\tau} = 1$,
$P^{\m|\sigma}_{\upsilon\tau} = 0$ if $\upsilon \not \preceq \tau$,
and 
$P^{\m|\sigma}_{\upsilon\tau} \in \ZZ[x^{2}]$.
If $\h(\tau) -\h(\upsilon)$ is even or $\upsilon \not \prec \tau$ then $\mu^{\m|\sigma}_{\upsilon\tau} = 0$,
and
if $\upsilon \prec\tau$ then $\deg( P^{\m|\sigma}_{\upsilon\tau} )< \h(\tau)-\h(\upsilon)$.
\end{corollary}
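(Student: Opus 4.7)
The plan is to derive the five claims mainly by strong induction on $\h(\tau)$ using the recursions in Corollary~\ref{sigma-cor}. The identities $P^{\m|\sigma}_{\tau\tau} = 1$ and $P^{\m|\sigma}_{\upsilon\tau} = 0$ for $\upsilon \not\preceq \tau$, as well as the degree bound when $\upsilon \prec \tau$, require no induction: Theorem~\ref{m-thm}(c) directly gives $\m^\sigma_{\tau\tau} = 1$, $\m^\sigma_{\upsilon\tau} = 0$ when $\upsilon \not\preceq \tau$, and $\deg \m^\sigma_{\upsilon\tau} \leq -1$ when $\upsilon \prec \tau$. Converting via $P^{\m|\sigma}_{\upsilon\tau} = x^{\h(\tau)-\h(\upsilon)} \m^\sigma_{\upsilon\tau}$ yields the three claims, including $\deg P^{\m|\sigma}_{\upsilon\tau} \leq \h(\tau)-\h(\upsilon)-1 < \h(\tau)-\h(\upsilon)$.

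The substantive content is $P^{\m|\sigma}_{\upsilon\tau} \in \ZZ[x^2]$, which I will prove by strong induction on $\h(\tau)$. The base case $\h(\tau) = 0$ forces $\tau = (1, z_{\min})$ with $\Des^<(\tau;\sigma) = \varnothing$, so Theorem~\ref{m-thm}(b) gives $\underline M^\sigma_\tau = M^\sigma_\tau$ and $P^{\m|\sigma}_{\upsilon\tau} = \delta_{\upsilon\tau} \in \ZZ[x^2]$. For $\h(\tau) > 0$ we select some $s \in \Des^<(\tau;\sigma)$, which is nonempty since $\tau$ is not $W$-minimal in its orbit. Exactly one of Corollary~\ref{sigma-cor}(b), (c), or (d) applies according as $s$ lies in $\Des^{\m|\sigma}(\upsilon)$, $\Asc^<(\upsilon;\sigma)$, or $\Asc^=(\upsilon;\sigma)$. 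Case (d) gives $P^{\m|\sigma}_{\upsilon\tau} = 0$ trivially. Cases (b) and (c) express $P^{\m|\sigma}_{\upsilon\tau}$ as a $\ZZ[x^2]$-combination of terms $P^{\m|\sigma}_{-,s\tau}$, which lie in $\ZZ[x^2]$ by induction since $\h(s\tau) = \h(\tau)-1$, minus the correction $\Sigma^{\m|\sigma}_{\upsilon\tau}(s)$.

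The main obstacle is showing $\Sigma^{\m|\sigma}_{\upsilon\tau}(s) \in \ZZ[x^2]$, which requires a simultaneous parity bootstrap. Each summand has the form $x^{\h(\tau)-\h(\kappa)} \mu^{\m|\sigma}_{\kappa,s\tau} P^{\m|\sigma}_{\upsilon\kappa}$ with $\kappa \prec s\tau$, forcing $\h(\kappa) < \h(s\tau) < \h(\tau)$. By the inductive hypothesis both $P^{\m|\sigma}_{\kappa,s\tau}$ and $P^{\m|\sigma}_{\upsilon\kappa}$ lie in $\ZZ[x^2]$. Now $\mu^{\m|\sigma}_{\kappa,s\tau}$ is the coefficient of $x^{-1}$ in $\m^\sigma_{\kappa,s\tau}$, equivalently the coefficient of $x^{\h(s\tau)-\h(\kappa)-1}$ in $P^{\m|\sigma}_{\kappa,s\tau}$, which vanishes unless $\h(s\tau)-\h(\kappa)$ is odd, i.e., unless $\h(\tau)-\h(\kappa)$ is even. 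Hence whenever $\mu^{\m|\sigma}_{\kappa,s\tau} \neq 0$ the prefactor $x^{\h(\tau)-\h(\kappa)}$ is already an even power, and the entire summand lies in $\ZZ[x^2]$.

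Finally, once $P^{\m|\sigma}_{\upsilon\tau} \in \ZZ[x^2]$ is established, the remaining assertions about $\mu^{\m|\sigma}_{\upsilon\tau}$ follow automatically: since $\mu^{\m|\sigma}_{\upsilon\tau}$ is the coefficient of $x^{\h(\tau)-\h(\upsilon)-1}$ in $P^{\m|\sigma}_{\upsilon\tau}$, it vanishes when $\h(\tau)-\h(\upsilon)$ is even (the exponent is then odd, and $P$ has only even powers), and it also vanishes when $\upsilon \not\prec \tau$ (either $P^{\m|\sigma}_{\upsilon\tau} = 0$, or $\upsilon = \tau$ in which case $\m^\sigma_{\tau\tau} = 1$ has no $x^{-1}$ coefficient).
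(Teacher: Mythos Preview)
Your proof is correct and follows essentially the same approach as the paper, which simply states that the properties are ``immediate by induction using Corollary~\ref{sigma-cor}, together with the fact that $\underline M^\sigma_\tau \in M^\sigma_\tau +\sum_{\upsilon \prec \tau} x^{-1}\ZZ[x^{-1}] M^\sigma_\upsilon$.'' You have supplied the details the paper omits, correctly handling the parity bootstrap needed to show $\Sigma^{\m|\sigma}_{\upsilon\tau}(s)\in\ZZ[x^2]$.
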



Suppose $\alpha \in \Aut(W,S)$.
Then $\alpha(J) \subset S$ and $\alpha(\sI_J) = \sI_{\alpha(J)}$, and 
we can consider the linear character $\sigma\circ \alpha^{-1} : W_{\alpha(J)} \to \{ \pm 1\}$.
For $\tau = (w,z) \in \cK^J$, let $\alpha(\tau) := (\alpha(w),\alpha(z)) \in \cK^{\alpha(J)}$.

\begin{corollary}\label{alpha-cor}
Suppose $\upsilon,\tau \in \cK^J$ and $\alpha \in \Aut(W,S)$. Then 
$\m^{\sigma}_{\upsilon\tau} = \m^{\sigma\circ \alpha^{-1}}_{\alpha(\upsilon)\alpha(\tau)}$.
\end{corollary}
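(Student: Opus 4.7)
The plan is to transport the entire canonical basis construction of Theorem~\ref{m-thm} along the Coxeter automorphism $\alpha$ and invoke the uniqueness statement in Theorem~\ref{m-thm}(c). Concretely, I would argue that the linear map
\[ \Phi : \cM^\sigma \to \cM^{\sigma\circ\alpha^{-1}}, \qquad M^\sigma_\tau \mapsto M^{\sigma\circ\alpha^{-1}}_{\alpha(\tau)} \]
is an isomorphism intertwining the $\H$-action via the algebra automorphism $H_s \mapsto H_{\alpha(s)}$ of $\H$, and that it also intertwines the bar operators of Theorem~\ref{m-thm}(b).

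First I would record the bookkeeping. Since $\alpha$ is a Coxeter automorphism, it preserves length, so $\alpha(W^J) = W^{\alpha(J)}$, it carries $\sI_J$ bijectively to $\sI_{\alpha(J)}$, and it sends $W_J$-conjugacy classes of perfect involutions to $W_{\alpha(J)}$-conjugacy classes, with minimal representatives mapped to minimal representatives. In particular $\alpha$ induces a bijection $\cK^J \to \cK^{\alpha(J)}$, and from the definitions one checks $\alpha(s\tau) = \alpha(s)\alpha(\tau)$, $\h(\alpha(\tau)) = \h(\tau)$, and
\[ \Asc^<(\alpha(\tau);\sigma\circ\alpha^{-1}) = \alpha(\Asc^<(\tau;\sigma)), \]
and likewise for $\Des^<$, $\Asc^=$, $\Des^=$; the last two sets use the fact that $\sigma(w^{-1} sw) = (\sigma\circ\alpha^{-1})(\alpha(w)^{-1}\alpha(s)\alpha(w))$. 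Similarly, $\alpha$ carries the partial order $\prec$ on $\cK^J$ isomorphically to the one on $\cK^{\alpha(J)}$.

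Next I would verify that $\Phi$ intertwines the $\H$-action after twisting: comparing the four cases in Theorem~\ref{m-thm}(a), the identity $\Phi(H_s M^\sigma_\tau) = H_{\alpha(s)} \Phi(M^\sigma_\tau)$ is immediate from the bullets above. Because $\Phi$ maps the distinguished basis to the distinguished basis, in particular it fixes (in the obvious sense) the elements with $\Des^<(\tau;\sigma) = \varnothing$, and thus the conjugated map $M \mapsto \Phi^{-1}(\overline{\Phi(M)})$ is an $\H$-compatible bar operator on $\cM^\sigma$ fixing all $M^\sigma_\tau$ with $\Des^<(\tau;\sigma)=\varnothing$. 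By uniqueness in Theorem~\ref{m-thm}(b), this conjugated map equals the original bar operator on $\cM^\sigma$, i.e.\ $\Phi$ intertwines the bar operators.

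Finally, applying $\Phi$ to the defining property
\[ \overline{\underline M^\sigma_\tau} = \underline M^\sigma_\tau \in M^\sigma_\tau + \sum_{\upsilon \prec \tau} x^{-1}\ZZ[x^{-1}] M^\sigma_\upsilon \]
and using that $\Phi$ preserves $\prec$ and commutes with bars, I see that $\Phi(\underline M^\sigma_\tau)$ satisfies the analogous property with respect to $\alpha(\tau)$. The uniqueness clause of Theorem~\ref{m-thm}(c) then forces $\Phi(\underline M^\sigma_\tau) = \underline M^{\sigma\circ\alpha^{-1}}_{\alpha(\tau)}$, and comparing coefficients of basis elements gives $\m^\sigma_{\upsilon\tau} = \m^{\sigma\circ\alpha^{-1}}_{\alpha(\upsilon)\alpha(\tau)}$. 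There is no real obstacle here; the only mild nuisance is the careful translation of the four descent/ascent types under $\alpha$, particularly the $\Asc^=$/$\Des^=$ split where the linear character enters.
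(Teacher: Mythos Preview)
Your proof is correct, but it takes a genuinely different route from the paper's. The paper argues by induction on $\h(\tau)$ directly from the recursive formulas for $P^{\m|\sigma}_{\upsilon\tau}$ in Corollary~\ref{sigma-cor}: once one knows that $s$ is a strict/weak ascent/descent of $\tau$ (relative to $\sigma$) if and only if $\alpha(s)$ is the same type for $\alpha(\tau)$ (relative to $\sigma\circ\alpha^{-1}$), the recursion for $P^{\m|\sigma}_{\upsilon\tau}$ matches term by term with that for $P^{\m|\sigma\circ\alpha^{-1}}_{\alpha(\upsilon)\alpha(\tau)}$, and the result drops out.

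Your approach instead transports the whole pre-canonical structure through $\alpha$ and appeals to the uniqueness clauses of Theorem~\ref{m-thm}(b) and (c). This is more conceptual: it makes transparent that the identity is a consequence of the functoriality of canonical bases under isomorphisms of the underlying data, and it would adapt immediately to any variant construction satisfying analogous uniqueness. The paper's inductive argument, by contrast, is shorter to write down here (one sentence) precisely because the recursive machinery has just been set up, and it does not require separately verifying that $\alpha$ preserves the order $\prec$ or intertwines bar operators---those facts are absorbed into the recursion. Both arguments rest on the same core observation about how the four ascent/descent types transform under $\alpha$.
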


\begin{proof}
As $s \in S$ is a strict/weak ascent/descent of $\tau$ if and only if $\alpha(s)$ is 
the same type of ascent/descent for $\alpha(\tau)$ (relative to $\sigma \circ \alpha^{-1}$),
this follows from Corollary~\ref{sigma-cor} by induction.
\end{proof}

We require one other lemma.

\begin{lemma}\label{delta-lem}
If $\upsilon,\tau \in \cK^J$ and $s \in \Des^{\m|\sigma}(\tau) \cap \Asc^{\m|\sigma}(\upsilon)$ then 
$\mu^{\m|\sigma}_{\upsilon\tau} = \delta_{s\upsilon,\tau}$.
\end{lemma}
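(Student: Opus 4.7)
The plan is to exploit Corollary~\ref{sigma-cor}(a), applied with the hypothesis $s \in \Des^{\m|\sigma}(\tau)$, to obtain the symmetry
\[ P^{\m|\sigma}_{\upsilon\tau} = P^{\m|\sigma}_{s\upsilon,\tau}, \]
and then to translate this back into a statement about $\m^\sigma_{\upsilon\tau}$ by unwinding the definition $P^{\m|\sigma}_{\upsilon\tau} = x^{\h(\tau)-\h(\upsilon)}\m^\sigma_{\upsilon\tau}$. The analysis splits naturally into two cases depending on whether $s$ is a strict or weak ascent of $\upsilon$.

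First I would handle the case $s \in \Asc^<(\upsilon;\sigma)$, in which Proposition~\ref{h-eq-prop} gives $s\upsilon \neq \upsilon$ and $\h(s\upsilon) = \h(\upsilon)+1$. The identity above then reads $x\cdot \m^\sigma_{\upsilon\tau} = \m^\sigma_{s\upsilon,\tau}$, so $\mu^{\m|\sigma}_{\upsilon\tau} = [x^{-1}]\m^\sigma_{\upsilon\tau} = [x^0]\m^\sigma_{s\upsilon,\tau}$. Now invoke Corollary~\ref{e-cor}: if $s\upsilon = \tau$ the constant term is $P^{\m|\sigma}_{\tau\tau} = 1$; if $s\upsilon \not\preceq \tau$ then $\m^\sigma_{s\upsilon,\tau} = 0$; and if $s\upsilon \prec \tau$ then $\m^\sigma_{s\upsilon,\tau} \in x^{-1}\ZZ[x^{-1}]$, so again the constant term vanishes. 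In every subcase the value agrees with $\delta_{s\upsilon,\tau}$, as desired.

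Next I would handle the case $s \in \Asc^=(\upsilon;\sigma)$, which by Proposition~\ref{h-eq-prop} forces $s\upsilon = \upsilon$. Here Corollary~\ref{sigma-cor}(d) applies directly to yield $P^{\m|\sigma}_{\upsilon\tau} = 0$, hence $\m^\sigma_{\upsilon\tau} = 0$ and so $\mu^{\m|\sigma}_{\upsilon\tau} = 0$. It remains to check $\delta_{s\upsilon,\tau} = 0$, i.e.\ that $\upsilon \neq \tau$: if $s \in \Des^<(\tau;\sigma)$ this is clear since $s \in \Asc^=(\upsilon;\sigma)$, while if $s \in \Des^=(\tau;\sigma)$ then the definitions in \eqref{asc-des-def} force $\sigma(w_\upsilon^{-1}sw_\upsilon) = -1$ and $\sigma(w_\tau^{-1}sw_\tau) = +1$, which again rules out $\upsilon = \tau$.

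I do not expect a genuine obstacle here, as the argument is essentially the extraction of leading coefficients from the recursion in Corollary~\ref{sigma-cor}; the only mild subtlety is remembering to rule out $\upsilon = \tau$ in the weak-ascent case, which is what makes the weak-descent/weak-ascent separation in $\Des^{\m|\sigma}$ and $\Asc^{\m|\sigma}$ (and thus the splitting of strict vs.\ weak ascents of $\upsilon$) essential.
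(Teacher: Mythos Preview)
Your proposal is correct and follows essentially the same approach as the paper: split according to whether $s$ is a strict or a weak ascent of $\upsilon$, then apply Corollary~\ref{sigma-cor}(a) in the strict case to reduce $\m^\sigma_{\upsilon\tau}$ to $x^{-1}\m^\sigma_{s\upsilon,\tau}$ and read off the coefficient of $x^{-1}$ via Corollary~\ref{e-cor}, and apply Corollary~\ref{sigma-cor}(d) in the weak case to get $P^{\m|\sigma}_{\upsilon\tau}=0$. Your verification that $\upsilon\neq\tau$ in the weak case is more explicit than the paper's (which simply notes that $\Des^{\m|\sigma}(\tau)$ and $\Asc^{\m|\sigma}(\tau)$ are disjoint), but the argument is otherwise the same.
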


\begin{proof}
Suppose $s\in \Des^{\m|\sigma}(\tau) \cap \Asc^{<}(\upsilon)$.
Then $P^{\m|\sigma}_{\upsilon\tau} = P^{\m|\sigma}_{s\upsilon,\tau}$ so
$\m^\sigma_{\upsilon\tau} = x^{-1} \m^\sigma_{s\upsilon,\tau}$.
In this case, if $s\upsilon = \tau$ then $\mu^{\m|\sigma}_{\upsilon\tau} = 1$
and otherwise $\m^\sigma_{\upsilon\tau} \in x^{-2}\ZZ[x^{-1}]$ so $\mu^{\m|\sigma}_{\upsilon\tau}=0$.
If instead $s \in \Des^{\m|\sigma}(\tau) \cap \Asc^{=}(\upsilon;\sigma)$, then
$s\upsilon=\upsilon \neq \tau$ and $\mu^{\m|\sigma}_{\upsilon\tau}=0$ by
Corollary~\ref{sigma-cor}(d).
\end{proof}

Define $\omega^{\m|\sigma} : \cK^J \times  \cK^J \to \ZZ$ by the formula
\[
\omega^{\m|\sigma}(\tau,\upsilon) :=\begin{cases} \mu^{\m|\sigma}_{\upsilon\tau} + \mu^{\m|\sigma}_{\tau\upsilon}
&\text{if }\Asc^{\m|\sigma}(\tau)\not\subset\Asc^{\m|\sigma}(\upsilon) \\
0&\text{if }\Asc^{\m|\sigma}(\tau)\subset \Asc^{\m|\sigma}(\upsilon).
\end{cases}
\]
Observe that if $\Asc^{\m|\sigma}(\tau)\not\subset \Asc^{\m|\sigma}(\upsilon)\not\subset\Asc^{\m|\sigma}(\tau)$
then $\omega^{\m|\sigma}(\tau,\upsilon) = \omega^{\m|\sigma}(\upsilon,\tau)$.

\begin{definition}\label{quasi-admissible-def}
A $W$-graph  $\Gamma=(V,\omega,I)$ is \emph{quasi-admissible} 
  if
  \ben
  \item[(1)] the corresponding directed graph is bipartite,
\item[(2)] we always have $\omega(u,v) \in \ZZ$,
\item[(3)]  we have
  $\omega(u,v) =0$ 
 whenever  $I(u)\subset I(v)$,
 and 
 \item[(4)] we have
$\omega(u,v) = \omega(v,u)$
  whenever
   $I(u)\not\subset I(v) \not\subset I(u)$.
   \een
A quasi-admissible $W$-graph is \emph{admissible} in the sense of \cite{Stembridge}
if every $\omega(u,v) \in \NN$.
\end{definition}


\begin{theorem}\label{mw-thm}
The $S$-labeled graph  $\Upsilon^{\m|\sigma}:=(\cK^J, \omega^{\m|\sigma}, \Asc^{\m|\sigma})$ is a
quasi-admissible $W$-graph.
The linear map with $Y_\tau \mapsto \underline M^\sigma_\tau$ is an isomorphism of $\H$-modules $\cY(\Upsilon^{\m|\sigma}) \xrightarrow{\sim} \cM^{\sigma}$.
\end{theorem}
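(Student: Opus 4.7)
The plan is to read off the $W$-graph axioms directly from Lemma~\ref{cb-lem}, using the definition of $\omega^{\m|\sigma}$ together with Lemma~\ref{delta-lem} to handle the terms indexed by elements that are not strictly below $\tau$ in the order $\prec$.

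First I would set $Y_\tau := \underline M^\sigma_\tau$ and verify the defining identity \eqref{hy-eq} for a $W$-graph. For $s \notin \Asc^{\m|\sigma}(\tau)$, i.e.\ $s \in \Des^{\m|\sigma}(\tau)$, Lemma~\ref{cb-lem}(a) gives $\underline H_s \underline M^\sigma_\tau = (x+x^{-1})\underline M^\sigma_\tau$, and subtracting $x^{-1}\underline M^\sigma_\tau$ yields $H_s \underline M^\sigma_\tau = x \underline M^\sigma_\tau$, as required. For $s \in \Asc^{\m|\sigma}(\tau)$, Lemma~\ref{cb-lem}(b) gives
\[
H_s \underline M^\sigma_\tau = -x^{-1} \underline M^\sigma_\tau + (1-\delta_{\tau,s\tau})\underline M^\sigma_{s\tau} + \sum_{\substack{\upsilon \prec \tau \\ s \in \Des^{\m|\sigma}(\upsilon)}} \mu^{\m|\sigma}_{\upsilon\tau}\, \underline M^\sigma_\upsilon,
\]
so the task reduces to identifying the last two terms with the $W$-graph sum $\sum_{\upsilon: s \notin \Asc^{\m|\sigma}(\upsilon)} \omega^{\m|\sigma}(\tau,\upsilon)\, \underline M^\sigma_\upsilon$.

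The key step is then a case analysis on the position of $\upsilon$ in $\prec$. For $\upsilon \prec \tau$ with $s \in \Des^{\m|\sigma}(\upsilon)$, since $\tau \not\preceq \upsilon$ Corollary~\ref{e-cor} forces $\mu^{\m|\sigma}_{\tau\upsilon}=0$, and since $s \in \Asc^{\m|\sigma}(\tau)\setminus \Asc^{\m|\sigma}(\upsilon)$ we have $\Asc^{\m|\sigma}(\tau) \not\subset \Asc^{\m|\sigma}(\upsilon)$, so $\omega^{\m|\sigma}(\tau,\upsilon) = \mu^{\m|\sigma}_{\upsilon\tau}$, matching the formula. For $\upsilon = \tau$ the term is excluded because $s \in \Asc^{\m|\sigma}(\tau)$. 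For $\upsilon$ not comparable to $\tau$ or with $\upsilon \succ \tau$, both $\mu^{\m|\sigma}_{\upsilon\tau}$ and $\mu^{\m|\sigma}_{\tau\upsilon}$ vanish by Corollary~\ref{e-cor} and Lemma~\ref{delta-lem}, except in the single case $\upsilon = s\tau$ with $s \in \Asc^<(\tau;\sigma)$; there $s\tau \neq \tau$, $s \in \Des^{\m|\sigma}(s\tau)$, and Lemma~\ref{delta-lem} gives $\mu^{\m|\sigma}_{\tau,s\tau} = 1$ while $\mu^{\m|\sigma}_{s\tau,\tau} = 0$, so $\omega^{\m|\sigma}(\tau, s\tau) = 1$, accounting exactly for the $(1-\delta_{\tau,s\tau})\underline M^\sigma_{s\tau}$ term (and when $s \in \Asc^=(\tau;\sigma)$ we have $s\tau = \tau$ by Proposition~\ref{h-eq-prop}, so no such term arises). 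This equality of the two expressions then also gives the $\H$-module isomorphism $\cY(\Upsilon^{\m|\sigma}) \xrightarrow{\sim} \cM^\sigma$.

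Finally I would dispatch the four quasi-admissibility axioms. Integrality of $\omega^{\m|\sigma}$ and the vanishing when $\Asc^{\m|\sigma}(\tau) \subset \Asc^{\m|\sigma}(\upsilon)$ are immediate from the definition. The symmetry $\omega^{\m|\sigma}(\tau,\upsilon)=\omega^{\m|\sigma}(\upsilon,\tau)$ when $\Asc^{\m|\sigma}(\tau) \not\subset \Asc^{\m|\sigma}(\upsilon) \not\subset \Asc^{\m|\sigma}(\tau)$ was noted just before Definition~\ref{quasi-admissible-def}. For bipartiteness I would observe that by Corollary~\ref{e-cor} the coefficients $\mu^{\m|\sigma}_{\upsilon\tau}$ vanish unless $\h(\tau)-\h(\upsilon)$ is odd, so $\omega^{\m|\sigma}(\tau,\upsilon) = 0$ unless $\h(\tau)$ and $\h(\upsilon)$ have opposite parity, which partitions $\cK^J$ into the two parts of the required bipartition. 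The main obstacle is the bookkeeping in the second paragraph, specifically isolating the $\upsilon = s\tau$ contribution among vertices with $\upsilon \not\prec \tau$; once Lemma~\ref{delta-lem} is invoked this collapses to the clean delta-function calculation above.
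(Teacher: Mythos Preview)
Your proposal is correct and follows essentially the same approach as the paper: both proofs verify \eqref{hy-eq} by combining Lemma~\ref{cb-lem} with Lemma~\ref{delta-lem} and Corollary~\ref{e-cor}, and both deduce bipartiteness from the parity statement in Corollary~\ref{e-cor}. The only cosmetic difference is that the paper organizes the case analysis by comparing $\h(\tau)$ and $\h(\upsilon)$, while you organize it by the position of $\upsilon$ relative to $\tau$ in the order $\prec$; the substance is the same.
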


This $W$-graph is usually not admissible.

\begin{proof}
To prove that $\Upsilon^{\m|\sigma}$ is a $W$-graph
and that the linear map with $Y_\tau \mapsto \underline M^\sigma_\tau$ is an $\H$-module isomorphism $\cY(\Upsilon^{\m|\sigma}) \xrightarrow{\sim} \cM^{\sigma}$,
it suffices to show that  
\be\label{hmo-eq} \underline H_s \underline M^\sigma_\tau = \sum_{\substack{
\upsilon \in \cK^J \\ 
s \in \Des^{\m|\sigma}(\upsilon)}
} \omega^{\m|\sigma}(\tau,\upsilon) \underline M^\sigma_\upsilon
\quad\text{for all $\tau \in \cK^J$ and $s \in \Asc^{\m|\sigma}(\tau)$}.
\ee
To this end,
suppose $\upsilon,\tau \in \cK^J$ with $ \h(\tau) \leq \h(\upsilon)$.
Then
$\omega^{\m|\sigma}(\tau,\upsilon)  = \mu^{\m|\sigma}_{\tau\upsilon}$ if 
there is some $s \in \Asc^{\m|\sigma}(\tau)\cap \Des^{\m|\sigma}(\upsilon)$ and 
$\omega^{\m|\sigma}(\tau,\upsilon)=0$ otherwise;
in the former case $\mu^{\m|\sigma}_{\tau\upsilon} = \delta_{\upsilon,s\tau}$
by Lemma~\ref{delta-lem}.
Thus
$\omega^{\m|\sigma}(\tau,\upsilon) = 1 $
if 
$\tau \prec s\tau = \upsilon$ for some $s \in S $
and $\omega^{\m|\sigma}(\tau,\upsilon) = 0$ otherwise.
On the other hand, if $\upsilon \in \cK^J$ with $\h(\upsilon)< \h(\tau)$,
then
$\omega^{\m|\sigma}(\tau,\upsilon) = \mu^{\m|\sigma}_{\upsilon\tau}$ if 
there exists $s\in \Asc^{\m|\sigma}(\tau)\cap \Des^{\m|\sigma}(\upsilon)$
and otherwise $\omega^{\m|\sigma}(\tau,\upsilon)=0$.
Hence, if $s\in \Asc^<(\tau;\sigma)$ then
\[ \sum_{\upsilon\in\cK^J\text{ and } s \in \Des^{\m|\sigma}(\upsilon)} \omega^{\m|\sigma}(\tau,\upsilon) \underline M^\sigma_\upsilon
=
\underline M^\sigma_{s\tau} + \sum_{s \in \Des^{\m|\sigma}(\upsilon)} \mu^{\m|\sigma}_{\upsilon\tau}\underline M^\sigma_\upsilon\]
which is equal to $\underline H_s \underline M^\sigma_\tau$ by Lemma~\ref{cb-lem}.
If
$s\in \Asc^=(\tau;\sigma)$ then \eqref{hmo-eq} follows similarly.

To show that the $W$-graph $\Upsilon^{\m|\sigma}$  is quasi-admissible, it remains only to check that 
the corresponding directed graph is bipartite.
This holds since Corollary~\ref{e-cor} implies
that $\tau \to \upsilon$ is a directed edge in $\Upsilon^{\m|\sigma}$
only if $\h(\tau) \not\equiv \h(\upsilon) \modu 2)$.
\end{proof}

We say that $u\leftrightarrow v$ is a \emph{bidirected edge} in a $W$-graph $\Gamma = (W,\omega,I)$
if $\omega(u,v) \neq 0 \neq \omega(v,u)$.

\begin{corollary}\label{mw-cor}
Suppose $\upsilon,\tau \in \cK^J$.
Then $\upsilon \leftrightarrow \tau$ is a bidirected edge in $\Upsilon^{\m|\sigma}$
if and only if $ \upsilon \prec s \upsilon = \tau$ for some $s \in S$ and $\Des^{\m|\sigma}(\upsilon) \cap \Asc^{\m|\sigma}(\tau)$ is nonempty.
\end{corollary}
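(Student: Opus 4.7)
The plan is to reduce the bidirected-edge condition to a statement about $\mu^{\m|\sigma}_{\upsilon\tau}$ and then invoke Lemma~\ref{delta-lem}. I would begin by observing that, by the bipartite structure of $\Upsilon^{\m|\sigma}$ established in Theorem~\ref{mw-thm}, any bidirected edge must join vertices whose $\h$-values have opposite parities, so I may assume without loss of generality that $\h(\upsilon) < \h(\tau)$. Under this assumption Corollary~\ref{e-cor} forces $\mu^{\m|\sigma}_{\tau\upsilon} = 0$, because $\tau \not\prec \upsilon$. Unpacking the definition of $\omega^{\m|\sigma}$ therefore gives
\[
\omega^{\m|\sigma}(\tau,\upsilon) = \begin{cases} \mu^{\m|\sigma}_{\upsilon\tau} & \text{if }\Asc^{\m|\sigma}(\tau)\not\subset\Asc^{\m|\sigma}(\upsilon),\\ 0 & \text{otherwise,}\end{cases}
\qquad
\omega^{\m|\sigma}(\upsilon,\tau) = \begin{cases} \mu^{\m|\sigma}_{\upsilon\tau} & \text{if }\Asc^{\m|\sigma}(\upsilon)\not\subset\Asc^{\m|\sigma}(\tau),\\ 0 & \text{otherwise.}\end{cases}
\]

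The next step is the simple set-theoretic observation that $\Asc^{\m|\sigma}(\tau)\not\subset\Asc^{\m|\sigma}(\upsilon)$ is the same thing as $\Des^{\m|\sigma}(\upsilon)\cap \Asc^{\m|\sigma}(\tau) \neq \varnothing$, which already takes care of one of the two defining conditions in the corollary. Thus $\upsilon\leftrightarrow\tau$ is a bidirected edge precisely when $\mu^{\m|\sigma}_{\upsilon\tau}\neq 0$ together with both non-containments of ascent sets.

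For the forward direction, I would extract some $t \in \Asc^{\m|\sigma}(\upsilon)\cap\Des^{\m|\sigma}(\tau)$ from the second non-containment and apply Lemma~\ref{delta-lem} to conclude $\mu^{\m|\sigma}_{\upsilon\tau}=\delta_{t\upsilon,\tau}$; since this coefficient is nonzero, we must have $t\upsilon=\tau$. Because $\h(\upsilon)<\h(\tau)=\h(t\upsilon)$, Proposition~\ref{h-eq-prop} forces $t\in\Asc^<(\upsilon;\sigma)$, so $\upsilon\prec t\upsilon=\tau$, giving the required $s:=t$. Conversely, if $\upsilon\prec s\upsilon=\tau$ then $s\in\Asc^<(\upsilon;\sigma)\subset\Asc^{\m|\sigma}(\upsilon)$ and $s\in\Des^<(\tau;\sigma)\subset\Des^{\m|\sigma}(\tau)$, so the second non-containment holds automatically, while Lemma~\ref{delta-lem} delivers $\mu^{\m|\sigma}_{\upsilon\tau}=\delta_{s\upsilon,\tau}=1$; the hypothesis $\Des^{\m|\sigma}(\upsilon)\cap\Asc^{\m|\sigma}(\tau)\neq\varnothing$ provides the first non-containment.

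Nothing here is really an obstacle — Lemma~\ref{delta-lem} is doing all the heavy lifting, and the main content of the argument is bookkeeping between the definitions of $\omega^{\m|\sigma}$, $\Asc^{\m|\sigma}$, $\Des^{\m|\sigma}$, and the strict order $\prec$. The only subtlety worth pausing over is the WLOG step: the asserted equivalence is not obviously symmetric in $\upsilon$ and $\tau$, so I should verify that swapping the roles of the two vertices turns the condition ``$\upsilon\prec s\upsilon=\tau$'' into ``$\tau\prec s\tau=\upsilon$'' while preserving the intersection condition, which is automatic since $(s\upsilon=\tau)\iff(s\tau=\upsilon)$ and $\Des^{\m|\sigma}(\upsilon)\cap\Asc^{\m|\sigma}(\tau)\neq\varnothing\iff\Des^{\m|\sigma}(\tau)\cap\Asc^{\m|\sigma}(\upsilon)\neq\varnothing$ in the presence of the ascent/descent reversal provided by $s$.
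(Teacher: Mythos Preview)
Your argument is correct and mirrors the paper's approach: the corollary is stated there without proof because it follows directly from the analysis of $\omega^{\m|\sigma}$ already carried out in the proof of Theorem~\ref{mw-thm}, which is exactly the bookkeeping you perform via Lemma~\ref{delta-lem} and Corollary~\ref{e-cor}. Your closing paragraph on symmetry is slightly off---the right-hand side is not symmetric in $(\upsilon,\tau)$, since $\upsilon\prec\tau$ and $\tau\prec\upsilon$ cannot both hold---but this is harmless, as the corollary is to be read with the tacit convention that $\upsilon$ names the vertex of smaller height, precisely the case your main argument handles.
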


Replacing the linear character $\sigma : W_J \to\{\pm 1\}$ by $\sgn \cdot \sigma$
gives dual forms of the above constructions. To refer to these, our convention is to replace each ``m'' by ``n'' and write 
\[\cN^\sigma := \cM^{\sgn \cdot \sigma}, \quad 
N_\tau^{\sigma} := M_\tau^{\sgn\cdot \sigma}, \quad
\underline N_\tau^{\sigma} := \underline M_\tau^{\sgn\cdot \sigma},\quad
\n^\sigma_{\upsilon\tau} := \m^{\sgn\cdot\sigma}_{\upsilon\tau},\quad 
\mu^{\n|\sigma}_{\upsilon\tau} := \mu^{\m|\sgn\cdot\sigma}_{\upsilon\tau},\]
and so forth.
This means that $\cN^{\sigma}=\LL\spanning\{ N^\sigma_\tau : \tau \in \cK^J\}$ 
is the free $\LL$-module with the unique
$\H$-module structure satisfying
\be
H_s N^\sigma_\tau = \begin{cases}
N^\sigma_{s\tau}  &\text{if }s \in \Asc^<(\tau;\sigma)\\
N^\sigma_{s\tau} + (x-x^{-1}) N^\sigma_\tau &\text{if }s \in \Des^<(\tau;\sigma) \\
xN^\sigma_\tau &  \text{if }s \in \Asc^=(\tau;\sigma) \\
-x^{-1} N^\sigma_\tau& \text{if }s \in \Des^=(\tau;\sigma)
\end{cases}
\quad\text{for all }s \in S\text{ and } \tau \in \cK^J,
\ee
where the weak/strict ascent/descent sets are as in \eqref{asc-des-def}.
 This $\H$-module has a unique $\H$-compatible bar operator $N \mapsto \overline {N}$,
which turns out to be an involution,
that
fixes $ N^\sigma_{\tau} \in \cN^{\sigma}$
for each $\tau \in \cK^J$ with $\Des^<(\tau)=\varnothing$.
Finally, the elements $\underline N_\tau^{\sigma} = \sum_{\upsilon \in \cK^J}  \n^\sigma_{\upsilon\tau}N^\sigma_{\upsilon}$
for $\tau \in \cK^J$ provide the unique basis of $\cN^\sigma$ satisfying 
\[\overline{\underline N^\sigma_\tau} = \underline N^\sigma_\tau \in N^\sigma_\tau + \sum_{ \upsilon \prec \tau} x^{-1} \ZZ[x^{-1}] N^\sigma_\upsilon\qquad\text{for all }\tau \in \cK^J.\]
As usual $ \mu^{\n|\sigma}_{\upsilon\tau} = [x^{-1}]  \n^\sigma_{\upsilon\tau} \in \ZZ$
is the coefficient of $x^{-1}$ in $\n^\sigma_{\upsilon\tau}$.
For $\tau \in \cK^J$ define
\[
\Des^{\n|\sigma}(\tau) := \Des^<(\tau;\sigma) \sqcup \Asc^=(\tau;\sigma)
\qquand
\Asc^{\n|\sigma}(\tau) := \Asc^<(\tau;\sigma) \sqcup \Des^=(\tau;\sigma),
\]
so that $\Des^{\n|\sigma}(\tau) = \Des^{\m|\sgn \cdot\sigma}(\tau)$
and $\Asc^{\n|\sigma}(\tau) = \Asc^{\m|\sgn \cdot\sigma}(\tau)$.
To do calculations with the basis $\{\underline N^\sigma_\tau\}$, it is helpful to restate 
Lemma~\ref{cb-lem}
with  $\sigma$ replaced by $\sgn \cdot \sigma$:

\begin{lemma}\label{cb-lem2}
Suppose $s \in S $ and $\tau \in \cK^J$. Let $\underline H_s = H_s + x^{-1}$.
\ben
\item[(a)] If $s \in \Des^\n(\tau) $ then $\underline H_s \underline N^\sigma_\tau =  
(x+x^{-1}) \underline N^\sigma_\tau$.

\item[(b)] If $s \in \Asc^\n(\tau)$ then 
$\ds\underline H_s \underline N^\sigma_\tau =
(1-\delta_{\tau,s\tau}) \underline N^\sigma_{s \tau} + 
\sum_{\substack{\upsilon \prec \tau  \\ s \in \Des^\n(\upsilon)}}
\mu^\n_{\upsilon\tau} \underline N^\sigma_\upsilon.$
\een
\end{lemma}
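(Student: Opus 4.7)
The proof will be almost entirely formal: Lemma 3.19 (\texttt{cb-lem2}) is nothing but Lemma 3.18 (\texttt{cb-lem}) applied with the linear character $\sigma$ replaced by $\sgn\cdot\sigma$. The plan is to justify this substitution carefully and then invoke Lemma 3.18.

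First, I would note that $\sgn\cdot\sigma : W_J \to \{\pm 1\}$ is again a group homomorphism, hence a legitimate linear character, so the entire construction of Theorem~\ref{m-thm} and everything derived from it (Lemma~\ref{cb-lem}, Corollary~\ref{sigma-cor}, etc.) applies verbatim to the module $\cM^{\sgn\cdot\sigma}$. By the definitions declared immediately before Lemma~\ref{cb-lem2}, namely
\[
\cN^\sigma := \cM^{\sgn\cdot\sigma},\qquad N_\tau^\sigma := M_\tau^{\sgn\cdot\sigma},\qquad \underline N_\tau^\sigma := \underline M_\tau^{\sgn\cdot\sigma},\qquad \mu^{\n|\sigma}_{\upsilon\tau} := \mu^{\m|\sgn\cdot\sigma}_{\upsilon\tau},
\]
together with $\Des^{\n|\sigma}(\tau) = \Des^{\m|\sgn\cdot\sigma}(\tau)$ and $\Asc^{\n|\sigma}(\tau) = \Asc^{\m|\sgn\cdot\sigma}(\tau)$, the two statements of Lemma~\ref{cb-lem2} translate symbol-for-symbol into the two statements of Lemma~\ref{cb-lem} with $\sigma$ replaced by $\sgn\cdot\sigma$.

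Second, I would verify the one subtle compatibility: the strict/weak ascent and descent sets of $\tau$ with respect to $\sigma$ versus $\sgn\cdot\sigma$. Inspecting \eqref{asc-des-def}, the sets $\Asc^<(\tau;\sigma)$ and $\Des^<(\tau;\sigma)$ depend only on the height function $\h$ (not on $\sigma$), while $\Asc^=(\tau;\sgn\cdot\sigma) = \Des^=(\tau;\sigma)$ and $\Des^=(\tau;\sgn\cdot\sigma) = \Asc^=(\tau;\sigma)$ because multiplication by $\sgn$ flips signs of values of the character. This is exactly the swap that distinguishes $\Des^{\n|\sigma}$ from $\Des^{\m|\sigma}$ (and similarly for ascents), confirming that the translation is consistent.

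With these observations, applying Lemma~\ref{cb-lem} to the character $\sgn\cdot\sigma$ yields Lemma~\ref{cb-lem2} directly. There is no real obstacle here: the only thing to be careful about is bookkeeping, ensuring that the notational conventions for the ``$\n$'' quantities really are the ``$\m$'' quantities for $\sgn\cdot\sigma$. Since this has already been arranged by the definitions preceding the lemma, the proof reduces to a single sentence invoking Lemma~\ref{cb-lem}.
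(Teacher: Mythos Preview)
Your proposal is correct and matches the paper's approach exactly: the paper explicitly introduces Lemma~\ref{cb-lem2} as a restatement of Lemma~\ref{cb-lem} with $\sigma$ replaced by $\sgn\cdot\sigma$, and gives no separate proof. Your additional verification of the bookkeeping for the ascent/descent sets is a welcome but inessential elaboration.
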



Continuing our notational conventions,
define $\omega^{\n|\sigma} : \cK^J \times  \cK^J \to \ZZ$ by the formula
\[
\omega^{\n|\sigma}(\upsilon,\tau) :=\begin{cases} \mu^{\n|\sigma}_{\upsilon\tau} + \mu^{\n|\sigma}_{\tau\upsilon}
&\text{if }\Asc^{\n|\sigma}(\upsilon)\not\subset\Asc^{\n|\sigma}(\tau) \\
0&\text{if }\Asc^{\n|\sigma}(\upsilon)\subset \Asc^{\n|\sigma}(\tau).
\end{cases}
\]
Again, we have $\omega^{\n|\sigma} = \omega^{\m|\sgn\cdot\sigma}$.
Also let $\Upsilon^{\n|\sigma} :=(\cK^J, \omega^{\n|\sigma}, \Asc^{\n|\sigma}).$
For future reference, we repeat the versions of Theorem~\ref{mw-thm} and Corollary~\ref{mw-cor}
given by replacing $\sigma$ by $\sgn\cdot \sigma$.

\begin{theorem}\label{nw-thm}
The $S$-labeled graph $\Upsilon^{\n|\sigma}$ is a
quasi-admissible $W$-graph.
The linear map with $Y_\tau \mapsto \underline N^\sigma_\tau$ is an isomorphism of $\H$-modules $\cY(\Upsilon^{\n|\sigma}) \xrightarrow{\sim} \cN^{\sigma}$.
\end{theorem}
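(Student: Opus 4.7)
The plan is to deduce Theorem~\ref{nw-thm} immediately from Theorem~\ref{mw-thm} by substituting the linear character $\sgn \cdot \sigma$ for $\sigma$. Since $\sgn|_{W_J}$ and $\sigma$ are both linear characters $W_J\to\{\pm 1\}$, so is their product, and therefore the entire machinery leading up to Theorem~\ref{mw-thm} (namely Theorem~\ref{m-thm}, Lemmas~\ref{cb-lem} and~\ref{delta-lem}, Corollaries~\ref{sigma-cor} and~\ref{e-cor}, and the definitions of $\omega^{\m|\sigma}$ and $\Upsilon^{\m|\sigma}$) applies verbatim with $\sigma$ replaced by $\sgn\cdot\sigma$.

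The first thing to verify is that this substitution matches the notational conventions introduced after Corollary~\ref{mw-cor}. By definition $\cN^\sigma = \cM^{\sgn\cdot\sigma}$, $N^\sigma_\tau = M^{\sgn\cdot\sigma}_\tau$, $\underline N^\sigma_\tau = \underline M^{\sgn\cdot\sigma}_\tau$, and $\mu^{\n|\sigma}_{\upsilon\tau} = \mu^{\m|\sgn\cdot\sigma}_{\upsilon\tau}$. Since $(\sgn\cdot\sigma)(w^{-1} sw) = -\sigma(w^{-1} sw)$ for any reflection conjugate to a simple generator, comparing \eqref{asc-des-def} yields $\Asc^=(\tau;\sgn\cdot\sigma) = \Des^=(\tau;\sigma)$ and $\Des^=(\tau;\sgn\cdot\sigma) = \Asc^=(\tau;\sigma)$, with the strict ascent/descent sets unchanged. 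Therefore
\[
\Asc^{\m|\sgn\cdot\sigma}(\tau) = \Asc^<(\tau;\sigma)\sqcup \Des^=(\tau;\sigma) = \Asc^{\n|\sigma}(\tau)
\]
and likewise $\Des^{\m|\sgn\cdot\sigma}(\tau) = \Des^{\n|\sigma}(\tau)$. It then follows from the formula defining $\omega^{\m|\sigma}$ that $\omega^{\m|\sgn\cdot\sigma} = \omega^{\n|\sigma}$, so in fact $\Upsilon^{\m|\sgn\cdot\sigma} = \Upsilon^{\n|\sigma}$ as $S$-labeled graphs.

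Once this identification is in place, Theorem~\ref{mw-thm} applied to $\sgn\cdot\sigma$ states exactly that $\Upsilon^{\n|\sigma}$ is a quasi-admissible $W$-graph and that $Y_\tau\mapsto \underline M^{\sgn\cdot\sigma}_\tau = \underline N^\sigma_\tau$ is an isomorphism $\cY(\Upsilon^{\n|\sigma}) \xrightarrow{\sim} \cM^{\sgn\cdot\sigma} = \cN^\sigma$ of $\H$-modules. The only real bookkeeping step, which is the main (and mild) obstacle, is to confirm that the $\H$-module structure on $\cN^\sigma$ prescribed in the paragraph following Corollary~\ref{mw-cor} coincides with the one obtained from Theorem~\ref{m-thm}(a) for the character $\sgn\cdot\sigma$; this is immediate from the two interchanges $\Des^=(\tau;\sgn\cdot\sigma)\leftrightarrow \Asc^=(\tau;\sigma)$ noted above, which exactly swap the scalars $x$ and $-x^{-1}$ in the formulas for $H_s M^{\sgn\cdot\sigma}_\tau$. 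With that check completed, there is nothing further to prove.
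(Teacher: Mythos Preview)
Your proof is correct and is exactly the approach the paper takes: immediately before stating Theorem~\ref{nw-thm}, the paper notes that $\omega^{\n|\sigma} = \omega^{\m|\sgn\cdot\sigma}$ and says explicitly that Theorem~\ref{nw-thm} is simply the version of Theorem~\ref{mw-thm} obtained by replacing $\sigma$ by $\sgn\cdot\sigma$, without giving a separate argument. Your bookkeeping verifying that the substitution produces the correct ascent/descent sets and $\H$-action is accurate and perhaps more detailed than what the paper spells out.
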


\begin{corollary}
Suppose $\upsilon,\tau \in \cK^J$.
Then $\upsilon \leftrightarrow \tau$ is a bidirected edge in $\Upsilon^{\n|\sigma}$
if and only if $ \upsilon \prec s \upsilon = \tau$ for some $s \in S$ and $\Des^{\n|\sigma}(\upsilon) \cap \Asc^{\n|\sigma}(\tau)$ is nonempty.
\end{corollary}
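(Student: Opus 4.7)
The plan is to derive this corollary as a formal translation of Corollary~\ref{mw-cor}, the already-established analogue for the $\mathbf{m}$-basis, under the substitution $\sigma \mapsto \sgn\cdot\sigma$. This works because the entire $\mathbf{n}$-setup for the module $\cN^\sigma$ was \emph{defined} by that substitution: we have the built-in identifications
\[
\cN^\sigma = \cM^{\sgn\cdot\sigma}, \qquad
N^\sigma_\tau = M^{\sgn\cdot\sigma}_\tau, \qquad
\underline N^\sigma_\tau = \underline M^{\sgn\cdot\sigma}_\tau, \qquad
\mu^{\n|\sigma}_{\upsilon\tau} = \mu^{\m|\sgn\cdot\sigma}_{\upsilon\tau}, \qquad
\omega^{\n|\sigma} = \omega^{\m|\sgn\cdot\sigma},
\]
so in particular the $S$-labeled graph $\Upsilon^{\n|\sigma}$ is literally the same directed graph as $\Upsilon^{\m|\sgn\cdot\sigma}$, and a bidirected edge in one is a bidirected edge in the other.

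The only verification needed is at the level of the ascent/descent decomposition. Inspecting \eqref{asc-des-def}, the strict parts $\Asc^<(\tau;\sigma)$ and $\Des^<(\tau;\sigma)$ depend only on whether $\h(s\tau)$ is greater than or less than $\h(\tau)$, hence are insensitive to replacing $\sigma$ by $\sgn\cdot\sigma$; whereas the weak parts $\Asc^=(\tau;\sigma)$ and $\Des^=(\tau;\sigma)$ are governed by the sign of $\sigma(w^{-1}sw)$ on the stabilizer, so that substitution exchanges them. Combining these observations with the definitions
\[
\Asc^{\n|\sigma}(\tau) = \Asc^<(\tau;\sigma)\sqcup \Des^=(\tau;\sigma), \qquad
\Des^{\n|\sigma}(\tau) = \Des^<(\tau;\sigma)\sqcup \Asc^=(\tau;\sigma),
\]
one sees immediately that $\Asc^{\n|\sigma}(\tau) = \Asc^{\m|\sgn\cdot\sigma}(\tau)$ and $\Des^{\n|\sigma}(\tau) = \Des^{\m|\sgn\cdot\sigma}(\tau)$.

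With these dictionaries in hand, I would simply invoke Corollary~\ref{mw-cor} for the linear character $\sgn\cdot\sigma$ in place of $\sigma$, translate each symbol through the identifications above, and read off the claim. The relation $\upsilon \prec s\upsilon = \tau$ is intrinsic to the $W$-action on $\cK^J$ and does not depend on $\sigma$ at all, so it transports unchanged. There is really no substantive obstacle; the only caveat worth mentioning is that one should double-check that the $W$-action, the order $\prec$, and the height function $\h$ used to define $\Upsilon^{\n|\sigma}$ agree with those used for $\Upsilon^{\m|\sgn\cdot\sigma}$, which is clear from Definition~\ref{prec-def} and \eqref{h-def-eq} since none of these structures involve $\sigma$.
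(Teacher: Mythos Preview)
Your proposal is correct and matches the paper's approach exactly: the paper states this corollary without proof, noting just before it that Theorem~\ref{nw-thm} and this corollary are simply the versions of Theorem~\ref{mw-thm} and Corollary~\ref{mw-cor} obtained by replacing $\sigma$ with $\sgn\cdot\sigma$. You have spelled out the details of this substitution carefully, which is precisely what the paper leaves implicit.
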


\subsection{Duality}\label{duality-sect}

In this section $(W,S)$ continues to be a finite Coxeter system with Iwahori-Hecke algebra $\H$,
and $\sigma : W_J \to\{\pm 1\}$
is a fixed linear character for some subset $J\subset S$.
There exists a unique $\LL$-algebra automorphism $\Theta : \H \to \H$
with 
\[\Theta(H_s) = - \overline{H_s} = -H_s^{-1} = -H_s + x-x^{-1}
\quand
\Theta(H_w) = (-1)^{\ell(w)} \overline{H_w}\]
for all $s \in S$ and $w \in W$. Recall that $\cK^J := W^J \times \sI_J$. Define  
$ \Phi^{\sigma} : \cM^{\sigma} \to \cN^{\sigma}
$ and
$\Psi^{\sigma} : \cN^{\sigma} \to \cM^{\sigma}$
to be the $\LL$-linear maps satisfying
\[ \Phi^{\sigma}(M^\sigma_\tau) = (-1)^{\h(\tau)} \overline{N^\sigma_\tau}
\quand
\Psi^{\sigma} (N^\sigma_\tau) = (-1)^{\h(\tau)} \overline{M^\sigma_\tau}
\quad\text{for $\tau \in \cK^J $.}\]
The following lemma extends several observations in \cite[\S3]{Marberg} which apply to the case $J=S$.

\begin{lemma}\label{phi-psi-lem}
Suppose $H \in \H$, $M \in \cM^{\sigma}$, and $N \in \cN^{\sigma}$. The following properties hold:
\ben
\item[(a)] One has
$\Phi^{\sigma}(HM) = \Theta(H) \Phi^{\sigma}(M)$ and $\Psi^{\sigma}(HN) = \Theta(H) \Psi^{\sigma}(N)$.

\item[(b)] It holds that
$\Phi^{\sigma}(\overline M) = \overline{\Phi^{\sigma}(M)}$ and $\Psi^{\sigma}(\overline N) = \overline{ \Psi^{\sigma}(N)}$.

\item[(c)] The maps $\Phi^{\sigma}:\cM^{\sigma} \to \cN^{\sigma}$ and $\Psi^{\sigma} :\cN^{\sigma} \to \cM^{\sigma}$ are inverse bijections.
\een
\end{lemma}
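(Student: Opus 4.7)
My plan is to verify the three parts in order, with (a) by direct case analysis, (b) by a uniqueness argument bootstrapped from (a), and (c) by a short calculation using (b).

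For part (a), by $\LL$-linearity and the multiplicativity of $\Theta$ it suffices to check $\Phi^\sigma(H_s M^\sigma_\tau) = \Theta(H_s)\Phi^\sigma(M^\sigma_\tau)$ for a generator $s \in S$ and a basis vector $M^\sigma_\tau$, and symmetrically for $\Psi^\sigma$. Using $\Theta(H_s) = -\overline{H_s}$ and the fact that the bar operator on $\cN^\sigma$ is $\H$-compatible, the right-hand side simplifies to $(-1)^{\h(\tau)+1}\overline{H_s N^\sigma_\tau}$. The key observation is that, because the strict ascent/descent sets $\Asc^<(\tau;\sigma)$ and $\Des^<(\tau;\sigma)$ depend only on $\h$ while $\Asc^=(\tau;\sigma)$ and $\Des^=(\tau;\sigma)$ swap under $\sigma\mapsto\sgn\cdot\sigma$ (since for every $s$ with $s\tau=\tau$ one has $w^{-1}sw\in J$ of length one, whence $\sgn\cdot\sigma(w^{-1}sw)=-\sigma(w^{-1}sw)$), the four cases of Theorem~\ref{m-thm}(a) for the action of $H_s$ on $M^\sigma_\tau$ align with the correspondingly reshuffled cases for $N^\sigma_\tau$. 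I then match each case using $\h(s\tau)=\h(\tau)\pm 1$ in the strict cases and $\h(s\tau)=\h(\tau)$ in the weak cases, so that the sign $(-1)^{\h(\cdot)}$ produces exactly the factor of $-1$ in $\Theta(H_s)=-\overline{H_s}$ on the strict side and converts between the $x$ and $-x^{-1}$ eigenvalues on the weak side. The same argument, with $\sigma$ and $\sgn\cdot\sigma$ interchanged, handles $\Psi^\sigma$.

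For part (b), I define an auxiliary map $\Phi'(M):=\overline{\Phi^\sigma(\overline{M})}$; this is $\LL$-linear, and by part (a) together with the identity $\overline{\Theta(\overline{H})}=\Theta(H)$ (which one checks on the generators $H_s$ and extends by multiplicativity), it intertwines $\Theta$ in the same way as $\Phi^\sigma$. By Proposition~\ref{same-orbit-prop}, each $W$-orbit in $\cK^J$ contains a unique element $\tau_{\min}=(1,z_{\min})$ with $\Des^<(\tau_{\min};\sigma)=\varnothing$, and these generate $\cM^\sigma$ as an $\H$-module since every $M^\sigma_\tau$ can be reached from some $M^\sigma_{\tau_{\min}}$ by repeated action of the $H_s$. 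For such $\tau_{\min}$ we have $\h(\tau_{\min})=0$ and $\overline{M^\sigma_{\tau_{\min}}}=M^\sigma_{\tau_{\min}}$ and $\overline{N^\sigma_{\tau_{\min}}}=N^\sigma_{\tau_{\min}}$ by construction of the bar operators in Theorem~\ref{m-thm}(b), so $\Phi'(M^\sigma_{\tau_{\min}}) = \overline{\Phi^\sigma(M^\sigma_{\tau_{\min}})} = \overline{N^\sigma_{\tau_{\min}}} = N^\sigma_{\tau_{\min}} = \Phi^\sigma(M^\sigma_{\tau_{\min}})$. Two $\Theta$-intertwiners agreeing on a generating set must coincide, hence $\Phi'=\Phi^\sigma$, which is exactly the claim for $\Phi^\sigma$; the argument for $\Psi^\sigma$ is identical.

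For part (c), I compute directly from (b):
\[
\Psi^\sigma(\Phi^\sigma(M^\sigma_\tau))=(-1)^{\h(\tau)}\Psi^\sigma(\overline{N^\sigma_\tau})=(-1)^{\h(\tau)}\overline{\Psi^\sigma(N^\sigma_\tau)}=\overline{\overline{M^\sigma_\tau}}=M^\sigma_\tau,
\]
using that the bar operator on $\cM^\sigma$ is an involution; the symmetric computation gives $\Phi^\sigma\Psi^\sigma=\mathrm{id}$, so the two maps are mutually inverse bijections.

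The main obstacle is the bookkeeping in (a): one must track four parallel case splits (strict ascent/descent and weak ascent/descent) and verify that the interchange of weak ascents and descents caused by twisting $\sigma$ by $\sgn$ is perfectly absorbed by the $\Theta$-twist and the sign $(-1)^{\h(\tau)}$. Once this careful matching is done, parts (b) and (c) are formal consequences.
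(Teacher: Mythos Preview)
Your proposal is correct and follows essentially the same route as the paper. Part (a) is handled identically by reducing to generators and basis vectors and doing the four-case check; part (c) is the same direct computation. In part (b) there is a slight cosmetic difference: the paper first observes that $\Phi^\sigma$ is bijective (from the triangularity in Theorem~\ref{m-thm}(b)) and then shows that $M\mapsto (\Phi^\sigma)^{-1}(\overline{\Phi^\sigma(M)})$ is an $\H$-compatible bar operator fixing the minimal elements, invoking the uniqueness in Theorem~\ref{m-thm}(b); you instead compare $\Phi^\sigma$ directly with $\Phi':=\overline{\Phi^\sigma(\overline{\cdot})}$ as two $\Theta$-intertwiners agreeing on an $\H$-generating set. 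Both arguments use the same ingredients (the intertwining from (a), the identity $\overline{\Theta(\overline H)}=\Theta(H)$, and bar-invariance of the minimal basis elements) and are interchangeable.
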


\begin{proof}
Because $\Theta$ is an algebra automorphism, it suffices to check part (a) when $H = H_s$ for some $s \in S$ and
when
 $M = M^\sigma_\tau$ and $N=N^\sigma_\tau$ for some $\tau \in \cK^J$. This is straightforward from 
 the explicit formulas in Theorem~\ref{m-thm}.
 
  Since $\Phi^\sigma(M^\sigma_\tau) \in (-1)^{\h(\tau)} N^\sigma_\tau + \sum_{\upsilon \prec \tau} \ZZ[x,x^{-1}] N^\sigma_\upsilon$
 for all $\tau \in \cK^J$ by Theorem~\ref{m-thm}(b), the map $\Phi^\sigma$ is a linear bijection  $\cM^\sigma \to \cN^\sigma$.
 Part (a) implies that we also have $(\Phi^\sigma)^{-1}(HN) =\Theta(H) (\Phi^\sigma)^{-1}(N)$ for all $H \in \H$ and $N \in \cN^\sigma$.
 It follows that 
 the map $M \mapsto (\Phi^\sigma)^{-1}(\overline{\Phi^\sigma(M)})$
 is another $\H$-compatible bar operator on $\cM^\sigma$ fixing each $M^\sigma_\tau$ with $\h(\tau) = 0$,
 so we must have $ (\Phi^\sigma)^{-1}(\overline{\Phi^\sigma(M)}) = \overline{M}$ for all $M \in \cM^\sigma$
 by Theorem~\ref{m-thm}(b), or equivalently
$\overline{\Phi^\sigma(M)} = \Phi^\sigma( \overline{M})$.
 One can derive the other identity in part (b) by a similar argument.
 
Part (b) implies that $\Psi^\sigma(\Phi^{\sigma}(M^\sigma_\tau)) =
 (-1)^{\h(\tau)} \Psi^\sigma(\overline{N^\sigma_\tau})
 =
  (-1)^{\h(\tau)} \overline{\Psi^\sigma(N^\sigma_\tau)} = M^\sigma_\tau$
  and likewise that $\Phi^\sigma(\Psi^{\sigma}(N^\sigma_\tau))= N^\sigma_\tau$
  for all $\tau \in \cK^J$
  since
  the bar operators on $\cM^\sigma$ and $\cN^\sigma$ are involutions.
   Thus $\Phi^{\sigma}$ and $\Psi^{\sigma}$ are inverse maps, as claimed in part (c).
\end{proof}

For $\tau \in \cK^J$ define 
\[\underline {\hat M}^\sigma_\tau :=  \sum_{\upsilon \in \cK^J} (-1)^{\h(\tau) - \h(\upsilon)}  \cdot \overline{\n^{\sigma}_{\upsilon\tau}} \cdot M^\sigma_\upsilon
\quand
\underline {\hat N}^\sigma_\tau :=  \sum_{\upsilon \in \cK^J} (-1)^{\h(\tau) - \h(\upsilon)}  \cdot \overline{\m^{\sigma}_{\upsilon\tau}} \cdot N^\sigma_\upsilon.
\]

\begin{corollary}\label{phi-psi-cor}
If $\tau \in \cK^J$ then $\underline {\hat M}^\sigma_\tau =  (-1)^{\h(\tau)} \Psi^{\sigma}(\underline N^\sigma_\tau)$
and
$\underline {\hat N}^\sigma_\tau =  (-1)^{\h(\tau)} \Phi^{\sigma}(\underline M^\sigma_\tau)$.
\end{corollary}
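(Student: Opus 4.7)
The plan is to verify the first identity $\underline{\hat M}^\sigma_\tau = (-1)^{\h(\tau)} \Psi^\sigma(\underline N^\sigma_\tau)$ by a short direct computation, and then obtain the second identity $\underline{\hat N}^\sigma_\tau = (-1)^{\h(\tau)} \Phi^\sigma(\underline M^\sigma_\tau)$ by the symmetric argument in which the roles of $\Phi^\sigma$ and $\Psi^\sigma$ (and of $\cM^\sigma$ and $\cN^\sigma$) are swapped. The essential inputs are only the $\LL$-linearity of $\Psi^\sigma$ (built into its definition) and the bar-commutation property $\Psi^\sigma(\overline N) = \overline{\Psi^\sigma(N)}$ from Lemma~\ref{phi-psi-lem}(b).

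First I would expand $\underline N^\sigma_\tau = \sum_\upsilon \n^\sigma_{\upsilon\tau} N^\sigma_\upsilon$ and apply $\Psi^\sigma$, using $\LL$-linearity and the definition $\Psi^\sigma(N^\sigma_\upsilon) = (-1)^{\h(\upsilon)} \overline{M^\sigma_\upsilon}$, to obtain
\[
\Psi^\sigma(\underline N^\sigma_\tau) = \sum_\upsilon (-1)^{\h(\upsilon)}\, \n^\sigma_{\upsilon\tau}\, \overline{M^\sigma_\upsilon}.
\]
This is almost what we want, except the basis vectors are barred and the coefficients are not. Next I would use that $\underline N^\sigma_\tau$ is bar-invariant (by Theorem~\ref{m-thm}(c) applied to $\sgn\cdot\sigma$) together with Lemma~\ref{phi-psi-lem}(b) to conclude that $\Psi^\sigma(\underline N^\sigma_\tau)$ is itself bar-invariant. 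Applying the (antilinear, involutive) bar operator on $\cM^\sigma$ to the displayed expression then gives
\[
\Psi^\sigma(\underline N^\sigma_\tau) = \sum_\upsilon (-1)^{\h(\upsilon)}\, \overline{\n^\sigma_{\upsilon\tau}}\, M^\sigma_\upsilon.
\]

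Multiplying by $(-1)^{\h(\tau)}$ and noting that $(-1)^{\h(\tau)+\h(\upsilon)} = (-1)^{\h(\tau)-\h(\upsilon)}$ (which is legitimate because only $\upsilon$ in the $W$-orbit of $\tau$ contribute, and along each orbit height differences are integers by Proposition~\ref{h-eq-prop} and the fact that $\h(s\tau)-\h(\tau) \in \{-1,0,1\}$) yields exactly the defining formula for $\underline{\hat M}^\sigma_\tau$. The parallel argument with $\Phi^\sigma$ in place of $\Psi^\sigma$ and the roles of $\m$ and $\n$ interchanged establishes the second identity.

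I do not expect any serious obstacle here: the corollary is essentially a rearrangement of Lemma~\ref{phi-psi-lem}. The only mildly delicate point is the sign bookkeeping with $(-1)^{\h(\tau)}$, which must be read consistently within a single $W$-orbit; once one notes that $\underline{\hat M}^\sigma_\tau$, $\underline{N}^\sigma_\tau$, and $\Psi^\sigma(\underline{N}^\sigma_\tau)$ are all supported on $\upsilon \preceq \tau$ in the orbit of $\tau$, the relative signs $(-1)^{\h(\tau)-\h(\upsilon)}$ are unambiguous and the identity drops out.
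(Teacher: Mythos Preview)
Your proof is correct and follows essentially the same approach as the paper: both compute $\Psi^\sigma(\underline N^\sigma_\tau)$ directly from the definition, then use Lemma~\ref{phi-psi-lem}(b) together with the bar-invariance of $\underline N^\sigma_\tau$ to remove the bars, arriving at the defining sum for $\underline{\hat M}^\sigma_\tau$. The only cosmetic difference is ordering (the paper first writes $\underline{\hat M}^\sigma_\tau = \overline{(-1)^{\h(\tau)}\Psi^\sigma(\underline N^\sigma_\tau)}$ and then strips the bar), and your caution about signs is unnecessary since $\h$ is integer-valued on all of $\cK^J$.
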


\begin{proof}
Evidently $\underline {\hat M}^\sigma_\tau = \overline{(-1)^{\h(\tau)} \Psi^{\sigma}(\underline N^\sigma_\tau)}$,
which  is equal to
\[(-1)^{\h(\tau)} \overline{ \Psi^{\sigma}(\underline N^\sigma_\tau)}
=
(-1)^{\h(\tau)}  \Psi^{\sigma}(\overline{\underline N^\sigma_\tau})
=
(-1)^{\h(\tau)}  \Psi^{\sigma}(\underline N^\sigma_\tau)\]
by Lemma~\ref{phi-psi-lem} and the definition of $\underline N^\sigma_\tau$. The identity for 
$\underline {\hat N}^\sigma_\tau$ follows similarly.
\end{proof}

\begin{corollary}
The set $\left\{ \underline {\hat M}^\sigma_\tau : \tau \in \cK^J\right\}$ is the unique basis of $\cM^{\sigma}$ satisfying
\[\overline{\underline {\hat M}^\sigma_\tau} = \underline {\hat M}^\sigma_\tau \in M^\sigma_\tau + \sum_{ \upsilon \prec \tau} x\ZZ[x] M^\sigma_\upsilon\qquad\text{for all }\tau \in \cK^J.\]
The set $\left\{ \underline {\hat N}^\sigma_\tau : \tau \in \cK^J\right\}$ is the unique basis of $\cN^{\sigma}$ satisfying
\[\overline{\underline {\hat N}^\sigma_\tau} = \underline {\hat N}^\sigma_\tau \in N^\sigma_\tau + \sum_{ \upsilon \prec \tau} x\ZZ[x] N^\sigma_\upsilon\qquad\text{for all }\tau \in \cK^J.\]
\end{corollary}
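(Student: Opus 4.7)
My plan is to establish existence first and then uniqueness, treating the $\underline{\hat M}^\sigma_\tau$ case in detail, with the parallel argument for $\underline{\hat N}^\sigma_\tau$ following by exchanging the roles of $\sigma$ and $\sgn\cdot\sigma$ (equivalently, by swapping $\Phi^\sigma$ and $\Psi^\sigma$).

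\textbf{Existence.} I read off triangularity directly from the definition $\underline{\hat M}^\sigma_\tau = \sum_\upsilon (-1)^{\h(\tau)-\h(\upsilon)}\overline{\n^\sigma_{\upsilon\tau}}M^\sigma_\upsilon$: Theorem~\ref{m-thm}(c) (applied to $\cN^\sigma$) tells us that $\n^\sigma_{\tau\tau}=1$, that $\n^\sigma_{\upsilon\tau}=0$ unless $\upsilon\preceq\tau$, and that $\n^\sigma_{\upsilon\tau}\in x^{-1}\ZZ[x^{-1}]$ when $\upsilon\prec\tau$. Applying the bar operator on $\LL$ flips $x^{-1}\ZZ[x^{-1}]$ onto $x\ZZ[x]$, giving $\underline{\hat M}^\sigma_\tau\in M^\sigma_\tau+\sum_{\upsilon\prec\tau}x\ZZ[x]\,M^\sigma_\upsilon$ as required. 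For bar-invariance I invoke Corollary~\ref{phi-psi-cor}, which writes $\underline{\hat M}^\sigma_\tau=(-1)^{\h(\tau)}\Psi^\sigma(\underline N^\sigma_\tau)$; since $\overline{\underline N^\sigma_\tau}=\underline N^\sigma_\tau$ by Theorem~\ref{m-thm}(c) and $\Psi^\sigma$ commutes with the bar operator by Lemma~\ref{phi-psi-lem}(b), this element is fixed by the bar operator on $\cM^\sigma$.

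\textbf{Uniqueness.} I use the standard Kazhdan--Lusztig-style maximality argument, since Lemma~\ref{webster-lem} cannot be quoted verbatim (the triangularity condition here lives in $x\ZZ[x]$ rather than $x^{-1}\ZZ[x^{-1}]$, but the same proof technique applies). Suppose $\{A_\tau:\tau\in\cK^J\}$ is another basis with the stated properties, and fix $\tau$. Consider the difference
\[
D := A_\tau - \underline{\hat M}^\sigma_\tau \in \sum_{\upsilon\prec\tau} x\ZZ[x]\,M^\sigma_\upsilon,
\]
which is bar-invariant since $A_\tau$ and $\underline{\hat M}^\sigma_\tau$ are. Assume for contradiction that $D\neq 0$ and write $D=\sum_\upsilon f_\upsilon M^\sigma_\upsilon$ with each $f_\upsilon\in x\ZZ[x]$. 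Pick $\upsilon_0$ maximal in the partial order $\prec$ among those with $f_{\upsilon_0}\neq 0$; this exists since $\{\upsilon:\upsilon\prec\tau\}$ is finite by Corollary~\ref{e-cor}.

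\textbf{Reaching the contradiction.} Applying the bar operator and using the triangularity $\overline{M^\sigma_\upsilon}\in M^\sigma_\upsilon+\sum_{\rho\prec\upsilon}\LL\cdot M^\sigma_\rho$ from \eqref{ut-eq} of Theorem~\ref{m-thm}(b), the coefficient of $M^\sigma_{\upsilon_0}$ in $\overline{D}$ is exactly $\overline{f_{\upsilon_0}}$ (no higher-order terms can contribute to this coefficient by the maximality of $\upsilon_0$). Bar-invariance of $D$ then forces $f_{\upsilon_0}=\overline{f_{\upsilon_0}}$, placing $f_{\upsilon_0}\in x\ZZ[x]\cap x^{-1}\ZZ[x^{-1}]=0$, a contradiction. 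Hence $D=0$ and the basis is unique. I expect the only delicate point is ensuring that the maximality argument really isolates the $M^\sigma_{\upsilon_0}$ coefficient after applying bar, but this is immediate from \eqref{ut-eq} since $\overline{M^\sigma_\upsilon}$ introduces only terms $M^\sigma_\rho$ with $\rho\prec\upsilon\preceq\upsilon_0$. The statement for $\underline{\hat N}^\sigma_\tau$ follows by the identical argument applied to $\cN^\sigma$ using $\Phi^\sigma$ in place of $\Psi^\sigma$ and $\m^\sigma$ in place of $\n^\sigma$.
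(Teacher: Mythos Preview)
Your proof is correct. The existence argument matches the paper's: both read triangularity from the definition via the properties of $\n^\sigma_{\upsilon\tau}$, and both obtain bar-invariance from Corollary~\ref{phi-psi-cor} together with Lemma~\ref{phi-psi-lem}(b).

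The uniqueness arguments differ. You run a direct Kazhdan--Lusztig maximality argument on $\cM^\sigma$: take the difference $D$, pick a $\prec$-maximal term, and use the triangularity \eqref{ut-eq} of the bar operator to force its coefficient to be bar-invariant and hence zero. The paper instead transports the problem through the bijection $\Psi^\sigma$: given any candidate basis $\{X_\tau\}$ of $\cN^\sigma$, it observes that $(-1)^{\h(\tau)}\overline{\Psi^\sigma(X_\tau)}$ is bar-invariant and lies in $M^\sigma_\tau+\sum_{\upsilon\prec\tau}x^{-1}\ZZ[x^{-1}]M^\sigma_\upsilon$, so it must equal $\underline M^\sigma_\tau$ by the uniqueness already established in Theorem~\ref{m-thm}(c); invertibility of $\Psi^\sigma$ then forces $X_\tau=\underline{\hat N}^\sigma_\tau$. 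The paper's route is shorter because it recycles the uniqueness statement it has already proved, while your route is more self-contained and avoids any appeal to the bijectivity of $\Phi^\sigma,\Psi^\sigma$ for this step. One small remark: your citation of Corollary~\ref{e-cor} for finiteness of $\{\upsilon:\upsilon\prec\tau\}$ is slightly off---that corollary does not state finiteness directly---but the point is harmless since $\cK^J$ itself is finite when $W$ is.
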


\begin{proof}
The fact that $\underline {\hat M}^\sigma_\tau$ and $\underline {\hat N}^\sigma_\tau$ have these properties 
is straightforward from Lemma~\ref{phi-psi-lem}, Corollary~\ref{phi-psi-cor}, and the bar-invariance of 
$\underline { M}^\sigma_\tau$ and $\underline { N}^\sigma_\tau$.
Next, observe that if $\{X_\tau : \tau \in \cK^J\}$ is a basis of $\cN^\sigma$ with $\overline{X_\tau} = X_\tau \in N^\sigma_{\tau} + \sum_{\upsilon \prec \tau} x\ZZ[x] N^\sigma_\upsilon$ then 
$(-1)^{\h(\tau)} \overline{\Psi^\sigma(X_\tau)}  = (-1)^{\h(\tau)} \Psi^\sigma(X_\tau) $
must coincide with $\underline M_\tau$ by the uniqueness assertion in Theorem~\ref{m-thm}(c).
Since $\Phi^\sigma$ and $\Psi^\sigma$ are inverse maps, it follows that $X_\tau = (-1)^{\h(\tau)} \Phi^\sigma(\underline M_\tau) = \underline {\hat N}^\sigma_\tau$
by Corollary~\ref{phi-psi-cor}.
Our claim about the uniqueness of the basis $\{ \underline {\hat M}^\sigma_\tau : \tau \in \cK^\sigma\}$ of $\cM^\sigma$ follows by substituting $\sgn\cdot \sigma$ for $\sigma$.
\end{proof}

Let $w_0$ denote the unique longest element of $W$ and let $w_J$ denote the unique longest element of $W_J$. Both elements are involutions and we have $\theta(w_0) = w_0$ for all $\theta \in \Aut(W,S)$ along with $\theta(w_J) = w_J$ for all $\theta \in \Aut(W_J,J)$. 
In addition,  $\ell(vw_0) = \ell(w_0v)= \ell(v) - \ell(w_0)$ for all $v \in W$
while $\ell(vw_J) = \ell(w_Jv)= \ell(v) - \ell(w_J)$ for all $v \in W_J$ \cite[\S2.3]{CCG}.
Define 
\be\label{j-vee-eq}
J^\vee := w_0 J w_0 \subset S\ee
and observe that $v \mapsto v \cdot w_J \cdot w_0$ is a bijection $W^J \to W^{J^\vee}$ \cite[Cor. 2.3.3]{CCG}.
Given $v \in W$, let $\Ad(v)$ denote the inner automorphism 
$w \mapsto vwv^{-1}$.
Now, for $\tau=(w,z) \in \cK^J$, let 
\be\label{tau-vee-eq}
\tau^\vee := ( w \cdot w_J \cdot w_0, z^\vee)\in \cK^{J^\vee} = W^{J^\vee} \times \sI_{J^\vee}
\ee
where if $z = (y,\theta)$ for $y \in W_J$ and $\theta \in \Aut(W_J,J)$ then 
we set 
\be\label{wz-vee-eq}
z^\vee := (y^\vee,\theta^\vee)\text{ where } \begin{cases} y^\vee := w_0\cdot w_J \cdot  y\cdot  w_0,  \\ 
\theta^\vee := \Ad(w_0)\circ \Ad(w_J)\circ  \theta \circ \Ad(w_0).
\end{cases}
\ee
Observe that $\theta^\vee$ is an involution of $W_{J^\vee}$ preserving $J^\vee$ and that
\[ y^\vee\cdot  \theta^\vee(y^\vee) =   
(w_0\cdot w_J \cdot  y\cdot  w_0)
\cdot 
(w_0 \cdot \theta(y) \cdot w_J \cdot w_0) = 1,
\]
so $\tau^\vee \in \cK^{J^\vee}$. 
The map
$\tau \mapsto \tau^\vee$ is a bijection $\cK^J \to \cK^{J^\vee}$.
This operation depends on $J$ although this is suppressed in our notation.
Recall the partial order $\prec$ from Definition~\ref{prec-def}.

\begin{lemma}\label{vee-lem}
Let $\upsilon,\tau \in \cK^J$ and $w\in W$. Then $(w\tau)^\vee = w\cdot  \tau^\vee$.
It also holds that
$\upsilon \prec \tau$ if and only if $\tau^\vee \prec \upsilon^\vee$.
\end{lemma}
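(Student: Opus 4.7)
The first assertion reduces, by induction on $\ell(w)$, to the case $w=s \in S$. Write $\tau = (v,z)$ with $z = (y,\theta) \in \sI_J$. If $sv \in W^J$, then unwinding both sides gives $(s\tau)^\vee = (sv \cdot w_J w_0, z^\vee) = s\cdot\tau^\vee$, using that $u \mapsto u \cdot w_J w_0$ is a bijection $W^J \to W^{J^\vee}$. If $sv \notin W^J$, set $t := v^{-1}sv \in J$, so $s\tau = (v, tzt)$ and $(s\tau)^\vee = (v w_J w_0, (tzt)^\vee)$. On the other side, the same bijection forces $s\cdot(v w_J w_0) \notin W^{J^\vee}$ with reflecting element $t' := w_0 w_J t w_J w_0 \in J^\vee$, so $s\cdot\tau^\vee = (v w_J w_0, t' z^\vee t')$. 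The remaining identity $(tzt)^\vee = t' z^\vee t'$ reduces to the semidirect-product computation $t' y^\vee \theta^\vee(t') = w_0 w_J \cdot t y \theta(t) \cdot w_0$, which follows from $\theta(w_J) = w_J$ and $w_0^2 = w_J^2 = 1$.

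For the second assertion, the first part immediately yields that $\upsilon, \tau$ lie in the same $W$-orbit if and only if $\upsilon^\vee, \tau^\vee$ do. The remainder of the argument uses three length-reversal facts: (a) $\ell(v w_J w_0) = \ell(w_0) - \ell(w_J) - \ell(v)$ for $v \in W^J$, since $w_J w_0$ is the longest element of $W^{J^\vee}$; (b) $\ell(z^\vee) = \ell(w_J) - \ell(z)$ for $z = (y,\theta) \in \sI_J$, immediate from $y^\vee = (w_0 w_J w_0)(w_0 y w_0)$ and $\ell(w_0 y w_0) = \ell(y)$; and (c) the map $z \mapsto z^\vee$ swaps $W_J$-minimal and $W_J$-maximal elements within each $W_J$-conjugacy class of $\sI_J$, verified by computing $\ell(s' z^\vee \theta^\vee(s')) = \ell(w_J) - \ell(s y \theta(s))$ for $s \in J$ and $s' := w_0 w_J s w_J w_0 \in J^\vee$.

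Combining (a)--(c), a direct computation yields
\[\h(\tau) + \h(\tau^\vee) = \ell(w_0) - \ell(w_J) + \tfrac{1}{2}(\ell(z_{\max}) - \ell(z_{\min})),\]
where $z_{\min}, z_{\max}$ are the extremal elements of the $W_J$-class of $z$; this sum is an orbit invariant, so $\h(\upsilon) < \h(\tau)$ if and only if $\h(\tau^\vee) < \h(\upsilon^\vee)$ when $\upsilon$ and $\tau$ share an orbit. Next, the Bruhat order on $W$ reverses under $v \mapsto v w_J w_0$ from $W^J$ to $W^{J^\vee}$: any cover $v < w = vt$ (with $t$ a reflection) becomes $w w_J w_0 = (v w_J w_0)(w_0 w_J t w_J w_0)$, a right-multiplication by a reflection whose length drops by one in view of (a). Analogously, using (b) and the identity $(tzt)^\vee = t' z^\vee t'$ from the first paragraph, the quasiparabolic Bruhat order on $\sI_J$ reverses under $\vee$. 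Orbit-preservation, height reversal, and these two order reversals together convert the defining conditions of $\upsilon \prec \tau$ into those of $\tau^\vee \prec \upsilon^\vee$. The main technical obstacle is (c): identifying the $W_{J^\vee}$-minimum of the orbit of $\tau^\vee$ as $(1, (z_{\max})^\vee)$ is what makes the height sum an orbit invariant and underwrites the rest of the argument.
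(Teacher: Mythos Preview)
Your proof is correct and follows essentially the same strategy as the paper's: reduce the equivariance to the case $w=s\in S$, then establish the order reversal via the length identities $\ell(vw_Jw_0)=\ell(w_0)-\ell(w_J)-\ell(v)$ and $\ell(z^\vee)=\ell(w_J)-\ell(z)$, leading to the height formula $\h(\tau)+\h(\tau^\vee)=\ell(w_0)-\ell(w_J)+\tfrac{1}{2}(\ell(z_{\max})-\ell(z_{\min}))$. The paper is terser, citing \cite[Chapter~2]{CCG} for the Bruhat reversal on $W^J$ rather than arguing via covers; your explicit verification of $(tzt)^\vee = t' z^\vee t'$ is a detail the paper leaves as ``straightforward.'' One small point: your cover argument for Bruhat reversal implicitly uses that $(W^J,\leq)$ is graded by $\ell$ so that covers generate the order, which is standard but worth noting.
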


\begin{proof}
Suppose $\upsilon = (v,y)$ and $\tau = (w,z)$. 
For the first claim it suffices show that $(s\tau)^\vee = s \tau^\vee$ when $s\in S$.
The basic properties of minimal length coset representatives (see \cite[Chapter 2]{CCG})
imply that $sw \in W^J$ if and only if $sww_Jw_0 \in W^{J^\vee}$ and that
\[ v \leq w\quad  \Leftrightarrow\quad v w_J \leq ww_J\quad \Leftrightarrow\quad ww_Jw_0 \leq vw_Jw_0.\]
It is straightforward from these observations to check that $(s\tau)^\vee = s \tau^\vee$ as needed.
Likewise, it holds that $\ell(z^\vee) = \ell(w_J) - \ell(z)$,
from which it follows that $z \mapsto z^\vee$ maps $W_J$-conjugacy classes in $\sI_J$ to $W_{J^\vee}$-conjugacy classes
in $\sI_{J^\vee}$ while reversing quasiparabolic Bruhat order.
Thus
\be\label{h-tau-eq}
\h(\tau^\vee)= \ell(w_0) - \ell(w_J) + \tfrac{1}{2}(\ell(z_{\max}) - \ell(z_{\min})) - \h(\tau)
\ee
where $z_{\min}$ and $z_{\max}$ are the unique $W_J$-minimal and $W_J$-maximal elements of the $W_J$-conjugacy class
of $z$. Noting Proposition~\ref{same-orbit-prop}, we conclude that $\upsilon$ and $\tau$ belong to the same $W$-orbit and have
$\h(\upsilon) < \h(\tau)$ if and only if $\upsilon^\vee$ and $\tau^\vee$ belong to the same $W$-orbit and have $\h(\tau^\vee) < \h(\upsilon^\vee)$.  When these conditions hold, moreover, it holds that $\upsilon \prec \tau$ if and only if $\tau^\vee \prec \upsilon^\vee$.
\end{proof}

Define $\sigma^\vee : W_{J^\vee} \to \{\pm1\}$ to be the linear character 
$\sigma^\vee := \sigma \circ \Ad(w_0)$.
If $J=S$ then  $\sigma = \sigma^\vee$ since characters are class functions,
but if $J \neq S$ then we could have $\sigma \neq \sigma^\vee$.

\begin{corollary}\label{des-vee-cor}
Let $s \in S$. Then $sw \in W^J$ if and only if $sw^\vee \in W^{J^\vee}$. Moreover:
\ben
\item[(a)] $ \Asc^<(\tau; \sigma) = \Des^<(\tau^\vee; \sigma^\vee)$ and $ \Des^<(\tau; \sigma) = \Asc^<(\tau^\vee; \sigma^\vee)$.
\item[(b)] $ \Asc^=(\tau; \sigma) = \Asc^=(\tau^\vee; \sigma^\vee)$ and $\Des^=(\tau; \sigma) = \Des^=(\tau^\vee; \sigma^\vee)$.
\een
Consequently $\Asc^{\m|\sigma}(\tau) =\Des^{\n|\sigma^\vee}(\tau^\vee)$
and
$\Des^{\m|\sigma}(\tau) =\Asc^{\n|\sigma^\vee}(\tau^\vee)$.
\end{corollary}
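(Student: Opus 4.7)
The plan is to establish the three parts of the corollary in order and then read off the concluding identities by comparing definitions. The opening assertion that $sw \in W^J \Leftrightarrow sw^\vee \in W^{J^\vee}$ was already implicit in the proof of Lemma~\ref{vee-lem}: since $w \mapsto ww_J w_0$ is a bijection $W^J \to W^{J^\vee}$, and since the condition $sw \in W^J$ concerns only right descents of $sw$ in $J$, this equivalence reduces to the analogous statement for $sw\cdot w_J w_0$ in $W^{J^\vee}$, which the standard theory of minimal-length coset representatives supplies (see \cite[Chapter 2]{CCG}).

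For part (a), I would combine Lemma~\ref{vee-lem} with equation~\eqref{h-tau-eq}. Lemma~\ref{vee-lem} gives $(s\tau)^\vee = s\tau^\vee$, and \eqref{h-tau-eq} shows that within a single $W$-orbit the map $\kappa\mapsto\kappa^\vee$ reverses $\h$ up to an additive constant. Hence $\h(s\tau) > \h(\tau)$ iff $\h(s\tau^\vee) = \h((s\tau)^\vee) < \h(\tau^\vee)$, which is exactly the identity $\Asc^<(\tau;\sigma) = \Des^<(\tau^\vee;\sigma^\vee)$. Swapping the inequalities yields $\Des^<(\tau;\sigma) = \Asc^<(\tau^\vee;\sigma^\vee)$.

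Part (b) requires the only nontrivial bookkeeping. By Proposition~\ref{h-eq-prop} the condition $\h(s\tau) = \h(\tau)$ is equivalent to $s\tau = \tau$, which by applying $\vee$ (using Lemma~\ref{vee-lem} again) is equivalent to $s\tau^\vee = \tau^\vee$, and hence to $\h(s\tau^\vee) = \h(\tau^\vee)$. Thus weak descents and weak ascents of $\tau$ and $\tau^\vee$ match as unordered pairs; I must check that the sign $\sigma(w^{-1}sw)$ agrees with $\sigma^\vee((w^\vee)^{-1}sw^\vee)$. Writing $t := w^{-1}sw \in J$ and expanding $w^\vee = w w_J w_0$, one finds $(w^\vee)^{-1} s w^\vee = w_0 w_J t w_J w_0$. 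Since $\sigma^\vee = \sigma\circ \Ad(w_0)$ and $w_0^2 = 1$, this evaluates to $\sigma(w_J t w_J)$, and because $\sigma$ is a homomorphism into the abelian group $\{\pm 1\}$ (hence a class function on $W_J$) and $w_J = w_J^{-1}$, we have $\sigma(w_J t w_J) = \sigma(w_J t w_J^{-1}) = \sigma(t)$. This proves (b).

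The consequences follow directly from the definitions:
\[
\Asc^{\m|\sigma}(\tau) = \Asc^<(\tau;\sigma)\sqcup \Asc^=(\tau;\sigma)
\stackrel{(a),(b)}{=} \Des^<(\tau^\vee;\sigma^\vee)\sqcup \Asc^=(\tau^\vee;\sigma^\vee) = \Des^{\n|\sigma^\vee}(\tau^\vee),
\]
and symmetrically $\Des^{\m|\sigma}(\tau) = \Asc^{\n|\sigma^\vee}(\tau^\vee)$. The only genuine obstacle is the character identity in (b); everything else reduces to parsing Lemma~\ref{vee-lem} and \eqref{h-tau-eq}.
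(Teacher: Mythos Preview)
Your proof is correct and follows essentially the same approach as the paper. The paper invokes the order-reversing property of $\prec$ under $\vee$ from Lemma~\ref{vee-lem} for part (a), while you use the equivariance $(s\tau)^\vee = s\tau^\vee$ together with the height formula \eqref{h-tau-eq}; these are two ways of packaging the same content of that lemma. For part (b) the paper merely says it is ``easy to check using Proposition~\ref{h-eq-prop},'' whereas you carry out the character computation $\sigma^\vee((w^\vee)^{-1}sw^\vee) = \sigma(w_J t w_J) = \sigma(t)$ explicitly, which is exactly the missing detail.
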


\begin{proof}
Since $ \Asc^<(\tau;\sigma) = \{ s \in S : \tau \prec s\tau\}$ and 
$ \Des^<(\tau;\sigma) = \{ s \in S : s\tau \prec \tau\}$, part (a) is  immediate from Lemma~\ref{vee-lem}.
Part (b) is easy to check using Proposition~\ref{h-eq-prop}.
\end{proof}

We may now prove the main results of this section.
\begin{theorem}\label{inversion-thm}
If $\upsilon,\tau \in \cK^J$ then $\sum_{\kappa \in \cK^J} (-1)^{\h(\upsilon) + \h(\kappa)} \cdot \m^\sigma_{\upsilon\kappa} \cdot \n^{\sigma^\vee}_{\tau^\vee \kappa^\vee}= \delta_{\upsilon\tau}.$
\end{theorem}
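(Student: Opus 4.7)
My plan is to prove this inversion identity via a bilinear pairing argument in the spirit of classical Kazhdan--Lusztig inversion. First, I would introduce the $\LL$-bilinear pairing $\langle \cdot, \cdot\rangle : \cM^\sigma \times \cN^{\sigma^\vee} \to \LL$ determined by
\[
\langle M^\sigma_\alpha, N^{\sigma^\vee}_\beta\rangle := (-1)^{\h(\alpha)}\delta_{\alpha^\vee, \beta}
\]
for $\alpha \in \cK^J$ and $\beta \in \cK^{J^\vee}$. The crucial structural property is that this pairing is compatible with the $\H$-module structures via the involution $\Theta$ from Section~\ref{duality-sect}, namely
\[
\langle HM, N\rangle = \langle M, \Theta(H) N\rangle \qquad\text{for all } H \in \H,\ M \in \cM^\sigma,\ N \in \cN^{\sigma^\vee}.
\]
This is verified by a direct case-by-case calculation on $H = H_s$ using Theorem~\ref{m-thm}(a), Corollary~\ref{des-vee-cor}, and the identity $\Theta(H_s) = -H_s + (x - x^{-1})$; the four cases according to whether $s$ is a weak or strict ascent or descent of $\alpha$ relative to $\sigma$ match those of $s$ relative to $\alpha^\vee$ and $\sigma^\vee$ because $\vee$ interchanges strict ascents with strict descents while preserving weak ones.

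Next I would show, by induction on $\h(\upsilon)$, that
\[
\langle \underline M^\sigma_\upsilon, \underline N^{\sigma^\vee}_{\tau^\vee}\rangle = (-1)^{\h(\upsilon)}\delta_{\upsilon, \tau} \qquad\text{for all } \upsilon, \tau \in \cK^J.
\]
For the base case $\h(\upsilon) = 0$, we have $\underline M^\sigma_\upsilon = M^\sigma_\upsilon$, and since $\upsilon^\vee$ is $W$-maximal in $\cK^{J^\vee}$ by \eqref{h-tau-eq}, the triangular form of $\underline N^{\sigma^\vee}_{\tau^\vee}$ forces the pairing to vanish unless $\tau = \upsilon$. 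For the inductive step, choose $s \in \Des^<(\upsilon; \sigma)$, set $\upsilon' := s\upsilon$ so that $\h(\upsilon') < \h(\upsilon)$ and $s \in \Asc^<(\upsilon'; \sigma)$, and use Lemma~\ref{cb-lem} to write
\[
\underline M^\sigma_\upsilon = \underline H_s \underline M^\sigma_{\upsilon'} - \sum_{\substack{\kappa \prec \upsilon' \\ s \in \Des^{\m|\sigma}(\kappa)}} \mu^{\m|\sigma}_{\kappa \upsilon'}\, \underline M^\sigma_\kappa.
\]
After pairing both sides with $\underline N^{\sigma^\vee}_{\tau^\vee}$, the $\H$-compatibility of the pairing together with $\Theta(\underline H_s) = -\underline H_s + (x + x^{-1})$ and Lemma~\ref{cb-lem2} (whose cases are governed by the identification $\Des^{\n|\sigma^\vee}(\tau^\vee) = \Asc^{\m|\sigma}(\tau)$ from Corollary~\ref{des-vee-cor}) reduce the right-hand side to a combination of pairings $\langle \underline M^\sigma_{\upsilon'}, \underline N^{\sigma^\vee}_\gamma\rangle$ with $\h(\upsilon') < \h(\upsilon)$, all of which are handled by the inductive hypothesis, together with boundary terms whose cancellation follows from Lemma~\ref{delta-lem}.

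Finally, expanding the resulting pairing identity in the standard bases yields the scalar equation
\[
\sum_\alpha (-1)^{\h(\alpha)}\, \m^\sigma_{\alpha, \upsilon}\, \n^{\sigma^\vee}_{\alpha^\vee, \tau^\vee} = (-1)^{\h(\upsilon)}\delta_{\upsilon, \tau},
\]
which is equivalent to a matrix identity $PQ = I$ on $\cK^J \times \cK^J$ with
\[
P_{\upsilon, \alpha} := (-1)^{\h(\alpha) - \h(\upsilon)}\m^\sigma_{\alpha, \upsilon} \qquad\text{and}\qquad Q_{\alpha, \tau} := \n^{\sigma^\vee}_{\alpha^\vee, \tau^\vee}.
\]
Both $P$ and $Q$ are upper unitriangular relative to $\prec$, so the identity $PQ = I$ forces $QP = I$ for these square matrices. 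Writing out $QP = I$ and swapping the dummy indices $\upsilon \leftrightarrow \tau$ recovers exactly the identity in the theorem. The main obstacle is the detailed case analysis in the inductive step, where the subcases based on whether $s$ is a strict or weak descent of $\tau$ (equivalently, where $s$ sits relative to $\tau^\vee$ under the $\n|\sigma^\vee$ partition) must be reconciled with the $\m|\sigma$-descent terms via the explicit $\mu$-coefficient identities from Lemma~\ref{delta-lem} and the orientation-reversing property of $\vee$ on $\prec$.
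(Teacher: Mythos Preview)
Your overall framework---a bilinear pairing compatible with $\Theta$, followed by the matrix argument $PQ=I \Rightarrow QP=I$---is sound and is essentially what the paper does (phrased there via the dual module $\cL^\sigma$ and the isomorphism $\beta$). The gap is in the inductive step.

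Carry out your induction in the subcase $s \in \Des^<(\upsilon;\sigma)$ and $s \in \Des^{\m|\sigma}(\tau)$. After applying the recursions from Lemmas~\ref{cb-lem} and~\ref{cb-lem2} and the inductive hypothesis, the pairing $\langle \underline M^\sigma_\upsilon, \underline N^{\sigma^\vee}_{\tau^\vee}\rangle$ reduces (when $\tau \prec \upsilon' := s\upsilon$) to the desired main term plus the residual
\[
-(-1)^{\h(\upsilon')}\mu^{\n|\sigma^\vee}_{(\upsilon')^\vee,\tau^\vee}\;-\;(-1)^{\h(\tau)}\mu^{\m|\sigma}_{\tau,\upsilon'}.
\]
When $\h(\upsilon')-\h(\tau)$ is odd (the only case where either $\mu$ can be nonzero, by Corollary~\ref{e-cor}), this residual vanishes if and only if $\mu^{\m|\sigma}_{\tau,\upsilon'} = \mu^{\n|\sigma^\vee}_{(\upsilon')^\vee,\tau^\vee}$. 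That identity is precisely the content of Theorem~\ref{dual-thm}, which in the paper is \emph{derived from} Theorem~\ref{inversion-thm}. Lemma~\ref{delta-lem} does not help here: it requires $s \in \Des^{\m|\sigma}$ of the second index and $\Asc^{\m|\sigma}$ of the first, whereas you have $s \in \Asc^{\m|\sigma}(\upsilon')\cap \Des^{\m|\sigma}(\tau)$, the opposite configuration. So the induction, as you have set it up, is circular.

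The paper sidesteps this entirely. Instead of induction, it observes that the pairing value $\beta(\underline{\hat M}^{\sigma^\vee}_{\upsilon^\vee})(\underline M^\sigma_\tau)$ is bar-invariant (because $\beta$ intertwines the bar operators, which follows from the uniqueness in Theorem~\ref{m-thm}(b)) and simultaneously lies in $\delta_{\upsilon\tau} + x^{-1}\ZZ[x^{-1}]$ by triangularity; the only such element is $\delta_{\upsilon\tau}$. If you want to salvage your approach, you should replace the inductive step by a proof that your pairing satisfies $\overline{\langle M,N\rangle} = \langle \overline M, \overline N\rangle$ and then argue via bar-invariance and the degree bound, exactly as the paper does.
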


\begin{proof}
Let $\cL^\sigma$ be the set of $\LL$-linear maps $\cM^\sigma \to \LL$. For $\tau \in \cK^J$ let $L^\sigma_\tau \in \cL^\sigma$
be the element with $L^\sigma_\tau(M^\sigma_\upsilon)= \delta_{\upsilon\tau}$ for all $\upsilon \in \cK^J$.
Then $\cL^\sigma$ has a unique $\H$-algebra structure in which $H_s L : M \mapsto L(H_sM)$
for $s\in S$, $L \in \cL^\sigma$, and $M \in \cM^\sigma$. In fact, the linear map $\cM^\sigma \to \cL^\sigma$ with $M^\sigma_\tau \mapsto L^\sigma_\tau$ is an isomorphism of $\H$-modules.

Define $\beta : \cM^{\sigma^\vee} \to \cL^\sigma$ to be the linear map with $\beta : \overline{M^{\sigma^\vee}_{\tau^\vee}} \mapsto L^\sigma_\tau$ for $\tau \in \cK^J$.
This is a linear bijection which by Corollary~\ref{des-vee-cor} is also an $\cH$-module isomorphism.
Write $L \mapsto \overline{L}$ for the antilinear map $\cL^\sigma \to \cL^\sigma$
with $\overline L : M\mapsto \overline{L(\overline M)}$.
One easily checks that $\overline{HL} =\overline H \cdot \overline L$ for all $H\in \H$ and $L \in \cL^\sigma$.
It is evident from \eqref{ut-eq} 
and the fact that $\overline{M_{s\tau}} = \overline{H_s} \overline{M_\tau} =  (H_s +x^{-1}-x)  \overline{M_\tau}$
that $\overline{L^\sigma_\tau} = L^\sigma_\tau$ if and only if $\Asc^<(\tau)=\varnothing$.
In view of Corollary~\ref{des-vee-cor} and the uniqueness assertion in Theorem~\ref{m-thm}(b),
 the map $\cM^{\sigma^\vee} \to \cM^{\sigma^\vee}$
given by $M \mapsto \beta^{-1}(\overline{\beta(M)})$ must coincide with the usual bar operator,
so $\beta(\overline{M}) = \overline{\beta(M)}$
for all $M \in \cM^{\sigma^\vee}$.

Now fix $\upsilon,\tau\in \cK^J$.
Since $\underline {\hat M}^{\sigma^\vee}_{\upsilon^\vee}$ is bar invariant
and since $\h(\upsilon^\vee) - \h(\kappa^\vee) = \h(\kappa) - \h(\upsilon)$ if 
$\kappa^\vee$ and $\upsilon^\vee$ are in the same $W$-orbit
(which is necessary for $\n^{\sigma^\vee}_{\kappa^\vee\upsilon^\vee}\neq 0$)
by \eqref{h-tau-eq}, we have 
\be\label{last-express} \beta\(\underline {\hat M}^{\sigma^\vee}_{\upsilon^\vee}\)(\underline M^\sigma_\tau) = \sum_{\kappa \in \cK^J} (-1)^{\h(\upsilon) + \h(\kappa)} \cdot \n^{\sigma^\vee}_{\kappa^\vee\upsilon^\vee} \cdot \m^{\sigma}_{\kappa\tau}.\ee
The last expression is bar invariant since if $\overline{M_1} = M_1$ and $\overline{M_2} = M_2$ then 
\[\overline{\beta(M_1)(M_2)} = \overline{\beta(M_1)}(\overline {M_2})  =  \beta(\overline {M_1})(\overline {M_2})
=  \beta( {M_1})( {M_2}).\]
On the other hand, the right side of \eqref{last-express}
 belongs to $\delta_{\upsilon\tau} + x^{-1}\ZZ[x^{-1}]$ since
 $\m^\sigma_{\kappa\tau}$ is $1$ when $\kappa = \tau$ and in $x^{-1}\ZZ[x^{-1}]$ otherwise,
 and likewise for $ \n^{\sigma^\vee}_{\kappa^\vee\upsilon^\vee}$.
For these observations to hold simultaneously, the right side of \eqref{last-express}
must in fact be equal to $\delta_{\upsilon\tau}$.

Thus, if $A$ is the $\cK^J\times \cK^J$ matrix with entries $A_{\upsilon\kappa} := (-1)^{\h(\upsilon)+\h(\kappa)} \cdot \n^{\sigma^\vee}_{\kappa^\vee\upsilon^\vee}$
and $B$ is the $\cK^J\times \cK^J$ matrix with entries $B_{\kappa\tau} := \m^\sigma_{\kappa\tau}$, then $AB$ is the $\cK^J\times \cK^J$ identity matrix, so $A^{-1}=B$ and $BA$ is also the identity matrix, which is equivalent 
to the desired identity.
\end{proof}

Recall that two $W$-graphs $\Gamma = (V,\omega,I)$ and
$ \Gamma' = (V',  \omega',  I')$ are \emph{dual} via some
 bijection $V \to V'$, written $v\mapsto v'$,
 if $\omega'(u',v') = \overline{\omega(v,u)}$ and $ I'(v') = S \setminus I(v)$
for all $u,v \in V$.
 
\begin{theorem}\label{dual-thm}
The $W$-graphs $\Upsilon^{\m|\sigma}$ and $\Upsilon^{\n|\sigma^\vee}$
(respectively, $\Upsilon^{\n|\sigma}$ and $\Upsilon^{\m|\sigma^\vee}$)
are dual via the map $\tau \mapsto \tau^\vee$.\end{theorem}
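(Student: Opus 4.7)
The plan is to verify directly the two defining axioms of $W$-graph duality for the bijection $\tau \mapsto \tau^\vee : \cK^J \to \cK^{J^\vee}$. The compatibility of vertex labels, $\Asc^{\n|\sigma^\vee}(\tau^\vee) = S \setminus \Asc^{\m|\sigma}(\tau)$, is immediate from Corollary~\ref{des-vee-cor}, which gives $\Asc^{\m|\sigma}(\tau) = \Des^{\n|\sigma^\vee}(\tau^\vee)$, together with the tautological partition $S = \Asc^{\n|\sigma^\vee}(\tau^\vee) \sqcup \Des^{\n|\sigma^\vee}(\tau^\vee)$.

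The real content is the edge-weight identity $\omega^{\n|\sigma^\vee}(\upsilon^\vee, \tau^\vee) = \overline{\omega^{\m|\sigma}(\tau, \upsilon)}$, in which the bar is trivial since all $\omega$-values are integers. Corollary~\ref{des-vee-cor} also shows that the vanishing condition for $\omega^{\m|\sigma}(\tau, \upsilon)$ (namely $\Asc^{\m|\sigma}(\tau) \subseteq \Asc^{\m|\sigma}(\upsilon)$) is equivalent to the corresponding condition for $\omega^{\n|\sigma^\vee}(\upsilon^\vee, \tau^\vee)$. In the remaining case I would expand both $\omega$'s in terms of their $\mu$-coefficients, reducing the claim to
\[
\mu^{\m|\sigma}_{\upsilon\tau} + \mu^{\m|\sigma}_{\tau\upsilon} = \mu^{\n|\sigma^\vee}_{\tau^\vee\upsilon^\vee} + \mu^{\n|\sigma^\vee}_{\upsilon^\vee\tau^\vee}.
\]
By Corollary~\ref{e-cor} a $\mu$-coefficient vanishes unless its indices are strictly ordered by $\prec$, and by Lemma~\ref{vee-lem} we have $\upsilon \prec \tau$ if and only if $\tau^\vee \prec \upsilon^\vee$, so all four $\mu$'s vanish simultaneously when $\upsilon$ and $\tau$ are $\prec$-incomparable, while otherwise exactly two of the four survive. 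The displayed equation therefore collapses to the single identity $\mu^{\m|\sigma}_{\upsilon\tau} = \mu^{\n|\sigma^\vee}_{\tau^\vee\upsilon^\vee}$, valid whenever $\upsilon \prec \tau$.

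To prove this remaining identity I would extract the coefficient of $x^{-1}$ from the inversion relation of Theorem~\ref{inversion-thm} applied to the pair $(\upsilon, \tau)$ with $\upsilon \prec \tau$, whose right-hand side $\delta_{\upsilon\tau}$ vanishes. The summands with $\kappa \notin \{\upsilon, \tau\}$ contribute nothing at $x^{-1}$, because both factors $\m^\sigma_{\upsilon\kappa}$ and $\n^{\sigma^\vee}_{\tau^\vee\kappa^\vee}$ already lie in $x^{-1}\ZZ[x^{-1}]$ by triangularity, forcing their product into $x^{-2}\ZZ[x^{-1}]$. The $\kappa = \upsilon$ term contributes $\mu^{\n|\sigma^\vee}_{\tau^\vee\upsilon^\vee}$, while the $\kappa = \tau$ term contributes $(-1)^{\h(\upsilon)+\h(\tau)} \mu^{\m|\sigma}_{\upsilon\tau}$; here the sign is forced to be $-1$ because $\mu^{\m|\sigma}_{\upsilon\tau} \neq 0$ requires $\h(\tau)-\h(\upsilon)$ to be odd (Corollary~\ref{e-cor}). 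Combining these yields $\mu^{\n|\sigma^\vee}_{\tau^\vee\upsilon^\vee} = \mu^{\m|\sigma}_{\upsilon\tau}$, as needed. The companion duality $\Upsilon^{\n|\sigma} \leftrightarrow \Upsilon^{\m|\sigma^\vee}$ then follows formally by applying everything above with $\sigma$ replaced by $\sgn \cdot \sigma$, using $(\sgn \cdot \sigma)^\vee = \sgn \cdot \sigma^\vee$ and the convention $\cN^\sigma = \cM^{\sgn \cdot \sigma}$. The main subtlety I expect is confirming that the degree bookkeeping in this last step really does extract the $\mu$-equality directly from Theorem~\ref{inversion-thm}, as opposed to demanding a separate induction on the quasiparabolic Bruhat order.
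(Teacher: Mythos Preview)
Your proposal is correct and follows essentially the same approach as the paper. Both arguments reduce the duality to the identity $\mu^{\m|\sigma}_{\upsilon\tau} = \mu^{\n|\sigma^\vee}_{\tau^\vee\upsilon^\vee}$ and prove it by extracting the coefficient of $x^{-1}$ from Theorem~\ref{inversion-thm}; your concern at the end is unfounded, as the degree bookkeeping works exactly as you describe, with only the $\kappa=\upsilon$ and $\kappa=\tau$ terms contributing.
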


\begin{proof}
Suppose $\upsilon,\tau \in \cK^J$. 
If these elements are not in the same $W$-orbit then neither are $\upsilon^\vee$ and $\tau^\vee$ by Corollary~\ref{vee-lem}, so $\mu^{\m|\sigma}_{\upsilon\tau} = \mu^{\n|\sigma^\vee}_{\tau^\vee \upsilon^\vee} = 0$ by 
Corollary~\ref{e-cor}.
Assume $\upsilon$ and $\tau$ belong to the same $W$-orbit.
If $\h(\tau) -\h(\upsilon)$ is even then so is $\h(\upsilon^\vee) - \h(\tau^\vee)$ by \eqref{h-tau-eq},
so $\mu^{\m|\sigma}_{\upsilon\tau} = \mu^{\n|\sigma^\vee}_{\tau^\vee \upsilon^\vee} = 0$ by 
Corollary~\ref{e-cor}.
Further assume $\h(\tau) -\h(\upsilon)$ is odd.
Because $\m^\sigma_{\upsilon\kappa}$ is either $1$ when $\upsilon = \kappa$ or in $x^{-1}\ZZ[x^{-1}]$ when $\upsilon \neq \kappa$ (and similarly for $\n^{\sigma^\vee}_{\tau^\vee \kappa^\vee}$),
taking the coefficient of $x^{-1}$ on both sides of the identity in Theorem~\ref{inversion-thm} gives 
$\mu^{\n|\sigma^\vee}_{\tau^\vee \upsilon^\vee} -\mu^{\m|\sigma}_{\upsilon\tau}  = \mu^{\n|\sigma^\vee}_{\tau^\vee \upsilon^\vee} + (-1)^{\h(\upsilon) + \h(\tau)} \mu^{\m|\sigma}_{\upsilon\tau} = 0$.
Thus we have $\mu^{\m|\sigma}_{\upsilon\tau} = \mu^{\n|\sigma^\vee}_{\tau^\vee \upsilon^\vee}$ in all cases.
Since $\omega^{\m|\sigma}$ and $\omega^{\n|\sigma^\vee}$ take integer values, it is immediate 
from Corollary~\ref{des-vee-cor} that $\Upsilon^{\m|\sigma}$ and $\Upsilon^{\n|\sigma^\vee}$ are dual via the map $\kappa\mapsto \kappa^\vee$.
Repeating this argument with $\sigma$ replaced by $\sgn\cdot \sigma$ shows that $\Upsilon^{\n|\sigma}$ and $\Upsilon^{\m|\sigma^\vee}$ are dual via the same map.
\end{proof}

\subsection{Gelfand $W$-graphs}\label{gel-sect}

Let $(W,S)$ be a finite Coxeter system with Iwahori-Hecke algebra $\H$.
Suppose $\TT = (J,z_{\min},\sigma)$ is a model triple for $W$
as in Definition~\ref{model-triple-def}, so that 
$z_{\min} \in \sI_J$ is $W_J$-minimal.
Define \be
\cK^\TT := W^J \times \left\{ v \cdot z_{\min} \cdot v^{-1} : v \in W_J \right\} \subset \cK^J
\ee
and then let 
$
\cM^\TT := \LL\spanning\left\{ M^\sigma_\tau : \tau \in \cK^\TT\right\}
$
and
$\cN^\TT := \LL\spanning\left\{ N^\sigma_\tau : \tau \in \cK^\TT\right\}.
$
The sets $\cM^\TT$ and $\cN^\TT$ are $\H$-submodules of $\cM^\sigma$
and  $\cN^\sigma$, respectively.
If $\sP$ is a set of model triples for $W$, then we define two associated  $\H$-modules as the direct sums
\be\cM^\sP := \bigoplus_{\TT \in \sP} \cM^\TT \quand \cN^\sP := \bigoplus_{\TT \in \sP} \cN^\TT.\ee
For any field $\KK$, let 
$(\cM^\sP)_\KK := \KK(x) \otimes_{\ZZ[x,x^{-1}]} \cM^\sP$
and
$(\cN^\sP)_\KK := \KK(x) \otimes_{\ZZ[x,x^{-1}]} \cN^\sP$.

\begin{theorem}\label{abs-main-thm}
Suppose $W$ is a finite Coxeter group with splitting field $\KK \subset \CC$.
If $\sP$ is a perfect model for $W$, then the $\H_\KK(W)$-modules 
$(\cM^\sP)_\KK$ and $(\cN^\sP)_\KK$
are Gelfand models.
\end{theorem}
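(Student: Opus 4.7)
The plan is to reduce the statement to the classical setting by specializing $x = 1$ and then invoke Tits deformation. First I would show that, for each model triple $\TT = (J, z_{\min}, \sigma) \in \sP$, the $\H$-module $\cM^\TT$ is free as an $\LL$-module, and specializing $x \mapsto 1$ converts it into a $\KK[W]$-module isomorphic to $\Ind_{C_{W_J}(z_{\min})}^W \Res^{W_J}_{C_{W_J}(z_{\min})}(\sigma)$. The key observation is that, in the formulas of Theorem~\ref{m-thm}(a), the coefficient $x - x^{-1}$ vanishes at $x = 1$, while the scalars $x$ and $-x^{-1}$ become $+1$ and $-1$ respectively. Thus at $x = 1$ we obtain $s \cdot M^\sigma_\tau = M^\sigma_{s\tau}$ in both strict cases, $s \cdot M^\sigma_\tau = +M^\sigma_\tau$ when $s \in \Des^=(\tau;\sigma)$, and $s \cdot M^\sigma_\tau = -M^\sigma_\tau$ when $s \in \Asc^=(\tau;\sigma)$. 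Under the identification $\cK^\TT \xrightarrow{\sim} W / C_{W_J}(z_{\min})$ sending $(w, v z_{\min} v^{-1}) \mapsto wv \cdot C_{W_J}(z_{\min})$ (well-defined by Proposition~\ref{same-orbit-prop} and the fact that $\sigma$ is constant on $W_J$-conjugacy classes of cosets), these formulas agree precisely with the standard action of $W$ on $\Ind_{C_{W_J}(z_{\min})}^W \Res^{W_J}_{C_{W_J}(z_{\min})}(\sigma)$: the equal case forces $s\tau = \tau$ by Proposition~\ref{h-eq-prop}, and then $\sigma(w^{-1}sw)$ is exactly the scalar dictated by the induced representation.

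Summing over $\TT \in \sP$ and using the perfect model identity of Definition~\ref{model-triple-def}, the specialization of $\cM^\sP$ at $x = 1$ is therefore isomorphic as a $\KK[W]$-module to
\[
\bigoplus_{(J, z_{\min}, \sigma) \in \sP} \Ind_{C_{W_J}(z_{\min})}^W \Res^{W_J}_{C_{W_J}(z_{\min})}(\sigma) \;\cong\; \bigoplus_{\chi \in \Irr(W)} V_\chi,
\]
i.e.\ to a Gelfand model for $\KK[W]$. Since $\KK$ is a splitting field of $W$, the algebra $\H_\KK$ is split semisimple and the Tits deformation theorem (see e.g.\ \cite[\S7.4.4, \S9.3]{GeckPfeiffer}) supplies a bijection between $\Irr(\H_\KK)$ and $\Irr(\KK[W])$ under which the multiplicity of an irreducible $\H_\KK$-module in the base change $(\cM^\sP)_\KK$ equals the multiplicity of the corresponding irreducible $\KK[W]$-module in the $x = 1$ specialization. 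Each irreducible thus occurs exactly once in $(\cM^\sP)_\KK$, which is the definition of a Gelfand model.

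For the second assertion, recall from the notational conventions following Theorem~\ref{mw-thm} that $N^\sigma_\tau = M^{\sgn \cdot \sigma}_\tau$, so $\cN^\sP = \cM^{\sP'}$ where $\sP'$ is obtained from $\sP$ by replacing each triple $(J, z_{\min}, \sigma)$ by $(J, z_{\min}, \sgn_{W_J} \cdot \sigma)$. Because $\sgn_W|_{W_J} = \sgn_{W_J}$ (since $W_J$ is a standard parabolic subgroup and lengths agree), the projection formula gives
\[
\sum_{\TT \in \sP'} \Ind_{C_{W_J}(z_{\min})}^W \Res^{W_J}_{C_{W_J}(z_{\min})}(\sgn_{W_J} \cdot \sigma)
\;=\; \sgn_W \otimes \sum_{\TT \in \sP} \Ind_{C_{W_J}(z_{\min})}^W \Res^{W_J}_{C_{W_J}(z_{\min})}(\sigma),
\]
and the latter equals $\sum_{\chi \in \Irr(W)} (\sgn_W \otimes \chi) = \sum_{\chi \in \Irr(W)} \chi$, since tensoring by a linear character permutes $\Irr(W)$. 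Hence $\sP'$ is also a perfect model, and applying the first part to $\sP'$ shows $(\cN^\sP)_\KK$ is a Gelfand model as well.

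The main obstacle here is the Tits deformation step: one must verify that $\cM^\sP$ is a free $\LL$-module whose specialization and generic fibre have matching decomposition multiplicities. This is immediate from freeness of each $\cM^\TT$ on the basis $\{M^\sigma_\tau : \tau \in \cK^\TT\}$, so the argument reduces to invoking the standard machinery of \cite[Chapter 7]{GeckPfeiffer}; all the conceptual work is done by the explicit $x = 1$ specialization computed above.
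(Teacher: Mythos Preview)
Your proposal is correct and follows essentially the same approach as the paper: specialize at $x=1$, identify each $\cM^\TT$ with the corresponding induced representation so that $\cM^\sP$ becomes a Gelfand model for $W$ by the perfect model hypothesis, then lift to $\H_\KK$ via deformation (the paper cites the isomorphism $\H_\KK \cong \KK(x)W$ from \cite[Thm.~8.1.7]{GeckPfeiffer} rather than Tits deformation, but this is the same idea). Your treatment of $\cN^\sP$ via the twisted model $\sP'$ is also equivalent to the paper's observation that the specialized character of $\cN^\sP$ is $\sgn$ times that of $\cM^\sP$.
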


If $W$ is a finite Weyl group then we can take $\KK=\QQ$ by \cite[Thm. 6.3.8]{GeckPfeiffer}.

\begin{proof}
When $x=1$, the $\H$-module $\cM^\TT$ for $\TT = (J,z_{\min},\sigma)$
becomes a $W$-module whose
character is $\Ind_{C_{W_J}(z_{\min})}^W \Res_{C_{W_J}(z_{\min})}^{W_J}(\sigma)$
and $\cM^\sP$ becomes a module containing each irreducible $W$-representation exactly once.
If $x=1$ then the same is true for $\cN^\sP$ since its character is the character
of $\cM^\sP$ multiplied by $\sgn$. 
This implies that 
$(\cM^\sP)_\KK$ are $(\cN^\sP)_\KK$ are Gelfand models for $\H_\KK$ 
since the latter algebra is isomorphic to the group algebra $\KK(x) W$ \cite[Thm. 8.1.7]{GeckPfeiffer}.
\end{proof}

If $\upsilon,\tau \in \cK^J$ then
$\upsilon \preceq \tau$ can only hold if $\{\upsilon,\tau \}\subset \cK^\TT$ for some model triple $\TT$.
It follows that each set $\cK^\TT$ is a union of weakly connected components in  
 $\Upsilon^{\m|\sigma}$ and $\Upsilon^{\n|\sigma}$. 
Define $\Upsilon^{\m}(\TT)$ and $\Upsilon^{\n}(\TT)$ to be the $W$-graphs formed by
restricting $\Upsilon^{\m|\sigma}$ and $\Upsilon^{\n|\sigma}$ to the vertex set $\cK^\TT$.
It is clear from Theorems~\ref{mw-thm} and \ref{nw-thm}
that $\cY(\Upsilon^{\m}(\TT)) \cong \cM^\TT$ and $\cY(\Upsilon^{\n}(\TT)) \cong \cN^\TT$.

There is an obvious notion of \emph{disjoint union} for $W$-graphs which preserves (quasi-)admissibility.
If $\sP$ is any set of model triples for $W$, then we define two quasi-admissible  $W$-graphs by \be
 \Upsilon^{\m}(\sP) := \bigsqcup_{\TT \in \sP} \Upsilon^{\m}(\TT)\quand\Upsilon^{\n}(\sP) := \bigsqcup_{\TT \in \sP} \Upsilon^{\n}(\TT).
 \ee
The following is clear from Theorem~\ref{abs-main-thm}
and our definition of a Gelfand $W$-graph.

\begin{corollary}\label{gelfand-w-thm}
If $\sP$ is a perfect model, then $\Upsilon^{\m}(\sP)$
and $\Upsilon^{\n}(\sP) $ are Gelfand $W$-graphs.
\end{corollary}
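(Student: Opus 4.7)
The plan is essentially to assemble pieces that have already been established. First, by construction $\Upsilon^\m(\sP) = \bigsqcup_{\TT \in \sP} \Upsilon^\m(\TT)$, so the $\H$-module $\cY(\Upsilon^\m(\sP))$ decomposes as the direct sum $\bigoplus_{\TT \in \sP} \cY(\Upsilon^\m(\TT))$. Using Theorem~\ref{mw-thm} restricted to each weakly connected piece $\cK^\TT \subseteq \cK^J$ (which is a union of weakly connected components of $\Upsilon^{\m|\sigma}$ because $\upsilon \preceq \tau$ forces $\upsilon$ and $\tau$ to lie in the same $W$-orbit, hence in the same $\cK^\TT$), I would identify $\cY(\Upsilon^\m(\TT)) \cong \cM^\TT$ via the map $Y_\tau \mapsto \underline M^\sigma_\tau$. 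Summing over $\TT \in \sP$ gives $\cY(\Upsilon^\m(\sP)) \cong \cM^\sP$ as $\H$-modules.

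Next, I would extend scalars to $\KK(x)$ and apply Theorem~\ref{abs-main-thm}: since $\sP$ is a perfect model, $(\cM^\sP)_\KK$ is a Gelfand model for $\H_\KK(W)$. Combined with the $\H$-module isomorphism from the previous step, this shows that $\cY(\Upsilon^\m(\sP))_\KK$ is a Gelfand model, which is exactly the definition required for $\Upsilon^\m(\sP)$ to be a Gelfand $W$-graph. The argument for $\Upsilon^\n(\sP)$ is identical, using $\cY(\Upsilon^\n(\TT)) \cong \cN^\TT$ from Theorem~\ref{nw-thm} and the corresponding conclusion in Theorem~\ref{abs-main-thm} for $(\cN^\sP)_\KK$.

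There is no substantive obstacle here: every nontrivial input (the $\H$-module structure on $\cY(\Upsilon^{\m|\sigma})$ and $\cY(\Upsilon^{\n|\sigma})$, the identification of their canonical bases with $\underline M^\sigma_\tau$ and $\underline N^\sigma_\tau$, and the Gelfand property of $(\cM^\sP)_\KK$ and $(\cN^\sP)_\KK$) has already been proved. The only thing worth being careful about is confirming that the decomposition into $\cK^\TT$-pieces is actually a decomposition as $W$-graphs (equivalently, as $\H$-modules), which follows from the observation about $\preceq$ noted above.
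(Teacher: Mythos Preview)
Your proposal is correct and follows essentially the same approach as the paper: the paper states the corollary is ``clear from Theorem~\ref{abs-main-thm} and our definition of a Gelfand $W$-graph,'' having already observed just before the corollary that $\cY(\Upsilon^{\m}(\TT)) \cong \cM^\TT$ and $\cY(\Upsilon^{\n}(\TT)) \cong \cN^\TT$ (via Theorems~\ref{mw-thm} and \ref{nw-thm}) and that each $\cK^\TT$ is a union of weakly connected components. You have simply spelled out this implicit reasoning.
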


Our next goal is to explain how to derive Theorems~\ref{main-thm2}, \ref{main-thm3}, and \ref{main-thm4}
in the introduction from the preceding results applied to  the perfect models $\sP(W)$ in Definition~\ref{main-thm1-def}.
Concretely, this amounts to describing for each classical Weyl group $W$ a bijection between the sets 
$\cG(W)$ and $\bigsqcup_{\TT \in \sP(W)} \cK^\TT$ that is descent-preserving in an appropriate sense.

For the rest of this section, we fix $W \in \{S_n, \W_n, \WD_n\}$ and write $S$ for its set of simple generators.
Recall the definition of $\cG=\cG(W)$ from Section~\ref{descent-sect}.
We again write $\cG^\A_{n-1}\subset \cG^\BC_n\supset \cG^\D_n$ 
for   $\cG$ when $W$ is $S_n$, $\W_n$, or $\WD_n$, respectively.
Choose $v \in \cG$
and suppose 
\[\{ i \in [n] : |v(i)| \in [n]\} = \{a_1 < a_2 < \dots < a_m\}\] for some numbers  $a_i \in [n]$. Note that $m$ must be even.
Let 
\[\psi :\{-m,\dots,-1,0,1,\dots,m\} \to \{-a_m < \dots < -a_1 < 0 < a_1 < \dots < a_m\}\] 
be the unique order-preserving bijection. Write $[\pm n] := \{ i \in \ZZ :  |i| \in [n]\}$
and suppose \[\{ i \in [\pm n]: v(i) > n\} = \{ b_1 < b_2 <\dots < b_{n-m}\}\]
for some $b_i \in [\pm n]$. 
We next define 
$J \subset S,$ $ \sigma : W_J \to \{\pm 1\},$ $ w \in W^J,$ and $ z\in \sI_J$
as follows:
\ben
\item[($\A$)] Assume $W=S_n$  so that $S = \{s_1,s_2,\dots,s_{n-1}\}$
and each $b_i \in [n]$. Let $J = S \setminus\{s_m\}$
so that $W_J = S_m \times S_{n-m}$ and define $\sigma = \one_{S_m} \times \sgn_{S_{n-m}}$.
Let \be\label{wab-eq}w= a_1a_2\cdots a_m b_1b_2\cdots b_{n-m} .\ee
As $u \in S_n=W$ belongs to $W^J$ if and only if $u(1) < \dots < u(m)$
and $u(m+1) < \dots < u(n)$, we have $w \in W^J$.
Finally 
define $z\in \sI_J$
to have $z(i) = \psi^{-1} \circ v \circ \psi(i)$ for 
 $i \in [m]$ and $z(i) = i$ for $i \in [n]\setminus [m]$.
For example, if $n=6$ and \[v = (1,4)(2,7)(3,6)(5,8)(9,10)(11,12) \in \cG^\A_5 \subset S_{12}\] then
$\{a_1<a_2<a_3<a_4\} = \{1<3<4<6\}$ and $\{b_1<b_2\} = \{2 < 5\}$ so
\[ J = \{s_1,s_2,s_3,s_5\},\quad w = 134625 \in S_6,\quand z = (1,3)(2,4) \in \langle J\rangle\subset S_6.\]

\item[($\BC$)] Assume $W=\W_n$ so that $S = \{s_0,s_1,\dots,s_{n-1}\}$.
Let $J =S \setminus\{s_m\}$
so that $W_J = \W_m \times S_{n-m}$ and define $\sigma = \one_{\W_m} \times \sgn_{S_{n-m}}$.
A permutation $u \in \W_n=W$ belongs to $W^J$ if and only if $0<u(1) < \dots < u(m)$
and $u(m+1) < \dots < u(n)$.
We can therefore define $w \in W^J$ and $z \in \sI_J$ in the same way as for type $\A$. 
For example, if $n=4$ and \[v = (1,-4)(-1,4)(2,6)(-2,-6)(3,-5)(-3,5)(7,8)(-7,-8) \in \cG^\BC_4 \subset \W_{8}\] then
$\{a_1<a_2\} = \{1<4\}$ and $\{b_1<b_2\} = \{-3<2\}$ so
\[ J = \{s_0,s_1,s_3\},\quad w = 14\bar 3 2 \in \W_4,\quand z = (1,-2)(-1,2) \in \langle J\rangle\subset \W_4.\]

\item[($\D$)] Assume $W=\WD_n$  so that $S = \{s_{-1},s_1,\dots,s_{n-1}\}$.
If $m=0$ then let $J =S\setminus\{s_{-1}\}$ and otherwise let 
$J =S \setminus \{s_m\}$.
Then $W_J = \WD_m \times S_{n-m}$ and we define 
$\sigma = \one_{\WD_m} \times \sgn_{S_{n-m}}$.
A permutation $u \in \WD_n=W$ belongs to $W^J$ if and only if $|u(1)| < u(2) < \dots < u(m)$
and $u(m+1) < \dots < u(n)$.
If $|\{ i \in [n-m] : b_i < 0\}|$ is even then we define
$w\in W^J$ and $z \in \sI_J$ in the same way as for types $\A$ and $\BC$.
If this number is odd then we set 
 $w :=  \overline{ a_1} a_2\cdots a_m b_1b_2\cdots b_{n-m} \in W^J$
and define $z  \in \sI_J$
to have $s_0zs_0(i) = \psi^{-1} \circ v \circ \psi(i)$ for 
 $i \in [m]$ and $z(i) = i$ for $i \in [n]\setminus [m]$.
For example, if $n=4$ and 
 \[v = (1,-4)(-1,4)(2,6)(-2,-6)(3,-5)(-3,5)(7,8)(-7,-8) \in \cG^\D_4 \subset \WD_{8}\] then $m=2$ and
exactly one $b_i$ is negative, so
\[ J = \{s_{-1},s_1,s_3\},\quad w = \bar14\bar 3 2 \in \WD_4,\quand z = (1,2)(-1,-2) \in \langle J\rangle\subset \WD_4.\]
\een
Putting everything together, in each case we write 
\be \vartheta_W(v) := (J,\sigma)\quand \varphi_W(v) := (w,z) \in \cK^J.\ee
The ascent and descent sets in the following lemma are as in
 \eqref{intro-des-1}, \eqref{intro-des-2}, and \eqref{asc-des-def}.

\begin{lemma}\label{vartheta-lem}
Fix $W \in \{S_n, \W_n, \WD_n\}$ and $v \in \cG$. Let $(J,\sigma) = \vartheta_W(v)$ and $\tau  =\varphi_W(v)$.
Then $\Des^=(v) = \Des^=(\tau;\sigma)$, $\Asc^=(v) = \Asc^=(\tau;\sigma)$,
$\Des^<(v) = \Des^<(\tau;\sigma)$, and $\Asc^<(v) = \Asc^<(\tau;\sigma)$.
Furthermore, if $s \in \Des^<(v)  \sqcup \Asc^<(v) $ then $\varphi_W(svs) = s\tau$.
\end{lemma}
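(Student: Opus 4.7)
The plan is a direct case analysis on the simple reflection $s \in S$, comparing the explicit descriptions of $\Des^=(v)$, $\Asc^=(v)$, $\Des^<(v)$, and $\Asc^<(v)$ from Proposition~\ref{new-des-prop2} against the definitions in \eqref{asc-des-def} applied to $\tau = \varphi_W(v) = (w,z)$. First I would unpack the $W$-action on $\cK^J$: either $sw \in W^J$, in which case $s\tau = (sw,z)$ and $\h(s\tau) - \h(\tau) = \ell(sw) - \ell(w) = \pm 1$ always yields a strict ascent or descent; or else $t := w^{-1}sw \in J$ and $s\tau = (w,tzt)$, so that $\h(s\tau) - \h(\tau) = \tfrac12(\ell(tzt)-\ell(z)) \in \{-1,0,1\}$, where by Lemma~\ref{QP-lem}(a) the value $0$ forces $tzt = z$. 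Thus weak ascents and weak descents arise only in this second scenario, and the sign $\sigma(t)$ distinguishes them: since $\sigma$ restricts to the trivial character on the first tensor factor of $W_J$ (namely $S_m$, $\W_m$, or $\WD_m$) and to $\sgn$ on the $S_{n-m}$ factor, $\sigma(t) = +1$ when $t$ acts on the first $m$ positions and $\sigma(t) = -1$ otherwise.

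Next, for each $s = s_i \in S$ with $i > 0$, I would determine which scenario applies. When $s_i$ would swap indices lying in different blocks of the sorted one-line notation of $w$, the values $a_j,a_{j+1}$ (or $b_j,b_{j+1}$) are not adjacent in $w$, so $s_iw \in W^J$; this yields a strict ascent or descent governed by $\ell(s_iw)$ versus $\ell(w)$, matching the ``mixed'' clauses $n < v(i) < v(i+1)$ and $v(i) < v(i+1) < -n$ in Proposition~\ref{new-des-prop2}. When $s_i$ acts within a single block, $s_iw \notin W^J$; in the first block, $tzt = z$ corresponds to the conditions $v(i) \in \{i+1,-i-1\}$ characterizing $\Des^=(v)$, while $tzt \neq z$ yields a strict ascent or descent matching $i+1 \neq v(i) > v(i+1) \neq i$. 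In the second block $t$ commutes with $z$ automatically (since $z$ fixes that block), so the sign $\sigma(t) = -1$ produces the weak ascents in $\Asc^=(v)$.

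The generators $s_0$ in type $\BC$ and $s_{-1}$ in type $\D$ require separate but parallel verifications using Proposition~\ref{old-des-prop}(b,c). The hard part will be the type $\D$ subcase when $|\{i \in [n-m] : b_i < 0\}|$ is odd: here $w$ begins with $\bar a_1$ rather than $a_1$ and $z$ is conjugated by $s_0$, so the identification of $s_{-1}$ with an element of $J$ gets twisted by $\mathrm{Ad}(s_0)$, and one must chase this twist through Proposition~\ref{new-des-prop2}(c) to see that it exactly compensates for the sign conventions.

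Finally, for the claim that $\varphi_W(svs) = s\tau$ whenever $s \in \Des^<(v) \sqcup \Asc^<(v)$, note that conjugation by a strict ascent or descent preserves the set $\{a_1,\ldots,a_m\}$ of moved indices (with their signs in types $\BC$ and $\D$) and the set $\{b_1,\ldots,b_{n-m}\}$ of fixed indices setwise, so $\vartheta_W(svs) = \vartheta_W(v)$; a direct inspection of how the one-line notations transform then yields the identity in both subcases of the $W$-action analyzed above. The overarching obstacle is purely the bookkeeping across three types and many subcases; the individual verifications are each elementary.
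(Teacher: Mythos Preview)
Your strategy is sound and matches the paper's at the level of ``direct verification via case analysis against Proposition~\ref{new-des-prop2},'' but there is a concrete mismatch in your case identification. The clauses $n < v(i) < v(i+1)$ and $v(i) < v(i+1) < -n$ in Proposition~\ref{new-des-prop2}(a) characterize $\Asc^=(v)$, not a strict ascent: they hold exactly when both $i$ and $i+1$ are $b$-indices of the same sign, in which case they occupy \emph{consecutive} positions of the second block of $w$, so $s_iw \notin W^J$, $t = w^{-1}s_iw = s_{m+j}$ for some $j$, and $tzt = z$ with $\sigma(t) = -1$. The genuinely mixed case (one $a$-index, one $b$-index) does give $s_iw \in W^J$ and hence a strict ascent or descent, but the corresponding condition on $v$ is that exactly one of $|v(i)|, |v(i+1)|$ exceeds $n$, which Proposition~\ref{new-des-prop2} folds into the generic $\Asc^<$/$\Des^<$ clauses rather than listing separately. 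You will need to redo this part of the case table before the details go through.

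The paper organizes the same verification more economically. It first introduces an explicit embedding $\iota$ of the $z$'s into the ambient group $\W_{2n}$ and records the two identities $v = w\cdot \iota(z)\cdot w^{-1}$ and $\ell(v) = \ell(z) + 2\ell(w) + \kappa_m$ for a constant $\kappa_m$ depending only on $m$. These reduce the entire lemma to just the two equalities $\Des^=(v) = \Des^=(\tau;\sigma)$ and $\Asc^=(v) = \Asc^=(\tau;\sigma)$: once those hold, for any $s$ outside the weak sets one gets $\varphi_W(svs) = s\tau$ directly from the conjugation formula (either $svs = (sw)\iota(z)(sw)^{-1}$ or $svs = w\iota(tzt)w^{-1}$), and then $\ell(svs) - \ell(v) = 2(\h(s\tau) - \h(\tau))$ follows from the length formula, yielding the strict-set equalities for free. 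Your plan of checking all four set equalities and the last identity independently will work but roughly doubles the bookkeeping; adopting the conjugation and length formulas up front is what makes the type-$\D$ subcase with odd $|\{i : b_i<0\}|$ manageable rather than merely ``hard.''
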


\begin{proof}
Let $I_m$ for $0\leq m \leq n$ be the set of elements $y=y^{-1} \in W $ with $|y(i)| \neq i$ for $i \in [m]$ and $y(i) = i$ for $i>m$.
If $m$ is not even then $I_m$ is empty.
Given $y \in I_m$, define $\iota(y) \in \W_{2n}$ to be the element that maps $i \mapsto y(i)$ for $i\in [m]$,
$m + i \mapsto n + i$ for $i \in [n-m]$, and $2n-m + i \mapsto 2n-m+i+1$ for each odd $i \in [m]$.
 Also let $J_m = S \setminus \{s_m\}$, except  when $m=0$ and $W=\WD_n$ in which case we set $J_0 = S\setminus\{s_{-1}\} =\{s_1,s_2,\dots,s_{n-1}\}$.

The following claims may be checked using Propositions~\ref{old-des-prop} and \ref{new-des-prop1}.
Suppose $ (w,z) = \tau =\varphi_W(v)$ and let $m = |\{ i \in [n] : |v(i)| \leq n\}|$. Then $m$ is even and $J=J_m$, 
and    $w $ and $z$ are the unique elements in $W^{J_m}$ and $I_m$, respectively, such that $w\cdot \iota (z) \cdot w^{-1} = v$.
 Moreover,  \be\label{vzw-eq}
 \ell(v) = \ell(z) + 2\ell(w) + \kappa_m\quad\text{where $\kappa_m := \ell(\iota(s_1s_3\cdots s_{m-1})) - \tfrac{m}{2}$.}\ee
 From these properties, it suffices to show that  $\Des^=(v) = \Des^=(\tau;\sigma)$
 and
  $\Asc^=(v) = \Asc^=(\tau;\sigma)$.
If this holds and $s \in \Des^<(v) \sqcup \Asc^<(v)= \Des^<(\tau;\sigma) \sqcup \Asc^<(\tau;\sigma)$,
  then either $sw \in W^{J_m}$ and $svs = (sw) \iota(z) (sw)^{-1}$,
  in which case $\varphi_W(sws) = (sw,z) = s\tau$
  and 
  \be\label{svs-1eq}\ell(svs) - \ell(v) = 2 (\ell(sw) - \ell(w)) = 2 (\h(s\tau) - \h(\tau)),\ee
  or $sw\notin W^{J_m}$ but $t:= w^{-1}sw \in J_m$.
  In the latter case we must have $t\notin \{s_{m+1},s_{m+2},\dots,s_{n-1}\}$ since $s \notin \Asc^=(\tau;\sigma)$,
  so $tzt \in I_m$ and $svs = wt\cdot \iota(z) \cdot tw^{-1} = w\cdot \iota(tzt)\cdot w^{-1}$,
  which implies that $\varphi_W(sws) = (w,tzt) = s\tau$ and 
   \be\label{svs-2eq}
   \ell(svs) - \ell(v) =\ell(tzt) - \ell(z) = 2 (\h(s\tau) - \h(\tau)).\ee
In both cases we have $\varphi_W(sws)  = s\tau$,
and the equations \eqref{svs-1eq} and \eqref{svs-2eq}
show that  $\Des^<(v) = \Des^<(\tau;\sigma)$ and $\Asc^<(v)= \Asc^<(\tau;\sigma)$.

Showing that $\Des^=(v) = \Des^=(\tau;\sigma)$
 and
  $\Asc^=(v) = \Asc^=(\tau;\sigma)$ reduces to a simpler claim.
Continue to write $\tau = (w,z)$. Fix  $s \in S$ and let $t = w^{-1}sw$.
Note that $s$ belongs to $ \Des^=(\tau;\sigma)$ 
if and only if $tzt=z$ and $t \in J_m \setminus\{s_{m+1},s_{m+2},\dots,s_{n-1}\}$,
while $s$ belongs to $\Asc^=(\tau;\sigma)$
if and only if $tzt=z$ and $t \in \{s_{m+1},s_{m+2},\dots,s_{n-1}\}$.
Since we have $\iota(tzt) = t\cdot \iota(z)\cdot t$ whenever $t \in J_m \setminus\{s_{m+1},s_{m+2},\dots,s_{n-1}\}$,
it follows that if $s \in \Des^=(\tau;\sigma)$ then 
\[v = w\cdot\iota(z) \cdot w^{-1} = wt\cdot\iota(z) \cdot tw^{-1} = svs\] so $s \in \Des^=(v)$.
Similarly, as $\iota(z) \cdot t\cdot  \iota(z) \in \{ s_i : i>n\}$ whenever $t \in \{s_{m+1},s_{m+2},\dots,s_{n-1}\}$,
it follows that if $s \in \Asc^=(\tau;\sigma)$ then
$vsv = w\cdot\iota(z)\cdot  t \cdot \iota(z) \cdot w^{-1} = \iota(z)\cdot t \cdot\iota(z) \in  \{ s_i : i>n\}$
so $s \in \Asc^=(v)$.
Thus
$\Des^=(v) \supset \Des^=(\tau;\sigma)$
 and
  $\Asc^=(v) \supset \Asc^=(\tau;\sigma)$.

It remains only to show the reverse containments.
We check this in detail for type $\D$ using Proposition~\ref{new-des-prop2}; the arguments for types $\A$ and $\BC$ are similar.
Assume $W=\WD_n$.
Define $a_1,a_2,\dots,a_m$ and $b_1,b_2,\dots,b_{n-m}$ relative to $v$ as before the lemma
and let $e=|\{ i \in [n-m] : b_i < 0\}|$.
 If $s_{\pm 1} \in \Des^=(v)$, then we have $v(1) \in \{\pm 2\}$
so $\{1,2\} \subset \{a_1,a_2,\dots,a_m\}$
and $z(1) \in \{\pm 2\}$. In this case $w^{-1} s_{\pm 1} w$ is  $ s_{\pm 1}$ if $e$ is even or $s_{\mp 1}$ if $e$ is odd; as this commutes with $z$ either way, we have $s \in \Des^=(\tau;\sigma)$.
If $s_i \in \Des^=(v)$ for $i>1$, then $v(i) \in \{\pm (i+1)\}$
so $\{i,i+1\} \subset \{a_j,a_{j+1}\}$ for some $j \in [m-1]$. But then   $z(j) \in \{\pm (j+1)\}$ and $w^{-1} s_i w = s_j$,
so  $s_j zs_j = z$ and again $s_i \in \Des^=(\tau;\sigma)$.

If instead  $s_{- 1} \in \Asc^=(v)$, then either $v(1) <-n<n<v(2)$ or $v(2) < -n < n < v(1)$.
In the first case we have $b_j=-1$ and $b_{j+1} = 2$ for some $j \in [n-m]$
and in the second case we have $b_j =-2$ and $b_{j+1} = 1$ for some $j \in [n-m]$,
and in either situation $w^{-1} s_{- 1}w = s_{m+j}$ so $s_{-1} \in \Asc^=(\tau;\sigma)$.
Similarly, if $s_i \in\Asc^=(v)$ for some $i \in [n-1]$ then 
either $n<v(1)< v(2)$ or $v(1)<v(2) < -n $, so  for some $j \in [n-m]$
we have $b_j=1<b_{j+1}=2$ or $b_j=-2<b_{j+1}=-1$, and therefore $w^{-1}s_i w=s_{m+j}$
whence $s_i \in \Asc^=(\tau;\sigma)$.
This completes our argument that
$\Des^=(v) = \Des^=(\tau;\sigma)$
 and
  $\Asc^=(v) = \Asc^=(\tau;\sigma)$
   when $W= \WD_n$.
\end{proof}

Define the model $\sP=\sP(W)$ as in Definition~\ref{main-thm1-def}.
For each $\TT = (J,z_{\min},\sigma) \in \sP$, let  $\cG|_\TT := \{ v \in \cG : \vartheta_W(v) = (J,\sigma)\}.$
Observe that  $u,v \in \cG$ both belong $\cG|_\TT$
for some $\TT \in \sP$ if and only if 
$\{ i \in [n] : |u(i)| \in [n]\}$ and $\{ i \in [n] : |v(i)| \in [n]\}$ have the same cardinality.

Define $\cM = \LL\spanning\{ M_z : z \in \cG\}$
and $\cN = \LL\spanning\{ N_z : z \in \cG\}$ as in Theorem~\ref{main-thm2}.
Let $\cM|_\TT$ and $\cN|_\TT$
denote the respective $\LL$-submodules of $\cM$ and $\cN$
spanned by the basis elements $M_v$ and $N_v$ with $v \in \cG|_\TT$.

\begin{lemma}\label{varphi-lem}
If $\TT \in \sP$ then $\varphi_W : \cG|_\TT \to \cK^\TT$ is a bijection, and  $\cG = \bigsqcup_{\TT \in \sP} \cG|_\TT$.
\end{lemma}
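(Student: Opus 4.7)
The plan is to first verify the partition $\cG = \bigsqcup_{\TT \in \sP}\cG|_\TT$, and then establish the bijection $\varphi_W : \cG|_\TT \to \cK^\TT$ for each $\TT \in \sP$. For the partition, I fix $v \in \cG$ and set $m := |\{ i \in [n] : |v(i)| \leq n\}|$. Since $v$ is an involution with $|v(i)|\neq i$, the set $\{ i \in [\pm n] : |v(i)| \leq n\}$ is closed under negation and under the action of $v$, forcing $m$ to be even; Proposition~\ref{new-des-prop1} (and its type-$\A$ and type-$\D$ analogues) then restricts $m$ to $\{0,2,\dots,2\lfloor n/2\rfloor\}$. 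Inspection of Definition~\ref{main-thm1-def} shows that the pair $\vartheta_W(v) = (J,\sigma)$ matches the datum of exactly one triple in $\sP(W)$: in types $\A$ and $\BC$ the match is indexed by $k = m/2$, while in type $\D$ the special case $m=0$ corresponds to the extra triple $(\{s_1,\dots,s_{n-1}\},1,\sgn_{S_n})$.

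Fix $\TT = (J, z_{\min}, \sigma) \in \sP$. Injectivity of $\varphi_W$ on $\cG|_\TT$ is immediate from the proof of Lemma~\ref{vartheta-lem}: if $\varphi_W(v) = (w,z)$ then $v = w \cdot \iota(z) \cdot w^{-1}$ (with the $s_0$-twist in type $\D$), so $v$ is recovered from $(w,z)$. For surjectivity, given $(w,z) \in \cK^\TT$, I would define $v := w \cdot \iota(z) \cdot w^{-1}$ (with the analogous twist in type $\D$) and verify using Proposition~\ref{new-des-prop1} that (i) $v$ is an involution in $\W_{2n}$ with $|v(i)|\neq i$ for all $i \in [2n]$, which follows from $z$ being fixed-point-free on $[m]$ and $\iota$ inserting a fixed-point-free pattern on the remaining positions; (ii) $v$ has no visible descents greater than $n$, a direct consequence of the monotonicity inequalities that characterise $W^J$; and (iii) $\vartheta_W(v) = (J,\sigma)$ with $\varphi_W(v) = (w,z)$, which is tautological from how the construction inverts itself.

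The remaining nontrivial point is well-definedness: one must verify that the element $z \in \sI_J$ produced by $\varphi_W$ actually lies in the $W_J$-conjugacy class of the specific $z_{\min} = s_1 s_3 \cdots s_{m-1}$ fixed in $\TT$, rather than in some other conjugacy class of perfect involutions in $W_J$. In type $\A$ this is automatic because all fixed-point-free involutions in $S_m$ (with $m$ even) are $S_m$-conjugate. In type $\BC$ a short calculation---already visible for $m=2$ via the identity $s_0 s_1 s_0 = s_{-1}$---shows that every signed involution $z \in \W_m$ with $|z(i)|\neq i$ on $[m]$ is $\W_m$-conjugate to $s_1 s_3 \cdots s_{m-1}$. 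The hard part will be type $\D$: the fixed-point-free involutions in $\WD_m$ split into the two $\WD_m$-conjugacy classes of $s_1 s_3 \cdots s_{m-1}$ and of $s_0 \cdot s_1 s_3 \cdots s_{m-1} \cdot s_0 = s_{-1} s_3 s_5 \cdots s_{m-1}$, interchanged by the $\diamond$-automorphism. I expect to show by a sign-counting argument that the parity-sensitive rule in the construction of $\varphi_W$---replacing $a_1$ by $-a_1$ and conjugating $z$ by $s_0$ precisely when $|\{ i \in [n-m] : b_i < 0\}|$ is odd---exactly compensates for the $\WD_n$-membership constraint recorded in \eqref{dun-eq}, forcing $z$ always into the conjugacy class of $s_1 s_3 \cdots s_{m-1}$. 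This bookkeeping is the main technical obstacle in the proof.
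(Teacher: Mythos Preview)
Your approach is correct but differs from the paper's in how surjectivity and well-definedness are handled. The paper does not construct an explicit inverse; instead, it leverages the equivariance $\varphi_W(svs)=s\cdot\varphi_W(v)$ for $s\in\Des^<(v)\sqcup\Asc^<(v)$ already established in Lemma~\ref{vartheta-lem}. Since the image of $\cG|_\TT$ contains $(1,z_{\min})$ and is closed under the action of strict ascents, it must contain the entire $W$-orbit $\cK^\TT$, giving surjectivity in one line. Your explicit-inverse route works too, but it forces you to verify directly that $w\cdot\iota(z)\cdot w^{-1}$ has no visible descents beyond $n$, which is extra work the paper avoids.

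Regarding the point you flag as the ``main technical obstacle'' (that $z$ lands in the $W_J$-conjugacy class of $z_{\min}$ rather than its $\diamond$-twist in type~$\D$): the paper dismisses this as ``easy to see,'' but the equivariance argument actually makes it so. Any $v\in\cG|_\TT$ is connected by a chain of strict descents to some $v_0$ with $\Des^<(v_0)=\varnothing$, and $\varphi_W(v_0)=(1,z_0)$ with $z_0$ being $W_J$-minimal. So instead of a general sign-counting argument over all of $\cG|_\TT$, one only needs to identify the elements $v_0\in\cG|_\TT$ with no strict descents and check that each maps to $(1,z_{\min})$---a single concrete verification per $\TT$. Your sign-counting plan would also work, but it proves more than is needed.
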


\begin{proof}
Suppose  $v \in \cG$ and $(J,\sigma) = \vartheta_W(v)$. 
Let $(w,z) = \varphi_W(v)\in W^J \times \sI_J$. 
We can recover $v$ from $(w,z)$ as $v = w \cdot\iota(z) \cdot w^{-1}$
where $\iota$ is defined as in the proof of Lemma~\ref{vartheta-lem},
so 
 $\varphi_W : \cG|_\TT \to \cK $ is injective.
 It is easy to see that if
 $\TT = (J,z_{\min},\sigma) \in \sP$ then
 the image of $\cG|_\TT$ under $\varphi_W$ is a subset of $\cK^\TT$
  containing $(1,z_{\min})$. 
Since  $\varphi_W(svs) = s\cdot \varphi_W(v)$ for $s \in \Des^<(v)  \sqcup \Asc^<(v) $
by Lemma~\ref{vartheta-lem},
the map
$\varphi_W : \cG|_\TT \to \cK^\TT$ is also surjective.
Finally, each $v \in \cG$ 
with $2k = |\{ i \in [n] : |v(i)| \in [n]\}|$ belongs to $\cG|_\TT$ for the model triple
$\TT$ indexed by $k$ in  Definition~\ref{main-thm1-def}. 
\end{proof}

\begin{proof}[Proof of Theorem~\ref{main-thm2}]
Lemma~\ref{varphi-lem}
shows that the linear maps
\be\label{fg-eq}
f_W : \cM \to \cM^\sP
\quand 
g_W:\cN \to \cN^\sP
\ee
with 
$f_W:
M_v \mapsto M_{\varphi_W(v)}^\sigma
$
and
$g_W:
N_v \mapsto N_{\varphi_W(v)}^\sigma$
for each $\TT=(J,z_{\min},\sigma) \in \sP$ and $v \in \cG|_\TT$
are bijections. By Lemma~\ref{vartheta-lem}, the desired
$\H$-module structures on $\cM$ and $\cN$
are precisely the ones given by transferring the $\H$-module structures on $\cM^\sP$ and $\cN^\sP$ via these maps.
We conclude from Theorem~\ref{abs-main-thm} that $\cM_\QQ \cong (\cM^\sP)_\QQ$ and $\cN_\QQ \cong (\cN^\sP)_\QQ$ 
are Gelfand models for $\H_\QQ$ when $\sP$ is a 
perfect model, which happens if $W$ is $S_n$, $\W_n$, or $\WD_{2n+1}$ for some $n$ by Theorem~\ref{main-thm1}.
\end{proof}

\begin{proof}[Proof of Theorem~\ref{main-thm3}]
The maps $f_W$ and $g_W$ from \eqref{fg-eq} are $\H$-module isomorphisms,
so setting  $\overline{M} = f_W^{-1}(\overline{f_W(M)})$ for $M \in \cM$
and $\overline{N} = g_W^{-1}(\overline{g_W(N)})$ for $N \in \cN$
gives the desired (unique) $\H$-compatible bar operators in view of Theorem~\ref{m-thm}  and Lemma~\ref{vartheta-lem}.
It follows from \eqref{vzw-eq} that if $\varphi_W(u) \preceq \varphi_W(v)$ for $u,v \in \cG$
then $\ell(u) \leq \ell(v)$, so Theorem~\ref{m-thm}
also implies that the bar invariant 
elements defined by $\underline M_z := f_W^{-1}(\underline M_{\varphi_W(z)})$
and
$\underline N_z := g_W^{-1}(\underline N_{\varphi_W(z)})$ for $z \in \cG$
have the triangular expansion specified in the statement of Theorem~\ref{main-thm3}. 
It remains only to show the uniqueness of these elements.
This follows from Lemma~\ref{webster-lem}, since
\eqref{ut-eq} in Theorem~\ref{m-thm} 
shows that $(M\mapsto \overline{M}, \{M_z :z \in \cG\})$, with $\cG$ ordered such that $u<v$ if $\ell(u)<\ell(v)$,
is a pre-canonical structure on $\cM$, and $\{ \underline M_z : z \in \cG\}$ is evidently
the unique canonical basis afforded by this structure.
A similar argument shows that the elements $\underline{N}_z$ are likewise unique.
\end{proof}

Recall the definitions of   $\Gamma^\m=\Gamma^\m(W)$ and $\Gamma^\n=\Gamma^\n(W)$
from Theorem~\ref{main-thm4}.

\begin{proof}[Proof of Theorem~\ref{main-thm4}]
In view of Lemma~\ref{vartheta-lem} and the proof of Theorem~\ref{main-thm3}, 
 $\Gamma^\m$ and $\Gamma^\n$ are the quasi-admissible $W$-graphs
 respectively formed by transferring the $W$-graph structures 
$ \Upsilon^{\m}(\sP)$ and $ \Upsilon^{\n}(\sP)$ to the set $\cG$
via the invertible map $v \mapsto \varphi_W(v)$. 
The claim that the linear maps $Y_z \mapsto \underline M_z$ and $Y_z \mapsto \underline N_z$ 
are isomorphisms $\cY(\Gamma^\m) \cong \cM$ and $\cY(\Gamma^\n)\cong \cN$
follows by comparing 
Theorems~\ref{mw-thm} and \ref{nw-thm} with
our definitions of  $ \underline M_z$ and $\underline N_z$
in the proof of Theorem~\ref{main-thm3}.
\end{proof}

\begin{remark}\label{bar-bar-rmk}
Given $\TT  \in \sP$, 
form $\Gamma^\m|_\TT$ and $\Gamma^\n|_\TT$ 
by restricting $\Gamma^\m$ and $\Gamma^\n$ 
to  $\cG|_\TT$.
Since $\varphi_W$ maps $\cG|_\TT$ bijectively onto $\cK^\TT$,
it follows 
 that
 $
 \Gamma^\m = \bigsqcup_{\TT \in \sP} \Gamma^\m|_\TT
 $ and $
 \Gamma^\n = \bigsqcup_{\TT \in \sP} \Gamma^\n|_\TT.
 $
\end{remark}

\subsection{Model equivalence}\label{equiv-sect}

Again let $(W,S)$ be a finite Coxeter system. If $\sP$ is a perfect model for $W$,
then it is a natural problem of interest to classify the cells in the $W$-graphs $\Upsilon^\m(\sP)$ and $\Upsilon^\n(\sP)$.
In this section we describe a form of equivalence for perfect models 
that ensures that the cells in these $W$-graphs are related in a simple way for different choices of $\sP$.

If $\Gamma = (V,\omega,I)$ and $\Gamma' = (V',\omega',I')$ are $W$-graphs,
then an \emph{isomorphism} $\phi : \Gamma \to \Gamma'$
  is a bijection $\phi : V \to V'$ with
$\omega  = \omega' \circ (\phi \times \phi)$
and
$ I  = I' \circ \phi$.
Such a map sends cells to cells.
Choose a model triple $\TT=(J,z_{\min},\sigma)$ for $W$ and an automorphism $\alpha \in \Aut(W,S)$.
Define 
\be \TT^\alpha := (\alpha(J), \alpha(z_{\min}), \sigma\circ \alpha^{-1}).\ee
It is easy to see that $\TT^\alpha$ is another model triple for $W$.

If  $\Gamma = (V,\omega,I)$ is a $W$-graph,
then let $\Gamma^\alpha := (V, \omega, I^\alpha)$ where $I^\alpha(v) := \{ \alpha(s) : s \in I(v)\}$ for $v \in V$.
As $\cY(\Gamma^\alpha)$
is just the $\H$-module $\cY(\Gamma)$ twisted by the $\H$-automorphism sending $H_w \mapsto H_{\alpha^{-1}(w)}$,
$\Gamma^\alpha$ 
 is also a $W$-graph
with the same directed graph structure as $\Gamma$.

\begin{proposition}\label{alpha-prop}
If $\TT$ is a model triple for $W$ and $\alpha \in \Aut(W,S)$ then
$(w,z) \mapsto (\alpha(w), \alpha(z))$ 
is an isomorphism of $W$-graphs $\Upsilon^{\m}(\TT)^\alpha\xrightarrow{\sim} \Upsilon^{\m}(\TT^\alpha )$ 
and $\Upsilon^{\n}(\TT)^\alpha \xrightarrow{\sim} \Upsilon^{ \n}(\TT^\alpha )$.
\end{proposition}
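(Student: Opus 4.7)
The plan is to verify in turn the three conditions that define a $W$-graph isomorphism $\Upsilon^{\m}(\TT)^\alpha \xrightarrow{\sim} \Upsilon^{\m}(\TT^\alpha)$: the map $\phi_\alpha : (w,z) \mapsto (\alpha(w), \alpha(z))$ should be a bijection of vertex sets, preserve edge weights, and transform the label function correctly. The statement for $\Upsilon^{\n}$ will then follow by replacing $\sigma$ with $\sgn\cdot\sigma$, since $\sgn\circ\alpha^{-1}=\sgn$ and $(\sgn\cdot\sigma)\circ\alpha^{-1}=\sgn\cdot(\sigma\circ\alpha^{-1})$.

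First I would check that $\phi_\alpha$ is a well-defined bijection $\cK^\TT \to \cK^{\TT^\alpha}$. Since $\alpha$ preserves $S$ and hence the length function, one has $\alpha(W^J) = W^{\alpha(J)}$, and $\alpha$ restricts to a group isomorphism $(W_J)^+ \to (W_{\alpha(J)})^+$ that sends $\sI_J$ to $\sI_{\alpha(J)}$ and identifies the $W_J$-orbit of $z_{\min}$ with the $W_{\alpha(J)}$-orbit of $\alpha(z_{\min})$. The inverse of $\phi_\alpha$ is given by $\phi_{\alpha^{-1}}$.

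Next, I would check that $\phi_\alpha$ intertwines the two $W$-actions in the sense that $\phi_\alpha(s\tau) = \alpha(s)\phi_\alpha(\tau)$ for all $s\in S$ and $\tau=(w,z)\in\cK^\TT$. This is a case check against the definition of $s\tau$: if $sw \in W^J$ then $\alpha(s)\alpha(w) \in W^{\alpha(J)}$ and both sides equal $(\alpha(sw),\alpha(z))$; otherwise $t=w^{-1}sw\in J$ and both sides equal $(\alpha(w),\alpha(tzt))$, using that $\alpha(w)^{-1}\alpha(s)\alpha(w)=\alpha(t)\in \alpha(J)$ commutes correctly with $\alpha(z)$. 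Combined with $\ell\circ\alpha=\ell$, this gives $\h(\phi_\alpha(s\tau)) = \h(s\tau)$, so $\phi_\alpha$ maps the strict/weak partition of ascents and descents of $\tau$ (relative to $\sigma$) onto the corresponding partition of $\phi_\alpha(\tau)$ (relative to $\sigma\circ\alpha^{-1}$); here we use that $(\sigma\circ\alpha^{-1})(\alpha(w)^{-1}\alpha(s)\alpha(w)) = \sigma(w^{-1}sw)$. In particular
\[
\{\alpha(s) : s \in \Asc^{\m|\sigma}(\tau)\} = \Asc^{\m|\sigma\circ\alpha^{-1}}(\phi_\alpha(\tau)),
\]
which is exactly the label compatibility $\Asc^{\m|\sigma}(\tau)^\alpha = \Asc^{\m|\sigma\circ\alpha^{-1}}\circ \phi_\alpha$ required for an isomorphism $\Upsilon^{\m}(\TT)^\alpha \to \Upsilon^{\m}(\TT^\alpha)$.

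Finally, preservation of edge weights follows directly from Corollary~\ref{alpha-cor}: since $\m^{\sigma}_{\upsilon\tau} = \m^{\sigma\circ\alpha^{-1}}_{\phi_\alpha(\upsilon)\phi_\alpha(\tau)}$, comparison of the coefficients of $x^{-1}$ yields $\mu^{\m|\sigma}_{\upsilon\tau} = \mu^{\m|\sigma\circ\alpha^{-1}}_{\phi_\alpha(\upsilon)\phi_\alpha(\tau)}$, and combined with the label compatibility just established this gives $\omega^{\m|\sigma}(\upsilon,\tau) = \omega^{\m|\sigma\circ\alpha^{-1}}(\phi_\alpha(\upsilon),\phi_\alpha(\tau))$. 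The main potential obstacle is keeping the bookkeeping straight between the twist operation $\Gamma \mapsto \Gamma^\alpha$ (which changes labels but not vertices or edges) and the pushforward by $\phi_\alpha$ (which relabels vertices); once both are unwound, every step reduces to checking that $\alpha$ commutes with the $W$-action on $\cK^J$ and invoking Corollary~\ref{alpha-cor}.
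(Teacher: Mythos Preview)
Your proposal is correct and follows essentially the same approach as the paper: both argue that $\alpha$ carries the strict/weak ascent/descent partition of $\tau$ (relative to $\sigma$) to that of $\phi_\alpha(\tau)$ (relative to $\sigma\circ\alpha^{-1}$), and then invoke Corollary~\ref{alpha-cor} to match edge weights. Your write-up simply unpacks in more detail what the paper states in two sentences.
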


\begin{proof}
The set of strict/weak ascents/descents for $(w,z) \in \cK^\TT$ relative to $\sigma$
is the same as the corresponding set of strict/weak ascents/descents for $(\alpha(w),\alpha(z)) \in \cK^{\TT^\alpha}$ relative to $\sigma\circ \alpha$, so this proposition is immediate from Corollary~\ref{alpha-cor}.
\end{proof}

Suppose $\TT = (J,z_{\min},\sigma)$ and $\TT' = (J',z_{\min}',\sigma')$
are model triples for $W$ such that 
\[ J=J',\quad \sigma = \sigma',\quand C_{W_J}(z_{\min}) = C_{W_J}(z'_{\min}).\]
To indicate this situation, we write $\TT \equiv\TT'$.
Given an element $z = w\cdot z_{\min} \cdot w^{-1}$ for some $w \in W_J$,
define $z' := w\cdot z'_{\min} \cdot w^{-1}$.
Then $z \mapsto z'$ is a well-defined bijection from the $W_J$-orbit of $z_{\min}$
to the $W_J$-orbit of $z'_{\min}$
and the map 
$(w,z) \mapsto (w,z')$ is an isomorphism of $W$-sets 
$\cK^\TT \xrightarrow{\sim} \cK^{\TT'}$.

\begin{proposition}\label{equiv-prop}
If $\TT \equiv \TT'$ are model triples for $W$ then the bijection 
$\cK^\TT \xrightarrow{\sim} \cK^{\TT'}$ just described
is an isomorphism of $W$-graphs $\Upsilon^{\m}(\TT)  \xrightarrow{\sim} \Upsilon^{\m}(\TT')$
and $\Upsilon^{\n}(\TT)  \xrightarrow{\sim} \Upsilon^{\n}(\TT') $. 
\end{proposition}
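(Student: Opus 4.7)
The plan is to construct an $\H$-module isomorphism $\phi\colon \cM^\TT \xrightarrow{\sim} \cM^{\TT'}$ sending $M^\sigma_\tau \mapsto M^{\sigma'}_{\tau'}$, from which the $W$-graph isomorphism $\Upsilon^\m(\TT) \xrightarrow{\sim} \Upsilon^\m(\TT')$ will follow by uniqueness arguments; the $\Upsilon^\n$ version is then obtained by substituting $\sgn\cdot\sigma$ for $\sigma$ throughout.

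First, I would verify that the rule $z = vz_{\min}v^{-1} \mapsto z' := vz'_{\min}v^{-1}$ is well defined: if $vz_{\min}v^{-1} = uz_{\min}u^{-1}$, then $u^{-1}v \in C_{W_J}(z_{\min}) = C_{W_J}(z'_{\min})$, so $vz'_{\min}v^{-1} = uz'_{\min}u^{-1}$. The induced bijection $(w,z) \mapsto (w,z')$ on $\cK^\TT$ is $W$-equivariant: the condition $sw \in W^J$ is intrinsic to $w$, and when $sw \notin W^J$ with $t = w^{-1}sw \in J$, writing $z = vz_{\min}v^{-1}$ gives $tzt = (tv)z_{\min}(tv)^{-1}$, so $(tzt)' = (tv)z'_{\min}(tv)^{-1} = tz't$, which matches the second clause of the $W$-action on $\cK^{\TT'}$.

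Next I would show that the height function is preserved, equivalently $\ell(z) - \ell(z_{\min}) = \ell(z') - \ell(z'_{\min})$. The point is that on each single $W$-orbit in $\cK^J$, the function $\h$ is determined purely by the $W$-set structure together with the value $\h((1,z_{\min})) = 0$: the constraints $\h(s\tau) - \h(\tau) \in \{-1,0,1\}$, with equality to $0$ iff $s\tau = \tau$ (Proposition~\ref{h-eq-prop}), propagate $\h$ inductively outward from the minimum. Since $\cK^\TT$ and $\cK^{\TT'}$ are single $W$-orbits matched by our equivariant bijection with distinguished minima $(1,z_{\min})$ and $(1,z'_{\min})$, this forces $\h_\TT(w,z) = \h_{\TT'}(w,z')$. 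Combined with the hypothesis $\sigma = \sigma'$, this gives equalities of the four decompositions in \eqref{asc-des-def} under the bijection, and hence $\Asc^{\m|\sigma}(\tau) = \Asc^{\m|\sigma'}(\tau')$, $\Des^{\m|\sigma}(\tau) = \Des^{\m|\sigma'}(\tau')$, and their $\n$-analogues.

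With $W$-equivariance, heights, and all four ascent/descent sets preserved, the $H_s$-action formulas in Theorem~\ref{m-thm}(a) agree term by term under the bijection, so $\phi$ extends to an $\H$-module isomorphism. Transporting the bar involution on $\cM^{\TT'}$ back through $\phi$ yields an $\H$-compatible bar operator on $\cM^\TT$ that fixes every $M^\sigma_\tau$ with $\Des^<(\tau;\sigma) = \varnothing$; by the uniqueness clause in Theorem~\ref{m-thm}(b) it must coincide with the original bar operator on $\cM^\TT$, so $\phi$ intertwines the two. Theorem~\ref{m-thm}(c) then forces $\phi(\underline M^\sigma_\tau) = \underline M^{\sigma'}_{\tau'}$, so $\m^\sigma_{\upsilon\tau} = \m^{\sigma'}_{\upsilon'\tau'}$ and consequently $\omega^{\m|\sigma}(\tau,\upsilon) = \omega^{\m|\sigma'}(\tau',\upsilon')$, completing the $\Upsilon^\m$ case. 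The main obstacle is the preservation of $\h$: one cannot read it off directly from the formula $\ell(w) + \tfrac12(\ell(z) - \ell(z_{\min}))$ since $z$ and $z'$ may differ in length as elements of $W_J$ — the argument must go through the abstract $W$-set-theoretic characterization.
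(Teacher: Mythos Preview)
Your proof is correct and follows essentially the same route as the paper: build an $\H$-module isomorphism $\cM^\TT \to \cM^{\TT'}$ via $M^\sigma_\tau \mapsto M^{\sigma'}_{\tau'}$, check that heights and hence all four ascent/descent sets are preserved, deduce that bar operators and canonical bases correspond, and read off the $W$-graph isomorphism. The one substantive difference is the justification of height preservation: the paper simply cites \cite[Prop.~2.15]{RV}, whereas you give a self-contained argument that on a single orbit the height is uniquely determined by the $W$-set structure together with the location of the minimum (via the increment constraint and Proposition~\ref{h-eq-prop}). Your argument is valid---the level sets $\{\tau : \h(\tau)\le k\}$ are recovered inductively since every nonminimal element has a strict descent---and has the advantage of not relying on the external reference; the paper's citation is more concise but less transparent.
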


\begin{proof}
Assume $\TT=(J,z_{\min},\sigma)\equiv \TT'$ and
let $\phi:\cK^\TT \xrightarrow{\sim} \cK^{\TT'}$
be the isomorphism of $W$-sets given above. 
This bijection preserves the height function \eqref{h-def-eq}
by \cite[Prop. 2.15]{RV}. The
linear map sending $M^\sigma_\tau \mapsto M^\sigma_{\phi(\tau)}$ is therefore an isomorphism of $\H$-modules 
$\cM^\TT \cong \cM^{\TT'}$ that commutes with the relevant bar operators and sends
$\underline M^\sigma_\tau \mapsto \underline M^\sigma_{\phi(\tau)}$ for all $\tau \in \cK^\TT$.
As $\phi$ preserves all relevant weak/strict ascent/descent sets, 
 $\phi :\Upsilon^{\m}(\TT)  \xrightarrow{\sim} \Upsilon^{\m}(\TT')$
is an isomorphism of $W$-graphs. Replacing $\sigma$ by $\sgn\cdot \sigma$ shows that $\phi :\Upsilon^{\n}(\TT)  \xrightarrow{\sim} \Upsilon^{\n}(\TT')$ is 
also an isomorphism.
\end{proof}

For a model triple $\TT = (J,z_{\min},\sigma)$ for $W$, let
$\overline{\TT}$ denote the model triple  $(J, z_{\min}, \sgn\cdot \sigma)$.

\begin{proposition}\label{overline-prop}
If $\TT$ is a model triple for $W$ then $\Upsilon^{\m}(\TT)  = \Upsilon^{\n}(\overline\TT) $
and $\Upsilon^{\n}(\TT)  = \Upsilon^{\m}(\overline\TT) $.
\end{proposition}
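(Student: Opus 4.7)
The plan is to observe that this proposition is essentially a tautology, unpacking the notational conventions that were set up at the end of Section~\ref{foll-sect}. Recall the definitions: for a linear character $\sigma:W_J\to\{\pm1\}$, one sets $\cN^\sigma := \cM^{\sgn\cdot\sigma}$ and correspondingly $\Upsilon^{\n|\sigma} := \Upsilon^{\m|\sgn\cdot\sigma}$, and all ``n''-decorated quantities $N^\sigma_\tau$, $\underline N^\sigma_\tau$, $\n^\sigma_{\upsilon\tau}$, $\mu^{\n|\sigma}_{\upsilon\tau}$, $\omega^{\n|\sigma}$, $\Des^{\n|\sigma}$, $\Asc^{\n|\sigma}$ are \emph{defined} as the corresponding ``m''-objects with $\sigma$ replaced by $\sgn\cdot\sigma$.

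First I would note that $\cK^\TT$ depends only on the subset $J\subset S$ and the $W_J$-minimal element $z_{\min}\in \sI_J$, not on the linear character $\sigma$; hence $\cK^{\overline\TT}=\cK^\TT$. By definition $\Upsilon^{\m}(\TT)$ (resp.\ $\Upsilon^{\n}(\TT)$) is the restriction of the $W$-graph $\Upsilon^{\m|\sigma}$ (resp.\ $\Upsilon^{\n|\sigma}$) to the vertex set $\cK^\TT$, and similarly $\Upsilon^{\m}(\overline\TT)$ and $\Upsilon^{\n}(\overline\TT)$ are restrictions of $\Upsilon^{\m|\sgn\cdot\sigma}$ and $\Upsilon^{\n|\sgn\cdot\sigma}$ to the same set $\cK^\TT$.

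Then I would apply the defining identity $\Upsilon^{\n|\sigma}=\Upsilon^{\m|\sgn\cdot\sigma}$ twice: once directly to give
\[
\Upsilon^{\n}(\TT) \;=\; \Upsilon^{\m|\sgn\cdot\sigma}\big|_{\cK^\TT} \;=\; \Upsilon^{\m}(\overline\TT),
\]
and once, using that $\sgn\cdot(\sgn\cdot\sigma)=\sigma$ (since $\sgn$ is multiplicatively an involution on characters into $\{\pm1\}$), to give
\[
\Upsilon^{\n}(\overline\TT) \;=\; \Upsilon^{\m|\sgn\cdot(\sgn\cdot\sigma)}\big|_{\cK^{\overline\TT}}
\;=\; \Upsilon^{\m|\sigma}\big|_{\cK^\TT}
\;=\; \Upsilon^{\m}(\TT).
\]
There is no obstacle here — the whole content of the proposition is the convention that ``n-for-$\sigma$'' means ``m-for-$\sgn\cdot\sigma$,'' and the fact that $\cK^\TT$ ignores the character. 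The only thing to be careful about is confirming that none of the auxiliary data (edge weights $\omega$, vertex labels $\Asc$) depends on $\sigma$ in any way beyond what is packaged into the $\m|\sigma$ or $\n|\sigma$ superscript, which is immediate from inspection of \eqref{asc-des-def} and the definitions of $\Asc^{\m|\sigma}$, $\Asc^{\n|\sigma}$, $\omega^{\m|\sigma}$, $\omega^{\n|\sigma}$.
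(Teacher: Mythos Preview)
Your proof is correct and takes essentially the same approach as the paper, which simply states that the result is obvious from the definitions. You have spelled out explicitly what the paper leaves implicit, namely that $\cK^{\overline\TT}=\cK^\TT$ and that the convention $\Upsilon^{\n|\sigma}=\Upsilon^{\m|\sgn\cdot\sigma}$ together with $\sgn\cdot(\sgn\cdot\sigma)=\sigma$ immediately yields both identities.
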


\begin{proof}
This is obvious from the definitions.
\end{proof}

Write $w_0$ and $w_J$ for the longest elements of $W$ and $W_J$.
Suppose $\TT = (J,z_{\min}, \sigma)$ is a model triple.
The $W_J$-orbit of $z_{\min}$ contains a unique $W_J$-maximal element $z_{\max}$ \cite[Cor. 2.10]{RV}.
Define
\be \TT^\vee := (J^\vee, z_{\max}^\vee, \sigma^\vee)\ee
where $J^\vee = w_0 J w_0$, $\sigma^\vee = \sigma \circ \Ad(w_0)$,
and the operation $z\mapsto z^\vee$ is defined as in \eqref{wz-vee-eq}.

\begin{proposition}\label{dual-prop}
If $\TT$ is a model triple for $W$ then
so is $\TT^\vee$, and the $W$-graphs 
$\Upsilon^{\m}(\TT) $ and $ \Upsilon^{\n}(\TT^\vee)$,
as well as $\Upsilon^{\n}(\TT) $ and $ \Upsilon^{\m}(\TT^\vee) $,
are dual via the map $\tau \mapsto \tau^\vee$ defined by \eqref{tau-vee-eq}.
\end{proposition}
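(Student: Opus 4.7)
The plan is to reduce this proposition to Theorem~\ref{dual-thm}. Two preliminary claims must be verified: (i) that $\TT^\vee$ is a model triple for $W$, and (ii) that the bijection $\tau \mapsto \tau^\vee$ of $\cK^J \to \cK^{J^\vee}$ defined in \eqref{tau-vee-eq} restricts to a bijection $\cK^\TT \to \cK^{\TT^\vee}$.

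For (i), clearly $J^\vee = w_0 J w_0 \subset S$. Since $\Ad(w_0)$ restricts to a group isomorphism $W_{J^\vee} \to W_J$, the composition $\sigma^\vee = \sigma \circ \Ad(w_0)$ is a well-defined linear character $W_{J^\vee} \to \{\pm 1\}$. The only nontrivial point is that $z_{\max}^\vee$ is $W_{J^\vee}$-minimal in $\sI_{J^\vee}$. This follows from the proof of Lemma~\ref{vee-lem}, which establishes that $z \mapsto z^\vee$ is an order-reversing bijection from the set of $W_J$-conjugacy classes in $\sI_J$ (with the quasiparabolic Bruhat order) to the set of $W_{J^\vee}$-conjugacy classes in $\sI_{J^\vee}$; consequently, the image of the $W_J$-maximal element $z_{\max}$ is $W_{J^\vee}$-minimal in its orbit.

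For (ii), the first-coordinate map $w \mapsto w \cdot w_J \cdot w_0$ is the standard bijection $W^J \to W^{J^\vee}$ already exploited in the proof of Lemma~\ref{vee-lem}. On second coordinates, the $W_J$-orbit of $z_{\min}$ (equivalently, of $z_{\max}$) maps bijectively under $z \mapsto z^\vee$ onto a single $W_{J^\vee}$-orbit, and by the order-reversing property this is precisely the orbit whose $W_{J^\vee}$-minimal element is $z_{\max}^\vee$. A direct computation using the identity $w_{J^\vee} = w_0 w_J w_0$ confirms that $(z^\vee)^\vee = z$ for all $z \in \sI_J$ and hence $(\tau^\vee)^\vee = \tau$ for all $\tau \in \cK^J$, so $\tau \mapsto \tau^\vee$ is involutive and restricts to a bijection on each pair of corresponding $W$-orbits.

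With these claims in hand, the duality assertion follows directly from Theorem~\ref{dual-thm}, which provides the dualities $\Upsilon^{\m|\sigma} \leftrightarrow \Upsilon^{\n|\sigma^\vee}$ and $\Upsilon^{\n|\sigma} \leftrightarrow \Upsilon^{\m|\sigma^\vee}$ realized by $\tau \mapsto \tau^\vee$. Since $\Upsilon^\m(\TT)$, $\Upsilon^\n(\TT)$, $\Upsilon^\m(\TT^\vee)$, and $\Upsilon^\n(\TT^\vee)$ are by definition the full subgraphs of these $W$-graphs induced on the vertex sets $\cK^\TT$ and $\cK^{\TT^\vee}$, and since by (ii) these vertex sets correspond bijectively under the duality map, the edge-weight identities $\omega^{\n|\sigma^\vee}(\tau^\vee,\upsilon^\vee) = \overline{\omega^{\m|\sigma}(\upsilon,\tau)}$ and the label identities $\Asc^{\n|\sigma^\vee}(\tau^\vee) = S \setminus \Asc^{\m|\sigma}(\tau)$ (together with their $\m \leftrightarrow \n$ counterparts) restrict to give exactly the claimed dualities. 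The proof is essentially bookkeeping: the only substantive technical input beyond Theorem~\ref{dual-thm} is the order-reversing property of $z \mapsto z^\vee$ established in Lemma~\ref{vee-lem}.
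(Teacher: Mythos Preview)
Your proof is correct and follows essentially the same approach as the paper: reduce to Theorem~\ref{dual-thm} after verifying that $\TT^\vee$ is a model triple and that $\tau\mapsto\tau^\vee$ restricts to a bijection $\cK^\TT\to\cK^{\TT^\vee}$. The only minor difference is that the paper handles claim (ii) by invoking the $W$-equivariance $(w\tau)^\vee = w\cdot\tau^\vee$ from Lemma~\ref{vee-lem} together with Proposition~\ref{same-orbit-prop}, whereas you argue coordinatewise using the order-reversing property of $z\mapsto z^\vee$; both routes are valid and draw on the same lemma.
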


\begin{proof}
Write $\TT = (J,z_{\min}, \sigma)$ and define $z_{\max}$  as above.
It is easy to check from \eqref{wz-vee-eq} and our assumption that $z_{\max}$ is $W_J$-maximal that 
 $z_{\max}^\vee$ is $W_{J^\vee}$-minimal, so
$\TT^\vee$ is a model triple for $W$. 
In view of Theorem~\ref{dual-thm}, we just need to show that $\tau\mapsto \tau^\vee$ 
restricts to a bijection $\cK^\TT \to \cK^{\TT^\vee}$.
This is clear from 
Proposition~\ref{same-orbit-prop}
and the first claim in Lemma~\ref{vee-lem}.
\end{proof}

These propositions suggest the following definition of equivalence for (sets of) model triples.

\begin{definition}
Let $\sim$ be the transitive closure of the relation on model triples for $W$
that has $\TT \sim \TT'$ whenever $\TT' \equiv \TT$ or $\TT' = \overline\TT$ or $\TT' = \TT^\vee$
or  $\TT' =\TT^\alpha$ for some $\alpha \in \Aut(W,S)$.
We say that $\TT$ and $\TT'$ are \emph{equivalent} when $\TT\sim \TT'$.
Two sets  $\sP$ and $\sP'$ of model triples for $W$ are \emph{equivalent} if there is a bijection $ \sP \to \sP'$ such that if $ \TT\mapsto\TT'$ then $\TT \sim \TT'$.
\end{definition}

If we understand the cells in 
$\Upsilon^{\m}(\sP)$ and $ \Upsilon^{\n}(\sP)$ for some perfect model $\sP$,
then we also understand the cells in the $W$-graphs associated to any model in the equivalence class
of $\sP$, in the following sense. 
An \emph{isomorphism} of weighted directed graphs is a  bijection
between vertex sets that preserves edge weights and edge orientations;
an \emph{anti-isomorphism} is a bijection
between vertex sets that preserves edge weights while reversing edge orientations.

\begin{corollary}\label{equiv-cor}
If $\sP$ and $\sP'$ are equivalent sets of model triples for $W$  then there is a bijection between the vertex sets of $\Upsilon^{\m}(\sP)\sqcup \Upsilon^{\n}(\sP) $ and $ \Upsilon^\m(\sP')\sqcup \Upsilon^\n(\sP')$ whose restriction to each weakly-connected
component of the underlying weighted directed graph 
 is either an isomorphism or an anti-isomorphism onto its image.
This bijection
maps cells to cells.
\end{corollary}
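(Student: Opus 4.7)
The plan is to reduce to the four elementary moves generating $\sim$ and then compose. Since $\sim$ is a transitive closure, any equivalence $\sP \sim \sP'$ of sets of triples is realized by a finite chain of moves, each of which replaces a single triple $\TT$ by one of $\TT^\alpha$ (for some $\alpha\in\Aut(W,S)$), a triple $\TT''$ with $\TT'' \equiv \TT$, $\overline\TT$, or $\TT^\vee$. Compositions of bijections that are isomorphisms or anti-isomorphisms of weighted directed graphs on each weakly-connected component are again of that form, so by induction on chain length it suffices to handle a single elementary replacement. Remark~\ref{bar-bar-rmk} writes $\Upsilon^\m(\sP)$ and $\Upsilon^\n(\sP)$ as disjoint unions over $\sP$, so the problem further reduces to constructing a bijection $\Upsilon^\m(\TT)\sqcup\Upsilon^\n(\TT)\to \Upsilon^\m(\TT')\sqcup\Upsilon^\n(\TT')$ with the stated property, while the identity handles all other summands.

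Each of the four elementary moves is exactly what an earlier proposition in this section provides. Propositions~\ref{alpha-prop} and \ref{equiv-prop} supply bijections that are $W$-graph isomorphisms; in the case of Proposition~\ref{alpha-prop} this requires $\alpha$-twisting the labeling, but the $\alpha$-twist does not alter the underlying weighted directed graph, so the maps are isomorphisms at the level of that graph. Proposition~\ref{overline-prop} gives equalities $\Upsilon^\m(\TT)=\Upsilon^\n(\overline\TT)$ and $\Upsilon^\n(\TT)=\Upsilon^\m(\overline\TT)$, so the required bijection is the identity on the underlying set but swaps the $\m$- and $\n$-copies. Proposition~\ref{dual-prop} yields the map $\tau\mapsto\tau^\vee$ as a $W$-graph duality, which reverses each edge while preserving its weight (the bar involution on $\LL$ fixes the integer weights appearing in our $W$-graphs), so it is an anti-isomorphism of weighted directed graphs, again swapping the $\m$- and $\n$-roles.

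To see that cells are preserved, we invoke the elementary fact that the strongly connected components of a directed graph are unchanged by simultaneous edge reversal, since a set of vertices is mutually reachable in $G$ iff it is mutually reachable in the opposite graph $G^{\mathrm{op}}$. Hence both isomorphisms and anti-isomorphisms of weighted directed graphs send cells to cells, and the same is true of any composition. Combining the elementary maps along a chain of moves and reassembling the result via Remark~\ref{bar-bar-rmk} yields the bijection asserted by the corollary.

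The main bookkeeping obstacle will be tracking whether each composite map lands in the $\m$- or $\n$-summand of the target: the $\overline\cdot$ and $\vee$ moves each swap $\m\leftrightarrow\n$, so the parity of the number of such moves in a chain determines which summand contains the image of a given weakly-connected component. This is why the corollary is phrased in terms of the combined object $\Upsilon^\m(\sP)\sqcup\Upsilon^\n(\sP)$ rather than on each factor separately; beyond respecting this parity, the argument requires no new ideas beyond the four cited propositions and the SCC invariance under edge reversal.
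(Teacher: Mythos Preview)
Your proof is correct and follows essentially the same approach as the paper: reduce to the four elementary moves via Propositions~\ref{alpha-prop}, \ref{equiv-prop}, \ref{overline-prop}, and \ref{dual-prop}, then compose. One minor point: the decomposition $\Upsilon^\m(\sP)=\bigsqcup_{\TT\in\sP}\Upsilon^\m(\TT)$ is by definition (see the display before Corollary~\ref{gelfand-w-thm}), not a consequence of Remark~\ref{bar-bar-rmk}, which concerns the specific graphs $\Gamma^\m$ for classical Weyl groups.
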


\begin{proof}
Propositions~\ref{alpha-prop}, \ref{equiv-prop}, \ref{overline-prop}, and \ref{dual-prop} show that 
if $\TT$ and $\TT'$ are model triples for $W$ with 
$\TT' \equiv \TT$, $\TT' = \overline\TT$, $\TT' = \TT^\vee$
or  $\TT' =\TT^\alpha$ for some $\alpha \in \Aut(W,S)$,
then the weighted directed graphs underlying
 $\Upsilon^{\m}(\TT)\sqcup \Upsilon^{\n}(\TT) $
and
$\Upsilon^{\m}(\TT')\sqcup \Upsilon^{\n}(\TT') $
are isomorphic or anti-isomorphic.
The corollary now follows from our definitions of $\Upsilon^\m(\sP)$, $\Upsilon^\n(\sP)$, and equivalence for models.
\end{proof}

We will show in \cite{MZ} that if $W \in \{S_n, \W_n, \WD_{2n+1}\} \setminus\{S_4, \W_3, \WD_3\}$
then
there is an essentially 
unique equivalence class of perfect models $\cP$.
(The rank 3 case is exceptional, as one can observe in Figure~\ref{abcd3-fig},
which  shows $W$-graphs corresponding
to inequivalent perfect models for the isomorphic Coxeter systems of types $\A_3$ and $\D_3$.)
Via Corollary~\ref{equiv-cor}, this will give a precise sense in which the Gelfand $W$-graphs $\Gamma^\m(W) \cong \Upsilon^\m(\sP)$ and $\Gamma^\n(W) \cong \Upsilon^\n(\sP)$ should be considered
as the canonical ones for classical Weyl groups.

\subsection{Duality revisited}\label{revisit-sect}
 
We now 
prove Theorems~\ref{bcbc-dual-thm} and \ref{dd-dual-thm}
concerning the $W$-graphs $\Gamma^\m=\Gamma^\m(W)$ and $\Gamma^\n=\Gamma^\n(W)$ for $W \in \{\W_n,\WD_n\}$.
%
Recall the notation $\underline w $ for elements of $ \cG^\BC_n \supset \cG^\D_n$ from Definition~\ref{suggests-def}.
For $w \in \W_{n}$, let $-w\in \W_{n}$ be the permutation $i \mapsto -w(i) $, and when $w=w^{-1}$ define 
\be\label{iotaBC-eq}
\dualityBC_n(\underline w) = \underline{-w}.\ee This map restricts to an involution of $\cG^\BC_n$,
which is the vertex set of $\Gamma^\m$ and $\Gamma^\n$ when $W=\W_n$.
For example,
if $z =\underline{\bar 3 2 \bar 1 \bar 4 \bar5} = \bar 3, 8, \bar 1, \bar 7, \bar 6, \bar 5, \bar 4, 2,10,9 \in \cG^\BC_5$
as in
\eqref{bc-under-eq}
then
\[ \dualityBC_n(z) = \underline{ 3 \bar 2  1  4 5} = 3,\bar 6,1,7,8,\bar 2,4,5,10,9.
\]
The longest element $w_0 \in \W_n$ is $w_0 = \bar 1 \bar 2 \cdots \bar n$ and $-w = w_0 w= ww_0$
 for all $w \in W$.
The following result is an effective version of  Theorem~\ref{bcbc-dual-thm}.
\begin{theorem}\label{bc-dual-thm}
If $W=\W_n$ then $\Gamma^\m$ and $\Gamma^\n$
are dual via  $\dualityBC_n$.
\end{theorem}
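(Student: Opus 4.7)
The plan combines Proposition~\ref{dual-prop} applied to each model triple in $\sP(\W_n)$ with an explicit combinatorial identification of the abstract $\vee$-duality with the involution $\dualityBC_n$. First, I verify that $\dualityBC_n$ preserves each fiber of the decomposition $\cG^\BC_n = \bigsqcup_{\TT \in \sP(\W_n)} \cG^\BC_n|_{\TT}$ from Remark~\ref{bar-bar-rmk}: for $v = \underline{u}$, the passage $u \mapsto -u$ interchanges the sequences $(a_i)_{i\in[p]}$ and $(b_i)_{i\in[q]}$ of Definition~\ref{suggests-def} without altering the fixed-point-free support of $u$ in $[n]$, so the integer $m = n-p-q$ which indexes the fiber is preserved. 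By Remark~\ref{bar-bar-rmk} and Theorem~\ref{main-thm4}, this reduces the claim to showing that for each $\TT = (J,z_{\min},\sigma)\in\sP(\W_n)$ the transported bijection
\[F_\TT \;=\; \varphi_W \circ \dualityBC_n \circ \varphi_W^{-1}\,:\, \cK^\TT \to \cK^\TT\]
implements a duality between $\Upsilon^\m(\TT)$ and $\Upsilon^\n(\TT)$.

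Because $w_0 \in \W_n$ is central (for $n > 2$), we have $J^\vee = J$ and $\sigma^\vee = \sigma$, so Proposition~\ref{dual-prop} yields a duality between $\Upsilon^\m(\TT)$ and $\Upsilon^\n(\TT^\vee)$ via $\tau \mapsto \tau^\vee$, where $\TT^\vee = (J, z_{\max}^\vee, \sigma)$. The crux is therefore to construct a $W$-graph isomorphism $\chi_\TT : \Upsilon^\n(\TT^\vee) \to \Upsilon^\n(\TT)$ satisfying $F_\TT = \chi_\TT \circ (\cdot)^\vee$. Since the $\vee$-operation is a $W$-equivariant involution on $\cK^J$ (Lemma~\ref{vee-lem}), its restriction gives a bijection $\cK^{\TT^\vee} \to \cK^\TT$; I define $\chi_\TT$ using the second coordinate of this bijection (a $W_J$-equivariant correspondence between the orbits of $z_{\max}^\vee$ and $z_{\min}$ in $\sI_J$) and the identity on the first coordinate. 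Verifying that $\chi_\TT$ is a genuine $W$-graph isomorphism uses Corollary~\ref{des-vee-cor} to match ascent/descent sets, together with the uniqueness of canonical bases from Theorem~\ref{m-thm}(c) to identify the induced $\H$-module structures.

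By the $W$-equivariance of both $F_\TT$ and $\chi_\TT \circ (\cdot)^\vee$, the identity $F_\TT = \chi_\TT \circ (\cdot)^\vee$ reduces to a direct check at the base point $\tau_{\min} = (1, z_{\min})\in\cK^\TT$. At this point, the first coordinate of $\tau_{\min}^\vee$ is $w_J w_0$, which acts by right multiplication on the identity element of $\W_n$ by fixing the first $2k$ entries and performing reversal-and-negation on the last $n-2k$ entries. A direct computation from Definition~\ref{suggests-def}, tracking how the swap between $(a_i)$ and $(b_i)$ changes the indexing of values outside $[\pm n]$, confirms this matches the first coordinate of $\varphi_W(\dualityBC_n(\underline{z_{\min}}))$. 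The second-coordinate agreement is then forced by the construction of $\chi_\TT$. The principal technical obstacle is verifying that $\chi_\TT$ is a $W$-graph isomorphism and not merely a $W$-set bijection: because $\TT\not\equiv\TT^\vee$ in general (the centralizers $C_{W_J}(z_{\min})$ and $C_{W_J}(z_{\max})$ are $W_J$-conjugate but distinct subgroups), Proposition~\ref{equiv-prop} does not apply directly, and the argument must exploit more carefully how the $\vee$-operation interacts with the Coxeter-automorphism components of perfect involutions in $\sI_J$.
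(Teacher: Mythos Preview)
Your overall strategy is right, but there is a genuine gap at the point you flag as the ``principal technical obstacle.'' You assert that $\TT\not\equiv\TT^\vee$ because the centralizers $C_{W_J}(z_{\min})$ and $C_{W_J}(z_{\max})$ are distinct. This is the wrong comparison: the definition of $\equiv$ requires comparing $C_{W_J}(z_{\min})$ with $C_{W_J}(z_{\max}^\vee)$, not with $C_{W_J}(z_{\max})$. The paper computes these explicitly. Writing $w_J = u_J v_J$ with $u_J = (-1,1)\cdots(-2k,2k)$ and $v_J$ the order-reversing permutation of $\{2k+1,\dots,n\}$, one has $z_{\max}^\vee = z_{\min}\cdot v_J^+$ where $v_J^+ := (v_J,\Ad(v_J))$. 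The point is that $v_J^+$ is \emph{central} in $W_J^+$ (since $u_J$ is central in $W_J$, so $\Ad(w_J)=\Ad(v_J)$, and $v_J$ is the longest element of the $S_{n-2k}$ factor). Hence $C_{W_J}(z_{\max}^\vee) = C_{W_J}(z_{\min})$ exactly, and $\TT\equiv\TT^\vee$ after all.

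Once this is recognized, Proposition~\ref{equiv-prop} applies directly: the bijection $\cK^\TT\to\cK^{\TT^\vee}$ it supplies is $(w,z)\mapsto(w,z v_J^+)$, and composing its inverse with the duality $\tau\mapsto\tau^\vee$ of Proposition~\ref{dual-prop} gives the explicit map $(w,z)\mapsto(w w_J w_0,\, z u_J)$ realizing the duality $\Upsilon^\m(\TT)\leftrightarrow\Upsilon^\n(\TT)$. Your proposed workaround via an ad hoc $\chi_\TT$ is both unnecessary and underspecified; verifying it is a $W$-graph isomorphism (as opposed to a mere $W$-set bijection preserving heights) would in any case force you back to the centrality observation above. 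The final verification that $\varphi_W$ intertwines this map with $\dualityBC_n$ is then the short computation you sketch at the base point. (Incidentally, $w_0$ is central in $\W_n$ for all $n$, not just $n>2$.)
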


\begin{proof}
Let 
$\iota = \dualityBC_n$ and write $\cG = \cG^\BC_n$.
Fix $k \in \NN$ with $2k\leq n$ and 
define $\TT = (J,z_{\min},\sigma)$ as in part ($\BC)$ of Definition~\ref{main-thm1-def},
so that $J = \{ s_0,s_1,s_2,\dots,s_{n-1}\} \setminus \{s_{2k}\}$ and $z_{\min} = s_1s_3s_5\cdots s_{2k-1}$.
One can show that the unique $W_J$-maximal element $z_{\max}$ in the $W_J$-orbit of $z_{\min}$
is $z_{\max} = s_{-1}s_{-3}s_{-5}\cdots s_{-2k+1}$
where $s_{-i} := (i,-i-1)(-i,i+1)$ for $i>0$.

The longest element $w_0 \in \W_n$ is central 
so $\Ad(w_0)$ is the identity map.
The longest element $w_J \in W_J \cong \W_{2k}\times S_{n-2k}$ is 
$w_J = u_J v_J= v_Ju_J$ where $u_J := (-1,1)(-2,2)\cdots (-2k,2k)$ and $v_J $ is the permutation in $\W_n$
fixing $i \in [2k]$ and mapping $2k+i \mapsto n + 1 - i$ for $i \in [n-2k]$.
Since $u_J$ is central in $W_J$, we have $\Ad(w_J) = \Ad(v_J)$.
Thus, in the notation before Proposition~\ref{dual-prop}, 
\[J^\vee=J, \quad \sigma^\vee=\sigma, \quand
z^\vee =  z \cdot  u_J \cdot (v_J, \Ad(v_J)) \in \sI_J.\]
In particular, $z_{\max}^\vee = z_{\min}  v_J^+$ 
for $v_J^+ := (v_J, \Ad(v_J))$. But $v_J^+$ is central in $W_J^+$, so we have 
 $\TT \equiv \TT^\vee$ and the bijection $\cK^\TT \xrightarrow{\sim} \cK^{\TT^\vee}$
 in Proposition~\ref{equiv-prop} is the map $(w,z) \mapsto (w, z  v_J^+)$.
Proposition~\ref{dual-prop} asserts that $\Upsilon^{\m}(\TT)$ is dual to $\Upsilon^{\n}(\TT)$
via the map $(w,z) \mapsto (w w_J  w_0, z  u_J)$.

In view of Remark~\ref{bar-bar-rmk},
the map $z \mapsto \varphi_{W}(z)$ is an isomorphism  
$\Gamma^\m|_\TT \xrightarrow{\sim}\Upsilon^{\m}(\TT)$
and
$\Gamma^\n|_\TT \xrightarrow{\sim}\Upsilon^{\n}(\TT)$.
To finish this proof, it suffices to check that if $v \in \cG|_\TT$
has $\varphi_{W}(v) = (w,z)$ and $ \varphi_{W}\circ \iota(v) = (w',z')$, then 
$w' = w\cdot w_J\cdot w_0$ and $z'=z\cdot u_J$.
It is easy to verify the second identity, and
  in the notation of \eqref{wab-eq} with $m=2k$
we have $ w w_J w_0 =  a_1a_2\cdots a_m \bar b_{n-m} \cdots \bar b_2 \bar b_1 =w'$.
\end{proof}


The longest element $w_0 \in \WD_n$ is $\bar 1 \bar 2 \cdots \bar n$ when $n$ is even
and $1 \bar 2 \bar 3\cdots \bar n$ when $n$ is odd.
Recall that $\diamond$ is the Coxeter automorphism of $\WD_n$
mapping $w\mapsto w^\diamond := s_0ws_0$. 
If $n$ is even then this is an outer automorphism and $w_0$ is central,
while if $n$ is odd then $w^\diamond = w_0ww_0$ so $\diamond = \Ad(w_0)$.
If $\sigma : \WD_n \to \ZZ$ is any map then we define $\sigma^\diamond : \WD_n \to \ZZ$ by
$\sigma^\diamond(w) = \sigma(w^\diamond)$.
For $z \in \cG^\D_n$ define 
\be\label{iotaD-eq}
\dualityD_n(z)
=
 \begin{cases}
\dualityBC_n(z) 
&\text{if $n + |\{ i \in [n] : |z(i)| > n\}|$ is divisible by $4$} \\
 \dualityBC_n(z) ^\diamond &\text{if $n + |\{ i \in [n] : |z(i)| > n\}|$ is not divisible by $4$}.\end{cases}
 \ee
This is an involution of $\cG^\D_n$,
which is the vertex set of $\Gamma^\m$ and $\Gamma^\n$ when $W=\WD_n$.
 For example, if $z =\underline{\bar 3 2 \bar 1 54} = \bar 3, 6, \bar 1, 5,4, 2, 8,7,10,9 \in \cG^\D_5$
then
\[ 
\dualityD_n(z) = (\underline{ 3\bar 2  1 \bar5 \bar4})^\diamond =  \bar3, \bar6,  \bar1, \bar 5,\bar4, \bar2, 8,7,10,9 \neq \dualityBC_n(z).
 \]

Write $\Gamma^\m = \Gamma^\m(\WD_n)$ and $\Gamma^\n = \Gamma^\n(\WD_n)$ and let $\sP=\sP(\WD_n)$ as in 
Definition~\ref{main-thm1-def}.
Recall the labeled graphs  $\widetilde \Gamma^\m$ and $\widetilde \Gamma^\n$ 
introduced before Theorem~\ref{dd-dual-thm}.
Proposition~\ref{alpha-prop} implies that
both of these are $\WD_n$-graphs: for example, 
 $\widetilde \Gamma^\m$   is obtained from 
$\Gamma^\m = \bigsqcup_{\TT \in \sP} \Gamma^{\m}|_\TT$
 by replacing the summand
 $\Gamma^{\m}|_\TT$
 by 
  $(\Gamma^{\m}|_\TT  )^\diamond$ whenever 
$\TT=(J,z,\sigma)$ and $n+ |\{i \in [n]: |z(i)|>n\}| \notin 4\NN$.

The vertex sets of $\widetilde \Gamma^\m$ and $\widetilde \Gamma^\n$ are again both
given by $\cG = \cG^\D_n$.
  If $\TT\in \sP$
  then we write $\widetilde\Gamma^\m|_\TT$ and $\widetilde\Gamma^\m|_\TT$
for the $\WD_n$-graphs formed by restricting 
$\widetilde\Gamma^\m$ and $\widetilde\Gamma^\n$ to the vertex set $\cG|_\TT$.
We now also have an effective version of  Theorem~\ref{dd-dual-thm}.

\begin{theorem}\label{d-dual-thm}
If $W=\WD_n$ then $\Gamma^\m$ and $\widetilde\Gamma^\n$ (respectively, $\Gamma^\n$ and $\widetilde\Gamma^\m$)
are dual via  $\dualityD_n$.
\end{theorem}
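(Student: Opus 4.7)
The plan is to adapt the proof of Theorem~\ref{bc-dual-thm} to type~$\D$, tracking when the Coxeter automorphism $\diamond$ must enter. By Remark~\ref{bar-bar-rmk} the graphs decompose as $\Gamma^\m=\bigsqcup_{\TT\in\sP}\Gamma^\m|_\TT$ and similarly for $\widetilde\Gamma^\n$, so it suffices to fix $\TT=(J,z_{\min},\sigma)\in\sP:=\sP(\WD_n)$ and prove the duality on the piece indexed by $\cG|_\TT$. The special triple $(\{s_1,\dots,s_{n-1}\},1,\sgn_{S_n})$ is handled by direct inspection. The bulk of the work is the main family $J=S\setminus\{s_{2k}\}$, $z_{\min}=s_1s_3\cdots s_{2k-1}$, $\sigma=\one_{\WD_{2k}}\times\sgn_{S_{n-2k}}$, for $0<k\le\lfloor n/2\rfloor$.

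For these triples I would identify the unique $W_J$-maximal element as $z_{\max}=s_{-1}s_{-3}\cdots s_{-(2k-1)}$, matching the type~$\BC$ computation. The longest element of $W_J=\WD_{2k}\times S_{n-2k}$ factors as $w_J=c_J v_J$ where $c_J=\bar1\bar2\cdots\overline{2k}$ is central in $W_J$ and $v_J$ is the longest element of $S_{n-2k}$, giving $\Ad(w_J)=\Ad(v_J)\in\Aut(W_J,J)$. Since $\diamond$ swaps $s_{-1}\leftrightarrow s_1$ inside the $\WD_{2k}$ factor and fixes the other simple generators, one verifies $J^\vee=J$ and $\sigma^\vee=\sigma$ in both parities of $n$. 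Computing $z_{\max}^\vee$ then shows that $\TT^\vee$ is equivalent, in the sense of Section~\ref{equiv-sect}, to either $\TT$ or $\TT^\diamond$, with the outcome determined by the parities of $n$ and $k$. When $\Ad(w_0)$ is trivial the central element $v_J^+:=(v_J,\Ad(v_J))\in W_J^+$ provides the equivalence exactly as in type $\BC$; when $\Ad(w_0)=\diamond$ an extra $\diamond$-twist appears.

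Applying Propositions~\ref{dual-prop}, \ref{equiv-prop}, and~\ref{alpha-prop} in turn then produces a duality between $\Upsilon^\m(\TT)$ and either $\Upsilon^\n(\TT)$ or $\Upsilon^\n(\TT)^\diamond$. Transporting through the isomorphism $v\mapsto\varphi_W(v)$ from Lemma~\ref{varphi-lem} and running the same explicit computation $ww_Jw_0=a_1a_2\cdots a_m\bar b_{n-m}\cdots\bar b_1$ that appears in the proof of Theorem~\ref{bc-dual-thm}---where now one must also account for the central factor $c_J$ and, when applicable, a post-composition by $\diamond$---identifies the underlying set-map on $\cG|_\TT$ as $\underline w\mapsto\underline{-w}$ or $\underline w\mapsto(\underline{-w})^\diamond$, matching $\dualityD_n$ on the appropriate components. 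The statement for $\Gamma^\n$ and $\widetilde\Gamma^\m$ follows by replacing $\sigma$ with $\sgn\cdot\sigma$ throughout.

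The main obstacle will be the parity bookkeeping: one must tabulate, across the four cases determined by the parities of $n$ and $k$, precisely when the $\diamond$-twist appears in relating $\TT^\vee$ to $\TT$, and confirm this matches the condition $n+|\{i\in[n]:|v(i)|>n\}|=2(n-k)\equiv 0\pmod 4$ used in the definitions of $\widetilde\Gamma^\n$ and $\dualityD_n$. This requires understanding how the parity $e(w)=|\{i\in[n]:w(i)<-i\}|$ in the parameterization~\eqref{dun-eq} interacts with the central element $c_J$ and the $\diamond$-twist from $\Ad(w_0)$. Once that tabulation is complete, the theorem follows.
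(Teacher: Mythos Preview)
Your approach is essentially the same as the paper's: decompose by model triples via Remark~\ref{bar-bar-rmk}, compute $\TT^\vee$ and show it is equivalent to $\TT$ or $\TT^\diamond$, apply Propositions~\ref{dual-prop}, \ref{equiv-prop}, \ref{alpha-prop}, then transport through $\varphi_W$ and verify the resulting set-map is $\dualityD_n$. Your identification of $w_J=c_Jv_J$ and the reduction to a four-case parity analysis in $n$ and $k$ are also correct.

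There is one concrete error that would derail the execution. You claim $z_{\max}=s_{-1}s_{-3}\cdots s_{-(2k-1)}$ for all $k$, ``matching the type~$\BC$ computation.'' This is false when $k$ is odd. Conjugating $z_{\min}=s_1s_3\cdots s_{2k-1}$ to $s_{-1}s_{-3}\cdots s_{-(2k-1)}$ requires flipping the signs of $2,4,\dots,2k$, which is $k$ sign changes and hence lies in $\WD_{2k}$ only when $k$ is even. When $k$ is odd the $W_J$-maximal element is instead $z_{\max}=s_{1}s_{-3}s_{-5}\cdots s_{-(2k-1)}$. This is not a cosmetic point: it is precisely the mechanism by which the parity of $k$ enters the analysis. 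With the correct $z_{\max}$, one finds (for $n$ even) that $z_{\max}^\vee$ equals $z_{\min}\cdot v_J^+$ when $k$ is even but $z_{\min}^\diamond\cdot v_J^+$ when $k$ is odd, so that $\TT^\vee\equiv\TT$ in the first case and $\TT^\vee\equiv\TT^\diamond$ in the second; for $n$ odd the roles swap because $\Ad(w_0)=\diamond$ contributes an additional twist. With your stated $z_{\max}$, the $k$-dependence would not appear where it should, and the tabulation you describe in your final paragraph would not match the divisibility condition $n+(n-2k)\equiv 0\pmod 4$ defining $\widetilde\Gamma^\n$ and $\dualityD_n$. Once you correct $z_{\max}$, the rest of your plan goes through exactly as in the paper.
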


\begin{proof}
The broad structure of the proof is the same as for Theorem~\ref{bc-dual-thm}.
Let  $\iota = \dualityD_n$ and write $\cG=\cG^\D_n$.
Fix $k \in \NN$ with $2k\leq n$ and 
define $\TT=(J,z_{\min},\sigma)$ as in part ($\D)$ of Definition~\ref{main-thm1-def},
so that $J = \{ s_{-1},s_1,s_2,\dots,s_{n-1}\} \setminus \{s_{2k}\}$ when $k>0$ and
$J = \{s_1,s_2,\dots,s_{n-1}\}$ when $k=0$,
 and $z_{\min} = s_1s_3s_5\cdots s_{2k-1}$.
To the prove the theorem, it suffices to show that $\Gamma^\m|_\TT$ is dual to $\widetilde\Gamma^\n|_\TT$
and that $\Gamma^\n|_\TT$ is dual to $\widetilde\Gamma^\m|_\TT$
via the map $\iota$ restricted to $\cG|_\TT$.
We explain below how this claim reduces to checking that certain diagrams commute.

One can check that the unique $W_J$-maximal element $z_{\max}$ in the $W_J$-orbit of $z_{\min}$
is  
$ z_{\max} = s_{-1}s_{-3}s_{-5}\cdots s_{-2k+1}$ 
 if $k$ is even
or
$ z_{\max} = s_{1}s_{-3}s_{-5}\cdots s_{-2k+1}$ if $k$ is odd.
In either case, the longest element $w_J \in W_J \cong \WD_{2k}\times S_{n-2k}$ is 
$w_J = u_J v_J= v_Ju_J$ where $u_J $ and $v_J $ are as in the proof of Theorem~\ref{bc-dual-thm},
so $\Ad(w_J) = \Ad(v_J)$.

Suppose $n$ is even. Then $w_0$ is central and 
 in the notation of Proposition~\ref{dual-prop} we have
\[J^\vee=J, \quad \sigma^\vee=\sigma
\quand 
z^\vee =  z \cdot  u_J \cdot v_J^+ \in \sI_J\quad\text{
for $v_J^+ := (v_J, \Ad(v_J)) \in W_J^+$.}\]
If $k$ is even then
 $z_{\max}^\vee = z_{\min}   v_J^+$ 
and
$\iota(v)  =\dualityBC_n(v)$ for all $v \in \cG|_\TT$,
while
 \be\label{gamma-1}\widetilde\Gamma^\m|_\TT = \Gamma^\m|_\TT
 \quand
  \widetilde\Gamma^\n|_\TT = \Gamma^\n|_\TT.\ee
In this case  $\TT \equiv \TT^\vee$ 
and  the bijection $\cK^{\TT}  \xrightarrow{\sim} \cK^{\TT^\vee}$
 in Proposition~\ref{equiv-prop} is again $(w,z) \mapsto (w, z  v_J^+)$,
 so it follows from
 Proposition~\ref{dual-prop} that $\Upsilon^{\m}(\TT)$ is dual to $\Upsilon^{\n}(\TT)$
via the map 
\be\label{psi-eq} \psi: (w,z) \mapsto (w w_J  w_0, z  u_J).\ee
To show that $\Gamma^\m|_\TT$ is dual to $\Gamma^\n|_\TT$
via $\iota$
it suffices to check that the diagram
\be\label{cd-1}
\begin{tikzcd}
\cG|_\TT \arrow[rr, "\varphi_W"] \arrow[d, "\iota"] && \cK^{\TT} \arrow[d, "\psi"] \\
\cG|_\TT \arrow[rr, "\varphi_W" ] & & \cK^{\TT}
\end{tikzcd}
\ee
commutes. 
%

On the other hand, if $n$ is even and $k$ is odd then we have  $z_{\max}^\vee = z_{\min}^\diamond  v_J^+$, $J^\diamond = J$,  $\sigma^\diamond =  \sigma$,
and
$\iota(v)  =\dualityBC_n(v)^\diamond$ for all $v \in \cG|_\TT$, while
 \be\label{gamma-2}
 \widetilde\Gamma^\m|_\TT = (\Gamma^\m|_\TT)^\diamond\quand 
  \widetilde\Gamma^\n|_\TT = (\Gamma^\n|_\TT)^\diamond.\ee
In this case 
 $\TT^\diamond \equiv \TT^\vee = (J^\diamond, z_{\min}^\diamond  v_J^+, \sigma^\diamond)$ 
 and the bijection $\cK^{\TT} = \cK^{\TT^\diamond} \xrightarrow{\sim} \cK^{\TT^\vee}$
 in Proposition~\ref{equiv-prop} is the map $(w,z) \mapsto (w, z  v_J^+)$,
so it follows from Proposition~\ref{dual-prop} that $\Upsilon^{\m}(\TT)$ is dual to $\Upsilon^{\n}(\TT^\diamond)$
and that $\Upsilon^{\n}(\TT)$ is dual to $\Upsilon^{\m}(\TT^\diamond)$
again via  \eqref{psi-eq}.
To show that $\Gamma^\m|_\TT$ is dual to $\widetilde\Gamma^\n|_\TT$
(respectively, that $\Gamma^\n|_\TT$ is dual to $\widetilde\Gamma^\m|_\TT$)
via $\iota$
it suffices to check that 
\be\label{cd-2}
\begin{tikzcd}
\cG|_\TT \arrow[rr, "\varphi_W"] \arrow[d, "\iota"]
&& \cK^{\TT} \arrow[d, "\psi"] \\
\cG|_\TT \arrow[r, "\varphi_W" ]  & \cK^{\TT} \arrow[r, "\diamond"] 
& \cK^{\TT^\diamond}
\end{tikzcd}\ee
commutes, where the horizontal arrow $\xrightarrow{\ \diamond\ }$ is the map  $(w,z) \mapsto (w^\diamond,z^\diamond)$.

Now suppose $n$ is odd. Then 
$w_0$ is not central and 
 in the notation of Proposition~\ref{dual-prop}  
we have $J^\vee=J^\diamond$, $ \sigma^\vee=\sigma^\diamond$
and
$z^\vee =  z^\diamond \cdot  u_{J} \cdot \tilde v_{J}^+$ where $\tilde v_{J}^+ := (v_{J}^\diamond, \Ad(v_{J}^\diamond))$, which is equal to $v_J^+$ when $k\neq 0$.
If $k$ is  odd then $J=J^\diamond$, $\sigma = \sigma^\diamond$, 
$z_{\max}^\vee = z_{\min} \tilde v_{J}^+$,
 $\iota(v) = \dualityBC_n(v)$ for all $v \in \cG|_\TT$, and \eqref{gamma-1} holds.
In this case  $\TT \equiv \TT^\vee$ 
and showing that $\Gamma^\m|_\TT$ is dual to $\Gamma^\n|_\TT$
via $\iota$
reduces as above to checking that \eqref{cd-1} commutes after replacing the map $\psi$ with 
\be\label{psi-eq2}
 (w,z) \mapsto (w w_J  w_0, z^\diamond  u_J).\ee
If $k$ is even then $z_{\max}^\vee = z_{\min}^\diamond \tilde v_{J}^+$,
 $\iota(v) = \dualityBC_n(v)^\diamond$ for all $v \in \cG|_\TT$, and \eqref{gamma-2} holds.
In this case  $\TT^\diamond \equiv \TT^\vee$ 
and showing that 
$\Gamma^\m|_\TT$ is dual to $\widetilde\Gamma^\n|_\TT$
and that $\Gamma^\n|_\TT$ is dual to $\widetilde\Gamma^\m|_\TT$
via $\iota$
reduces to checking that \eqref{cd-2} commutes after replacing $\psi$ with \eqref{psi-eq2}.

All of the maps in the diagrams \eqref{cd-1} and \eqref{cd-2} have been explicitly defined.
Checking the desired commutativity in the four cases of interest
(according to the parities of $n$ and $k$) is a straightforward but somewhat tedious exercise.
We omit the specific details.
\end{proof}

\end{document}